 \theoremstyle{plain}
 \newtheorem{Thm}{Theorem}[section]
 \newtheorem*{Thm*}{Theorem}
 \newtheorem{Cor}[Thm]{Corollary}
 \newtheorem{Lemma}[Thm]{Lemma}
 \newtheorem{Prop}[Thm]{Proposition}
 \theoremstyle{definition}
 \newtheorem{Rem}[Thm]{Remark}
 \newtheorem{Ex}[Thm]{Example}
 \newtheorem{Defi}[Thm]{Definition}
 \numberwithin{Thm}{section}
 \numberwithin{equation}{section}
\def\medno{\medbreak\noindent}
\def\text#1{\;\;\;\;{\rm \hbox{#1}}\;\;\;\;}
\def\qquad{\quad\quad}
\def\itema{\item[{\rm (a)}]}
\def\itemb{\item[{\rm (b)}]}
\def\itemc{\item[{\rm (c)}]}
\def\itemd{\item[{\rm (d)}]}
\def\msy#1{{\mathbb #1}}
\def\C{{\msy C}}
\def\N{{\msy N}}
\def\Z{{\msy Z}}
\def\R{{\msy R}}
\def\D{{\msy D}}
\def\ga{\alpha}
\def\gd{\delta}
\def\geps{\varepsilon}
\def\gf{\varphi}
\def\gg{\gamma}
\def\gl{\lambda}
\def\gs{\sigma}
\def\gD{\Delta}
\def\Deltach{\delta}
\def\gS{\Sigma}
\def\fa{{\mathfrak a}}
\def\fg{{\mathfrak g}}
\def\fh{{\mathfrak h}}
\def\fk{{\mathfrak k}}
\def\fl{{\mathfrak l}}
\def\fm{{\mathfrak m}}
\def\fn{{\mathfrak n}}
\def\fp{{\mathfrak p}}
\def\fq{{\mathfrak q}}
\def\fs{{\mathfrak s}}
\def\ft{{\mathfrak t}}
\def\fu{{\mathfrak u}}
\def\fv{{\mathfrak v}}
\def\implies{\Rightarrow}
\def\to{\rightarrow}
\def\Re{\mathrm{Re}\,}
\def\inp#1#2{\langle#1\,,\,#2\rangle}
\def\Ad{\mathrm{Ad}}
\def\End{\mathrm{End}}
\def\Hom{\mathrm{Hom}}
\def\ad{\mathrm{ad}}
\def\after{\,{\scriptstyle\circ}\,}
\def\pr{\mathrm{pr}}
\def\tr{\mathrm{tr}\,}
\DeclareMathOperator*{\Res}{Res}
\def\iq{{\mathrm q}}
\def\iC{{\scriptscriptstyle \C}}
\def\cA{{\mathscr A}}
\def\cC{{\mathscr C}}
\def\cD{{\mathscr D}}
\def\cH{{\mathscr H}}
\def\cL{{\mathscr L}}
\def\cM{{\mathscr M}}
\def\cP{{\mathscr P}}
\def\cS{{\mathscr S}}
\def\cV{{\mathscr V}}
\def\cW{{\mathscr W}}
\def\cZ{{\mathscr Z}}
\def\col{\,:\,}
\def\faq{\fa_\fq}
\def\faqc{\fa_{\fq\iC}}
\def\faqd{\faq^*}
\def\faqdc{\fa_{\fq\iC}^*}
\def\fadc{\fa_{\iC}^*}
\def\fad{\fa^*}
\def\fah{\fa_\fh}
\def\fahd{\fa_\fh^*}
\def\oC{\cA_{M,2}}
\def\oCtau{\oC(\tau)}
\def\Aq{A_\fq}
\def\supp{\mathop{\rm supp}}
\def\Ind{\mathrm{Ind}}
\def\Cartan{\theta}
\def\Vtau{V_\tau}
\def\Aqp{A_\iq^+}
\def\GL{\mathrm{GL}}
\def\embeds{\hookrightarrow}
\def\dotvar{\, \cdot\,}
\def\bs{\backslash}
\def\bp{{}^\backprime}
\def\GammaQ{\Gamma(Q)}
\def\Ft{\mathfrak{F}}
\def\nFt{\mathcal{F}}
\def\eFt{{\mathcal F}}
\def\Ht{\mathcal{H}}
\def\It{\mathcal{I}}
\def\Kt{\mathcal{K}}
\def\Rt{\mathcal{R}}
\def\diag{\mathrm{diag}}
\def\spn{\mathrm{span}}
\def\loc{\mathrm{loc}}
\def\Cen{Z}
\def\Nor{N}
\def\Acomp{\mathfrak{A}}
\def\ch{\mathrm{conv}}
\def\1{\mathbf{1}}
\def\cusp{\mathrm{cusp}}
\def\ds{\mathrm{ds}}
\def\temp{\mathrm{temp}}
\def\mc{\mathrm{mc}}
\def\fahd{\fa_{\fh}^*}
\def\Ua{U}
\def\vH{{}^v\!H}
\def\Etau{E}
\def\ctau{C}
\def\Mzerohat{\widehat M_0}
\def\barV{{\bar V}}
\def\inj{i^\#}
\def\proj{p^\#}
\def\xiM{\xi_M}
\def\SO{{\rm SO}}
\def\faqdp{\fa_\fq^{*+}}
\def\DGH{\D(G/H)}
\def\Restau{{\rm Res}_\tau}
\def\Resone{{\rm Res}_1}
\def\LH{H_L}
\def\nicefrac#1#2{#1/#2}
\def\sign{{\rm sign}\,}
\def\Nt{{\mathcal N}}
\def\oneK{1}
\def\subseteqq{\subseteq}
\def\cPH{{\cP_\fh}}
\def\findim{\cV}
\def\res{{\rm res}}
\def\colsep{\colon}
\def\HypQtau{{\rm Hyp}(Q,\tau)}
\def\HypRQtau{{\rm Hyp}_\R(Q,\tau)}
\def\nbhood{U}
 \title{Cusp forms for reductive symmetric spaces of split rank one}
 \author{Erik~P.~van den Ban and Job~J.~Kuit\footnote{Supported by the Danish National Research Foundation through the Centre for Symmetry and Deformation (DNRF92) and the ERC Advanced Investigators Grant HARG 268105.}}
 \date{\today}
\begin{document}
 \maketitle
\begin{abstract}
For reductive symmetric spaces $G/H$ of split rank  one
 we identify a class of minimal parabolic subgroups for which certain
  cuspidal integrals of Harish-Chandra -- Schwartz functions are absolutely convergent. Using
these integrals we  introduce a notion of cusp forms  and investigate its relation with representations of the discrete series for $G/H$.
\end{abstract}
\tableofcontents
\section*{Introduction}\addcontentsline{toc}{section}{Introduction}
\setcounter{section}{9}
\renewcommand{\thesection} {\Alph{section}}
In this article we aim to develop a notion of cusp forms for reductive symmetric spaces.
More precisely, we generalize Harish-Chandra's notion of cusp forms for reductive Lie groups to a notion for  reductive symmetric spaces of split rank one.
Furthermore, we investigate the relation of this
notion with representations of the discrete series for the spaces considered.

Let $G$ be a real reductive Lie group of the Harish-Chandra class and let $\cC(G)$ be the Harish-Chandra  space of $L^2$-Schwartz functions on $G.$
In \cite{Harish-Chandra_HarmonicAnalyisOnRealReductiveGroupsI} Harish-Chandra proved that for every parabolic subgroup $P=M_{P}A_{P}N_{P}$ of $G$, every $\phi\in\cC(G)$ and every $g\in G$ the integral
\begin{equation}\label{eq int_N phi(gn) dn}
\int_{N_{P}}\phi(gn)\,dn
\end{equation}
is absolutely convergent. In analogy with the theory of automorphic forms,  he then defined a cusp form on $G$ to be a function $\phi\in\cC(G)$
 such that
 the integral (\ref{eq int_N phi(gn) dn}) vanishes for every proper parabolic subgroup
 $P$ of $G$  and every $g\in G$.
Let $\cC_{\cusp}(G)$ be the space of cusp forms and let $\cC_{\ds}(G)$ be the closed span of the $K$-finite matrix coefficients of the representations from
the discrete series. Harish-Chandra established the fundamental result that
\begin{equation}
\label{e: intro cusp is ds for group}
\cC_{\cusp}(G)=\cC_{\ds}(G).
\end{equation}
See \cite{HC_ds_2}, \cite[Thm.\ 10]{HC_harman_70} and \cite[Sect.\ 18 \& 27]{Harish-Chandra_HarmonicAnalyisOnRealReductiveGroupsI}; see also \cite[Thm.\  16.4.17]{Varadarajan_HarmonicAnalysisOnRealReductiveGroups}.
\par
For the more general class of real reductive symmetric spaces
$G/H,$ the main problem one encounters when trying to define cusp forms, is convergence of the integrals involved. The naive idea would be to use the class of
$\gs$-parabolic subgroups, as they appear in the general Plancherel theorem
as obtained by P. Delorme \cite{Delorme_Planch}  and independently,
H.\  Schlichtkrull and the first named author, \cite{vdBanSchlichtkrull_MostContinuousPart}, \cite{vdBan_Schlichtkrull_Plancherel1}. This approach fails
however for two reasons; firstly the integrals need not always converge, see
\cite[Lemma 4.1]{AndersenFlenstedJensenSchlichtkrull_CuspidalDiscreteSerieseForSemisimpleSymmetricSpaces}  and secondly,
the notion differs from Harish-Chandra's for the group.

Around 2000,  M.\  Flensted-Jensen proposed a notion of cusp forms for symmetric spaces that does generalize Harish-Chandra's notion.
This notion makes use of minimal parabolic subgroups for the group $G,$ which
are in a certain position relative to the Lie algebra $\fh$ of $H;$ in a sense they are as far away from $\gs$-parabolic subgroups as possible; in the present paper such minimal parabolic subgroups are called $\fh$-extreme,
see Definition \ref{d: extreme parabolics}.

In \cite{AndersenFlenstedJensenSchlichtkrull_CuspidalDiscreteSerieseForSemisimpleSymmetricSpaces} the new notion was tested for real hyperbolic spaces.
In that setting the space $\cC_\cusp(G/H)$ of cusp forms in the Schwartz space $\cC(G/H)$
is contained in the discrete part $\cC_\ds(G/H),$ but in contrast with the case of the group, the inclusion may be proper.
The aim of this paper is to understand such and other properties of cusp forms in the more general context of
reductive symmetric spaces of split rank one.

Our approach to the convergence problem is indirect, and heavily based on the
available tools from the harmonic analysis leading to the Plancherel formula.
In an earlier paper, \cite{vdBanKuit_EisensteinIntegrals},  we prepared for the present one by developing (without restriction on the rank) a notion of minimal Eisenstein integrals
for $G/H$  in terms of minimal parabolics of the group $G.$
For the case of the group
viewed as a symmetric space, Harish-Chandra's (minimal) Eisenstein integrals can then be recovered by making the special choice of
$\fh$-extreme minimal parabolic subgroups.

Somewhat surprisingly,  it appears that for the convergence of the cuspidal integrals another condition on the minimal parabolic subgroup involved is needed, which we call $\fh$-compatibility, see  Definition  \ref{d: fh compatible}.  The set
$\cP_\fh$ of such minimal parabolic subgroups is non-empty;
for the group, it actually consists of {\em all} minimal parabolic subgroups. For the real hyperbolic
spaces the class of $\fh$-extreme minimal parabolic subgroups turns out to coincide with the class
of $\fh$-compatible ones.

Let $G/H$ be of split rank one.
In Theorem \ref{Thm convergence}
we prove that for each $Q \in \cP_\fh$
and every Schwartz function $\phi \in \cC(G/H)$ the following
Radon transform integral,
\begin{equation}
\label{e: intro Rt Q}
\Rt_Q\phi (g):= \int_{N_Q/N_Q \cap H} \phi(gn)\; dn        \qquad (g \in G)
\end{equation}
is absolutely convergent and defines a smooth function of $g \in G;$ here, $N_Q$ denotes the nilpotent radical of $Q.$
A function $\phi \in \cC(G/H)$ is said to be a cusp form if for all $Q \in \cP_\fh$ the
Radon transform $\Rt_Q\phi$ is identically zero.
It turns out that for this to be valid, it is enough to require vanishing of $\Rt_Q\phi$
for all $\fh$-extreme parabolic subgroups in $\cP_\fh,$ see Lemma
\ref{l: minimal defi cusp form}. Thus, for both the case of the group and for the real hyperbolic spaces, our notion coincides with the existing ones.
Let $\cC_\cusp(G/H)$ denote the space of cusp forms. Under the assumption of
split rank one, we show that
\begin{equation}
\label{e: cusp G/H in ds}
\cC_\cusp(G/H) \subseteq \cC_\ds(G/H),
\end{equation}
see Theorem \ref{t: inclusion cusp in ds}.
Let $K$ be a $\gs$-stable maximal compact subgroup of $G$ and $\tau$ a finite dimensional unitary
representation of $K.$ In Theorem \ref{t: spherical cCres generated by Res} we establish that the space
$\cC_{\ds}(G/H:\tau)$ admits an $L^2$-orthogonal decomposition
\begin{equation}
\label{e: K-finite deco cusp and res}
\cC_\ds(G/H:\tau)  \ = \cC_\cusp(G/H:\tau ) \oplus \cC_{\res}(G/H : \tau),
\end{equation}
where $\cC_\res(G/H:\tau)$ is spanned by certain residues of Eisenstein
integrals defined in terms of $\fh$-compatible, $\fh$-extreme parabolic
subgroups. Furthermore, in Theorem
\ref{t: K fixed ds cusp forms implies all} we give the following remarkable criterion
for the analogue of (\ref{e: intro cusp is ds for group}) to be valid,
\begin{equation}
\label{e: criterion ds is cusp}
\cC_\res(G/H)^K = 0 \;  \implies\;  \cC_\cusp(G/H) = \cC_\ds(G/H).
\end{equation}
Finally, we establish, in Theorem \ref{t: characterisation ds by Radon}, a characterisation
of the subspace $\cC_\ds(G/H)$ of $\cC(G/H)$ in terms of the behavior of the
Radon transforms $\Rt_Q \phi,$ for $Q \in \cP_\fh.$

We will now give a more detailed outline of the structure of our paper. In the first part we
work in the generality of an arbitrary reductive symmetric space of the Harish-Chandra class.
Let $\Cartan$ be the Cartan involution associated with $K,$ and
$\fg = \fk \oplus \fp$ the associated Cartan decomposition of the Lie algebra of $G.$
Let $\fq$ be the $-1$-eigenspace of the infinitesimal involution
$\gs$ and let $\fa$ be a maximal abelian subspace
of $\fp$ such that $\faq:= \fa \cap \fq$ is maximal abelian in $\fp \cap \fq.$
Furthermore, let $A := \exp \fa$ and $\Aq : = \exp \faq.$ The (finite) set
of minimal parabolic subgroups $Q \subseteq G$ containing $A$
is denoted by $\cP(A)$ and the subset of $\fh$-compatible ones by $\cP_\fh(A).$
After necessary preparations in Section 1,
we define Radon transforms for $\phi \in L^1(G/H)$ as in (\ref{e: intro Rt Q}). By a Fubini type argument combined
with the Dixmier-Malliavin theorem on smooth vectors,
we show, in Proposition \ref{Prop int_N_Q/(N_Q cap H) phi integrable for phi smooth L^1 vector} that for $\phi \in L^1(G/H)^\infty$ the integral (\ref{e: intro Rt Q}) is absolutely convergent, and defines a smooth function on $G/N_Q.$

To make the connection with harmonic analysis,
we define, in Section \ref{s: Harish-Chandra transforms}, a
Harish-Chandra transform $\Ht_{Q}$, which maps a function $\phi\in L^1(G/H)^{\infty}$ to the smooth function on $MA:= Z_G(\fa)$
given by
$$
\Ht_{Q}\phi(l)
=\Deltach_{Q}(l)\Rt_{Q}\phi(l)\qquad  (l\in MA).
$$
Here $\Deltach_{Q}$ is a certain character on $MA$ that is chosen such that $\Ht_{Q}\phi$ is right $(MA \cap H)$-invariant and can therefore be viewed as a smooth function on $MA/(MA\cap H)$. We thus obtain a continuous linear map
\begin{equation}
\label{e: HC Q transform new}
\Ht_{Q}: L^1(G/H)^{\infty}\;  \longrightarrow\; C^{\infty}(MA/MA\cap H).
\end{equation}
It is then shown, that associated with $Q$ there exists a certain $P \in \cP(A)$ such that
$\gd_P^{-1} \Ht_Q\gf$ vanishes at infinity on $MA/(MA\cap H)$ for all $\phi \in L^1(G/H)^\infty.$
It is a consequence of this result that
$\Rt_Q$ vanishes on $L^1(G/H) \cap L^2_\ds(G/H),$ see Theorem
\ref{t: vanishing Rt Q on Lone}.

The next goal is to find a condition on the minimal parabolic subgroup
$Q$ to ensure that  (\ref{e: HC Q transform new})
extends continuously from $L^1(G/H)^\infty$  to the larger space  $\cC(G/H).$

Our strategy  is to first prove, in Section \ref{s: Schwartz functions}, that every Schwartz function can be dominated by a non-negative $K$-fixed function from $\cC(G/H),$
see Proposition \ref{Prop domination by K-inv Schwartz functions}.
Based on this, we show that for the convergence of the integral
(\ref{e: intro Rt Q}) for $\phi\in\cC(G/H)$ it suffices to prove that the restriction of $\Ht_{Q}$ to $C_{c}^{\infty}(G/H)^{K}$ extends continuously to $\cC(G/H)^{K}$, see Proposition \ref{Prop If H_Q extends to C(G/H)^K, then R_Q extends to C(G/H) with conv integrals}.

In Section \ref{s: Fourier transforms} we use the Eisenstein integrals associated with $Q\in \cP(A)$ and a finite dimensional representation $\tau$ of $K,$ introduced in \cite{vdBanKuit_EisensteinIntegrals}, to define a Fourier transform $\Ft_{Q,\tau}.$
For a compactly supported smooth $\tau$-spherical function $\phi\in C_c^\infty(G/H:\tau),$
the Fourier transform $\Ft_{Q,\tau}\phi$  is a meromorphic
function of a spectral parameter $\gl \in \faqdc.$

In Section \ref{s: tau spherical HC transform}
we introduce a $\tau$-spherical version of the transform (\ref{e: HC Q transform new}),
$$
\Ht_{Q,\tau} : C_c^\infty (G/H:\tau) \to C^\infty(\Aq) \otimes \oC(\tau).
$$
Here, $\oC(\tau)$ is a certain finite dimensional Hilbert space, which appears in the
description of the most continuous part of the Plancherel formula for $G/H,$
as a parameter space for the Eisenstein integrals involved.
The transform $\Ht_{Q,\tau}$ applied to a compactly
supported smooth $\tau$-spherical function $\phi \in C_c^\infty(G/H:\tau)$
gives a function whose Euclidean Fourier-Laplace transform coincides with
$\Ft_{Q,\tau}\phi,$ see Proposition \ref{Prop Ft_(Q,tau)=F_A circ H_Q}.
At the end of the section, we discuss the relation of the Harish-Chandra transform
with invariant differential operators on $G/H.$

Section \ref{section Extension to Harish-Chandra-Schwartz functions} is devoted
to the extension of the Harish-Chandra transform to the Schwartz space. First,
for a function $\phi \in \cC(G/H:\tau),$ the transform
$\Ht_{Q,\tau} \phi$
can be expressed as a Euclidean inverse Fourier transform of $\Ft_{Q,\tau}\phi$
which involves a contour integral over a translate of $i\faqd$ in the spectral parameter space $\faqdc,$
see Lemma \ref{l: HC transform as contour integral}. The idea is then to shift the contour integral towards the tempered part of the Plancherel spectrum, corresponding to $i\faqd,$
and to analyze the appearing residues.

At this point we restrict to spaces $G/H$ with $\dim \Aq = 1,$ in order to
be able to handle the appearing residues. The shift then results in the sum of a so-called tempered term and a so-called residual term, which essentially is  a sum of residues of the Fourier transform $\Ft_{Q,\tau}\phi.$ By its close relation with the most continuous part of the Plancherel formula, the tempered term can be shown to extend continuously to the Schwartz space. On the other hand, for $\tau$ the trivial
representation, the residual term can be shown to come from testing with
matrix coefficients of the discrete series, which arise from residues of the Eisenstein integral
for $Q.$  It is for drawing this conclusion that the condition of $\fh$-compatibility on $Q$ is needed.
Accordingly, for such a $Q,$ the transform $\Ht_{Q,1}$
extends continuously to all of $\cC(G/H)^K.$ As we indicated
above this implies that $\Rt_Q$ extends continuously to $\cC(G/H),$
see Theorem \ref{Thm convergence}.
In turn, this implies that the general $\tau$-spherical Harish-Chandra transform extends to
$\cC(G/H:\tau),$ so that the associated $\tau$-spherical residual term
must be tempered.

In Section \ref{section Cusp forms}, \S \ref{ss: kernel of I Q tau} and \S \ref{ss: residues arbitrary K types}, we apply a spectral analysis involving invariant differential
operators, to show that the $\tau$-spherical residual term consists of matrix coefficients
of discrete series representations. In the final subsection of the paper, \S \ref{ss: cusp forms}, we define the notion of cusp form as discussed above, and obtain the mentioned
results (\ref{e: cusp G/H in ds}), (\ref{e: K-finite deco cusp and res}) and
(\ref{e: criterion ds is cusp}), as well as the mentioned characterization of $\cC_\ds(G/H)$
in terms of Radon transforms.

Whenever possible, we develop the theory without restriction on
the split rank of $G/H.$ In fact, only in the subsections \ref{subsection Residues}, \ref{ss: residues trivial K type}, \ref{ss: Convergence}, \ref{ss: residues arbitrary K types} and \ref{ss: cusp forms} we require that
$\dim \Aq = 1.$ This restriction will always be mentioned explicitly.

In Remarks
\ref{r: convergence Radon in AFJS}, \ref{r: notion of cusp forms coincides with AFJS},
\ref{r: K finite cusp forms}, \ref{r: inclusion cusp in ds proper} and
\ref{r: non spherical residual part} and
\ref{r: finitely many non-cuspidal ds in AFJS}
we compare our results with the results of \cite{AndersenFlenstedJensenSchlichtkrull_CuspidalDiscreteSerieseForSemisimpleSymmetricSpaces}.
Finally, our results are consistent with the convergence of a certain integral transform
appearing in the proof of the Whittaker Plancherel formula given in \cite{Wallach_RealReductiveGroupsII}, but suggest that
the image space does not consist of Schwartz functions. This is confirmed
by an explicit calculation for ${\rm SL}(2,\R),$ see Example \ref{ex: Whittaker transform} and
Remark \ref{r: error in Wallach}.

{\bf Acknowledgements\ }
Both authors are very grateful to Mogens Flensted-Jensen and Henrik
Schlichtkrull for generously sharing their ideas in many enlightening
discussions.

\setcounter{section}{0}
\renewcommand{\thesection}{\arabic{section}}
\section{Notation and preliminaries}\label{section Notation}
Throughout the paper, $G$ will be a reductive Lie group of the Harish-Chandra class, $\sigma$ an involution of $G$ and $H$ an open subgroup of the fixed point subgroup for $\sigma$.
We assume that $H$ is essentially connected as defined in \cite[p.\ 24]{vdBan_ConvexityThm}.
The involution of the Lie algebra $\fg$ of $G$ obtained by deriving $\gs$ is denoted by the same symbol.
Accordingly, we write $\fg=\fh\oplus\fq$ for the decomposition of $\fg$ into the $+1$ and $-1$-eigenspaces for $\sigma$. Thus, $\fh$ is the Lie algebra of $H$. Here and in the rest of the paper, we adopt the convention to denote Lie groups
by Roman capitals, and their Lie algebras by the corresponding Fraktur lower cases.

 Given a subgroup $S$ of $G$ we agree to write
 $$
 H_{S}:= S\cap H.
 $$
We fix a Cartan involution $\theta$ that commutes with $\sigma$ and write $\fg=\fk\oplus\fp$ for the corresponding decomposition of $\fg$ into the $+1$ and $-1$ eigenspaces for $\theta$. Let $K$ be the fixed point subgroup of $\theta$. Then $K$ is a $\sigma$-stable maximal compact subgroup with Lie algebra $\fk$.   In addition, we fix a maximal abelian subspace $\faq$ of $\fp\cap\fq$ and a maximal abelian subspace $\fa$ of $\fp$ containing $\faq$. Then $\fa$ is $\sigma$-stable and
$$
\fa=\faq\oplus\fa_{\fh},
$$
where $\fa_{\fh}=\fa\cap \fh$.
This decomposition allows us to identify $\faqd$ and $\fahd$ with the subspaces
$(\fa/\fh)^*$ and $(\fa/\fq)^*$ of $\fad,$ respectively.

Let $A$ be the connected Lie group with Lie algebra $\fa$. We define $M$ to be the centralizer of $A$ in $K$ and write
$L$ for the group $MA.$ The  set of minimal parabolic subgroups containing $A$ is denoted by
$
\cP(A).
$

In general,  if $Q$ is a parabolic subgroup, then its nilpotent radical will be denoted by
$N_{Q}.$ Furthermore, we agree to write $\bar Q = \Cartan Q$ and $\bar N_Q = \Cartan N_Q.$
Note that if $Q\in\cP(A)$, then $L$ is a Levi subgroup of $Q$ and $Q=MAN_{Q}$ is
the Langlands decomposition of $Q$.

The root system of $\fa$ in $\fg$ is denoted by $\Sigma = \gS(\fg,\fa).$
For $Q\in\cP(A)$ we put
$$
\Sigma(Q): = \{\ga \in \gS :  \fg_\ga \subseteq     \fn_Q\}.
$$
Let $\Cen_\fg(\faq)$ denote the centralizer of $\faq$ in $\fg.$
We define the elements $\rho_Q $ and $\rho_{Q,\fh}$ of $\fad$ by
\begin{equation}
\label{e: defi rho 2x}
\rho_Q(\dotvar) = \frac12 \tr(\ad(\dotvar)|_{\fn_Q}), \qquad {\rm and}\quad
\rho_{Q,\fh}(\dotvar) = \frac12 \tr(\ad(\dotvar)|_{\fn_Q \cap \Cen_\fg(\faq)}).
\end{equation}
Let $m_\ga = \dim \fg_\ga,$ for $\ga \in \gS.$ Then it follows that
$$
\rho_{Q}
=\frac12\sum_{\alpha\in\Sigma(Q)}m_\ga \,\alpha,
\quad{\rm and}\quad
\rho_{Q,\fh}
=\frac12\sum_{\alpha\in\Sigma(Q)\cap\fa_{\fh}^{*}} m_\ga \,\alpha.
$$

For an involution $\tau$ of $\fg$ that stabilizes $\fa$ we write
$$
\Sigma(Q,\tau):= \Sigma(Q)\cap\tau\Sigma(Q).
$$
If $Q \in \cP(A)$ then $\gS(Q) \cap \fahd \subseteq \gS(Q,\gs)$ and
$\gS(Q) \cap \faqd \subseteq \gS(Q, \gs \Cartan).$ Furthermore,
$$
\gS(Q) = \gS(Q, \gs\Cartan) \sqcup \gS(Q, \gs)
$$
see \cite[Lemma 2.1]{vdBanKuit_EisensteinIntegrals}.
The following definition is consistent with \cite[Def.\ 1.1]{vdBanKuit_EisensteinIntegrals}.

\begin{Defi}
\label{d: extreme parabolics}
Let $Q \in \cP(A).$
\begin{enumerate}
\itema
The parabolic subgroup $Q$ is said to be $\fq$-extreme if $\Sigma(Q,\sigma)=\Sigma(Q)\cap\fa_{\fh}^{*}$.
\itemb
The group $Q$ is said to be $\fh$-extreme if $\Sigma(Q,\sigma\theta)=\Sigma(Q)\cap\faq^{*}$.
\end{enumerate}
\end{Defi}
\vspace{3pt}
We define the partial ordering $\preceq$ on $\cP(A)$ by
$$
Q \preceq P \iff\;\;
\Sigma(Q,\sigma\theta)\subseteq    \Sigma(P,\sigma\theta)\;\;{\rm and}\;\;
\Sigma(P,\sigma) \subseteq    \Sigma(Q,\sigma).
$$
The condition $Q\preceq P$  guarantees in particular that
$H\cap N_{P} \subseteq   H \cap N_{Q}.$ The latter implies that we have a natural surjective $H$-map $H/(H\cap N_{P}) \to H/(H \cap N_{Q})$.
\begin{Lemma}
\label{l: extremity and ordering}
Let $Q\in \cP(A).$ Then we have the following equivalences
\begin{enumerate}
\itema  $Q$ is $\fq$-extreme  $\iff\;$  $Q$ is $\preceq$-maximal;
\itemb  $Q$ is $\fh$-extreme  $\iff\;$  $Q$ is $\preceq$-minimal.
\end{enumerate}
\end{Lemma}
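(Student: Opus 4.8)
The plan is to unwind the definitions and express both the extremity conditions and the $\preceq$-extremality conditions in terms of inclusions among the sets $\Sigma(Q,\sigma)$, $\Sigma(Q,\sigma\theta)$, $\Sigma(Q)\cap\fahd$ and $\Sigma(Q)\cap\faqd$. Recall from the excerpt that for every $Q\in\cP(A)$ we have the inclusions $\Sigma(Q)\cap\fahd\subseteq\Sigma(Q,\sigma)$ and $\Sigma(Q)\cap\faqd\subseteq\Sigma(Q,\sigma\theta)$, together with the disjoint decomposition $\Sigma(Q)=\Sigma(Q,\sigma\theta)\sqcup\Sigma(Q,\sigma)$. Thus $Q$ is $\fq$-extreme exactly when the first inclusion is an equality, and $\fh$-extreme exactly when the second is. I would first record the elementary observation that, because of the disjoint decomposition, $\Sigma(Q,\sigma)=\Sigma(Q)\cap\fahd$ is equivalent to $\Sigma(Q,\sigma\theta)\supseteq\Sigma(Q)\setminus\fahd=\Sigma(Q)\cap\faqd$ (using that roots either vanish on $\faq$ or on $\fah$ is not needed — only that the two pieces partition $\Sigma(Q)$), hence equivalent to $\Sigma(Q,\sigma\theta)=\Sigma(Q)\cap\faqd$; in other words $Q$ is $\fq$-extreme if and only if it is $\fh$-extreme would be false, so I must be careful: the correct symmetric statement is that $\fq$-extremity says $\Sigma(Q,\sigma)$ is as small as possible and $\fh$-extremity says $\Sigma(Q,\sigma\theta)$ is as small as possible, and these are genuinely different conditions. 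I will keep both characterizations in the two forms given.

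Next I would prove (b), that $\fh$-extreme $\iff$ $\preceq$-minimal; part (a) then follows by an entirely parallel argument with the roles of $\sigma$ and $\sigma\theta$ interchanged (or one can deduce it from (b) applied to a suitably conjugated situation, but the direct argument is cleaner). For the forward direction, suppose $Q$ is $\fh$-extreme and let $P\preceq Q$; I must show $Q\preceq P$, i.e. $\Sigma(P,\sigma\theta)\supseteq\Sigma(Q,\sigma\theta)$ and $\Sigma(Q,\sigma)\supseteq\Sigma(P,\sigma)$. From $P\preceq Q$ we have $\Sigma(P,\sigma\theta)\subseteq\Sigma(Q,\sigma\theta)=\Sigma(Q)\cap\faqd$, using $\fh$-extremity of $Q$. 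Since $\Sigma(P,\sigma\theta)\subseteq\faqd$ and also $\Sigma(P,\sigma\theta)\subseteq\Sigma(P)$ with the reverse inclusion $\Sigma(P)\cap\faqd\subseteq\Sigma(P,\sigma\theta)$ always holding, I get $\Sigma(P,\sigma\theta)=\Sigma(P)\cap\faqd\subseteq\Sigma(P)\cap\faqd$, so $P$ is itself $\fh$-extreme; symmetrically $\Sigma(Q,\sigma)\subseteq\Sigma(P,\sigma)$ from the second half of $P\preceq Q$ is what we want, but we need it the other way, so I would instead argue: $P\preceq Q$ gives $\Sigma(Q,\sigma)\subseteq\Sigma(P,\sigma)$? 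No — $P\preceq Q$ means $\Sigma(Q,\sigma)\supseteq\Sigma(P,\sigma)$ (the ordering reverses on the $\sigma$-part). Hence the first half of $Q\preceq P$, namely $\Sigma(Q,\sigma)\supseteq\Sigma(P,\sigma)$, is immediate, and I still need $\Sigma(P,\sigma\theta)\supseteq\Sigma(Q,\sigma\theta)$. Since $P$ and $Q$ are both now known $\fh$-extreme, $\Sigma(P,\sigma\theta)=\Sigma(P)\cap\faqd$ and $\Sigma(Q,\sigma\theta)=\Sigma(Q)\cap\faqd$; and $\Sigma(P)\cap\faqd\supseteq\Sigma(Q)\cap\faqd$ follows because $\Sigma(P,\sigma\theta)\subseteq\Sigma(Q,\sigma\theta)$ forces equality of these two sets (each is contained in the other once both are $\fh$-extreme). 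For the converse, if $Q$ is $\preceq$-minimal I must produce an $\fh$-extreme parabolic, show it lies $\preceq$-below $Q$, and conclude it equals $Q$; this uses the (standard, and surely available from \cite{vdBanKuit_EisensteinIntegrals}) fact that $\fh$-extreme elements of $\cP(A)$ exist and that every $Q$ dominates an $\fh$-extreme one in the $\preceq$ order — concretely, one takes a Weyl chamber adjustment in $\fah$ that makes $\Sigma(\cdot,\sigma\theta)$ minimal while preserving the $\faq$-part, which can only shrink $\Sigma(\cdot,\sigma\theta)$ and enlarge $\Sigma(\cdot,\sigma)$, i.e. moves down in $\preceq$.

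The main obstacle I anticipate is the existence/comparability input in the converse direction: to go from "$\preceq$-minimal" to "$\fh$-extreme" I need to know that below any $Q\in\cP(A)$ there sits an $\fh$-extreme element, and that requires a small structural argument about how $\Sigma(Q,\sigma\theta)$ can be modified by changing the positivity system only in the $\fah$-directions (equivalently, by acting with the little Weyl group $W(\fah)$ of $\Sigma_{\fah}$ on the complementary chamber). I would either cite the relevant lemma from \cite{vdBanKuit_EisensteinIntegrals} or include a two-line argument: among all $P\preceq Q$ choose one with $|\Sigma(P,\sigma\theta)|$ minimal; if it were not $\fh$-extreme there would be $\alpha\in\Sigma(P,\sigma\theta)$ with $\alpha\notin\faqd$, hence $\sigma\theta\alpha\ne\alpha$, and reflecting in (the $\fah$-component of) $\alpha$ produces a $P'\preceq P\preceq Q$ with strictly smaller $\Sigma(\cdot,\sigma\theta)$, a contradiction — so the minimizer is $\fh$-extreme, lies $\preceq$ below $Q$, and by $\preceq$-minimality of $Q$ equals $Q$. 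The remaining verifications (that $\preceq$ is genuinely a partial order, that reflections move one in the claimed direction) are routine combinatorics with root systems and I would not spell them out.
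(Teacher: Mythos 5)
Your overall strategy coincides with the paper's: the implications ``extreme $\Rightarrow$ $\preceq$-extremal'' are to be checked from the definitions, and the converses are reduced to the existence, for any $Q\in\cP(A)$, of a $\fq$-extreme element above $Q$ and an $\fh$-extreme element below $Q$, which the paper simply quotes from \cite[Lemma 2.6]{vdBanKuit_EisensteinIntegrals} and \cite[Lemma 2.6]{BalibanuVdBan_ConvexityTheorem}; citing these, as you offer to do, is fine. However, two steps in your write-up do not hold as written.

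First, in the forward direction of (b) you end up misreading the order on the $\sigma$-part. By the definition, $P\preceq Q$ gives $\Sigma(P,\sigma\theta)\subseteq\Sigma(Q,\sigma\theta)$ \emph{and} $\Sigma(Q,\sigma)\subseteq\Sigma(P,\sigma)$; your final assertion ``$P\preceq Q$ means $\Sigma(Q,\sigma)\supseteq\Sigma(P,\sigma)$'' is the wrong way around (and contradicts your own earlier, correct, reading). Consequently the inclusion $\Sigma(P,\sigma)\subseteq\Sigma(Q,\sigma)$ needed for $Q\preceq P$ is precisely the one that is \emph{not} immediate, and your argument never supplies it. It can be supplied as follows: having shown that $P$ is $\fh$-extreme with $\Sigma(P)\cap\faqd=\Sigma(P,\sigma\theta)\subseteq\Sigma(Q,\sigma\theta)=\Sigma(Q)\cap\faqd$, note that both sets are positive systems for the root system $\Sigma\cap\faqd$, hence have equal cardinality, so they are equal; this is also the justification missing behind your parenthetical ``each is contained in the other once both are $\fh$-extreme'', which by itself is no reason (e.g.\ $P$ and $\theta P$ are both $\fh$-extreme but induce opposite positive systems on $\Sigma\cap\faqd$). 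Then $\Sigma(Q)=\Sigma(Q,\sigma\theta)\sqcup\Sigma(Q,\sigma)\subseteq\Sigma(P,\sigma\theta)\cup\Sigma(P,\sigma)=\Sigma(P)$, and since $|\Sigma(Q)|=|\Sigma(P)|$ this forces $\Sigma(P)=\Sigma(Q)$, i.e.\ $P=Q$. The ``parallel'' argument you invoke for (a) needs the analogous counting step, and note also that the identity $\Sigma(Q)\setminus\fahd=\Sigma(Q)\cap\faqd$ floated in your first paragraph is false (most roots vanish on neither $\faq$ nor $\fah$), though you abandon it.

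Second, your fallback ``two-line argument'' for producing an $\fh$-extreme element below a given $Q$ is not valid as stated: for $\alpha\in\Sigma(P,\sigma\theta)\setminus\faqd$ the restriction $\alpha|_{\fah}$ is in general not a root, so ``reflecting in the $\fah$-component of $\alpha$'' is not an operation on positive systems of $\Sigma$. The correct elementary move is a wall crossing in a \emph{simple} root $\alpha$ of $\Sigma(P)$ lying in $\Sigma(P,\sigma\theta)\setminus\faqd$ (this does yield $P'\preceq P$ with $\Sigma(P',\sigma\theta)=\Sigma(P,\sigma\theta)\setminus\{\alpha,\sigma\theta\alpha\}$), but one must then prove that such a \emph{simple} root exists whenever $P$ is not $\fh$-extreme; that is exactly the nontrivial content hidden in the quoted lemmas. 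So for the converse directions you should do what the paper does and cite \cite[Lemma 2.6]{BalibanuVdBan_ConvexityTheorem} and \cite[Lemma 2.6]{vdBanKuit_EisensteinIntegrals} rather than rely on this sketch.
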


\begin{proof}
In both (a) and (b) the implications from left to right are obvious from the definitions.
The converse implications follow from \cite[Lemma 2.6]{vdBanKuit_EisensteinIntegrals}
and \cite[Lemma 2.6]{BalibanuVdBan_ConvexityTheorem}.
\end{proof}

We denote by $\cP_\gs(\Aq)$ the set of minimal $\Cartan\gs$-stable parabolic subgroups containing $\Aq.$
If $P_0 \in \cP_\gs(\Aq)$ then $A \subseteqq P_0$ and we write
$$
\gS(P_0):= \{\ga \in \gS: \fg_\ga \subseteqq\fn_{P_0}\}\;\;{\rm and}\;\;  \gS(P_0, \faq):= \{\ga|_{\faq} : \ga \in \gS(P_0)\}.
$$
Then $P_0 \mapsto \gS(P_0,\faq)$ is a bijection from $\cP_{\sigma}(A_{\fq})$
onto the collection of positive systems for the root system $\gS(\fg, \faq)$ of $\faq$ in $\fg.$

From \cite[Lemma 1.2]{vdBanKuit_EisensteinIntegrals} we recall that a
parabolic subgroup $P \in \cP(A)$ is $\fq$-extreme if and only if it is contained in a parabolic subgroup
$P_0 \in \cP_\gs(\Aq).$ Furthermore, in that case we must have
$$
\gS(P_0) = \gS(P, \gs\Cartan),
$$
showing that $P_0$ is uniquely determined.  In accordance with this observation, we agree to write
$$
\cP_\gs(A) =
\{P \in \cP(A) : \;\;P \;{\rm is} \;\; \fq\mbox{\rm{-extreme}}\;\}.
$$
We note that the assignment $P \mapsto P_0$ mentioned above defines a surjective map
\begin{equation}
\label{e: relation cP gs}
\cP_\gs(A) \twoheadrightarrow \cP_\gs(\Aq)
\end{equation}
For a given $P_0 \in \cP_\gs(\Aq),$ the fiber of $P_0$ for the map (\ref{e: relation cP gs})
consists of the parabolic subgroups $P \in \cP_{\gs}(A)$ with $\gS(P) = \gS(P_0) \cup (\gS(P) \cap \fahd).$ It is readily seen that the map
$P \mapsto \gS(P)\cap \fahd$ defines a bijection from this fiber onto the set of positive systems for the
root system $\gS \cap \fahd.$
\begin{Rem}
\label{r: PH open}
If $\ga \in \gS \cap \fahd,$ then
the associated root space $\fg_\ga$ is contained in $\fh,$ see
\cite[Lemma 4.1]{vdBanKuit_EisensteinIntegrals}. Hence, if $P \in \cP_\gs(A)$  and $P_0 $ the unique
group in $\cP_\gs(\Aq)$ containing $P,$ then $N_P H = N_{P_0}H,$ and $PH = P_0H.$ In particular,
it follows that $PH$ is an open subset of $G.$
\end{Rem}
For $Q \in \cP(A)$ we define
\begin{equation}
\label{e: defi P gs A Q}
\cP_\gs(A,Q): = \{ P \in \cP_\gs(A) :  P \succeq Q\}.
\end{equation}
It follows from Lemma \ref{l: extremity and ordering} (a) that this set is non-empty.
The following lemma will be used frequently.

\begin{Lemma}
\label{l: Q  P zero and P}
Let $Q \in \cP(A)$ and $P_0 \in \cP_\gs(\Aq).$
Then the following assertions are equivalent.
\begin{enumerate}
\itema
There exists a $P \in \cP_{\gs}(A)$ such that $Q \preceq P \subseteq P_0.$
\itemb
$\gS(Q, \gs\Cartan) \subseteqq \gS(P_0).$
\end{enumerate}
If (b) is valid, then the group $P$ in (a) is uniquely determined.
\end{Lemma}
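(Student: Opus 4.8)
The plan is to work entirely on the level of root systems, since both conditions (a) and (b) are stated in terms of the sets $\gS(Q,\gs\Cartan)$, $\gS(P_0)$, etc., and the relevant parabolic subgroups are determined by their root systems. For the implication (a) $\implies$ (b): given $P \in \cP_\gs(A)$ with $Q \preceq P \subseteq P_0$, the inclusion $P \subseteq P_0$ gives $\gS(P) \subseteq \gS(P_0)$, and the recalled fact from \cite[Lemma 1.2]{vdBanKuit_EisensteinIntegrals} (together with the displayed formula $\gS(P_0) = \gS(P,\gs\Cartan)$ valid for $\fq$-extreme $P$ inside $P_0$) identifies $\gS(P,\gs\Cartan)$ with $\gS(P_0)$. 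Then $Q \preceq P$ forces $\gS(Q,\gs\Cartan) \subseteq \gS(P,\gs\Cartan) = \gS(P_0)$, which is exactly (b).

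For the converse (b) $\implies$ (a), I would construct $P$ explicitly by prescribing its root system. The candidate is the $\fq$-extreme element $P \in \cP_\gs(A)$ lying over $P_0$ whose "$\fa_\fh^*$-part" $\gS(P)\cap\fahd$ is chosen so that $\gS(Q)\cap\fahd \subseteq \gS(P)\cap\fahd$; concretely, one takes the positive system for $\gS \cap \fahd$ determined by $\gS(Q) \cap \fahd$ (this is a positive system because $Q$ is a minimal parabolic, so $\gS(Q)$ is), and lets $\gS(P) := \gS(P_0) \cup (\gS(Q)\cap\fahd)$. One must check this is the positive system of a genuine parabolic $P \in \cP_\gs(A)$ over $P_0$ — which is precisely the content of the bijection between the fiber of $P_0$ and positive systems for $\gS\cap\fahd$ described just before Remark \ref{r: PH open}. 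It then remains to verify $Q \preceq P$, i.e. the two inclusions $\gS(Q,\gs\Cartan) \subseteq \gS(P,\gs\Cartan)$ and $\gS(P,\gs) \subseteq \gS(Q,\gs)$. The first holds because $\gS(P,\gs\Cartan) = \gS(P_0) \supseteq \gS(Q,\gs\Cartan)$ by hypothesis (b), using $\gS(P) = \gS(P,\gs\Cartan) \sqcup \gS(P,\gs)$ together with $\gS(P)\cap\faqd \subseteq \gS(P,\gs\Cartan)$ and $\gS(P)\cap\fahd \subseteq \gS(P,\gs)$. For the second, using $\gS(P,\gs) = \gS(P)\cap\fahd$ for $\fq$-extreme $P$ (the defining property, Definition \ref{d: extreme parabolics}(a)), we get $\gS(P,\gs) = \gS(Q)\cap\fahd \subseteq \gS(Q,\gs)$ by the inclusion recalled right after the definition of $\preceq$.

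The uniqueness claim is the cleanest part: if $P, P' \in \cP_\gs(A)$ both satisfy (a), then both lie over $P_0$ (by the uniqueness in \cite[Lemma 1.2]{vdBanKuit_EisensteinIntegrals}, since $\gS(P,\gs\Cartan) = \gS(P_0) = \gS(P',\gs\Cartan)$), so it suffices to show their $\fahd$-parts agree. But $Q \preceq P$ gives $\gS(P,\gs) \subseteq \gS(Q,\gs)$, and since $\gS(P,\gs) = \gS(P)\cap\fahd$ while $\gS(Q,\gs) \cap \fahd$ — hmm, more carefully: $\gS(P)\cap\fahd = \gS(P,\gs) \subseteq \gS(Q,\gs)$, and intersecting with $\fahd$ and using that $\gS(Q)\cap\fahd \subseteq \gS(Q,\gs)$ together with a counting/positivity argument (a positive system contained in another positive system for the same root system must be equal) forces $\gS(P)\cap\fahd = \gS(Q)\cap\fahd = \gS(P')\cap\fahd$, hence $P = P'$.

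The main obstacle I anticipate is bookkeeping rather than anything deep: one has to be careful throughout with the decomposition $\gS(Q) = \gS(Q,\gs\Cartan) \sqcup \gS(Q,\gs)$ and the fact that this is *not* literally the decomposition into the $\faqd$-part and the $\fahd$-part (only inclusions hold in general, $\gS(Q)\cap\faqd \subseteq \gS(Q,\gs\Cartan)$ and $\gS(Q)\cap\fahd \subseteq \gS(Q,\gs)$), so that the roots with nonzero projection on both $\faq$ and $\fa_\fh$ need separate attention. The key leverage that makes everything go through is that for $\fq$-extreme $P$ these inclusions become equalities via Definition \ref{d: extreme parabolics}(a) and the displayed identity $\gS(P_0) = \gS(P,\gs\Cartan)$, so the candidate $P$ is pinned down and the verification of $Q \preceq P$ reduces to hypothesis (b) plus the general inclusions recalled after the definition of $\preceq$.
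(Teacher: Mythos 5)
Your proposal is correct and follows essentially the same route as the paper: (a)$\Rightarrow$(b) via $\gS(Q,\gs\Cartan)\subseteq\gS(P,\gs\Cartan)=\gS(P_0)$, and (b)$\Rightarrow$(a) by taking the unique $\fq$-extreme $P\subseteq P_0$ with $\gS(P)\cap\fahd=\gS(Q)\cap\fahd$ (via the fiber bijection) and checking the two inclusions defining $Q\preceq P$; your uniqueness argument (a positive system for $\gS\cap\fahd$ contained in another must coincide with it) is exactly the detail the paper leaves implicit. One minor slip: the parenthetical claim that $P\subseteq P_0$ gives $\gS(P)\subseteq\gS(P_0)$ is false whenever $\gS\cap\fahd\neq\emptyset$ (the roots in $\gS(P)\cap\fahd$ lie in the Levi of $P_0$, not in $\fn_{P_0}$), but you never actually use it, relying instead on the correct identity $\gS(P,\gs\Cartan)=\gS(P_0)$.
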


\begin{proof}
First  assume (a). Let $P_0 $ be the unique parabolic subgroup
from $\cP_\gs(\Aq)$ containing $P.$ Then
$\gS(Q,\gs \Cartan) \subseteqq \gS(P, \gs\Cartan)\subseteqq\gS(P_0).$
Hence, (b).

Now, assume (b). By the discussion above there exists
a unique $\fq$-extreme $P$ with $P\subseteq P_0$ and
$\gS(P) \cap \fahd = \gS(Q)\cap \fahd.$ For this $P,$ we have
$\gS(Q, \gs \Cartan) \subseteqq\gS(P_0) = \gS(P, \gs\Cartan).$
Furthermore, $\gS(P,\gs) = \gS(P)\cap \fahd = \gS(Q) \cap \fahd
\subseteqq\gS(Q,\gs).$ Hence, $Q \preceq P$ and we infer that
(a) is valid.
\end{proof}

We fix an $\Ad(G)$-invariant symmetric bilinear form
\begin{equation}
\label{e: defi B}
B: \fg \times \fg \to \R
\end{equation}
such that $B$ is $\Cartan$- and $\gs$-invariant,  $B$ agrees with the Killing form on $[\fg,\fg]$ and $-B(\dotvar,\theta\dotvar)$ is positive definite on $\fg$.

Haar measures on compact Lie groups and invariant measures on compact
homogeneous spaces will be normalized such that they are probability measures.
If $N$ is a simply connected nilpotent Lie subgroup of $G$ with Lie algebra $\fn$, then we will normalize the Haar measure on $N$ such that its pull-back under the exponential
map coincides with the Lebesgue measure on $\fn$ normalized according to the restriction
of the inner product $-B(\dotvar, \Cartan(\dotvar)).$
\section{Radon transforms}
\subsection{Decompositions of nilpotent groups}
Let $P\in \cP(A).$
For a given element $X\in \faq$ we define the Lie subalgebra
$$
\fn_{P,X}
:=\bigoplus_{\genfrac{}{}{0pt}{}{\alpha\in\Sigma(P)}{\alpha(X)>0}}\fg_{\alpha}
$$
and denote by $N_{P,X}$  the associated connected Lie subgroup of $G.$
The following lemma is proved in \cite[Prop.\  2.16]{BalibanuVdBan_ConvexityTheorem}.

\begin{Lemma}\label{Lemma N_(Q,X) x (N_Q cap H) to N_Q diffeo}
There exists $X\in\faq$  such that
\begin{equation}\label{eq condition on X}
\begin{cases}
    \alpha(X)\neq 0 & \text{if}\alpha\in\Sigma\setminus \fahd\\
         \alpha(X)>0 & \text{if}\alpha\in\Sigma(P,\sigma\theta).
\end{cases}
\end{equation}
For any such $X,$  the multiplication map
$$
N_{P,X}\times H_{N_P}\to N_P
\qquad
(n,n_{H})\mapsto nn_{H}
$$
is a diffeomorphism.
\end{Lemma}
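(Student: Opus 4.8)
The plan is to reduce the statement about the nilpotent group $N_P$ to a statement about its Lie algebra $\fn_P$ by means of the exponential map, and then to analyze the grading of $\fn_P$ induced by $\ad(X)$. First I would observe that, since $\fn_P$ is nilpotent and $H_{N_P} = N_P \cap H$ is the connected subgroup with Lie algebra $\fn_P \cap \fh$ (using that $N_P$ is simply connected and $\sigma$-stable, so $N_P \cap H$ is connected), both sides of the multiplication map are diffeomorphic, via $\exp$, to the corresponding Lie algebras. So it suffices to show that $\fn_P = \fn_{P,X} \oplus (\fn_P \cap \fh)$ as vector spaces, together with the fact that the product map is a diffeomorphism; the latter is a standard consequence for nilpotent groups once one knows that $\fn_{P,X}$ and $\fn_P \cap \fh$ are subalgebras with $\fn_{P,X}$ normalizing, or more cleanly that $\fn_{P,X} + (\fn_P \cap \fh)$ is a direct-sum decomposition into subalgebras with the first one an ideal-like complement — I would instead argue directly that the map $N_{P,X} \times H_{N_P} \to N_P$ is a bijective immersion between manifolds of the same dimension that is proper, hence a diffeomorphism, or invoke \cite[Prop.\ 2.16]{BalibanuVdBan_ConvexityTheorem} which is cited as containing exactly this.

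The heart of the matter, then, is the vector-space decomposition $\fn_P = \fn_{P,X} \oplus (\fn_P \cap \fh)$. Since $\fn_P = \bigoplus_{\alpha \in \Sigma(P)} \fg_\alpha$ and each root space $\fg_\alpha$ is $\ad(\faq)$-stable (because $\faq \subseteq \fa$ and $\fg_\alpha$ is an $\fa$-weight space), I would split $\Sigma(P)$ according to the sign of $\alpha(X)$. By the first condition in \eqref{eq condition on X}, a root $\alpha \in \Sigma(P)$ with $\alpha(X) = 0$ must lie in $\fahd$, i.e.\ $\alpha|_{\faq} = 0$; for such $\alpha$, Remark \ref{r: PH open} (via \cite[Lemma 4.1]{vdBanKuit_EisensteinIntegrals}) gives $\fg_\alpha \subseteq \fh$. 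For roots with $\alpha(X) > 0$ we get, by definition, $\fg_\alpha \subseteq \fn_{P,X}$. The remaining case is $\alpha \in \Sigma(P)$ with $\alpha(X) < 0$: here I must show $\fg_\alpha \subseteq \fh$. This is where the second condition in \eqref{eq condition on X} enters. Since $\alpha(X) < 0$ and, by that condition, all roots in $\Sigma(P, \sigma\theta)$ take positive values on $X$, we have $\alpha \notin \Sigma(P,\sigma\theta)$; combined with $\Sigma(P) = \Sigma(P,\sigma\theta) \sqcup \Sigma(P,\sigma)$ (recalled in the preliminaries from \cite[Lemma 2.1]{vdBanKuit_EisensteinIntegrals}), this forces $\alpha \in \Sigma(P, \sigma)$, i.e.\ $\sigma\alpha \in \Sigma(P)$ as well. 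But $\sigma$ acts on $\fa$ fixing $\fa_\fh$ and negating $\faq$, so $(\sigma\alpha)(X) = \alpha(\sigma X) = -\alpha(X) > 0$, meaning $\fg_{\sigma\alpha} \subseteq \fn_{P,X}$ — which does not immediately give $\fg_\alpha \subseteq \fh$. I would therefore instead pair $\alpha$ with $\sigma\theta\alpha$: argue that the correct complement to $\fn_{P,X}$ inside $\fn_P$ is the sum of $\fg_\alpha$ over $\alpha \in \Sigma(P)$ with $\alpha(X) \le 0$, and show this sum lies in $\fh$ by a $\sigma$-pairing argument on each such $\fg_\alpha$, using that $\sigma$ preserves $\fn_P \cap \fn_{P,X}^{\mathrm{op}}$ appropriately — this is the step I expect to be the main obstacle, and the cleanest route is simply to defer to the cited \cite[Prop.\ 2.16]{BalibanuVdBan_ConvexityTheorem} for the precise bookkeeping.

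Concretely, then, my proof would have three steps: (i) quote the existence of $X$ satisfying \eqref{eq condition on X} — this is elementary convexity/genericity, choosing $X$ in the appropriate open cone intersected with the complement of finitely many hyperplanes, and I would note that such $X$ exists because $\Sigma(P,\sigma\theta) \cap \faqd$ spans a consistent half-space; (ii) verify that for the chosen $X$ the algebras $\fn_{P,X}$ and $\fn_P \cap \fh$ are complementary in $\fn_P$, via the root-space analysis above, handling the sign-zero roots with Remark \ref{r: PH open} and the negative roots with the $\sigma$-involution argument; and (iii) upgrade the Lie-algebra splitting to the diffeomorphism statement on the group level using simple connectedness of $N_P$ and the exponential coordinates, or by directly citing \cite[Prop.\ 2.16]{BalibanuVdBan_ConvexityTheorem}. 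The main obstacle, as indicated, is step (ii) for the roots $\alpha$ with $\alpha(X) < 0$: one needs that these contribute root spaces lying in $\fh$, which requires carefully using both conditions in \eqref{eq condition on X} together with the decomposition $\Sigma(P) = \Sigma(P,\sigma\theta) \sqcup \Sigma(P,\sigma)$ and the behavior of $\sigma$ and $\theta$ on $\fa$; since the statement is explicitly attributed to \cite[Prop.\ 2.16]{BalibanuVdBan_ConvexityTheorem}, the honest proof is a one-line citation, and the sketch above records what that cited proof must be doing.
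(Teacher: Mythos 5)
The paper itself gives no argument for this lemma beyond the single citation to \cite[Prop.\ 2.16]{BalibanuVdBan_ConvexityTheorem}, so your final fallback coincides exactly with the paper's ``proof.'' The problem is with the sketch you offer of what that proof must be doing: your step (ii) is false as stated. You propose that the complement of $\fn_{P,X}$ in $\fn_P$ is the sum of the $\fg_\alpha$ over $\alpha\in\Sigma(P)$ with $\alpha(X)\leq 0$, and that this sum lies in $\fh$. But a root space $\fg_\alpha$ with $\alpha|_{\faq}\neq 0$ cannot even meet $\fh$ nontrivially: if $0\neq Y\in\fg_\alpha\cap\fh$, then applying $\sigma$ to $[H,Y]=\alpha(H)Y$ shows $Y$ is a weight vector for the weight $\sigma\alpha$ as well, forcing $\sigma\alpha=\alpha$, i.e.\ $\alpha\in\fahd$, contradicting $\alpha(X)<0$. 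Roots $\alpha\in\Sigma(P)$ with $\alpha(X)<0$ do occur (precisely when $P$ is not $\fq$-extreme, since they are the members of the $\sigma$-pairs in $\Sigma(P,\sigma)\setminus\fahd$ whose partner is positive on $X$), and no pairing with $\sigma\theta\alpha$ will place $\fg_\alpha$ inside $\fh$. The underlying misconception is that $\fn_P\cap\fh$ is a sum of root spaces; it is not.

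The decomposition that actually holds is $\fn_P=\fn_{P,X}\oplus(\fn_P\cap\fh)$ with $\fn_P\cap\fh$ meeting each pair $\fg_\alpha\oplus\fg_{\sigma\alpha}$, $\alpha\in\Sigma(P,\sigma)\setminus\fahd$, in the ``diagonal'' $\{Y+\sigma Y\colon Y\in\fg_\alpha\}$, while $\fg_\alpha\subseteq\fh$ only for $\alpha\in\Sigma(P)\cap\fahd$ (that part of your argument, via Remark \ref{r: PH open}, is fine). One gets the splitting either from the explicit projection $\sum_\alpha Y_\alpha\mapsto\sum_{\alpha(X)<0}(Y_\alpha+\sigma Y_\alpha)+\sum_{\alpha\in\Sigma(P)\cap\fahd}Y_\alpha$ (note $\sigma Y_\alpha\in\fg_{\sigma\alpha}\subseteq\fn_{P,X}$ since $(\sigma\alpha)(X)=-\alpha(X)>0$ and $\sigma\alpha\in\Sigma(P)$, the second condition on $X$ guaranteeing $\alpha\in\Sigma(P,\sigma)$), or from $\fn_{P,X}\cap\fh=0$ together with a dimension count using $m_\alpha=m_{\sigma\alpha}$ --- but not from a root-space-by-root-space inclusion into $\fh$. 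Two smaller points: your parenthetical that $N_P$ is $\sigma$-stable is false in general ($\sigma N_P=N_{\sigma P}$), so connectedness of $N_P\cap H$ needs a different justification, e.g.\ injectivity of $\exp$ on nilpotent elements of $[\fg,\fg]$; and the existence of $X$ is not bare genericity --- one must check that $0$ is not in the convex hull of $\{\alpha|_{\faq}\colon\alpha\in\Sigma(P,\sigma\theta)\}$, which follows because a relation $\sum_i c_i\alpha_i\in\fahd$ with $c_i>0$ would place both $\pm\sum_i c_i\alpha_i$ in the pointed cone spanned by $\Sigma(P)$. With these repairs (or with the literal one-line citation, which is all the paper does) the proof is complete; as written, step (ii) is a genuine gap.
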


The groups $N_P$ and $H_{N_P}$ are both unimodular. Hence, there exists an $N_P$-invariant measure on $N_P/H_{N_P}$.
We normalize the measure on $N_P/H_{N_P}$ such that for every $\psi\in C_{c}(N_P)$
$$
\int_{N_P}\psi(n)\,dn
=\int_{N_P/H_{N_P}}\int_{H_{N_P}}\psi(xn)\,dn\,dx.
$$
Lemma \ref{Lemma N_(Q,X) x (N_Q cap H) to N_Q diffeo} has the following corollary.

\begin{Cor} \label{Cor int_N/(N cap H)=int_N_Q,X}
Let $X \in \faq$ be as in Lemma \ref{Lemma N_(Q,X) x (N_Q cap H) to N_Q diffeo}.
Let $\phi\in L^1(N_P /H_{N_P})$.
Then
$$
\int_{N_P /H_{N_{P}}}\phi(x)\,dx
=\int_{N_{P,X}}\phi(n\big)\,dn.
$$
\end{Cor}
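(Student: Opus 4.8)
The plan is to reduce the identity to the diffeomorphism of Lemma \ref{Lemma N_(Q,X) x (N_Q cap H) to N_Q diffeo} together with the normalization of the invariant measure on $N_P/H_{N_P}$ introduced just above. First I would observe that, by the stated normalization, for every $\psi \in C_c(N_P)$ one has
$$
\int_{N_P}\psi(n)\,dn = \int_{N_P/H_{N_P}}\int_{H_{N_P}}\psi(x n_H)\,dn_H\,dx,
$$
and that by the diffeomorphism $N_{P,X}\times H_{N_P}\to N_P$, $(n,n_H)\mapsto n n_H$, the left-hand side can be rewritten as an iterated integral over $N_{P,X}$ and $H_{N_P}$. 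The point is to identify the Jacobian of this diffeomorphism with respect to the chosen Haar measures; since $N_{P,X}$ and $H_{N_P}$ are subgroups of the simply connected nilpotent group $N_P$ whose Lie algebras give a direct sum decomposition $\fn_P = \fn_{P,X}\oplus \fh_{\fn_P}$ that is orthogonal for $-B(\dotvar,\Cartan\dotvar)$, and all Haar measures are normalized via the exponential map and this inner product, the Jacobian is constant equal to $1$. Hence
$$
\int_{N_P}\psi(n)\,dn = \int_{N_{P,X}}\int_{H_{N_P}}\psi(n n_H)\,dn_H\,dn
$$
for all $\psi \in C_c(N_P)$.

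Next I would compare the two expressions for $\int_{N_P}\psi(n)\,dn$: the map $N_{P,X}\to N_P/H_{N_P}$, $n \mapsto n H_{N_P}$, is a diffeomorphism (being the composition of the inclusion with the projection, using the decomposition of Lemma \ref{Lemma N_(Q,X) x (N_Q cap H) to N_Q diffeo}), and the two iterated-integral formulas show that the push-forward of Haar measure on $N_{P,X}$ under this map agrees with the normalized invariant measure on $N_P/H_{N_P}$ on the dense subspace of functions of the form $x \mapsto \int_{H_{N_P}}\psi(x n_H)\,dn_H$ with $\psi \in C_c(N_P)$. Since every $\varphi \in C_c(N_P/H_{N_P})$ arises in this way (lift $\varphi$ to $N_P$ against a cutoff and integrate over $H_{N_P}$), the two measures coincide, and therefore $\int_{N_P/H_{N_P}}\varphi(x)\,dx = \int_{N_{P,X}}\varphi(n H_{N_P})\,dn$ first for $\varphi \in C_c(N_P/H_{N_P})$.

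Finally I would pass from $C_c$ to $L^1$: the identity of the two (positive) measures on $N_P/H_{N_P}$ extends the equality to all non-negative measurable $\phi$ by monotone convergence, hence to all $\phi \in L^1(N_P/H_{N_P})$ by writing $\phi$ as a combination of its positive and negative parts (and real and imaginary parts), with absolute convergence of the right-hand integral guaranteed by the $L^1$ assumption. The main obstacle is the Jacobian computation in the first paragraph: one must check carefully that the product decomposition $N_{P,X}\times H_{N_P}\xrightarrow{\sim} N_P$ is measure preserving for the normalizations fixed in Section \ref{section Notation}. This follows because in a simply connected nilpotent Lie group, a multiplication map associated with a vector-space direct sum of the Lie algebra into two subalgebras has, in exponential coordinates, unipotent — hence determinant one — differential, and the Lebesgue measures on $\fn_{P,X}$ and $\fh_{\fn_P}$ induced by the orthogonal decomposition combine to the Lebesgue measure on $\fn_P$; alternatively one cites the unimodularity of all three groups together with the fact that the correction factor is a continuous homomorphism $H_{N_P}\to \R_{>0}$, which must be trivial.
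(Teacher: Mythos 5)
Your overall route --- the quotient integral formula that normalizes the measure on $N_P/H_{N_P}$, the diffeomorphism of Lemma \ref{Lemma N_(Q,X) x (N_Q cap H) to N_Q diffeo}, identification of the push-forward of the Haar measure of $N_{P,X}$ with the quotient measure, and the extension from $C_c$ to $L^1$ by positivity and monotone convergence --- is exactly the argument the paper leaves implicit, and the final $C_c\Rightarrow L^1$ step is fine. The gap is in the key normalization step. The decomposition $\fn_P=\fn_{P,X}\oplus(\fn_P\cap\fh)$ is in general \emph{not} orthogonal for $-B(\dotvar,\Cartan\,\dotvar)$: if $\ga\in\gS(P,\gs)$ with $\ga|_{\faq}\neq 0$ and $\ga(X)>0$, then $\fg_\ga\subseteq\fn_{P,X}$, while the corresponding piece of $\fn_P\cap\fh$ is $\{Y+\gs Y: Y\in\fg_\ga\}$ (here $\gs\fg_\ga=\fg_{\gs\ga}$ with $\gs\ga\in\gS(P)$ and $\gs\ga(X)<0$), and $-B(Y+\gs Y,\Cartan Y)=-B(Y,\Cartan Y)\neq 0$. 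Orthogonality holds precisely when $\gS(P,\gs)\subseteq\fahd$, i.e.\ when $P$ is $\fq$-extreme, whereas the corollary is used for arbitrary parabolic subgroups in $\cP(A)$; already in the group case with an $\fh$-extreme parabolic subgroup every root of $\gS(P)$ produces such a non-orthogonal pair.

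Your two fallback arguments do not close this gap. Unipotence of the multiplication map in exponential coordinates gives Jacobian $1$ relative to the linear identification $\fn_{P,X}\oplus(\fn_P\cap\fh)\simeq\fn_P$, but the product of the two normalized Lebesgue measures corresponds under this identification to the normalized Lebesgue measure of $\fn_P$ only when the decomposition is orthogonal; in general one picks up the square root of the Gram determinant of a pair of orthonormal bases of the two summands, which contributes a factor $2^{-m_\ga/2}$ for each pair $\{\ga,\gs\ga\}$ as above. Likewise, the unimodularity/character argument shows only that the correction factor is a \emph{constant}, not that it equals $1$. So, as written, your proof establishes the displayed identity only up to a positive constant depending on $P$ --- which is in fact all that the later applications require (the constant cancels in the proof of Lemma \ref{Lemma sqrt(Delta) int_N/(N cap H) is L cap H invariant} and is absorbed into unspecified constants elsewhere), but it is not the literal statement. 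To obtain the equality exactly as printed you must either verify that this constant is $1$ for the normalizations fixed in Section \ref{section Notation} --- which the orthogonality argument cannot deliver outside the $\fq$-extreme case --- or renormalize the invariant measure on $N_P/H_{N_P}$ (equivalently the Haar measure on $N_{P,X}$) so as to absorb it; this normalization issue is precisely the delicate point and needs to be confronted rather than dispatched by the orthogonality claim.
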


\begin{Lemma}
\label{l: deco NPX and NQX}
Let $P,Q \in \cP(A)$ and assume that $X \in \faq$ satisfies the conditions of
Lemma \ref{Lemma N_(Q,X) x (N_Q cap H) to N_Q diffeo}.
If $Q \preceq P,$  then  both
$N_P \cap \bar N_Q$ and $N_{Q,X}$ are contained in $N_{P,X}$ and the multiplication map
$$
(N_{P} \cap \bar N_Q)\times N_{Q,X} \to N_{P,X}
$$
is a diffeomorphism.
\end{Lemma}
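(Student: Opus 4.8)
The plan is to reduce the assertion to a single combinatorial fact about roots and the element $X$, prove that fact, and then obtain the diffeomorphism from the grading of $\fn_{P,X}$ by $\ad(X)$.

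The fact in question is: \emph{if $\alpha\in\Sigma(P)$ and $-\alpha\in\Sigma(Q)$, then $\alpha(X)>0$}; call it $(\star)$. I would prove $(\star)$ by distinguishing whether or not $\sigma\alpha\in\Sigma(P)$. If $\sigma\alpha\in\Sigma(P)$, then $\alpha\in\Sigma(P)\cap\sigma\Sigma(P)=\Sigma(P,\sigma)$; since $Q\preceq P$ gives $\Sigma(P,\sigma)\subseteq\Sigma(Q,\sigma)\subseteq\Sigma(Q)$, this contradicts $-\alpha\in\Sigma(Q)$, so this case is vacuous. Hence $\sigma\alpha\notin\Sigma(P)$, so $\gamma:=-\sigma\alpha\in\Sigma(P)$; and $\sigma\gamma=-\alpha\notin\Sigma(P)$ (because $\alpha\in\Sigma(P)$), so $\gamma\notin\Sigma(P,\sigma)$, whence $\gamma\in\Sigma(P,\sigma\theta)$ by the decomposition $\Sigma(P)=\Sigma(P,\sigma\theta)\sqcup\Sigma(P,\sigma)$. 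The defining condition on $X$ in Lemma~\ref{Lemma N_(Q,X) x (N_Q cap H) to N_Q diffeo} then gives $\gamma(X)>0$; and since $X\in\faq$ we have $\sigma X=-X$, so $\gamma(X)=-(\sigma\alpha)(X)=-\alpha(\sigma X)=\alpha(X)$. Thus $\alpha(X)>0$.

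Granting $(\star)$, I would deduce the three pieces. First, $\fn_P\cap\bar\fn_Q=\bigoplus_{\alpha\in\Sigma(P)\cap(-\Sigma(Q))}\fg_\alpha$, so $(\star)$ gives $\fn_P\cap\bar\fn_Q\subseteq\fn_{P,X}$. Second, if $\alpha\in\Sigma(Q)$ with $\alpha(X)>0$ were not in $\Sigma(P)$, then $-\alpha\in\Sigma(P)\cap(-\Sigma(Q))$ and $(\star)$ would force $-\alpha(X)>0$, a contradiction; hence $\alpha\in\Sigma(P)$, and therefore $\fn_{Q,X}\subseteq\fn_{P,X}$. Third, splitting $\{\alpha\in\Sigma(P):\alpha(X)>0\}$ along $\Sigma=\Sigma(Q)\sqcup(-\Sigma(Q))$ and applying $(\star)$ to the $(-\Sigma(Q))$-part and the previous step to the $\Sigma(Q)$-part identifies it with the disjoint union of the root sets of $\fn_P\cap\bar\fn_Q$ and of $\fn_{Q,X}$; thus $\fn_{P,X}=(\fn_P\cap\bar\fn_Q)\oplus\fn_{Q,X}$ as a direct sum of Lie subalgebras. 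All groups in sight are closed connected (hence simply connected) subgroups of the simply connected nilpotent group $N_P$, so $N_P\cap\bar N_Q=\exp(\fn_P\cap\bar\fn_Q)$ and the containments of groups follow. Injectivity of the multiplication map then holds because $(\fn_P\cap\bar\fn_Q)\cap\fn_{Q,X}\subseteq\bar\fn_Q\cap\fn_Q=0$, so the two subgroups meet only in the identity. For surjectivity and smoothness of the inverse I would use that $\fn_{P,X}$ is graded by the $\ad(X)$-eigenspaces $\fg_{[\lambda]}$, all eigenvalues $\lambda=\alpha(X)$ being strictly positive, and that both $\fn_P\cap\bar\fn_Q$ and $\fn_{Q,X}$ are graded subalgebras with $\fn_{P,X}\cap\fg_{[\lambda]}=((\fn_P\cap\bar\fn_Q)\cap\fg_{[\lambda]})\oplus(\fn_{Q,X}\cap\fg_{[\lambda]})$; in exponential coordinates the map $(\xi,\eta)\mapsto\log(\exp\xi\exp\eta)$ is given by the terminating Baker--Campbell--Hausdorff series, which is lower triangular for this grading (the grade-$\lambda$ component equals $\xi_\lambda+\eta_\lambda$ plus a polynomial in components of strictly smaller grade), so solving inductively over the finitely many grades yields a polynomial inverse.

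The main obstacle is $(\star)$: once one hits on the case distinction $\sigma\alpha\in\Sigma(P)$ versus $\sigma\alpha\notin\Sigma(P)$, it falls out of the positivity condition $\alpha(X)>0$ for $\alpha\in\Sigma(P,\sigma\theta)$ built into the choice of $X$, and everything else is bookkeeping together with the standard fact about positively graded nilpotent groups.
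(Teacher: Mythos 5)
Your proof is correct and follows essentially the same route as the paper: your fact $(\star)$ encapsulates exactly the paper's two root inclusions ($\Sigma(P)\cap\Sigma(\bar Q)\subseteq\Sigma(P,\sigma\theta)$ and $\{\alpha\in\Sigma(Q):\alpha(X)>0\}\subseteq\Sigma(P)$), proved from the same ingredients, namely $\Sigma(P,\sigma)\subseteq\Sigma(Q,\sigma)$, the disjoint decomposition $\Sigma(P)=\Sigma(P,\sigma)\sqcup\Sigma(P,\sigma\theta)$, and the positivity of $X$ on $\Sigma(P,\sigma\theta)$ (your detour through $\gamma=-\sigma\alpha$ is harmless but unnecessary, since in your Case 2 one already has $\alpha\notin\Sigma(P,\sigma)$, hence $\alpha\in\Sigma(P,\sigma\theta)$ and $\alpha(X)>0$ directly). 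The only difference is at the final step, where the paper identifies the two factors as $N_{P,X}\cap\bar N_Q$ and $N_{P,X}\cap N_Q$ and invokes the standard decomposition of such a unipotent group with respect to $N_Q$ and $\bar N_Q$, whereas you spell this standard fact out via the $\ad(X)$-grading and the Baker--Campbell--Hausdorff series.
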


\begin{proof}
Since $\gS(P, \gs) \subseteq \gS(Q,\gs),$
it follows that $\gS(P) \cap \gS(\bar Q)
\subseteq \gS(P,\gs \Cartan)$ and we infer that the first inclusion follows.

Let $\ga \in \gS(Q)$ be such that $\ga(X) > 0.$ Assume $-\ga \in \gS(P).$
Then $-\ga$ is negative on $X$ hence cannot belong to $\gS(P, \gs \Cartan)$ and
must belong to $\gS(P,\gs).$ The latter set is
contained in $\gS(Q,\gs)$ hence in $\gS(Q),$ contradiction. We conclude that
$\ga \in \gS(P).$ This establishes the second  inclusion.

From the two established inclusions it follows that $N_P \cap \bar N_Q = N_{P,X} \cap \bar N_Q,$
and  $N_{Q,X} = N_{P,X} \cap N_Q$ and we see that the above
map is a diffeomorphism indeed.
\end{proof}

\begin{Cor}
\label{c: dominant radon integral}
Let $P,Q\in\cP(A)$ satisfy $Q \preceq P$ and assume that $\gf \in C(G/H)$ is integrable over $N_P/H_{N_P}.$ Then for almost
all $n \in N_{P} \cap \bar N_Q$ the function $L_{n^{-1}}\gf$ is integrable
over $N_Q/H_{N_Q}$ and
$$
\int_{N_P/H_{N_P}} \gf(x )\; d x  =
\int_{ N_P\cap \bar N_Q } \int_{N_Q/H_{N_Q}}  \gf(n y)\; dn\, d y
$$
with absolutely convergent outer integral.
\end{Cor}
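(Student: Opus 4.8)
The plan is to pull both sides of the asserted identity back to the nilpotent groups $N_{P,X}$ and $N_{Q,X}$ of Lemma~\ref{Lemma N_(Q,X) x (N_Q cap H) to N_Q diffeo}, where the product decomposition of Lemma~\ref{l: deco NPX and NQX} reduces the claim to an application of Fubini's theorem, and then to translate back via Corollary~\ref{Cor int_N/(N cap H)=int_N_Q,X}.

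The first step is to fix an element $X \in \faq$ satisfying the conditions (\ref{eq condition on X}) \emph{for $P$}; such an $X$ exists by Lemma~\ref{Lemma N_(Q,X) x (N_Q cap H) to N_Q diffeo}. The point is that the same $X$ then also satisfies (\ref{eq condition on X}) for $Q$: the requirement $\ga(X) \neq 0$ for $\ga \in \gS \setminus \fahd$ does not involve the parabolic, and since $Q \preceq P$ we have $\gS(Q,\gs\Cartan) \subseteq \gS(P,\gs\Cartan)$, so $\ga(X) > 0$ on $\gS(Q,\gs\Cartan)$ as well. Hence Corollary~\ref{Cor int_N/(N cap H)=int_N_Q,X} is available for both $P$ and $Q$, and Lemma~\ref{l: deco NPX and NQX} applies, all for this single choice of $X$.

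Next I would apply Corollary~\ref{Cor int_N/(N cap H)=int_N_Q,X} to $P$ and to the function $x H_{N_P} \mapsto \gf(xH)$ on $N_P/H_{N_P}$ (well defined since $H_{N_P} \subseteq H$, and integrable by hypothesis), obtaining
\[
\int_{N_P/H_{N_P}} \gf(x)\,dx \;=\; \int_{N_{P,X}} \gf(nH)\,dn ,
\]
so that $n \mapsto \gf(nH)$ lies in $L^1(N_{P,X})$; continuity of $\gf$ makes this and all integrands below continuous, hence measurable. By Lemma~\ref{l: deco NPX and NQX} the multiplication map $(N_P \cap \bar N_Q) \times N_{Q,X} \to N_{P,X}$ is a diffeomorphism, and I would check it is measure preserving for the normalizations fixed in Section~\ref{section Notation}: the subspace $\fn_P \cap \bar\fn_Q$ is a sum of root spaces with roots in $\gS(\bar Q)$ and $\fn_{Q,X}$ a sum of root spaces with roots in $\gS(Q)$, so the two are $(-B(\dotvar,\Cartan\dotvar))$-orthogonal (distinct $\fa$-root spaces pair trivially under $-B(\dotvar,\Cartan\dotvar)$), whence the Lebesgue measure on $\fn_{P,X}$ is the product of those on the two summands; and in exponential coordinates the map $(Y,Z) \mapsto \log(\exp Y\exp Z)$ has Jacobian $1$, as multiplication in a simply connected nilpotent group always does (a filtration argument on the Campbell--Hausdorff series). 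Then Fubini's theorem, applied to $n \mapsto \gf(nH) \in L^1(N_{P,X})$ with $N_{P,X} \cong (N_P \cap \bar N_Q) \times N_{Q,X}$ carrying the product measure, yields that for almost every $n \in N_P \cap \bar N_Q$ the function $n' \mapsto \gf(nn'H)$ lies in $L^1(N_{Q,X})$ and
\[
\int_{N_{P,X}} \gf(nH)\,dn \;=\; \int_{N_P \cap \bar N_Q} \int_{N_{Q,X}} \gf(nn'H)\,dn'\,dn
\]
with absolutely convergent outer integral. For each such $n$, a second application of Corollary~\ref{Cor int_N/(N cap H)=int_N_Q,X}, now to $Q$ and to $y H_{N_Q} \mapsto \gf(nyH) = (L_{n^{-1}}\gf)(yH)$, shows that $L_{n^{-1}}\gf$ is integrable over $N_Q/H_{N_Q}$ and that the inner integral equals $\int_{N_Q/H_{N_Q}} \gf(ny)\,dy$. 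Concatenating the three displayed identities gives the corollary.

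The only point that requires genuine care is the measure bookkeeping: verifying that the diffeomorphism of Lemma~\ref{l: deco NPX and NQX} carries the product of the normalized Haar measures to the normalized Haar measure on $N_{P,X}$, and that the $L^1$-conditions transport correctly under the identifications $N_{P,X} \cong N_P/H_{N_P}$ and $N_{Q,X} \cong N_Q/H_{N_Q}$ supplied by Lemma~\ref{Lemma N_(Q,X) x (N_Q cap H) to N_Q diffeo} and Corollary~\ref{Cor int_N/(N cap H)=int_N_Q,X}, so that Fubini is legitimately invoked on $N_{P,X}$. Everything else is formal.
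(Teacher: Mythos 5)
Your proposal is correct and follows essentially the same route as the paper: the paper's proof is precisely "take $X$ as in Lemma \ref{Lemma N_(Q,X) x (N_Q cap H) to N_Q diffeo}, then combine Corollary \ref{Cor int_N/(N cap H)=int_N_Q,X}, Lemma \ref{l: deco NPX and NQX} and Fubini, using the normalization of measures fixed at the end of Section \ref{section Notation}." Your added verifications — that the same $X$ works for $Q$ because $\gS(Q,\gs\Cartan)\subseteq\gS(P,\gs\Cartan)$ when $Q\preceq P$, and that the diffeomorphism of Lemma \ref{l: deco NPX and NQX} is measure preserving (orthogonality of distinct root spaces plus unit Jacobian of multiplication in exponential coordinates) — are exactly the details the paper leaves implicit.
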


\begin{proof}
Let $X \in \faq$ be as in Lemma \ref{Lemma N_(Q,X) x (N_Q cap H) to N_Q diffeo}.
Then the result follows from Corollary \ref{Cor int_N/(N cap H)=int_N_Q,X} and Lemma
\ref{l: deco NPX and NQX} combined with Fubini's theorem, in view of the normalization
of measures on the nilpotent groups involved, see the end of Section \ref{section Notation}.
\end{proof}

\subsection{Invariance of integrals}
As in the previous section, we assume that $Q \in \cP(A).$
Recall that $L=MA$. We define the character $\Deltach_{Q}$ on $L$ by
\begin{equation}
\label{e: defi Delta Q}
\Deltach_{Q}(l)
=\left|\frac{\det\Ad(l)\big|_{\fn_{Q}}}{\det\Ad(l)\big|_{\fn_{Q}\cap\Cen_{\fg}(\faq)}}\right|^{\frac12}
\qquad(l\in L)
\end{equation}
Since $M$ is compact, it follows from (\ref{e: defi rho 2x}) that
\begin{equation}
\label{e: Delta Q and rho}
\Deltach_{Q}(ma)
=a^{\rho_{Q}-\rho_{Q,\fh}}
\qquad(m\in M, a\in A).
\end{equation}

\begin{Lemma}\label{Lemma sqrt(Delta) int_N/(N cap H) is L cap H invariant}
Let $\phi$ be a measurable function on $G/H$ such that
$$
\int_{N_{Q}/H_{N_{Q}}}|\phi(n)|\,dn<\infty.
$$
Then for every $l\in H_{L}$ the function $n\mapsto \phi(ln)$ is absolutely integrable on
$N_Q/H_{N_Q}$ and
$$
\Deltach_{Q}(l)\int_{N_{Q}/H_{N_{Q}}}\phi(ln)\,dn
=\int_{N_{Q}/H_{N_{Q}}}\phi(n)\,dn.
$$
\end{Lemma}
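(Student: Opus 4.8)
The plan is to transport the integral over $N_Q/H_{N_Q}$ to the nilpotent group $N_{Q,X}$ of Lemma~\ref{Lemma N_(Q,X) x (N_Q cap H) to N_Q diffeo}, on which $l$ acts by a conjugation whose Jacobian can be read off from the root space decomposition, and then to identify that Jacobian with $\Deltach_Q(l)$ by a root-by-root comparison.

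First I would fix $X\in\faq$ satisfying the conditions of Lemma~\ref{Lemma N_(Q,X) x (N_Q cap H) to N_Q diffeo} (with $Q$ in place of $P$). Since $l\in L=MA$ centralizes $A$, the operator $\Ad(l)$ preserves every root space $\fg_\ga$, hence preserves $\fn_{Q,X}$, so conjugation $c_l\colon n\mapsto lnl^{-1}$ restricts to an automorphism of $N_{Q,X}$ corresponding under $\exp$ to $\Ad(l)|_{\fn_{Q,X}}$; as the Haar measure of $N_{Q,X}$ is the pull-back of Lebesgue measure, the linear change of variables gives $\int_{N_{Q,X}}f(c_l(n))\,dn=|\det\Ad(l)|_{\fn_{Q,X}}|^{-1}\int_{N_{Q,X}}f(n)\,dn$ for non-negative measurable $f$. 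On the other hand, since $l\in H$ we have $lnH=(lnl^{-1})\,lH=(lnl^{-1})H$ in $G/H$ for $n\in N_Q$, so $\phi(ln)=\phi(c_l(n))$; in particular $n\mapsto\phi(ln)$ is right $H_{N_Q}$-invariant on $N_Q$. Corollary~\ref{Cor int_N/(N cap H)=int_N_Q,X} (which, by monotone convergence, also holds for non-negative measurable integrands), applied to $|\phi|$ and to $n\mapsto|\phi(ln)|$ together with the change of variables above, then yields
$$
\int_{N_Q/H_{N_Q}}|\phi(ln)|\,dn=|\det\Ad(l)|_{\fn_{Q,X}}|^{-1}\int_{N_Q/H_{N_Q}}|\phi(n)|\,dn<\infty,
$$
which is the claimed absolute integrability; running the same identities without absolute values gives
$$
\int_{N_Q/H_{N_Q}}\phi(ln)\,dn=|\det\Ad(l)|_{\fn_{Q,X}}|^{-1}\int_{N_Q/H_{N_Q}}\phi(n)\,dn.
$$

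It remains to show $|\det\Ad(l)|_{\fn_{Q,X}}|=\Deltach_Q(l)$ for $l\in H_L$. By the choice of $X$, $\{\ga\in\gS(Q):\ga(X)=0\}=\gS(Q)\cap\fahd$, and $\fn_Q\cap\Cen_\fg(\faq)=\bigoplus_{\ga\in\gS(Q)\cap\fahd}\fg_\ga$; hence $\fn_Q=\fn_{Q,X}\oplus\fn_{Q,-X}\oplus(\fn_Q\cap\Cen_\fg(\faq))$ with $\fn_{Q,-X}:=\bigoplus_{\ga\in\gS(Q),\,\ga(X)<0}\fg_\ga$, and so $|\det\Ad(l)|_{\fn_Q}|=|\det\Ad(l)|_{\fn_{Q,X}}|\cdot|\det\Ad(l)|_{\fn_{Q,-X}}|\cdot|\det\Ad(l)|_{\fn_Q\cap\Cen_\fg(\faq)}|$. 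In view of~(\ref{e: defi Delta Q}) it therefore suffices to prove $|\det\Ad(l)|_{\fn_{Q,X}}|=|\det\Ad(l)|_{\fn_{Q,-X}}|$ for $l\in H_L$: this gives $\Deltach_Q(l)^2=|\det\Ad(l)|_{\fn_{Q,X}}|^2$, and taking positive square roots, together with the last display, proves the lemma. To see the equality of these two Jacobians, write $l=ma$ with $m\in M\cap H$ and $\log a\in\fah$ (possible because $M$ and $A$ are $\gs$-stable, $M\cap A=\{e\}$, and $A^\gs=\exp\fah$, so a $\gs$-fixed element of $MA$ has both components $\gs$-fixed). Then $|\det\Ad(l)|_{\fn_{Q,X}}|=\exp\bigl(\sum_{\ga\in\gS(Q),\,\ga(X)>0}m_\ga\,\ga(\log a)\bigr)$, and likewise for $\fn_{Q,-X}$ with $\ga(X)<0$, so it is enough that $\sum_{\ga\in\gS(Q),\,\ga(X)>0}m_\ga\,\ga$ and $\sum_{\ga\in\gS(Q),\,\ga(X)<0}m_\ga\,\ga$ restrict to the same functional on $\fah$. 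Using the partition $\gS(Q)=\gS(Q,\gs\Cartan)\sqcup\gS(Q,\gs)$: by the choice of $X$ every root in $\gS(Q,\gs\Cartan)$ is positive on $X$, and $\sum_{\ga\in\gS(Q,\gs\Cartan)}m_\ga\,\ga$ restricts to $0$ on $\fah$ because $\gS(Q,\gs\Cartan)$ is stable under $\ga\mapsto\gs\Cartan\ga$, which preserves $m_\ga$ and acts as $-1$ on $\fah$ (so $(\gs\Cartan\ga)|_{\fah}=-\ga|_{\fah}$); meanwhile $\gS(Q,\gs)$ is stable under $\ga\mapsto\gs\ga$, which preserves $m_\ga$, acts as $+1$ on $\fah$ (so $(\gs\ga)|_{\fah}=\ga|_{\fah}$), and, since $\gs X=-X$, maps $\{\ga\in\gS(Q,\gs):\ga(X)>0\}$ bijectively onto $\{\ga\in\gS(Q,\gs):\ga(X)<0\}$. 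As also $\{\ga\in\gS(Q):\ga(X)<0\}\subseteq\gS(Q,\gs)$, both restricted sums equal $\sum_{\ga\in\gS(Q,\gs),\,\ga(X)>0}m_\ga\,\ga|_{\fah}$, as required.

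The step I expect to be the main obstacle is this last identification: a priori $|\det\Ad(l)|_{\fn_{Q,X}}|$ only involves the ``positive half'' $\fn_{Q,X}$ of $\fn_Q$, whereas $\Deltach_Q(l)$ is a square root of a ratio of determinants over $\fn_Q$ and $\fn_Q\cap\Cen_\fg(\faq)$; the balancing succeeds precisely because $l$ is confined to $H_L$, so that $\log a\in\fah$ and the involutions $\gs$ and $\gs\Cartan$ can be used to match the root contributions on $\fah$.
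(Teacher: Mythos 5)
Your proof is correct and follows essentially the same route as the paper: transfer the integral to $N_{Q,X}$ via Lemma \ref{Lemma N_(Q,X) x (N_Q cap H) to N_Q diffeo}, use $l\in H$ to replace left translation by conjugation with Jacobian $|\det\Ad(l)|_{\fn_{Q,X}}|$, and then identify this Jacobian with $\Deltach_Q(l)$ by a root computation on $\fah$ using the involutions $\gs$ and $\gs\Cartan$. Your symmetric formulation (showing the contributions of $\fn_{Q,X}$ and $\fn_{Q,-X}$ agree on $H_L$) is just a repackaging of the paper's Lemma \ref{l: rho identity}, so there is nothing further to add.
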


\begin{proof}
Assume that $X\in\faq$ satisfies (\ref{eq condition on X}) and let $l\in H_{L}.$
By applying  Lemma \ref{Lemma N_(Q,X) x (N_Q cap H) to N_Q diffeo},
performing a substitution of variables and applying the same lemma once more, we obtain
the following identities of absolutely convergent integrals
\begin{eqnarray*}
\int_{N_{Q}/H_{N_{Q}}}\phi(ln)\,dn &=&
 \int_{N_{Q,X}}\phi(ln')\,dn' \\
 &=& \int_{N_{Q,X}}\phi(ln'l^{-1})\,dn'
 =  D( l)^{-1} \, \int_{N_{Q,X}}\phi(n')\,dn'\\
 &=&  D(l)^{-1}\; \int_{N_{Q}/H_{N_{Q}}}\phi(n)\,dn,
\end{eqnarray*}
\medbreak
{\ }{\ }{\ }{\ }{\ }{\ }where
$
D(l) = \left|\det\Ad(l)\big|_{\fn_{Q,X}}\right|.
$
\medbreak\noindent
Thus, it suffices to show that $D(l)$ equals $\gd_Q(l)$ as defined in (\ref{e: defi Delta Q}). Since
$H_L = (M\cap H) (A \cap H)$ and $M$ is compact, we see that $D = \gd_Q =1$  on $M\cap H$
and it suffices to prove the identity for $l =a \in A \cap H.$ Equivalently, in view of
(\ref{e: Delta Q and rho}) it suffices to
prove the identity of Lemma \ref{l: rho identity} below.
\end{proof}

\begin{Lemma}
\label{l: rho identity}
Let $X \in \faq$ be as in {\rm (\ref{eq condition on X})}. Then
\begin{equation}
\label{e: rho identity}
(\rho_{Q}-\rho_{Q,\fh})\big|_{\fah}
= \sum_{\substack{\alpha\in\Sigma(Q)\\ \ga(X) > 0}} m_{\alpha}\alpha\big|_{\fah}.
\end{equation}
\end{Lemma}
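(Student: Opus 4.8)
The plan is to reduce the asserted identity of linear functionals on $\fah$ to a root-by-root cancellation governed by the two commuting involutions $\gs$ and $\gs\Cartan$ of $\fg$.

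First I would translate both sides into root-combinatorial language. Identifying $\fahd$ with the functionals on $\fa$ vanishing on $\faq$, a root $\ga\in\gS(Q)$ lies in $\gS(Q)\cap\fahd$ exactly when $\ga|_{\faq}=0$, i.e.\ when $\fg_\ga\subseteq\Cen_\fg(\faq)$; hence by (\ref{e: defi rho 2x})
$$
\rho_Q-\rho_{Q,\fh}=\frac12\sum_{\ga\in S}m_\ga\,\ga,\qquad S:=\{\ga\in\gS(Q):\ga|_{\faq}\neq 0\}.
$$
Since $X\in\faq$ satisfies the first line of (\ref{eq condition on X}), every $\ga\in S$ has $\ga(X)\neq 0$, so $S=S_+\sqcup S_-$ with $S_\pm:=\{\ga\in S:\pm\ga(X)>0\}$; and since $\ga|_{\faq}=0$ forces $\ga(X)=0$, we also have $S_+=\{\ga\in\gS(Q):\ga(X)>0\}$. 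Thus (\ref{e: rho identity}) becomes the claim
$$
\sum_{\ga\in S_-}m_\ga\,\ga|_{\fah}=\sum_{\ga\in S_+}m_\ga\,\ga|_{\fah}.
$$

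Next I would split $S$ using the disjoint decomposition $\gS(Q)=\gS(Q,\gs\Cartan)\sqcup\gS(Q,\gs)$. On the $\gs$-part I would use that $\gs$ acts on $\fad$ as $+1$ on $\fahd$ and as $-1$ on $\faqd$ (because $\gs$ is $+1$ on $\fah$ and $-1$ on $\faq$), that $\gs$ stabilises $\gS(Q,\gs)$, and that $\gs$ is an automorphism of $\fg$ carrying $\fg_\ga$ onto $\fg_{\gs\ga}$, so that $m_{\gs\ga}=m_\ga$. Hence $\ga\mapsto\gs\ga$ restricts to a fixed-point-free involution of $S\cap\gS(Q,\gs)$ (a fixed point would have $\ga|_{\faq}=0$) which preserves $m_\ga$ and $\ga|_{\fah}$ and, since $\gs\ga(X)=-\ga(X)$, interchanges the $\ga(X)>0$ and $\ga(X)<0$ roots there; so the $\gs$-parts of the two sums above agree. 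For the $\gs\Cartan$-part I would first observe that $\gS(Q,\gs\Cartan)\subseteq S$ (from $\gS(Q)\cap\fahd\subseteq\gS(Q,\gs)$ and disjointness) and then, from the second line of (\ref{eq condition on X}), that in fact $\gS(Q,\gs\Cartan)\subseteq S_+$; so this part contributes nothing to the left sum, and it remains to see that $\sum_{\ga\in\gS(Q,\gs\Cartan)}m_\ga\,\ga|_{\fah}=0$. This follows from the involution $\ga\mapsto\gs\Cartan\ga$ of $\gS(Q,\gs\Cartan)$: on $\fad$ the map $\gs\Cartan$ is $+1$ on $\faqd$ and $-1$ on $\fahd$, so its fixed points on $\gS(Q,\gs\Cartan)$ lie in $\faqd$ and have $\ga|_{\fah}=0$, while each genuine two-cycle $\{\ga,\gs\Cartan\ga\}$ contributes $m_\ga\,\ga|_{\fah}+m_\ga\,(\gs\Cartan\ga)|_{\fah}=0$, using $m_{\gs\Cartan\ga}=m_\ga$ and $(\gs\Cartan\ga)|_{\fah}=-\ga|_{\fah}$. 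Adding the two parts yields (\ref{e: rho identity}).

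I expect the only real care needed is the sign bookkeeping: one must pin down exactly how $\gs$ and $\gs\Cartan$ act on $\fad$ relative to $\fad=\faqd\oplus\fahd$ (using $\Cartan|_{\fa}=-\mathrm{id}$ together with $\gs|_{\faq}=-\mathrm{id}$ and $\gs|_{\fah}=\mathrm{id}$), check that $\gS(Q,\gs)$ and $\gS(Q,\gs\Cartan)$ are respectively $\gs$- and $\gs\Cartan$-stable, note that the multiplicities $m_\ga$ are unchanged under these automorphisms, and verify that the two hypotheses on $X$ in (\ref{eq condition on X}) are precisely what places $\gS(Q,\gs\Cartan)$ inside $S_+$ and permits the splitting $S=S_+\sqcup S_-$. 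Once these are in place the argument is routine.
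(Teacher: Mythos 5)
Your proof is correct and follows essentially the same route as the paper's: restrict everything to $\fah$, split $\gS(Q)$ into its $\gs$- and $\gs\Cartan$-parts, pair the $X$-positive and $X$-negative roots of $\gS(Q,\gs)$ via the involution $\ga\mapsto\gs\ga$, and use the hypothesis $\ga(X)>0$ on $\gS(Q,\gs\Cartan)$ together with the vanishing of $\sum_{\ga\in\gS(Q,\gs\Cartan)}m_\ga\,\ga|_{\fah}$. The only difference is cosmetic: you phrase the identity as an equality of the $X$-positive and $X$-negative halves rather than tracking the factor $\tfrac12$, and you spell out the $\gs\Cartan$-pairing argument for the vanishing that the paper merely asserts.
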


\begin{proof}
We write $\gS(Q,X)$ for the set of roots $\ga \in \gS(Q)$ with $\ga(X) > 0.$ For the purpose of the proof, it will be convenient to use the notation
$$
S(\Phi):= \sum_{\ga \in A} m_\ga \ga\big|_{\fah},
$$
for $\Phi \subseteq   \gS.$ Then the expression on the left-hand side of
(\ref{e: rho identity})  equals $\frac 12 \, S(\gS(Q)\!\setminus\! \fahd),$
whereas the expression on  the right-hand side equals $S(\gS(Q,X)).$
We observe that $\gS(Q)$ is the disjoint union of the sets
$\gS(Q,\gs)$ and $\gS(Q,\gs\Cartan).$ Furthermore,
 $S(\gS(Q,\gs \Cartan)) = 0.$ Hence,
 $$
( \rho_Q - \rho_{Q, \fh})\big |_{\fah} =  \frac12 \, S(\gS(Q,\gs)  \setminus\fahd).
 $$
 Next, we observe that $\gS(Q, \gs) \!\setminus \!\fahd$ is the disjoint union of $\gS(Q, X) \cap \gS(Q, \gs)$ and $\gs(\gS(Q, X) \cap \gS(Q, \gs))$ so that
$$
 \frac12 \, S(\gS(Q, \gs)\setminus \fahd ) = S(\gS(Q, \gs) \cap \gS(Q, X)).
$$
Finally, using that $\gS(Q,X) \supseteq \gS(Q, \gs\Cartan)$ we find
\begin{eqnarray*}
 S(\gS(Q, \gs) \cap \gS(Q, X)) &=&
 S(\gS(Q, \gs) \cap \gS(Q, X)) + S(\gS(Q, \gs\Cartan)) \\
 & = &
 S(\gS(Q, X))
 \end{eqnarray*}
 and the lemma follows.
 \end{proof}

\subsection{Convergence of integrals}
As before, we assume that $Q \in \cP(A).$
If $P \in \cP(A)$ is a $\fq$-extreme parabolic subgroup, then $PH$ is an open subset of
$G,$ see Remark \ref{r: PH open}. Its natural image in $G/H$ will be denoted by $P\cdot H.$

\begin{Lemma}
\label{l: int over P dot H}
Let $P\in\cP_{\sigma}(A,Q)$.
Then the $G$-invariant measure on $G/H$ and the $L$-invariant measure on $L/H_{L}$
can be normalized so that for every $\phi\in L^1(G/H)$
\begin{equation}\label{e: int over P dot H}
\int_{P\cdot H}\phi(x)\,dx
=\int_{N_{P}\cap \bar N_Q }\int_{L/H_{L}}\frac{\Deltach_{Q}(l)}{\Deltach_{P}(l)}\int_{N_{Q}/H_{N_{Q}}}
    \phi(\overline{n}ln)\,dn\,dl\,d\overline{n}
\end{equation}
with absolutely convergent integrals.
\end{Lemma}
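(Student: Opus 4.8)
The plan is to reduce the asserted integration formula to the standard integration formula over an open Bruhat-type cell, combined with the fibration results already established for the nilpotent groups. First I would fix $X \in \faq$ satisfying the conditions of Lemma \ref{Lemma N_(Q,X) x (N_Q cap H) to N_Q diffeo}; since $Q \preceq P$, the same $X$ may be used simultaneously for $P$ and $Q$, which is exactly what makes Lemma \ref{l: deco NPX and NQX} and Corollary \ref{c: dominant radon integral} available. The key geometric input is that $P \in \cP_\gs(A,Q)$ is in particular $\fq$-extreme, so by Remark \ref{r: PH open} the set $PH$ is open in $G$, and $P\cdot H$ is open in $G/H$ with complement of measure zero. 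Thus $\int_{G/H}\phi = \int_{P\cdot H}\phi$ for $\phi \in L^1(G/H)$, and the whole statement becomes an assertion about the open cell.

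The main step is to establish the decomposition formula on $P\cdot H$. Writing the Langlands decomposition $P = MAN_P = LN_P$ and recalling $\bar N_P = \Cartan N_P$, one has the standard open-dense parametrization of $P\cdot H$ by $\bar N_P \times L \times N_P$ modulo $H$; more precisely, using that $H \cap P = H_L (H \cap N_P)$ one gets the chart $(\bar n, l, n H_{N_P}) \mapsto \bar n\, l\, n \cdot H$, and the $G$-invariant measure disintegrates — after suitable normalization of the measures on $L/H_L$ and $N_P/H_{N_P}$ — with a Jacobian that, by the usual modular-character computation, contributes the factor $\gd_P(l)^{-1}$ relative to Haar measure on $\bar N_P$ and the invariant measure on $L/H_L$ and on $N_P/H_{N_P}$. (This is the symmetric-space analogue of the classical formula $\int_G f = \int_{\bar N}\int_L \int_N f(\bar n l n)\,\gd_P(l)^{-1}\,dn\,dl\,d\bar n$; the presence of $H$ only replaces $L$ and $N_P$ by their quotients by $H_L$ and $H_{N_P}$, which is harmless because all groups in sight are unimodular.) This yields
$$
\int_{P\cdot H}\phi(x)\,dx = \int_{\bar N_P}\int_{L/H_L}\frac{1}{\gd_P(l)}\int_{N_P/H_{N_P}}\phi(\bar n\, l\, n)\,dn\,dl\,d\bar n,
$$
with absolutely convergent integrals for $\phi \in L^1(G/H)$.

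It then remains to refine the outer two integrals. Here is where $Q \preceq P$ enters a second time: by Corollary \ref{c: dominant radon integral}, for (almost every) fixed $l$ the inner integral over $N_P/H_{N_P}$ unfolds as $\int_{N_P \cap \bar N_Q}\int_{N_Q/H_{N_Q}}\phi(\bar n\, l\, n'\, n)\,dn'\,dn$. One then wants to move the $N_P \cap \bar N_Q$ integration past $l$. Since $l \in L$ normalizes $N_P \cap \bar N_Q$, conjugating $n'$ by $l$ produces precisely the Jacobian $|\det \Ad(l)|_{\fn_P \cap \bar\fn_Q}|^{-1}$; combining this factor with the factor $\gd_P(l)^{-1}$ already present, and using $\fn_P = (\fn_P \cap \bar\fn_Q) \oplus \fn_Q$ together with the definition \eqref{e: defi Delta Q} of $\gd_Q$ and the corresponding identity for $\gd_P$, one checks that the product of all modular factors collapses to $\gd_Q(l)/\gd_P(l)$. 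After this bookkeeping, a final application of Fubini (justified by the absolute convergence inherited from the $L^1$ hypothesis and the diffeomorphism in Lemma \ref{l: deco NPX and NQX}) rearranges the integrals into the order $\int_{N_P \cap \bar N_Q}\int_{L/H_L}\int_{N_Q/H_{N_Q}}$, giving \eqref{e: int over P dot H}.

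I expect the main obstacle to be purely organizational rather than conceptual: carefully pinning down the open-cell measure decomposition on $P\cdot H$ with the correct modular factor $\gd_P(l)^{-1}$ (keeping track of the role of $H_L$, $H_{N_P}$, and the chosen normalizations of all invariant measures), and then verifying that the modular characters assemble exactly into $\gd_Q/\gd_P$ after the conjugation of $N_P \cap \bar N_Q$ by $l$. None of these are deep, but each requires a clean normalization convention to avoid sign or reciprocal errors; the convergence claims throughout are immediate from the $L^1$ hypothesis once the diffeomorphisms of Lemmas \ref{Lemma N_(Q,X) x (N_Q cap H) to N_Q diffeo} and \ref{l: deco NPX and NQX} and Corollary \ref{c: dominant radon integral} are in hand.
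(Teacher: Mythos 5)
The central step of your plan is not correct. The proposed disintegration
$\int_{P\cdot H}\phi(x)\,dx=\int_{\bar N_P}\int_{L/H_L}\Deltach_P(l)^{-1}\int_{N_P/H_{N_P}}\phi(\bar n\, l\, n)\,dn\,dl\,d\bar n$
cannot hold: the left-hand side is an integral over the open $P$-orbit $P\cdot H\simeq P/(P\cap H)$, which has dimension $\dim G/H$, whereas your parameter space $\bar N_P\times L/H_L\times N_P/H_{N_P}$ has dimension at least $\dim G/H+\dim\fn_P$, since already $\dim L/H_L+\dim N_P/H_{N_P}\geq\dim P/(P\cap H)=\dim G/H$. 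The classical formula $\int_G f=\int_{\bar N_P}\int_L\int_{N_P}f(\bar n l n)\,\Deltach_P(l)^{-1}\,dn\,dl\,d\bar n$ is an integration formula for the \emph{group} $G$; in passing to $G/H$ the $H$-directions transverse to $P$ (of dimension $\dim\fn_P$, by openness of $PH$) must absorb the entire $\bar N_P$-factor, so it is not ``harmless'' to merely replace $L$ and $N_P$ by their quotients: your map $(\bar n,l,n)\mapsto \bar n l n\cdot H$ has noncompact fibers of dimension $\dim\fn_P$, and hence does not give a disintegration of the invariant measure on $P\cdot H$ at all. (In the group case $G=\bp G\times\bp G$, $H=\diag(\bp G)$, $P=\bp P\times\bp{\bar P}$, your right-hand side integrates over a space of dimension $\dim\bp G+2\dim N_{\bp P}$.) Moreover, even formally, carrying out your second stage would leave \emph{both} an $\bar N_P$- and an $N_P\cap\bar N_Q$-integration, which cannot match (\ref{e: int over P dot H}). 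The preliminary claim that $P\cdot H$ has complement of measure zero in $G/H$ is also false in general (there are typically several open orbits), though that claim is not needed for the lemma.

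A correct version of your starting point would be the $Q=P$ instance of the lemma, $\int_{P\cdot H}\phi=\int_{L/H_L}\int_{N_P/H_{N_P}}\phi(ln)\,dn\,dl$, with no $\bar N_P$-integration and no modular factor; but this identity is exactly the nontrivial measure-theoretic content that your plan leaves unsupported, and it is not a consequence of the Bruhat-type formula for the group. The paper obtains it, in the equivalent form $\int_{P_0\cdot H}\phi=c\int_{M_{P_0}\cap K}\int_{A_\fq}\int_{N_{P_0}}\phi(man)\,dn\,da\,dm$, from \cite[Thm.~1.2]{Olafsson_FourierAndPoissonTransformationAssociatedToASemisimpleSymmetricSpace}, combined with $M(M_{P_0}\cap H)=M_{P_0}$ and $P\cdot H=P_0\cdot H$ (Remark \ref{r: PH open}); it then transforms the right-hand side of (\ref{e: int over P dot H}) into that expression using Corollary \ref{Cor int_N/(N cap H)=int_N_Q,X}, the diffeomorphism $(\bar N_Q\cap N_{P_0})\times(N_Q\cap N_{P_0})\to N_{P_0}$, and the Jacobian identity of Lemma \ref{l: Jacobian of int over Q P}. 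Your second stage (unfolding $N_P/H_{N_P}$ via Lemma \ref{l: deco NPX and NQX} and Corollary \ref{c: dominant radon integral}, and conjugating $N_P\cap\bar N_Q$ past $l$ so that the modular factors assemble to $\Deltach_Q/\Deltach_P$) is essentially sound and corresponds to that Jacobian computation, but without a valid substitute for the open-orbit integration formula the argument has a genuine gap at its core.
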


Note that by Lemma
\ref{Lemma sqrt(Delta) int_N/(N cap H) is L cap H invariant}
the function
$$L\ni l\mapsto \Deltach_{Q}(l)\int_{N_{Q}/H_{N_{Q}}} \phi(ln)\,dn
$$
is right $H_{L}$-invariant if the integral is absolutely convergent for every $l \in L.$ 
Since $\Deltach_{P}$ is a right $H_{L}$-invariant function as well,
the right-hand side of (\ref{e: int over P dot H})
is well-defined.

\begin{proof}[Proof of Lemma \ref{l: int over P dot H}]
It suffices to prove the lemma for non-negative integrable functions only.
Let $\phi\in L^1(G/H)$ be non-negative. Since $P\cdot H$ is an open subset of $G/H$, the integral over $P\cdot H$ is absolutely convergent. The repeated integral on the right-hand side
of (\ref{e: int over P dot H})
is well defined (although possibly infinitely large).
To prove the lemma, we start by rewriting the right-hand side and then show that it equals the left-hand side.

Note that $L/H_{L}$ is diffeomorphic to $M/H_{M}\times A_{\fq}$ and the $L$-invariant measure on $L/H_{L}$ equals the product of the $M$-invariant measure on $M/H_{M}$ and the Haar measure on $A_{\fq}$.
Furthermore,  from (\ref{e: Delta Q and rho}) we infer that
$$
\frac{\Deltach_{Q}(ma)}{\Deltach_{P}(ma)}
=a^{\rho_{Q}-\rho_{P}}\qquad(m\in M,a\in A_{\fq}).
$$
Hence
\begin{align}
\nonumber&\int_{ N_{P}\cap \bar N_Q }\int_{L/H_{L}}\frac{\Deltach_{Q}(l)}{\Deltach_{P}(l)}\int_{N_{Q}/H_{N_{Q}}}
    \phi(\overline{n}ln)\,dn\,dl\,d\overline{n}\\
\label{eq int_(N_P0 cap theta N_Q) int_M int_A_q int_(N_Q/N_Q cap H)}
    &\qquad=\int_{N_{P}\cap \bar N_Q }\int_{M}\int_{A_{\fq}}
    a^{\rho_{Q}-\rho_{P}}\int_{N_{Q}/H_{N_{Q}}}\phi(\overline{n}man)
    \,dn\,da\,dm\,d\overline{n}.
\end{align}
Here we have used that $H_{M}$ is compact and has volume equal to
$1$ by our chosen  normalization of the Haar measure.

Let $P_{0}$ be the unique minimal $\sigma\theta$-stable parabolic subgroup such that $P\subseteq    P_{0}.$ Then the set of roots of $\fa$ in $\fn_{P_0}$ is given by $\gS(P_0) = \gS(P) \setminus \fahd $
and
$$
P_0 = Z(\faq) N_{P_0}.
$$
It follows that $\rho_P - \rho_{P_0}$ vanishes on $\faq.$

Let  $X\in\faq$ be such that $\alpha(X)>0$ for every $\alpha\in\Sigma(P_{0})$. Then $X$ satisfies (\ref{eq condition on X}) and it is readily seen that $N_{Q,X}=N_{Q}\cap N_{P_{0}}$. By Corollary \ref{Cor int_N/(N cap H)=int_N_Q,X} the integral over $N_{Q}/H_{N_{Q}}$ can be replaced by an integral over $N_{Q}\cap N_{P_{0}}$.
Therefore, (\ref{eq int_(N_P0 cap theta N_Q) int_M int_A_q int_(N_Q/N_Q cap H)}) can be rewritten as
\begin{equation}
\label{e: repeated integral for Q P}
\int_{\bar N_{Q}\cap N_{P}}\int_{M}\int_{A_{\fq}}
    a^{\rho_{Q}-\rho_{P_{0}}}\int_{N_{Q}\cap N_{P_{0}}}
    \phi(\overline{n}man)\,dn\,da\,dm\,d\overline{n}.
\end{equation}
Note that $(\bar N_{Q}\cap N_{P})=(\bar N_{Q}\cap N_{P_{0}})$.
The multiplication map
$$
(\bar N_{Q}\cap N_{P_{0}})\times(N_{Q}\cap N_{P_{0}})\to N_{P_{0}}
$$
is a diffeomorphism with Jacobian equal to $1$. We now change the order of integration
in (\ref{e: repeated integral for Q P}) and subsequently apply the change of variables
$n\mapsto (ma)n(ma)^{-1}$ to the integral over $\bar N_{Q}\cap N_{P_{0}}.$
This change of variables has Jacobian equal to $a^{\rho_{P_0} - \rho_Q}$
by the lemma below. Finally, we  rewrite the double integral over $\bar N_{Q}\cap N_{P_{0}}$ and $N_{Q}\cap N_{P_{0}}$ as an integral over $N_{P_{0}}$. We thus  infer that the integral in (\ref{e: repeated integral for Q P}) equals
\begin{equation}
\label{e: second rewrite integral P Q}
\int_{M}\int_{A_{\fq}}\int_{N_{P_{0}}}\phi(man)\,dn\,da\,dm.
\end{equation}
Note that $M_{P_{0}}\cap K\cap H$ centralizes $A_{\fq}$ and normalizes $N_{P_{0}}$. Moreover, $|\det\Ad(m)\big|_{\fn_{P_{0}}}|=1$ for all $m\in M_{P_{0}}\cap K\cap H$. Since the volume of $M_{P_{0}}\cap K\cap H$ equals $1$ (by our chosen normalization of Haar measure), it follows  that
the integral (\ref{e: second rewrite integral P Q}) equals
$$
\int_{M}\int_{M_{P_{0}}\cap K\cap H}\int_{A_{\fq}}\int_{N_{P_{0}}}\phi(mm'an)\,dn\,da\,dm'\,dm.
$$
It follows from \cite[Lemma 4.3]{vdBanKuit_EisensteinIntegrals} that $M(M_{P_{0}}\cap H)=M_{P_{0}}$. Therefore the integrals over $M$ and $M_{P_{0}}\cap K\cap H$ can be replaced by one integral over $M_{P_{0}}\cap K$. To conclude the proof, we note that
$$
\int_{M_{P_{0}}\cap K}\int_{A_{\fq}}\int_{N_{P_{0}}}\phi(mm'an)\,dn\,da\,dm
=c\int_{P_{0}\cdot H}\phi(x)\,dx
$$
for some constant $c>0$ by \cite[Thm.~1.2]{Olafsson_FourierAndPoissonTransformationAssociatedToASemisimpleSymmetricSpace},
and observe that $P_{0}\cdot H=P\cdot H$, see Remark \ref{r: PH open}.
\end{proof}

\begin{Lemma}
\label{l: Jacobian of int over Q P}
Let $P_{0}\in\cP_{\sigma}(A_{\fq})$ satisfy $\Sigma(Q,\sigma\theta)\subseteq \Sigma(P_{0})$. Then
\begin{equation}
\label{e: det Ad and rho Q P}
\left| \det \Ad(ma)\big|_{\bar \fn_Q \cap \fn_{P_0}}\right|  = a^{\rho_{P_0} - \rho_Q}, \qquad (m \in M, a \in \Aq).
\end{equation}
\end{Lemma}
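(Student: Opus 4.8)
The statement is purely about the adjoint action of $L=MA$ on the nilpotent subalgebra $\bar\fn_Q\cap\fn_{P_0}$, so the natural approach is to decompose this space into $\fa$-root spaces and compute the determinant root space by root space. First I would identify exactly which roots occur. The condition $\gS(Q,\gs\Cartan)\subseteq\gS(P_0)$ is precisely condition (b) of Lemma~\ref{l: Q P zero and P}; by Remark~\ref{r: PH open} and the surrounding discussion, $\gS(P_0)=\gS(\fg,\fn_{P_0})$ consists of the roots $\ga\in\gS$ with $\fg_\ga\subseteq\fn_{P_0}$, and $\gS(\bar Q)=-\gS(Q)$. Hence
$$
\bar\fn_Q\cap\fn_{P_0}=\bigoplus_{\ga\in(-\gS(Q))\cap\gS(P_0)}\fg_\ga .
$$
A root $\ga$ in this index set satisfies $-\ga\in\gS(Q)$ and $\ga\in\gS(P_0)$; since $\gS(P_0)$ contains no roots from $\fahd$ (as $P_0\in\cP_\gs(\Aq)$ and $\gS(P_0)=\gS(P)\setminus\fahd$), such $\ga$ restricts nontrivially to $\faq$, so $-\ga\notin\gS(Q)\cap\fahd$ and therefore $-\ga\in\gS(Q,\gs\Cartan)$. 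Conversely, if $\gg\in\gS(Q,\gs\Cartan)$ then by hypothesis $\gg\in\gS(P_0)$, so $\gs\Cartan\gg\in\gS(P_0)$ as well; but $\gs\Cartan\gg=-\gg$ on $\faq$ while $\gS(P_0)$ is the positive system attached to $\gS(P_0,\faq)$, forcing $-\gg\in\gS(Q)$ to lie... — more carefully, one checks $\gg\in\gS(Q,\gs\Cartan)$ means $\gg\in\gS(Q)$ and $\gs\Cartan\gg\in\gS(Q)$, hence $-\gg=\Cartan\gg\in\gS(\bar Q)$, and since $\gg\in\gS(P_0)$ we want $-\gg\notin\gS(P_0)$, which holds because $\gS(P_0)\cap(-\gS(P_0))=\emptyset$ — wait, that gives $-\gg\notin\gS(P_0)$, not $-\gg\in\gS(P_0)$. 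So the index set is in bijection with $\gS(Q,\gs\Cartan)$ via $\ga\mapsto-\ga$: indeed $\ga\in(-\gS(Q))\cap\gS(P_0)\iff -\ga\in\gS(Q)$ and $-\ga\notin -\gS(P_0)$, and combined with $\gS(\bar Q)\cap\gS(P_0)\subseteq\gS(Q,\gs\Cartan)$ (shown above) and the reverse inclusion $\gS(Q,\gs\Cartan)\subseteq\gS(P_0)$ (hypothesis) together with $\gS(Q,\gs\Cartan)=-\Cartan\gS(Q,\gs\Cartan)\subseteq-\gS(\bar Q)$... this bookkeeping is the one delicate point and I would write it out as: $\ga\in\gS(\bar Q)\cap\gS(P_0)\iff -\ga\in\gS(Q,\gs\Cartan)$.

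Second, once the root space decomposition is pinned down, the determinant computation is routine: $M$ is compact so $|\det\Ad(m)|_{\bar\fn_Q\cap\fn_{P_0}}|=1$ for all $m\in M$, and for $a=\exp H$ with $H\in\faq$,
$$
\left|\det\Ad(a)\big|_{\bar\fn_Q\cap\fn_{P_0}}\right|
=\prod_{\ga\in\gS(\bar Q)\cap\gS(P_0)}e^{\ga(H)m_\ga}
=a^{\sum_{\ga\in\gS(\bar Q)\cap\gS(P_0)}m_\ga\,\ga}
=a^{-\sum_{\gg\in\gS(Q,\gs\Cartan)}m_\gg\,\gg},
$$
using the bijection $\ga\mapsto -\ga$ and $m_{-\gg}=m_\gg$.

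Third, I must identify the exponent $-\sum_{\gg\in\gS(Q,\gs\Cartan)}m_\gg\,\gg$ restricted to $\faq$ with $\rho_{P_0}-\rho_Q$ restricted to $\faq$ (both characters are trivial on $\fah$ anyway, and $a\in\Aq$, so only the restriction to $\faq$ matters). Here I use the decompositions $\gS(Q)=\gS(Q,\gs\Cartan)\sqcup\gS(Q,\gs)$ (recalled in Section~\ref{section Notation}) and $\gS(P_0)=\gS(P)\setminus\fahd$ together with $\gS(Q,\gs\Cartan)\subseteq\gS(P_0)$. Then $\rho_Q=\tfrac12\sum_{\ga\in\gS(Q)}m_\ga\ga$; on $\faq$ the contribution of $\gS(Q,\gs)$ is $\tfrac12\sum_{\ga\in\gS(Q,\gs)}m_\ga\ga|_{\faq}$, and since $\gs$ acts as $-1$ on $\faq$... actually $\gS(Q,\gs)$ is $\gs$-stable, but $\gs$ fixes $\faq$ pointwise? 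No — $\gs$ is $+1$ on $\fh$ and $-1$ on $\fq$, and $\faq\subseteq\fq$, so $\gs=-1$ on $\faq$; thus $\sum_{\ga\in\gS(Q,\gs)}m_\ga\ga|_{\faq}=0$ since the set is $\gs$-stable with $m_{\gs\ga}=m_\ga$. Hence $\rho_Q|_{\faq}=\tfrac12\sum_{\ga\in\gS(Q,\gs\Cartan)}m_\ga\ga|_{\faq}$. But $\gS(Q,\gs\Cartan)$ is $\gs\Cartan$-stable with $\gs\Cartan=-\gs=+1$... $\gs\Cartan$ on $\faq$: $\Cartan=-1$ on $\faq$ (since $\faq\subseteq\fp$), so $\gs\Cartan=(-1)(-1)=+1$ on $\faq$; this gives no cancellation, consistent with $\gS(Q,\gs\Cartan)$ being all positive on the $X$ from Lemma~\ref{l: rho identity}. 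For $\rho_{P_0}$: $\gS(P_0)=\gS(Q,\gs\Cartan)\sqcup(\gS(P_0)\setminus\gS(Q,\gs\Cartan))$, and the complement $\gS(P_0)\setminus\gS(Q,\gs\Cartan)$ — I claim this is $\gs\Cartan$-stable, so its $\rho$-contribution restricted to $\faq$ need not vanish a priori; the cleaner route is to observe directly that $\rho_{P_0}-\rho_Q$, as a character of $\Aq$, equals $a\mapsto|\det\Ad(a)|_{\fn_{P_0}/(\fn_{P_0}\cap\fn_Q)}|$ divided appropriately, i.e. run the same root-space argument for $\rho_{P_0}-\rho_Q$ itself: $\rho_{P_0}-\rho_Q=\tfrac12(\sum_{\gS(P_0)}-\sum_{\gS(Q)})m_\ga\ga$, and on $\faq$ the symmetric pieces cancel, leaving exactly $-\sum_{\gS(\bar Q)\cap\gS(P_0)}m_\ga\ga|_{\faq}$ (matching what we need). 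I would phrase step three as: both sides of (\ref{e: det Ad and rho Q P}) are characters of $\Aq$ trivial extended to $L$, and comparing $\faq$-weights via the two root-space decompositions of $\fn_{P_0}$ and $\fn_Q$ shows they agree.

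\textbf{Main obstacle.} The only real subtlety is the combinatorial identification $\gS(\bar Q)\cap\gS(P_0)=-\gS(Q,\gs\Cartan)$ and the verification that the "mismatched" pieces of $\gS(Q)$ and $\gS(P_0)$ contribute nothing to the $\faq$-restriction; everything else is a one-line determinant computation plus $M$-compactness. In a clean writeup I would actually bypass this by noting that the displayed identity is exactly the content of the Jacobian used between (\ref{e: repeated integral for Q P}) and (\ref{e: second rewrite integral P Q}), so it suffices to record: $\bar\fn_Q\cap\fn_{P_0}$ carries the $\fa$-roots $\ga$ with $-\ga\in\gS(Q)$, $\ga\in\gS(P_0)$; such $\ga$ satisfy $\ga|_{\faq}\ne 0$ (as $\gS(P_0)$ avoids $\fahd$) and $-\ga\in\gS(Q,\gs\Cartan)$; conversely $\gS(Q,\gs\Cartan)\subseteq\gS(P_0)$ forces the reverse containment, so $a\mapsto|\det\Ad(a)|_{\bar\fn_Q\cap\fn_{P_0}}|=a^{-\sum_{\gg\in\gS(Q,\gs\Cartan)}m_\gg\gg}$, which restricted to $\Aq$ equals $a^{\rho_{P_0}-\rho_Q}$ by the (identically argued) vanishing on $\faq$ of the $\gs$-symmetric parts of $\gS(Q)$ and $\gS(P_0)$.
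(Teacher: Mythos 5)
Your overall strategy (decompose $\bar\fn_Q\cap\fn_{P_0}$ into $\fa$-root spaces, use compactness of $M$, and compare the resulting $\faq$-weight with $(\rho_{P_0}-\rho_Q)|_{\faq}$) is the same as the paper's, but your key combinatorial step is wrong. You claim $\gS(\bar Q)\cap\gS(P_0)=-\gS(Q,\gs\Cartan)$. In fact, under the hypothesis $\gS(Q,\gs\Cartan)\subseteq\gS(P_0)$ these two sets are \emph{disjoint}: if $-\ga\in\gS(Q,\gs\Cartan)$ then $-\ga\in\gS(P_0)$, hence $\ga\notin\gS(P_0)$. The inference ``$-\ga\notin\fahd$, therefore $-\ga\in\gS(Q,\gs\Cartan)$'' is a non sequitur, because $\gS(Q,\gs)$ in general contains roots not lying in $\fahd$; your own mid-proof observation (``wait, that gives $-\gg\notin\gS(P_0)$'') was the correct one, and you should not have overruled it. The correct identification, coming from $\gS(Q)=\gS(Q,\gs)\sqcup\gS(Q,\gs\Cartan)$, is $\gS(\bar Q)\cap\gS(P_0)=\Cartan\gS(Q,\gs)\cap\gS(P_0)$: the hypothesis is used precisely to \emph{remove} $-\gS(Q,\gs\Cartan)$ from the intersection, not to put it in. Consequently your formula $a^{-\sum_{\gg\in\gS(Q,\gs\Cartan)}m_\gg\gg}$ for the left-hand side of (\ref{e: det Ad and rho Q P}) is false: in the group case of Example \ref{Ex Group case - HC-transform def compared to HC's def}, for the $\fh$-extreme $Q=\bp P\times\bp P$ one has $\gS(Q,\gs\Cartan)=\emptyset$, so your expression equals $1$, whereas $(\rho_{P_0}-\rho_Q)|_{\faq}=\rho_{P_0}|_{\faq}\neq 0$.

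The second gap is that the real content of the lemma --- the identity, with $T(\Phi):=\sum_{\ga\in\Phi}m_\ga\,\ga|_{\faq}$, that $\tfrac12\bigl(T(\gS(P_0))-T(\gS(Q))\bigr)=T\bigl(\Cartan\gS(Q,\gs)\cap\gS(P_0)\bigr)$ --- is only asserted in your third step (``the symmetric pieces cancel, leaving exactly $-\sum\dots$''), and with the wrong sign: your determinant computation produces the exponent $+\sum_{\ga\in\gS(\bar Q)\cap\gS(P_0)}m_\ga\,\ga|_{\faq}$, so you would need $(\rho_{P_0}-\rho_Q)|_{\faq}$ to equal $+\sum$, not $-\sum$. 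Proving this needs more than $T(\gS(Q,\gs))=0$ (which you do argue correctly): one writes $T(\gS(P_0))-T(\gS(Q))=T(\gS(P_0)\setminus\gS(Q,\gs\Cartan))=T(\gS(P_0)\cap\gS(Q,\gs))+T(\gS(P_0)\cap\Cartan\gS(Q,\gs))$, and then uses that $\gS(P_0)$ is $\gs\Cartan$-stable while $\gs\Cartan$ is the identity on $\faq$ (so $T$ is unchanged when $\gs\Cartan$ is applied to a set) to conclude that the two terms are equal; this supplies the factor $2$ cancelling the $\tfrac12$, which is exactly how the paper concludes. Without this step your argument does not close even after the root-set identification is corrected.
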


\begin{proof}
Given a subset $\Phi \subseteq   \gS$ we agree to write
$$
T(\Phi) = \sum_{\ga \in \Phi} m_\ga \ga\big|_{\faq}.
$$
Then, the expression on the left-hand side
of (\ref{e: det Ad and rho Q P})  equals
$$
a^{T(\Cartan\gS(Q) \cap \gS(P_0))}
$$
Since $\gS(Q) $ is the disjoint union of $\gS(Q,\gs)$ and $\gS(Q, \gs \Cartan),$ whereas
the latter set is contained in $\gS(P_0),$ it follows that  the expression on the left-hand side
of (\ref{e: det Ad and rho Q P})  equals
\begin{equation}
\label{e: expression with T Q P}
a^{T(\Cartan\gS(Q,\gs) \cap \gS(P_0))}.
\end{equation}
On the other hand, the expression on the right-hand side of (\ref{e: det Ad and rho Q P}) equals
$$
a^{\frac12 [T(\gS(P_0))  - T(\gS(Q)]}
$$
Now $T(\gS(Q,\gs)) = 0$ and since $\gS(Q)$ is the disjoint union of
$\gS(Q,\gs)$ and $\gS(Q, \gs \Cartan),$  whereas the latter set is contained
in $\gS(P_0),$ it follows that
\begin{eqnarray*}
T(\gS(P_0))  - T(\gS(Q)) & = &  T(\gS(P_0) \setminus \gS(Q, \gs \Cartan))\\
&=& T(\gS(P_0) \cap \gS(Q, \gs)) + T(\gS(P_0) \cap \Cartan \gS(Q,\gs))\\
&=&
T(\Cartan\gs \gS(P_0) \cap \gs \gS(Q, \gs)) + T(\gS(P_0) \cap \Cartan \gS(Q,\gs))\\
&=& 2 T(\gS(P_0) \cap \Cartan \gS(Q,\gs)).
\end{eqnarray*}
Combining these we find that the expression on the right-hand side of (\ref{e: det Ad and rho Q P}) equals
(\ref{e: expression with T Q P}) as well.
\end{proof}

\begin{Lemma}\label{l: int Radon over K L HL}
Let $P\in\cP_{\sigma}(A,Q)$.
There exists a constant $c>0$ such that for every $\phi\in L^1(G/H)$
$$
\int_{K}\int_{L/H_{L}}\frac{\Deltach_{Q}(l)}{\Deltach_{P}(l)}\int_{N_{Q}/H_{N_{Q}}}|\phi(kln)|\,dn\,dl\,dk
\leq c\|\phi\|_{L^1}.
$$
\end{Lemma}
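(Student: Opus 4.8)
The plan is to reduce this $K$-averaged estimate to the Radon-type integral identity already established in Lemma \ref{l: int over P dot H}, via a standard covering/integral-geometry argument on $G/H$. First I would recall that since $P \in \cP_\gs(A,Q)$, the set $P\cdot H$ is open in $G/H$ (Remark \ref{r: PH open}), and by Lemma \ref{l: int over P dot H} the iterated integral
$$
\int_{N_P \cap \bar N_Q}\int_{L/H_L}\frac{\Deltach_Q(l)}{\Deltach_P(l)}\int_{N_Q/H_{N_Q}} |\psi(\bar n l n)|\,dn\,dl\,d\bar n
$$
equals $\int_{P\cdot H}|\psi(x)|\,dx \le \|\psi\|_{L^1}$ for every $\psi \in L^1(G/H)$, with all integrals absolutely convergent. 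So the content of the present lemma is that replacing the outer integral over $N_P\cap\bar N_Q$ by an integral over the compact group $K$ changes things only by a bounded factor, after applying the left-$K$-translate of $\phi$.

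The key step is to write, for $k \in K$ and the function $\psi = L_{k^{-1}}\phi$ (i.e. $\psi(x) = \phi(kx)$, which again lies in $L^1(G/H)$ with $\|\psi\|_{L^1} = \|\phi\|_{L^1}$ since the invariant measure on $G/H$ is $G$-invariant), that
$$
\int_{L/H_L}\frac{\Deltach_Q(l)}{\Deltach_P(l)}\int_{N_Q/H_{N_Q}} |\phi(kln)|\,dn\,dl \ = \ \int_{L/H_L}\frac{\Deltach_Q(l)}{\Deltach_P(l)}\int_{N_Q/H_{N_Q}} |\psi(ln)|\,dn\,dl.
$$
Now I want to recover the missing $\bar n$-integration. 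Here I would use that the multiplication map $(N_P\cap\bar N_Q)\times L \times (N_Q\cap N_{P_0}) \to $ its image is a diffeomorphism onto an open subset of $G$ whose image in $G/H$ is $P\cdot H$ (this is exactly the decomposition underlying Lemma \ref{l: int over P dot H} and Lemma \ref{l: deco NPX and NQX}); in particular, for fixed $\bar n \in N_P\cap\bar N_Q$ and fixed $k \in K$, the map $G/H \to G/H$ given by $x \mapsto \bar n^{-1} x$ is measure-preserving. Combining this with the elementary observation that $\int_K dk = 1$ (our normalization), I would insert a harmless $\int_K dk$ in front and then apply Lemma \ref{l: int over P dot H} to the function $\psi = L_{k^{-1}}\phi$: this yields
$$
\int_K\!\int_{L/H_L}\frac{\Deltach_Q(l)}{\Deltach_P(l)}\int_{N_Q/H_{N_Q}}\!|\phi(kln)|\,dn\,dl\,dk \ \le\ C\int_K\!\int_{N_P\cap\bar N_Q}\!\int_{L/H_L}\frac{\Deltach_Q(l)}{\Deltach_P(l)}\int_{N_Q/H_{N_Q}}\!|\phi(k\bar n l n)|\,dn\,dl\,d\bar n\,dk,
$$
provided one can bound the $l$-fibre integral without the $\bar n$ by a constant times the full one with the $\bar n$. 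To make that step honest one needs a positive lower bound on the density of the $\bar n$-average near $\bar n = e$ against a fixed neighbourhood, i.e. one chooses a compact neighbourhood $\Omega$ of $e$ in $N_P\cap\bar N_Q$ and uses that the left-regular representation of $N_P\cap\bar N_Q$ on $L^1$ of the relevant slice is bounded, together with $\bar n^{-1}(N_Q/H_{N_Q}) = (\bar n^{-1}N_Q\bar n)/\cdots$ reducing again, by the group structure of $N_{P_0}$, to a $\bar N_Q\cap N_{P_0}$-translate absorbed into the $N_Q\cap N_{P_0}$-integral with bounded Jacobian (Lemma \ref{l: Jacobian of int over Q P}).

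The cleanest route, which I would actually follow, avoids the density estimate: apply Lemma \ref{l: int over P dot H} directly to $\psi = L_{k^{-1}}\phi$ to get
$$
\int_{N_P\cap\bar N_Q}\int_{L/H_L}\frac{\Deltach_Q(l)}{\Deltach_P(l)}\int_{N_Q/H_{N_Q}}|\phi(k\bar n l n)|\,dn\,dl\,d\bar n \ = \ \int_{P\cdot H}|\phi(kx)|\,dx \ \le \ \|\phi\|_{L^1},
$$
and then observe that the map $N_Q/H_{N_Q} \to N_Q/H_{N_Q}$, $n \mapsto \bar n^{-1}n\bar n$-type conjugations combined with the fact that for $l \in L$ we may write $l n = (\bar n l)((\bar n l)^{-1}l n)$ shows that the integrand of the present lemma, integrated over $K$, is dominated by the $\bar n$-integrated version after a change of variables $k \mapsto k\bar n$ in the $K$-integral — but $\bar n \notin K$, so instead I use the Iwasawa-type projection: every $\bar n \in \bar N_Q$ can be written $\bar n = \kappa(\bar n)\, a(\bar n)\, n'$ with $\kappa(\bar n)\in K$, and the resulting Jacobian factors are exactly compensated by $\Deltach_Q/\Deltach_P$, so that integrating $|\phi(kln)|$ over $k\in K$ and $l\in L/H_L$ against $\Deltach_Q/\Deltach_P$ bounds, up to the constant $c = \mathrm{vol}(\Omega)^{-1}\sup_\Omega(\text{Jacobian})$, the integral over $N_P\cap\bar N_Q \times L/H_L$ localized to $\Omega$, hence $\le c\|\phi\|_{L^1}$ by the displayed identity. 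The main obstacle is precisely this last bookkeeping: matching the Jacobian of the $\bar N_Q$-to-$K$ passage against the quotient character $\Deltach_Q/\Deltach_P$ and controlling the $H_{N_Q}$-quotient under conjugation; I expect this to be routine given Corollary \ref{Cor int_N/(N cap H)=int_N_Q,X}, Lemma \ref{l: deco NPX and NQX} and Lemma \ref{l: Jacobian of int over Q P}, but it is where all the normalization conventions from the end of Section \ref{section Notation} must be used carefully.
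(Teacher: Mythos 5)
Your ``cleanest route'' is essentially the paper's own proof: apply Lemma \ref{l: int over P dot H} to the left $K$-translates of $\phi$, discard all but a compact subset $C\subseteq \bar N_Q\cap N_P$ with nonempty interior in the $\bar n$-integral, and use the Iwasawa decomposition $\bar n=k_Q(\bar n)a_Q(\bar n)n_Q(\bar n)$ relative to $Q$ to absorb the three components by changes of variables in $K$, $L/H_L$ and $N_Q/H_{N_Q}$, so that the constant is $c=\big(\int_C \Deltach_P(a_Q(\bar n))\Deltach_Q(a_Q(\bar n))^{-1}\,d\bar n\big)^{-1}$, finite and positive by compactness and the open interior. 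The bookkeeping you flag as the main obstacle is exactly this and is indeed routine: the shift $l\mapsto a_Q(\bar n)^{-1}l$ does not cancel against $\Deltach_Q/\Deltach_P$ but pulls out the factor $\Deltach_P(a_Q(\bar n))/\Deltach_Q(a_Q(\bar n))$, while the $n_Q(\bar n)$-component is absorbed by writing $n_Q(\bar n)l=l\,(l^{-1}n_Q(\bar n)l)$ and using that $L$ normalizes $N_Q$ together with the $N_Q$-invariance of the measure on $N_Q/H_{N_Q}$.
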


\begin{proof}
Applying Lemma \ref{l: int over P dot H}
to left $K$-translates of $\phi$ we find
\begin{align*}
\|\phi\|_{L^1}
&=\int_{K}\int_{G/H}|\phi(k\cdot x)|\,dx\,dk
\geq\int_{K}\int_{P\cdot H}|\phi(k\cdot x)|\,dx\,dk\\
&=\int_{K}\int_{ \bar N_{Q}\cap N_{P}}\int_{L/H_{L}}\int_{N_{Q}/H_{N_{Q}}}
    \frac{\Deltach_{Q}(l)}{\Deltach_{P}(l)}|\;\phi(k\overline{n}ln)|\,\,dn\,dl\,d\overline{n}\,dk.
\end{align*}
For $g\in G$ we write $k_{Q}(g)$, $a_{Q}(g)$ and $n_{Q}(g)$
for the elements of $K$, $A$ and $N_{Q}$ respectively such that the Iwasawa decomposition of $g$ is given by $g=k_{Q}(g)a_{Q}(g)n_{Q}(g)$. Let $C$ be a compact subset of $\theta N_{Q}\cap N_{P}$ with open interior. By a change of the integration variables from $N_{Q}/H_{N_{Q}}$, $L/H_{L}$ and $K$ we obtain
\begin{align*}
\|\phi\|_{L^1}
&\geq \int_{K}\int_{C}\int_{L/H_{L}}\int_{N_{Q}/H_{N_{Q}}}
    \frac{\Deltach_{Q}(l)}{\Deltach_{P}(l)}
        |\phi(kk_{Q}(\overline{n})a_{Q}(\overline{n})n_{Q}(\overline{n})ln)|\,dn\,dl\,d\overline{n}\,dk\\
&=\int_{C}
    \frac{\Deltach_{P}(a_{Q}(\overline{n}))}{\Deltach_{Q}(a_{Q}(\overline{n}))} \,d\overline{n}
    \int_{K}\int_{L/H_{L}}\int_{N_{Q}/H_{N_{Q}}}\frac{\Deltach_{Q}(l)}{\Deltach_{P}(l)}
        |\phi(kln)|\,dn\,dl\,dk.
\end{align*}
Note that the first integral on the right-hand side is finite since $C$ is compact. Moreover, it is strictly positive since $C$ has an open interior and the integrand is strictly positive.
This proves the lemma.
\end{proof}

We denote by $L^1_\loc(G/N_Q)$ the space of locally integrable functions on $G/N_Q.$
Let $dx$ be a choice of invariant measure on $G/N_Q.$ Then for each compact subset $C \subseteq G/N_Q$ the function
$$
\nu_C: \phi \mapsto \int_{C} |\phi(x)|\; dx
$$
defines a continuous seminorm
on $L^1_\loc(G/H).$ The seminorms thus defined determine a Fr\'echet topology on $L^1_\loc(G/H).$
It is readily seen that the left regular representation of $G$ in $L^1_\loc(G/H)$ is continuous
for this topology.

\begin{Cor}
\label{c: intro bp R Q}
Let $Q \in \cP(A).$ Then for every $\phi \in L^1(G/H)$ the integral
\begin{equation}
\label{e: bp integral phi over NQ mod H}
\bp \Rt_Q\phi(g):= \int_{N_{Q}/H_{N_{Q}}}\phi(gn)\,dn
\end{equation}
converges for $g$ in a right $N_Q$-invariant measurable subset of $G$ whose complement
is of measure zero. The defined function $\bp \Rt_Q\phi(g)$ is locally integrable on $G /N_Q.$
Finally, the resulting map
$$
\bp \Rt_Q: L^1(G/H) \to L^1_\loc(G/N_Q)
$$
is continuous linear and $G$-equivariant.
\end{Cor}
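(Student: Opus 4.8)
The plan is to deduce Corollary \ref{c: intro bp R Q} from Lemma \ref{l: int Radon over K L HL} by a localization argument. Fix $Q \in \cP(A)$ and, using Lemma \ref{l: extremity and ordering}(a), choose $P \in \cP_\gs(A,Q)$; such a $P$ exists since the set (\ref{e: defi P gs A Q}) is nonempty. The starting point is that Lemma \ref{l: int Radon over K L HL} gives, for every $\phi \in L^1(G/H)$, the bound
$$
\int_{K}\int_{L/H_{L}}\frac{\Deltach_{Q}(l)}{\Deltach_{P}(l)}\int_{N_{Q}/H_{N_{Q}}}|\phi(kln)|\,dn\,dl\,dk \leq c\|\phi\|_{L^1}.
$$
In particular the inner integral $\int_{N_Q/H_{N_Q}}|\phi(kln)|\,dn$ is finite for almost every $(k,l) \in K \times L/H_L$. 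Since the product map $K \times L/H_L \times N_Q \to G$, $(k,lH_L,n) \mapsto kln$, is a submersion onto an open subset of $G$ whose image has full measure (this is essentially the Iwasawa-type decomposition $G = KLN_Q$ up to the $H_L$-ambiguity, which is harmless because $N_Q$ normalizes nothing problematic here), the set of $g \in G$ for which $\int_{N_Q/H_{N_Q}}|\phi(gn')|\,dn'$ diverges is a null set; moreover this bad set is manifestly right $N_Q$-invariant since replacing $g$ by $gn_0$ with $n_0 \in N_Q$ merely reparametrizes the integration variable and multiplies the integrand by a positive Jacobian which is constant in $n'$. This establishes the convergence assertion and defines $\bp\Rt_Q\phi$ as a right $N_Q$-invariant function on a conull subset of $G$, hence a measurable function on $G/N_Q$.

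**Next I would** establish local integrability on $G/N_Q$ together with the continuity of the map. For a compact set $C \subseteq G/N_Q$, one wants to bound $\nu_C(\bp\Rt_Q\phi) = \int_C |\bp\Rt_Q\phi(x)|\,dx$. Since $G = KLN_Q$ (modulo null sets and the $H_L$-identification on $L$), the image of a compact set in $G/N_Q$ can be covered by the image of $K \times C_0$ for a suitable compact $C_0 \subseteq L/H_L$; one then estimates, using the positivity of $\Deltach_Q/\Deltach_P$ on $L$,
$$
\nu_C(\bp\Rt_Q\phi) \;\le\; C' \int_{K}\int_{C_0}\int_{N_{Q}/H_{N_{Q}}}|\phi(kln)|\,dn\,dl\,dk \;\le\; C'' \int_{K}\int_{L/H_{L}}\frac{\Deltach_{Q}(l)}{\Deltach_{P}(l)}\int_{N_{Q}/H_{N_{Q}}}|\phi(kln)|\,dn\,dl\,dk,
$$
where the last inequality absorbs the ratio $\Deltach_Q/\Deltach_P$ and the measure of $C_0$ into the constant $C''$ (here one uses that $\Deltach_Q/\Deltach_P$ is bounded below by a positive constant on the compact set $C_0$). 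Combining with Lemma \ref{l: int Radon over K L HL} yields $\nu_C(\bp\Rt_Q\phi) \le c_C \|\phi\|_{L^1}$. This shows simultaneously that $\bp\Rt_Q\phi \in L^1_\loc(G/N_Q)$ and that $\bp\Rt_Q : L^1(G/H) \to L^1_\loc(G/N_Q)$ is continuous, since each defining seminorm $\nu_C$ is dominated by a constant times the $L^1$-norm. Linearity is clear from the definition of the integral, and $G$-equivariance is immediate: for $g_0 \in G$, the change of variables $g \mapsto g_0^{-1}g$ in (\ref{e: bp integral phi over NQ mod H}) shows $\bp\Rt_Q(L_{g_0}\phi) = L_{g_0}(\bp\Rt_Q\phi)$.

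**The main technical point** I would be careful about is the precise passage from the $(k,l,n)$-parametrization to statements about $g \in G$ and about functions on $G/N_Q$: one must check that the map $K \times L/H_L \times N_Q \to G$ really does have open dense image of full measure with locally bounded fibers, so that null sets and integrability transfer correctly in both directions, and that compact subsets of $G/N_Q$ lift to (images of) products $K \times C_0 \times N_Q$ with $C_0 \subseteq L/H_L$ compact. This is where the normalizations of measures from the end of Section \ref{section Notation} and the structure of the decomposition underlying Lemma \ref{l: int over P dot H} (which itself rests on the Iwasawa-type decomposition and the diffeomorphism $K \times C_0 \times N_Q$-coordinates) do the real work; everything else is bookkeeping with positive Jacobians and the already-proven integral inequality. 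Once this dictionary is in place, both the convergence statement and the continuity of $\bp\Rt_Q$ follow directly from Lemma \ref{l: int Radon over K L HL}.
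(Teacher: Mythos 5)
Your overall strategy is the same as the paper's: combine the estimate of Lemma \ref{l: int Radon over K L HL} with Fubini's theorem, transfer the resulting almost-everywhere statement to $G$, use right $N_Q$-invariance, and deduce local integrability and continuity from the same estimate. However, the central transfer step, as you have written it, does not work. The map $K\times L/H_L\times N_Q\to G$, $(k,lH_L,n)\mapsto kln$, is not well defined: for $h\in H_L$ one has $klhn\neq kln'$ for any $n'\in N_Q$ in general, since $h$ normalizes $N_Q$ but does not belong to it. If instead you choose representatives of $L/H_L$ (say in $MA_\fq$), the image $K\,M A_\fq\,N_Q=K A_\fq N_Q$ is a lower-dimensional submanifold of $G$ whenever $\fa_\fh\neq 0$ -- it misses the $\exp\fa_\fh$-directions entirely -- so it does not have full measure, contrary to your claim; and if you use the well-defined surjection $K\times L\times N_Q\to G$, then the almost-everywhere finiteness you need on $K\times L$ does not follow from Lemma \ref{l: int Radon over K L HL} alone, which only controls an integral over $K\times L/H_L$. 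The same issue affects your local-integrability estimate: a compact subset of $G/N_Q\simeq KA$ has positive extent in the $\exp\fa_\fh$-direction, which your right-hand side $\int_K\int_{C_0}\int_{N_Q/H_{N_Q}}|\phi(kln)|\,dn\,dl\,dk$ never integrates over, so the inequality $\nu_C(\bp\Rt_Q\phi)\leq C''(\cdots)$ is not justified as written.

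The missing ingredient -- and the one the paper uses -- is the behaviour of $g\mapsto\int_{N_Q/H_{N_Q}}|\phi(gn)|\,dn$ under right translation by $H_L$: by Lemma \ref{Lemma sqrt(Delta) int_N/(N cap H) is L cap H invariant} this integral is multiplied by the positive character $\Deltach_Q^{-1}$, so the $\Deltach_Q/\Deltach_P$-weighted integrand of Lemma \ref{l: int Radon over K L HL} is genuinely right $H_L$-invariant and finiteness, together with local bounds, propagates along the $H_L$- (in particular the $\exp\fa_\fh$-) directions, the characters being locally bounded. With this in place one argues as the paper does: $K\times L\to KA$ is a fiber bundle with compact fiber $M$, so Lemma \ref{l: int Radon over K L HL} plus Fubini gives convergence for $x$ in a conull subset $\Omega\subseteq KA$ and local integrability on $KA$; right $N_Q$-invariance of the integral (the invariant measure on $N_Q/H_{N_Q}$ is preserved, so in fact no Jacobian appears, contrary to your aside) extends convergence to $\Omega N_Q$, which is conull by the Iwasawa decomposition $G\simeq KA\times N_Q$; continuity then follows from the estimate by Fubini. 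Your linearity and $G$-equivariance remarks are fine.
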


\begin{proof}
Let $\phi \in L^1(G/H).$
By the Iwasawa decomposition, $KA$ is a closed submanifold of $G.$
Since the multiplication map $K \times L \to KA$ defines a fiber bundle with fiber $M,$
it follows from Lemma
\ref{l: int Radon over K L HL}
 combined with Fubini's theorem that there exists a subset
$\Omega \subseteq   KA,$ whose complement has Lebesgue measure zero, such that the integral
(\ref{e: bp integral phi over NQ mod H}) converges for all $x \in \Omega.$ Furthermore,
the resulting function $\bp \Rt_Q\phi$
is locally integrable on $KA.$ By invariance of the measure on $N_Q/H_{N_Q}$ it follows that
(\ref{e: bp integral phi over NQ mod H})  converges for $g \in \Omega N_Q.$ By the Iwasawa
decomposition the set $\Omega N_Q$ has a complement of measure $0.$
We infer that the resulting function $\bp \Rt_Q\phi$ is defined almost everywhere
and locally integrable on $G/N_Q.$ By application of Fubini's theorem, it follows from
the estimate in Lemma \ref{l: int Radon over K L HL}
that the map $\bp \Rt_Q: L^1(G/H) \to L^1_\loc(G/N_Q)$ thus defined
is continuous linear.  Its $G$-equivariance is obvious from the definition.
\end{proof}

We write $L^1(G/H)^{\infty}$ for the space of smooth vectors for the left-regular representation $L$ of $G$  in  $L^1(G/H)$. If $\phi \in C^\infty(G/H)$ and
$L_u \phi \in L^1(G)$ for all $u \in \Ua(\fg),$ then $\phi \in L^1(G)^\infty;$ this follows
by a straightforward application of Taylor's theorem with remainder term, see also  \cite[Thm.~5.1]{Poulsen_OnSmoothVectorsAndIntertwiningBilinearForms}.

Conversely, any function in $L^1(G/H)^\infty$ can be represented by a smooth function
$\phi \in C^\infty(G).$ This follows from the analogous
local statement in $\R^n$ by using a partition of unity.
We may thus identify $L^1(G/H)^\infty$ with the space of $\phi \in C^\infty(G/H)$ such that
$L_u \phi \in L^1(G/H),$ for all $u \in \Ua(\fg).$

Likewise, we write $L^1_\loc(G/N_Q)^\infty$ for the Fr\'echet space of smooth vectors
in the $G$-space $L^1_\loc(G/N_Q).$ By similar remarks as those made above it follows that the inclusion map $C^\infty(G/N_Q) \to L^1_\loc(G/N_Q)$ induces
a topological linear isomorphism
\begin{equation}
\label{e: iso smooth Lone loc}
C^\infty(G/N_Q)\;\;\; { \buildrel\simeq\over \longrightarrow} \;\;\; L^1_\loc(G/N_Q)^\infty.
\end{equation}

By equivariance, it follows from Corollary \ref{c: intro bp R Q} that the map $\bp \Rt_Q$ restricts
to a continuous linear map $L^1(G/H)^\infty \to L^1_\loc(G/N_Q)^\infty.$ The following proposition
asserts that  the integral transform $\bp \Rt_Q$ actually sends the smooth representatives for functions in the first of these spaces to smooth representatives of functions in the second.

\begin{Prop}
\label{p: smoothness N integral}
\label{Prop int_N_Q/(N_Q cap H) phi integrable for phi smooth L^1 vector}
Let $Q \in \cP(A)$ and $\phi\in L^1(G/H)^{\infty}$. Then for every $g \in G$ the integral
\begin{equation}
\label{e: integral phi over NQ mod H}
\Rt_Q\phi(g):= \int_{N_{Q}/H_{N_{Q}}}\phi(gn)\,dn
\end{equation}
is absolutely convergent and the displayed integral defines a smooth
function of $g \in G.$  The indicated transform defines a continuous linear $G$-equivariant map
$$
\Rt_Q: L^1(G/H)^\infty \to C^\infty(G/N_Q).
$$
\end{Prop}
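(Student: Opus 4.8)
The plan is to bootstrap from Corollary \ref{c: intro bp R Q}, which already gives the weak transform $\bp\Rt_Q : L^1(G/H) \to L^1_\loc(G/N_Q)$ as a continuous, $G$-equivariant linear map, together with its restriction to smooth vectors landing in $L^1_\loc(G/N_Q)^\infty$. By the topological isomorphism (\ref{e: iso smooth Lone loc}), a smooth vector in $L^1_\loc(G/N_Q)$ is represented by a genuine function in $C^\infty(G/N_Q)$; so the only real content is to show that for $\phi \in L^1(G/H)^\infty$ the integral (\ref{e: integral phi over NQ mod H}) converges \emph{for every} $g \in G$ (not just almost every $g$) and that the resulting everywhere-defined function is a smooth representative of the class $\bp\Rt_Q\phi$. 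Once that is known, continuity and $G$-equivariance of $\Rt_Q : L^1(G/H)^\infty \to C^\infty(G/N_Q)$ are inherited verbatim from the corresponding properties of $\bp\Rt_Q$ via the identification (\ref{e: iso smooth Lone loc}).

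First I would fix $g \in G$ and recall, as remarked just before the proposition, that $\phi \in L^1(G/H)^\infty$ means $L_u\phi \in L^1(G/H)$ for all $u \in \Ua(\fg)$, and that $\phi$ is represented by a genuine smooth function on $G$. The key point is a local uniform $L^1$-bound: using Lemma \ref{l: int Radon over K L HL} applied to left translates $L_k\phi$ (as in the proof of Corollary \ref{c: intro bp R Q}), the map $k \mapsto \int_{N_Q/H_{N_Q}} |\phi(k l n)|\,dn$ is, after Fubini over the fiber bundle $K\times L \to KA$, finite for almost every point of $KA$ and locally integrable there. To upgrade "almost every $g$" to "every $g$", I would apply this estimate not to $\phi$ itself but to $L_u\phi$ for a suitable $u\in\Ua(\fg)$, i.e. use that all derivatives of $\phi$ are again in $L^1(G/H)^\infty$: a Sobolev-type / Taylor-with-remainder argument in the $N_Q/H_{N_Q}$ variable shows that if enough derivatives of $\phi$ along $\fn_Q$ have finite $N_Q/H_{N_Q}$-integral near a given $g$, then the integral (\ref{e: integral phi over NQ mod H}) at that particular $g$ converges absolutely. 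Concretely, for each $g$ one chooses a compact neighbourhood, writes $\phi(gn)$ near any point of $N_Q/H_{N_Q}$ by its Taylor expansion in the $K A$-directions transverse to $N_Q$, controls the remainder by $\sup$-norms of finitely many $L_u\phi$, and then bounds those sup-norms locally by $L^1$-norms of further derivatives of $\phi$ over a compact set in $KA$ — all of which are finite because $\phi$ is a smooth $L^1$-vector. This is the step I expect to be the main obstacle: making the "pointwise from a.e." upgrade clean requires carefully choosing which invariant vector fields to differentiate against so that the Iwasawa-decomposition change of variables (as in Lemma \ref{l: int Radon over K L HL}, using $k_Q, a_Q, n_Q$) turns an a.e.-statement on $KA$ into an everywhere-statement on $G/N_Q$, and keeping track of the $\Deltach_Q/\Deltach_P$ factors.

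Once absolute convergence of (\ref{e: integral phi over NQ mod H}) is established for every $g$, smoothness follows by a standard differentiation-under-the-integral-sign argument: for $u \in \Ua(\fg)$ acting on the left, $L_u(\Rt_Q\phi)(g) = \int_{N_Q/H_{N_Q}} (L_u\phi)(gn)\,dn$, which is again absolutely convergent by the same argument applied to $L_u\phi \in L^1(G/H)^\infty$; the interchange of $L_u$ and the integral is justified by dominated convergence using the local uniform bounds just obtained, since difference quotients of $\phi$ along $\fg$ are controlled by $\sup$-norms of its second derivatives, which are locally $L^1$ over $N_Q/H_{N_Q}$ by the hypothesis on $\phi$. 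Hence $\Rt_Q\phi \in C^\infty(G)$, and it is right $N_Q$-invariant because the measure on $N_Q/H_{N_Q}$ is $N_Q$-invariant, so $\Rt_Q\phi \in C^\infty(G/N_Q)$. Finally, since $\Rt_Q\phi$ agrees almost everywhere with $\bp\Rt_Q\phi$ by construction, it is the smooth representative of that class; continuity of $\Rt_Q : L^1(G/H)^\infty \to C^\infty(G/N_Q)$ then follows from continuity of $\bp\Rt_Q : L^1(G/H)^\infty \to L^1_\loc(G/N_Q)^\infty$ together with the topological isomorphism (\ref{e: iso smooth Lone loc}), and $G$-equivariance is immediate from the definition. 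This completes the proof.
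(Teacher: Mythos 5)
Your framing is right: Corollary \ref{c: intro bp R Q} together with the isomorphism (\ref{e: iso smooth Lone loc}) reduces everything to showing that for $\phi\in L^1(G/H)^\infty$ the integral converges absolutely at \emph{every} $g$ and that the everywhere-defined function is a smooth representative of $\bp\Rt_Q\phi$; continuity and equivariance then follow as you say. But that crucial step is exactly where your argument breaks down, and you flag it as "the main obstacle" without actually resolving it. As written the mechanism fails: you propose to control the Taylor remainder "by sup-norms of finitely many $L_u\phi$" and then to bound "those sup-norms locally by $L^1$-norms of further derivatives over a compact set in $KA$". The set on which you need control contains a whole fiber $gN_Q/H_{N_Q}$, which is noncompact, so a sup-norm bound cannot be integrated over the fiber (a constant has infinite integral over $N_Q/H_{N_Q}$), and sup-norms are not dominated by $L^1$-norms over compact subsets of $KA$ in any case. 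A correct argument of this type must control the fiber integral $\int_{N_Q/H_{N_Q}}|\phi(gn)|\,dn$ at the given $g$ by fiber integrals of the derivatives $L_u\phi$ averaged over a small transverse neighborhood, i.e.\ a Sobolev embedding $W^{d,1}_{\mathrm{loc}}\hookrightarrow C^0$ in the transverse variable with $d=\dim G/N_Q$, which needs mixed derivatives up to order $d$, not a first-order Taylor expansion (a first-order remainder along segments ending at $g$ produces a non-integrable singularity at $g$, the very point where finiteness is unknown). Moreover the transverse perturbations must be implemented by left translations $g\mapsto\exp(Y)g$, so that the resulting vector fields are independent of $n$; inserting "$KA$-directions" between $g$ and $n$ produces, after commuting past $n$, factors $\Ad(n^{-1})Y$ that grow with $n$. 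The $\Deltach_Q/\Deltach_P$-factors are irrelevant here. Your smoothness step has the same defect: "difference quotients \ldots controlled by sup-norms of its second derivatives, which are locally $L^1$ over $N_Q/H_{N_Q}$" conflates sup-norms with fiberwise $L^1$-bounds and does not produce the integrable dominant needed for dominated convergence.

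The paper bypasses all of this with a device you do not mention: the Dixmier--Malliavin theorem \cite{DixmierMalliavin_FactorisationsDeFunctionsEtDeVecteursIndefinimentDifferentiables}. Since $L^1(G/H)^{\infty}$ is spanned by functions $\phi=\chi*\psi$ with $\chi\in C_c^{\infty}(G)$ and $\psi\in L^1(G/H)$, one considers $J(g)=\int_G\chi(g\gamma)\int_{N_Q/H_{N_Q}}\psi(\gamma^{-1}n)\,dn\,d\gamma$, which is the convolution of a test function with the locally integrable function $\bp\Rt_Q\psi$ of Corollary \ref{c: intro bp R Q}, hence absolutely convergent for every $g$ and smooth in $g$; a change of variables and Fubini identify $J(g)$ with $\int_{N_Q/H_{N_Q}}\phi(gn)\,dn$, giving in one stroke absolute convergence at every $g$, smoothness, and the identification with the class $\bp\Rt_Q\phi$, after which the continuity statement follows via (\ref{e: iso smooth Lone loc}) exactly as in your last paragraph. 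So either invoke Dixmier--Malliavin, or carry out the full order-$d$ Sobolev argument sketched above; the proposal as it stands does neither.
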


\begin{proof}
By \cite[Thm.~3.3]{DixmierMalliavin_FactorisationsDeFunctionsEtDeVecteursIndefinimentDifferentiables} the space $L^1(G/H)^{\infty}$ is spanned by functions of the form
$\phi = \chi*\psi$ with $\chi\in C_{c}^{\infty}(G)$ and $\psi\in L^1(G/H)$. Therefore, it suffices to prove the proposition for such functions.
Let $\chi\in C_{c}^{\infty}(G)$ and
$\psi \in L^1(G/H),$ and put $\phi = \chi * \psi.$

It follows from Corollary \ref{c: intro bp R Q}
that the integral for $\chi *  \Rt_Q \psi(g),$ given by
$$
J(g) := \int_G \chi(g \gamma)\int_{N_{Q}/H_{N_{Q}}}\psi(\gamma^{-1} n)\,dn\,d\gamma
$$
is absolutely convergent for every $g \in G$ and the defined function $J: G \to \C$ is smooth.
By a change of variables, followed by application of Fubini's theorem the integral may be rewritten as
\begin{eqnarray*}
J(g) & = & \int_{G}\chi(\gamma)\int_{N_{Q}/H_{N_{Q}}}\psi(\gamma^{-1}gn)\,dn\,d\gamma\\
&=&
\int_{N_{Q}/H_{N_{Q}}}(\chi*\psi)(gn)\,dn \\
&= & \int_{N_Q/H_{N_Q}} \gf(gn)\; dn.
\end{eqnarray*}
All assertions but the last now follow. By equivariance, it follows that the map $\bp \Rt_{Q}$
defined in Corollary \ref{c: intro bp R Q} restricts to a continuous linear map
$L^1(G/H)^\infty \to L^1_\loc(G/N_Q)^\infty.$ For $\phi \in L^1(G/H)^\infty,$
the function $\bp \Rt_Q(\phi)$ is represented by the smooth function $\Rt_Q(\phi).$
The last assertion now follows from the fact that (\ref{e: iso smooth Lone loc}) is a topological linear isomorphism.
\end{proof}

\begin{Defi}\label{Defi Radon and HC-transform}
The Radon transform $\Rt_{Q}$ is defined to be the $G$-equivariant continuous linear map
$$
\Rt_Q: \;\; L^1(G/H)^{\infty}\to C^{\infty}(G/N_{Q})
$$
given by (\ref{e: integral phi over NQ mod H}).
\end{Defi}

\section{Harish-Chandra transforms}
\label{s: Harish-Chandra transforms}
We retain the assumption that $Q \in \cP(A).$
In terms of the Radon transform $\Rt_Q,$ defined in the previous section, we define
a new transform as follows.

\begin{Defi}
The Harish-Chandra transform $\Ht_{Q}$ is defined to be the
continuous linear map
$$
L^1(G/H)^{\infty}\to C^{\infty}(L/H_{L})
$$
given by
$$
\Ht_{Q}\phi(l)
=\Deltach_{Q}(l) \Rt_{Q}\phi(l)
\qquad(l\in L).
$$
\end{Defi}

\begin{Ex}[\bf Group case]\label{Ex Group case - HC-transform def compared to HC's def}
Let $\bp G$ be a reductive Lie group of the Harish-Chandra class. Then $\bp G$ is diffeomorphic to $G/H$, where $G=\bp G\times\bp G$ and $H=\diag(\bp G)$, via the map
$$
G/H\to\bp G;\qquad (g_1,g_{2})\mapsto g_1g_{2}^{-1}.
$$
Under this map, the action of $G$ on $G/H$
corresponds
 to the left  times right action of
 $\bp G \times \bp G$ on $\bp G.$ As $H$ is the fixed-point group of the involution
 $\gs: G \to G,$ $ (\bp x, \bp y) \mapsto (\bp y, \bp x),$ the pair $(G,H)$  is symmetric.
 Let $\fg=\fk\oplus\fp$
 be a $\sigma$-stable Cartan decomposition of $\fg$. Then $\fk=\bp\fk\times\bp\fk$ and $\fp=\bp\fp\times\bp\fp$, where $\bp\fg=\bp\fk\oplus\bp\fp$ is a Cartan decomposition of $\bp\fg$. Let $\bp\fa$ be a maximal abelian subspace of $\bp\fp$ and let $\fa=\bp\fa\times\bp\fa$. Then
 $$
 \faq :=\fa\cap\fq = \{(H,-H) :  H \in \bp\fa \}
 $$
 is a maximal abelian subspace of $\fp\cap\fq$. Every minimal parabolic subgroup of $G$ is of the form $\bp P\times\bp Q$, where $\bp P$ and $\bp Q$ are minimal parabolic subgroups of $\bp G$. Let $\bp A=\exp(\bp\fa)$ and let $A=\bp A\times\bp A$. Let
 $\bp L = \Cen_G(\bp A)$  and let $L=\bp L\times\bp L$.

Every $\fh$-extreme parabolic subgroup
is of the form $\bp P\times\bp P$ where $\bp P$ is a minimal parabolic subgroup of $\bp G$. Let $\bp P$ be a minimal parabolic subgroup containing $\bp A$. Under the identifications $G/H\simeq \bp G$ and $L/H_{L}\simeq \bp L$ the transform $\Ht_{\bp \!P}:=\Ht_{\bp \!P\times\bp \!P}$ is given by
$$
\Ht_{{\bp \!P}}\phi(ma)
=a^{\rho_{\,\bp \!P}}\int_{N_{\bp \!P}}\phi(man)\,dn\qquad(\phi\in L^1(\bp G)^{\infty}, m\in \bp M, a\in \bp A).
$$
This shows that $\Ht_{\bp \!P}$ equals the map
$\phi\mapsto \phi^{(\bp \!P)}$, defined by Harish-Chandra
in \cite[p.\ 145]{Harish-Chandra_HarmonicAnalyisOnRealReductiveGroupsI}.

Similarly, under the described  identifications the Radon transform
$\Rt_{\bp P}:=\Rt_{\bp P\times\bp P}$ is given by
\begin{align*}
\Rt_{\bp P}\phi(g_1,g_{2})
=\int_{N_{\bp P}}\phi(g_1ng_{2}^{-1})\,dn\qquad(\phi\in L^1(\bp G)^{\infty}, g_1,g_{2}\in G).
\end{align*}
The function $\Rt_{\bp P}\phi(\dotvar, e)$ is equal to $\phi^{\bp P},$ defined by Harish-Chandra in \cite[p.\ 145]{Harish-Chandra_HarmonicAnalyisOnRealReductiveGroupsI}.
\end{Ex}

In the remainder of this section we investigate some of the properties of the Harish-Chandra transform. We start with a lemma.

\begin{Prop}\label{Prop sqrt(1/Delt_P)H_Q phi vanishing at infinity for phi smooth L^1 vector}
Let $P\in\cP_{\sigma}(A,Q)$ and $\phi\in L^1(G/H)^{\infty}$. Then
\begin{equation}
\label{e: Delta P inv times H Q}
l\mapsto {\Deltach_{P}(l)}^{-1} {\Ht_{Q}\phi(l)},\qquad L/H_{L}\to\C
\end{equation}
defines a function in $L^1(L/H_{L})^{\infty}.$
\end{Prop}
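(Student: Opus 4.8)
The plan is to deduce this from Lemma~\ref{l: int Radon over K L HL} by a Fubini-type argument, exploiting that $\phi \in L^1(G/H)^\infty$ means $L_u\phi \in L^1(G/H)$ for all $u \in \Ua(\fg)$. First I would observe that, since $P \in \cP_\sigma(A,Q)$, we have $Q \preceq P$, and we may apply Lemma~\ref{l: int Radon over K L HL}: there is a constant $c>0$ so that for every $\psi \in L^1(G/H)$,
$$
\int_K \int_{L/H_L} \frac{\Deltach_Q(l)}{\Deltach_P(l)} \int_{N_Q/H_{N_Q}} |\psi(kln)|\, dn\, dl\, dk \le c\|\psi\|_{L^1}.
$$
Specializing to $\psi = \phi$ and discarding the $K$-integration (the inner integrand is non-negative and $K$ has volume $1$, so the $L/H_L$-integral is finite for a.e.\ $k \in K$; by left $K$-invariance of the construction and continuity of $\phi$ we may in fact take $k = e$), this shows that $l \mapsto \Deltach_P(l)^{-1}\Deltach_Q(l)\Rt_Q\phi(l) = \Deltach_P(l)^{-1}\Ht_Q\phi(l)$, which is a continuous function on $L/H_L$ by Proposition~\ref{p: smoothness N integral}, lies in $L^1(L/H_L)$. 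Here I should be slightly careful: the cleanest route is to note that for each fixed $l$, Proposition~\ref{p: smoothness N integral} already guarantees absolute convergence of the $N_Q/H_{N_Q}$-integral, so $\Deltach_P^{-1}\Ht_Q\phi$ is a well-defined continuous function, and the displayed estimate (applied with the $K$-average, then using that the integrand depends continuously on the $K$-variable) bounds its $L^1$-norm by $c\|\phi\|_{L^1}$.

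Next I would upgrade from $L^1$ to $L^1$-smoothness. The point is that the map $\Ht_Q$ intertwines the left regular representation of $G$ on $L^1(G/H)^\infty$ with the left regular representation of $L$ on $C^\infty(L/H_L)$, in the following sense: for $l_0 \in L$ one has $\Ht_Q(L_{l_0}\phi) = \Deltach_Q(l_0)\, L_{l_0}(\Ht_Q\phi)$ when $l_0 \in L$ normalizes appropriately — more simply, $\Rt_Q$ is $G$-equivariant by Proposition~\ref{p: smoothness N integral}, so $\Rt_Q(L_{l_0}\phi)(l) = \Rt_Q\phi(l_0^{-1}l)$, and a direct computation with the character $\Deltach_Q$ and $\Deltach_P$ (both multiplicative on $L$) gives, for $Y \in \fl$,
$$
\frac{1}{\Deltach_P}\,\Ht_Q(L_Y\phi) \;=\; L_Y\!\Bigl(\frac{1}{\Deltach_P}\,\Ht_Q\phi\Bigr) \;+\; \Bigl(\rho_P - \rho_Q\Bigr)(Y)\,\frac{1}{\Deltach_P}\,\Ht_Q\phi,
$$
using $\Deltach_Q/\Deltach_P|_A = a^{\rho_Q - \rho_P}$ from the proof of Lemma~\ref{l: int over P dot H} and that $\Deltach_Q = \Deltach_P = 1$ on $M$. (The sign and exact constant are routine; the structural fact is that $L_Y$ commutes with $\Deltach_P^{-1}\Ht_Q$ up to a bounded multiplication operator.) Iterating, for any $u \in \Ua(\fl)$ the function $L_u(\Deltach_P^{-1}\Ht_Q\phi)$ is a finite $\C$-linear combination of functions $\Deltach_P^{-1}\Ht_Q(L_{u'}\phi)$ with $u' \in \Ua(\fl)$, each of which lies in $L^1(L/H_L)$ by the first paragraph applied to $L_{u'}\phi \in L^1(G/H)$. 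Hence $L_u(\Deltach_P^{-1}\Ht_Q\phi) \in L^1(L/H_L)$ for all $u \in \Ua(\fl)$, and since the function is already smooth, the criterion recalled in the excerpt (after Corollary~\ref{c: intro bp R Q}) gives $\Deltach_P^{-1}\Ht_Q\phi \in L^1(L/H_L)^\infty$.

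I expect the main obstacle to be the bookkeeping in the first paragraph: namely justifying rigorously that the estimate of Lemma~\ref{l: int Radon over K L HL}, which involves an integral over $K$, yields a bound on the $L^1(L/H_L)$-norm of the single function $\Deltach_P^{-1}\Ht_Q\phi$ (with the $K$-variable set to $e$), rather than merely a bound on a $K$-average. The right way to handle this is to run the argument with the left $K$-translates $L_k\phi$ simultaneously: Lemma~\ref{l: int Radon over K L HL} applied to such translates and Fubini show that the function $(k,l) \mapsto \Deltach_P^{-1}(l)\Ht_Q(L_k\phi)(l)$ lies in $L^1(K \times L/H_L)$, and since $\Ht_Q(L_k\phi)(l) = \Deltach_Q(l)\Rt_Q\phi(k^{-1}l)$ depends continuously (indeed smoothly) on $k$ by Proposition~\ref{p: smoothness N integral}, a mean-value / Sobolev argument on the compact group $K$ recovers integrability of the slice at $k = e$. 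Alternatively — and this may be cleanest — one observes that the assertion is really about $\Rt_Q\phi$ restricted to $L$, and one can absorb the $K$-integration into the earlier analysis, since everything in sight is continuous linear in $\phi$ and $G$-equivariant; I would present whichever of these makes the measure-theoretic step shortest, deferring the routine details.
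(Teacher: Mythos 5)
The second half of your argument (the Leibniz rule exploited through the fact that $\Deltach_Q/\Deltach_P$ is a character, the use of the $G$-equivariance from Proposition~\ref{p: smoothness N integral} to move $u\in\Ua(\fl)$ past $\Rt_Q$, and the criterion that a smooth function with all $\Ua(\fl)$-derivatives in $L^1$ is a smooth $L^1$-vector) is exactly the paper's second step and is fine, except that when you feed $L_{u'}\phi$ back into the integrability statement you need $L_{u'}\phi\in L^1(G/H)^\infty$, not merely $L^1(G/H)$: bare $L^1$-membership is not enough for the first step, in whatever form. The genuine gap is in that first step. Lemma~\ref{l: int Radon over K L HL} only tells you that $k\mapsto\int_{L/H_L}\frac{\Deltach_Q(l)}{\Deltach_P(l)}\int_{N_Q/H_{N_Q}}|\phi(kln)|\,dn\,dl$ is finite for almost every $k\in K$. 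This function of $k$ is only lower semicontinuous (Fatou), and there is no ``left $K$-invariance of the construction'': translating by $k$ changes the integrand, and an a.e.\ finite lower semicontinuous function may well be infinite at the particular point $k=e$. So ``discarding the $K$-integration'' and ``taking $k=e$ by continuity'' is not justified, and this is precisely where the smooth-vector hypothesis has to do its work; it is the crux of the proposition, not bookkeeping.

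Your proposed repair points in a workable direction but is aimed at the wrong object: smoothness of $k\mapsto\Rt_Q\phi(k^{-1}l)$ (which has no absolute value inside the $N_Q/H_{N_Q}$-integral) does not control the $k=e$ slice of the $L^1(K\times L/H_L)$-function, whose integrand carries $|\phi|$ inside the inner integral and is not known to be finite, let alone regular, in $k$ off a null set. What does work is to dominate $\phi$ itself before integrating: a Sobolev (mean-value) inequality on the compact group $K$ applied to $k\mapsto\phi(klnH)$ gives $|\phi(x)|\leq C\sum_{u\in B}\int_K|(L_u\phi)(kx)|\,dk$ for a finite set $B\subseteq\Ua(\fk)$, and integrating this against $\frac{\Deltach_Q}{\Deltach_P}$ over $N_Q/H_{N_Q}$ and $L/H_L$ and applying Lemma~\ref{l: int Radon over K L HL} to each $L_u\phi$ yields the desired $L^1(L/H_L)$-bound. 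You leave exactly this step as ``routine details to be deferred,'' but it is the substance of the proof. The paper handles it differently and more cleanly: by the Dixmier--Malliavin theorem one may assume $\phi=\chi*\psi$ with $\chi\in C_c^\infty(G)$ and $\psi\in L^1(G/H)$; Lemma~\ref{l: int Radon over K L HL} and Fubini show that $\gamma\mapsto\int_{L/H_L}\frac{\Deltach_Q(l)}{\Deltach_P(l)}\int_{N_Q/H_{N_Q}}|\psi(\gamma^{-1}ln)|\,dn\,dl$ is finite a.e.\ and locally integrable, so integrating against $\chi$ and applying Fubini once more produces precisely the finiteness of $\int_{L/H_L}\Deltach_P^{-1}|\Ht_Q\phi|\,dl$, with no slice argument needed. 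Either implementation would close your proof, but one of them must actually be carried out.
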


\begin{Rem}
In particular, the function (\ref{e: Delta P inv times H Q}) vanishes at infinity by Lemma
\ref{l: vanishing at infinity} below.
\end{Rem}

\begin{proof}
We will first prove that $\Deltach_{P}^{-1}\Ht_{Q}\phi$  is integrable.
From \cite[Thm.~3.3]{DixmierMalliavin_FactorisationsDeFunctionsEtDeVecteursIndefinimentDifferentiables} it follows that $L^1(G/H)^{\infty}$ is spanned by functions of the form
$\phi = \chi * \psi$ with $\chi\in C_{c}^{\infty}(G)$ and $\psi\in L^1(G/H)$.  Hence,
we may assume that $\phi$ is of this form.
It follows from
Lemma \ref{l: int Radon over K L HL} and Fubini's theorem that the integral
$$
\int_{L/H_{L}}\frac{\Deltach_{Q}(l)}{\Deltach_{P}(l)}\int_{N_{Q}/H_{N_{Q}}}
        \psi(\gamma^{-1}ln)\,dn\,dl
$$
is absolutely convergent for almost every $\gamma\in G$, and that the almost everywhere defined function on $G$ thus obtained is locally integrable. Therefore, the integral
$$
\int_{G}\chi(\gamma)\int_{L/H_{L}}\frac{\Deltach_{Q}(l)}{\Deltach_{P}(l)}\int_{N_{Q}/H_{N_{Q}}}
        \psi(\gamma^{-1}ln)\,dn\,dl\,d\gamma
$$
is absolutely convergent, and by Fubini's theorem it is equal to
$$
\int_{L/H_{L}}\frac{\Deltach_{Q}(l)}{\Deltach_{P}(l)}\int_{N_{Q}/H_{N_{Q}}}
        (\chi*\psi)(ln)\,dn\,dl
=\int_{L/H_{L}}\frac{\Ht_{Q}(\phi)(l)}{\Deltach_{P}(l)}\,dl.
$$
This proves the integrability of $\Deltach_{P}^{-1}\Ht_{Q}\phi$.

We move on to show that $\Deltach_{P}^{-1}\Ht_{Q}\phi\in L^1(L/H_{L})^{\infty}.$
In view of the remarks above (\ref{e: iso smooth Lone loc})
 it suffices to prove that $u(\Deltach_{P}^{-1}\Ht_{Q}\phi)$ is integrable for each $u\in \Ua(\fl)$.

For conciseness we write
$h: =  \Deltach_{Q}^{-1}\Ht_{Q}\phi = \Rt_Q(\phi)|_L$.
Let $u\in \Ua(\fl)$. By the Leibniz rule there exists an $n\in\N$ and $u_{j},v_{j}\in \Ua(\fl)$ for $1 \leq j \leq n$, such that
$$
 u \Big(\Deltach_P^{-1} \Ht_Q\phi   \Big) = u \Big(\frac{\Deltach_{Q}}{\Deltach_{P}}h\Big)
=\sum_{j=1}^{n} \Big(u_{j}\frac{\Deltach_{Q}}{\Deltach_{P}}\Big)v_{j}(h).
$$
Since
$\Deltach_Q/\Deltach_P$  is a character on $L$ ,
there exist constants $c_j$ such that $u_{j}(\Deltach_{Q}/\Deltach_{P}) =c_{j} \Deltach_{Q}/\Deltach_{P}$.
Therefore,
\begin{eqnarray*}
 u \Big(\Deltach_P^{-1} \Ht_Q\phi   \Big)
& = & \sum_{j=1}^{n}c_{j}\frac{\Deltach_{Q}(l)}{\Deltach_{P}(l)}
v_j(\Rt_Q\gf)(l) \\
& = & \sum_{j=1}^{n}c_{j}\frac{\Deltach_{Q}(l)}{\Deltach_{P}(l)}
\Rt_Q (v_j\phi) (l) \\
& = & \sum_{j=1}^{n}c_{j}\frac{1}{\Deltach_{P}(l)}\Ht_{Q}(v_{j}\phi)(l)
    \qquad(l\in L).
\end{eqnarray*}
Here we note that the above interchange of $v_j$ and $\Rt_{Q}$
is justified by the final assertion of Proposition \ref{p: smoothness N integral}.
By the first part of the proof, the functions $\Deltach_{P}^{-1}\Ht_{Q}(v_{j}\phi )$ are integrable on
$L/H_{L}$.
It follows that $u(\Deltach_P^{-1}\Ht_Q \phi)$ is integrable as well.
\end{proof}

We denote by $C_0(L/H_L)$ the space of continuous functions $L/H_L \to \C$
which vanish at infinity. Equipped with the sup-norm, this is a Banach space.

\begin{Lemma}
\label{l: vanishing at infinity}
The space $ L^1(L/H_{L})^{\infty}$ is contained in $C_0(L/H_L),$
with continuous inclusion map.
\end{Lemma}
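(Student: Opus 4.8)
The plan is to reduce the statement to the classical fact that a function on a Euclidean space all of whose derivatives (up to sufficiently high order) are in $L^1$ must be continuous and vanish at infinity, transported along the identification $L/H_L \simeq M/H_M \times \Aq$. Since $M$ is compact, the factor $M/H_M$ is compact, and $\Aq \simeq \R^r$ with $r = \dim \faq$ via the exponential map; thus $L/H_L$ is, up to diffeomorphism, a product of a compact manifold and a Euclidean space. A function $f \in L^1(L/H_L)^\infty$ is by definition smooth with $u f \in L^1(L/H_L)$ for all $u \in \Ua(\fl)$, so in particular all its partial derivatives in the $\Aq$-directions (with coefficients that are smooth in the compact variable) lie in $L^1$.

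First I would fix the diffeomorphism $L/H_L \simeq M/H_M \times \Aq$ and a left-invariant frame; write points as $(m H_M, \exp Y)$ with $Y \in \faq$. For $f \in L^1(L/H_L)^\infty$, for each multi-index $\beta$ the derivative $\partial_Y^\beta f$ lies in $L^1(M/H_M \times \Aq)$, because differentiation in the $\faq$-directions is given by elements of $\Ua(\fl)$ (the $\faq$-translation vector fields being left-invariant and commuting with each other), and $L^1$-membership of these follows from the smooth-vector hypothesis together with the product structure of the invariant measure. Then I would invoke the one-variable estimate $|g(t)| = |\int_{-\infty}^{t} g'(s)\,ds| \le \|g'\|_{L^1(\R)}$, iterated over the $r$ coordinates of $\faq$, to write a Sobolev-type embedding: for fixed $m$,
$$
\sup_{Y \in \faq} |f(m H_M, \exp Y)| \ \le\ \sum_{\beta \le (1,\dots,1)} \big\| \partial_Y^\beta f(m H_M, \exp\,\cdot\,) \big\|_{L^1(\faq)},
$$
and moreover $f(m H_M, \exp Y) \to 0$ as $\|Y\| \to \infty$ because each $\partial_Y^{(1,\dots,1)} f(m H_M, \exp\,\cdot\,)$ is integrable, so the iterated integral defining $f$ has a limit at infinity which must be $0$. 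Integrating this pointwise bound over the compact $M/H_M$ (whose volume is $1$ by our normalization) yields $\sup_{L/H_L} |f| \le \sum_\beta \|\partial_Y^\beta f\|_{L^1(L/H_L)}$, which is a finite sum of continuous seminorms on $L^1(L/H_L)^\infty$; this gives both the continuity of the inclusion $L^1(L/H_L)^\infty \hookrightarrow C_0(L/H_L)$ and, once I check uniformity in $m$, the vanishing at infinity.

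The main technical point — and the one deserving a little care rather than real difficulty — is the uniformity in the compact variable: to conclude $f \to 0$ on all of $L/H_L$ (not merely slicewise) one needs the convergence $f(m H_M, \exp Y) \to 0$ to be uniform in $m$, which follows because the bound on $|f(m H_M, \exp Y)|$ by the tails $\int_{\|s\| > R} |\partial_Y^{(1,\dots,1)} f(m H_M, \exp\,\cdot\,)|$ can itself be integrated over the compact $M/H_M$ and made small uniformly for $\|Y\|$ large, using dominated convergence on the product space. Everything else is the routine translation between $\Ua(\fl)$-derivatives and Euclidean partial derivatives in the $\faq$-slice, plus Fubini on the product measure; I would keep that at the level of a remark. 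One should also note that the higher-order hypotheses are used only to get that $f$ is represented by an honest continuous (indeed smooth) function, which the excerpt's discussion of smooth vectors already provides.
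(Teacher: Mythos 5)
Your overall strategy---reduce via $L/H_L \simeq M/H_M \times \Aq$ to a Euclidean statement and prove that statement by an iterated fundamental-theorem-of-calculus (Sobolev-type) argument, rather than by Fourier inversion plus Riemann--Lebesgue as the paper does---is viable, but the decisive quantitative step is wrong as stated. From the slicewise bound $\sup_{Y}|f(mH_M,\exp Y)| \le \sum_{\beta}\|\partial_Y^\beta f(mH_M,\exp\,\cdot\,)\|_{L^1(\faq)}$ you propose to ``integrate over the compact $M/H_M$'' and conclude $\sup_{L/H_L}|f| \le \sum_\beta \|\partial_Y^\beta f\|_{L^1(L/H_L)}$. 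Integration in $m$ only controls $\int_{M/H_M}\sup_Y|f(m,\cdot)|\,dm$, not $\sup_m\sup_Y|f|$, and the asserted inequality is in fact false whenever $M/H_M$ has positive dimension: take $f(m,Y)=\chi_\epsilon(m)g(Y)$ with $g$ Schwartz, $g(0)=1$, and $\chi_\epsilon$ a smooth bump on $M/H_M$ of height $1$ and small support; the left-hand side stays $\geq 1$ while the right-hand side is $\|\chi_\epsilon\|_{L^1(M/H_M)}\sum_\beta\|\partial^\beta g\|_{L^1(\faq)}\to 0$. The same defect undermines your proposed uniformity-in-$m$ argument for the vanishing at infinity: integrating the tail bounds over $m$ and invoking dominated convergence again controls an integral in $m$, not a supremum in $m$. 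So neither the continuity of the inclusion nor the uniform decay follows from $\faq$-derivatives alone.

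The repair is to use the full smooth-vector hypothesis, i.e.\ derivatives along the compact factor as well, which you have since $uf\in L^1(L/H_L)$ for \emph{all} $u\in \Ua(\fl)$ (mixed $\fm$- and $\faq$-derivatives included): either run a $W^{k,1}\hookrightarrow C_0$ argument on the whole product, covering $M/H_M$ by finitely many charts, or follow the paper's structure, which first embeds $L^1(L/H_L)^\infty$ continuously into $C(M/H_M, L^1(\Aq)^\infty)$ (this is exactly where compactness of $M/H_M$ and the $\fm$-derivatives enter), then uses the Euclidean fact $L^1(\Aq)^\infty\subseteq C_0(\Aq)$ continuously---proved there via Fourier inversion and Riemann--Lebesgue, though your iterated one-variable argument would serve equally well for this step---and finally passes from $C(M/H_M,C_0(\Aq))$ to $C_0(L/H_L)$ using compactness of $M/H_M$. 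Two smaller points you should make explicit in the Euclidean step: the identity $g(t)=\int_{-\infty}^t g'(s)\,ds$ requires the observation that $g$ has limit $0$ at $-\infty$ (true because $g,g'\in L^1(\R)$), and Fubini gives the slice integrability $f(m,\cdot),\,\partial_Y^\beta f(m,\cdot)\in L^1(\faq)$ only for almost every $m$, so upgrading to every $m$ again needs continuity in $m$ of these slice norms, i.e.\ the $\fm$-derivative control.
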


For a general symmetric space, this result is proved in \cite{KrotzSchlichtkrull_OnFunctionSpacesOnSymmetricSpaces}.
We only need it in the present more restricted setting, which is essentially Euclidean.

\begin{proof}
The multiplication map induces a diffeomorphism
$L/H_L \simeq M/H_M \times \Aq.$
Since $M/H_M$ is compact, it readily follows that
$$
L^1(L/H_L)^\infty \embeds C(M/H_M, L^1(\Aq)^\infty),
$$
with continuous inclusion map. By the Fourier inversion formula on $\Aq,$ combined with
application of the Riemann--Lebesgue lemma,
it follows that $L^1(\Aq)^\infty \subseteq   C_0(\Aq)$ continuously.
Hence, $C(M/H_M, L^1(\Aq)^\infty)$ is contained in $C(M/H_M, C_0(\Aq)),$ continuously.
Let $C_b(\Aq)$ be the Banach space of bounded continuous functions on $\Aq,$ equipped with the sup-norm. Then $C_0(\Aq)$ is a closed subspace of $C_b(\Aq).$
Likewise, $C_0(L/H_L)$ is a closed subspace of  $C_b(L/H_L),$ the Banach space of bounded continuous functions on $L/H_L.$

By compactness of $M/H_M,$ the diffeomorphism mentioned in the beginning of the proof induces a continuous linear
isomorphism
$$
\psi: C(M/H_M, C_b(\Aq))  \to C_b(L/H_L).
$$
It suffices to show that $\psi$ maps the subspace $C(M/H_M, C_0(\Aq))$ into the closed subspace
$C_0(L/H_L)$ of $C_b(L/H_L).$ This can be achieved by application of a straightforward
argument involving the compactness of $M/H_M.$
\end{proof}

We recall the definition of the continuous linear map $\bp \Rt_Q: L^1(G/H) \to L^1_\loc(G/H)$
from Corollary \ref{c: intro bp R Q} and note that by Proposition \ref{p: smoothness N integral} this map can be viewed as the continuous linear extension of the restriction of the Radon transform $\Rt_Q$ to
$C^\infty_c(G/H).$

\begin{Thm}
\label{t: vanishing Rt Q on Lone}
Let $\phi \in L^1(G/H) \cap L^2_\ds(G/H).$ Then $\bp\Rt_Q(\phi) = 0.$
\end{Thm}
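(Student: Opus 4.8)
The plan is to reduce the vanishing of $\bp\Rt_Q(\phi)$ on a discrete-series Schwartz-$L^1$ function to the vanishing-at-infinity result for Harish-Chandra transforms established above, and to do this it suffices (by $G$-equivariance of $\bp\Rt_Q$ and density) to test against left $K$-translates and restrict to $L$. Concretely, since $\bp\Rt_Q\colon L^1(G/H)\to L^1_\loc(G/N_Q)$ is continuous and $G$-equivariant, and since a locally integrable function on $G/N_Q$ vanishes iff all of its left $K$-translates, restricted to $L$ (equivalently to $L/H_L$ after multiplication by the character $\Deltach_Q$), vanish as locally integrable functions, it is enough to show that $\Deltach_Q(l)\,\bp\Rt_Q(L_k\phi)(l)=0$ for a.e.\ $l\in L$ and a.e.\ $k\in K$. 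For smooth $\phi$ this quantity is exactly $\Ht_Q(L_k\phi)(l)$; in general it is the almost-everywhere defined locally integrable function furnished by Corollary \ref{c: intro bp R Q} and Lemma \ref{l: int Radon over K L HL}.

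The first step is to replace $\phi$ by a smooth vector. Choose $P\in\cP_\gs(A,Q)$, which is nonempty by Lemma \ref{l: extremity and ordering}(a). Using the Dixmier--Malliavin theorem, write $\phi$ as a finite sum of convolutions $\chi*\psi$ with $\chi\in C_c^\infty(G)$ and $\psi\in L^1(G/H)\cap L^2_\ds(G/H)$; note each $\chi*\psi$ again lies in $L^1(G/H)\cap L^2_\ds(G/H)$ because $L^2_\ds(G/H)$ is a closed $G$-invariant subspace of $L^2(G/H)$, and smoothing preserves membership in $L^1(G/H)^\infty$ by Proposition \ref{p: smoothness N integral}. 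So we may assume $\phi\in L^1(G/H)^\infty\cap L^2_\ds(G/H)$. Then $\Ht_Q\phi$ is a genuine smooth function on $L/H_L$, and by Proposition \ref{Prop sqrt(1/Delt_P)H_Q phi vanishing at infinity for phi smooth L^1 vector} (together with Lemma \ref{l: vanishing at infinity}) the function $\Deltach_P^{-1}\Ht_Q\phi$ lies in $L^1(L/H_L)^\infty\subseteq C_0(L/H_L)$, hence vanishes at infinity on $L/H_L\simeq M/H_M\times\Aq$.

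The heart of the argument is then to upgrade "vanishes at infinity" to "vanishes identically" using the discrete-series hypothesis. The idea is that for $\phi\in L^2_\ds(G/H)$ the Radon transform $\Rt_Q\phi$, and hence $\Ht_Q\phi$, has an exponential decay / temperedness profile along $\Aq$ governed by the leading exponents of the discrete-series matrix coefficients occurring in $\phi$; these exponents are strictly negative (this is the defining property of the discrete series, via the square-integrability estimates of van den Ban--Schlichtkrull / Oshima--Matsuki). More precisely, $\Ht_Q\phi(ma_t)$ as a function of $a_t=\exp(tX_0)$ along a direction $X_0\in\faq$ picking out the relevant Weyl chamber should be a finite linear combination of generalized exponential polynomials $a_t^{\mu}p(\log a_t)$ with $\Re\mu$ in a half-space; combined with the fact that $\Deltach_P^{-1}\Ht_Q\phi$ vanishes at $+\infty$ in every direction of $\faq$, a standard argument on exponential polynomials (if an exponential polynomial on $\R^n$ that extends holomorphically and has controlled exponents vanishes in an open cone of directions at infinity, it vanishes identically) forces $\Ht_Q\phi\equiv 0$ on $L/H_L$. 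The technical realization I expect is: express $\Ht_Q\phi$ in terms of the asymptotic expansion of $\phi$ near the $Q$-infinity of $G/H$, invoke the known asymptotics of discrete-series functions to see all exponents have real part bounded away from the value $\rho_P|_{\faq}$ needed for non-decay, and conclude. The step I anticipate as the main obstacle is precisely this asymptotic-decay input: one must control the exponents appearing in $\Ht_Q\phi$ in terms of those of $\phi\in L^2_\ds(G/H)$ and know they are incompatible with the mere boundedness that a nonzero contribution would require — i.e.\ turning the qualitative "$\in C_0$" into the quantitative statement that forces identical vanishing, rather than establishing the Fubini/convergence bookkeeping, which is routine given Lemma \ref{l: int Radon over K L HL} and Corollary \ref{c: intro bp R Q}.
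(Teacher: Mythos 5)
Your overall skeleton --- pass to a good dense class, use Proposition \ref{Prop sqrt(1/Delt_P)H_Q phi vanishing at infinity for phi smooth L^1 vector} and Lemma \ref{l: vanishing at infinity} to see that $\Deltach_P^{-1}\Ht_Q\phi$ vanishes at infinity, and then upgrade this to identical vanishing via exponential-polynomial rigidity --- matches the paper's, but the decisive middle step is missing, and the reduction you chose does not support it. You reduce (via Dixmier--Malliavin/approximate identity) only to smooth vectors $\phi\in L^1(G/H)^\infty\cap L^2_\ds(G/H)$ and then assert that $\Ht_Q\phi|_{\Aq}$ ``should be'' a finite linear combination of exponential polynomials. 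For a merely smooth vector this is not justified and in general false: $L^2_\ds(G/H)$ typically contains infinitely many discrete series constituents and infinitely many $K$-types, so infinitely many exponents can contribute; the finiteness is precisely what the discrete-series hypothesis alone does not provide. The substitute you propose (asymptotic expansions of discrete-series functions near the $Q$-infinity) is not carried out and is genuinely delicate: pointwise asymptotics of $\phi$ do not directly control the integral over the noncompact set $N_Q/H_{N_Q}$ --- controlling exactly such integrals is what most of this paper is devoted to, so one cannot wave at it here.

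The paper closes this gap purely algebraically. It first reduces, by continuity and equivariance of $\bp\Rt_Q$ (Corollary \ref{c: intro bp R Q}), to $K$-finite $\phi\in L^1(G/H)\cap L^2_\ds(G/H)_\vartheta$. Since $L^2_\ds(G/H)_\vartheta$ is finite dimensional and consists of smooth functions, $Z(\fg)$ acts finitely on $\phi$, and by \cite[Thm.\ 7.3]{vdBan_AsymptoticBehaviourOfMatrixCoefficientsRelatedToReductiveSymmetricSpaces} the function $\phi$, and with it the whole $(\fg,K)$-module $V$ it generates, lies in the $L^1$-Schwartz space, hence in $L^1(G/H)^\infty$. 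Then the map $T:\psi\mapsto\Rt_Q\psi|_A$ kills $\fn_Q V$, so it factors through the finite-dimensional quotient $V/\fn_Q V$ (\cite[Lemma 4.3.1]{Wallach_RealReductiveGroupsI}) and is $U(\fa)$-equivariant; this is what forces each $T\psi$ to be an exponential polynomial on $\Aq$, with no asymptotic analysis of the Radon integral at all. Combined with the $C_0$ property this gives $T(V)=0$, and applying the same to the translates $L_g\phi$ yields $\Rt_Q\phi=0$. So the two ideas absent from your write-up are: (i) reduce to $K$-finite, not merely smooth, vectors, so as to obtain a finitely generated admissible module inside $L^1(G/H)^\infty$; and (ii) derive the exponential-polynomial property from the finite dimensionality of $V/\fn_Q V$ and $\fa$-equivariance rather than from asymptotics of discrete-series matrix coefficients.
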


\begin{proof}
By equivariance and continuity of $\bp\Rt_Q,$ see Corollary \ref{c: intro bp R Q},
 it suffices to prove this result for a $K$-finite
function $\phi.$ Thus, we may assume that  $\phi \in L^1(G/H) \cap L^2_\ds(g/H)_\vartheta,$
with $\vartheta$ a finite subset of $K^{\wedge}.$
It follows from the theory of the discrete series developed in \cite{Oshima_Matsuki_ds}
and \cite[Lemma 12.6 \& Rem.\ 12.7]{vdBan_Schlichtkrull_Plancherel1}
that the space $L^2_\ds(G/H)_\vartheta$ is finite dimensional
and consists of smooth functions. It follows that the center of $U(\fg)$ acts finitely on
$\phi.$ In view of \cite[Thm.\ 7.3]{vdBan_AsymptoticBehaviourOfMatrixCoefficientsRelatedToReductiveSymmetricSpaces}, we infer that $\phi$ is
contained in the $L^1$-Schwartz space $\cC^1(G/H)$
and therefore, so is the $(\fg, K)$-module $V$ generated by $\phi.$ In particular, $V$
is contained in $L^1(G/H)^\infty$ and we see that $\bp\Rt_Q = \Rt_Q$ on $V.$

We now observe that the assignment $T: \psi  \mapsto \Rt_Q(\psi)|_{A}$ defines a linear map $V \to C^\infty(A/(A\cap H)),$ see Proposition \ref{p: smoothness N integral}.
Since $L$ normalizes $\fn_Q,$ we infer that  $T$ factors through
a map $\bar T: V/\fn_Q V \to C^\infty(A/A\cap H).$ It is well known that $\dim V/\fn_Q V < \infty,$ see
\cite[Lemma 4.3.1]{Wallach_RealReductiveGroupsI}.
From the equivariance of $\Rt_Q$ it follows that $\bar T$ is $U(\fa)$-equivariant.
Hence, for $\psi \in V,$ the function $T(\psi)$ is of exponential polynomial type on
$\Aq \simeq A/A\cap H.$ By application of Proposition
\ref{Prop sqrt(1/Delt_P)H_Q phi vanishing at infinity for phi smooth L^1 vector}
and Lemma \ref{l: vanishing at infinity} we infer that
$\gd_P^{-1}\gd_Q T(\psi)$ is an exponential polynomial function on $\Aq$ which vanishes
at infinity.  This implies $T\psi = 0,$ hence
$T(V) = 0.$
It follows that the map $\psi \mapsto \Rt_Q \psi(e) $ is zero on the closure of $V$ in $L^1(G)^\infty,$ hence on $L_g \phi$ for every $g \in G.$ We conclude that $\Rt_Q \phi = 0.$
\end{proof}

\begin{Rem}
For Radon transforms associated with minimal $\gs\Cartan$-stable parabolic subgroups the analogous result
for analytic vectors in $L^1(G/H)$ was obtained by a similar $\fa$-weight analysis in  \cite[Thm~4.1]{Krotz_horospherical_transform}. Let $P_0 \in \cP_\gs(\Aq)$ be such a parabolic subgroup. Then there
exists a parabolic subgroup $P \in \cP(A)$ such that $P \subseteq P_0.$ Since $N_{P_0} \simeq
N_{P}/(N_P \cap H),$ the Radon transform for $P_0$ coincides with $\Rt_P,$ and our
result implies that the restriction to analytic vectors is unnecessary.
\end{Rem}

The results in
Proposition \ref{Prop sqrt(1/Delt_P)H_Q phi vanishing at infinity for phi smooth L^1 vector} can be improved if only compactly supported smooth functions on $G/H$ are considered. We start by describing the support of the Harish-Chandra transform of a function in terms of the latter's support.
To  prepare for this, we introduce some notation.

 For each $\ga \in \gS \cap \faqd$
the root space $\fg_\ga$ is invariant under the involution $\gs\Cartan$  so that the root space
admits the decomposition
$
\fg_\ga= \fg_{\ga , +} \oplus \fg_{\ga, -}
$
into the $\pm 1$ eigenspaces for this involution. Accordingly, for any $Q \in \cP(A)$ we define the set
\begin{equation}
\label{e: defi gS Q minus}
\gS(Q)_- :=   \{\ga \in \gS(Q,\gs \Cartan) : \ga \in
\faqd \implies \fg_{\ga, -} \neq 0\}.
\end{equation}
We define the cone
\begin{equation}\label{eq def Gamma_Q}
\Gamma(Q)
=\sum_{\ga \in\Sigma(Q)_-} \R_{\geq0} \;\pr_\fq \,H_{\ga}.
\end{equation}
Here $H_{\ga}$ denotes the unique element of $\fa$
for which $\ga=B(H_{\ga},\cdot)$, see (\ref{e: defi B}).
Furthermore, $\pr_\fq: \fa \to \faq$ denotes the $B$-orthogonal projection.

\begin{Prop}\label{Prop support of H phi}
Let $C\subseteq   \faq$ be compact, convex and invariant under the action of $\Nor_{K\cap H}(\faq)$. If $\phi\in C_{c}(G/H)$ satisfies
$$
\supp(\phi)\subseteq    K\exp(C)\cdot H,
$$
then
$$
\supp(\Ht_{Q}\phi)\subseteq    M\exp(C+\GammaQ)\cdot H_{L}.
$$
\end{Prop}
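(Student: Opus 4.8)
The plan is to reduce the statement to an explicit computation on the nilpotent group $N_Q$ using the diffeomorphism $N_{Q,X} \times H_{N_Q} \to N_Q$ from Lemma \ref{Lemma N_(Q,X) x (N_Q cap H) to N_Q diffeo}, and to track how the Iwasawa-type coordinates move a point $ln$ with $l \in L$, $n \in N_{Q,X}$, back into $K\exp(\faq)\cdot H$. First I would fix $X \in \faq$ satisfying \eqref{eq condition on X}, so that by Corollary \ref{Cor int_N/(N cap H)=int_N_Q,X} we have $\Rt_Q\phi(l) = \int_{N_{Q,X}} \phi(ln)\,dn$, and hence $\Ht_Q\phi(l) = \Deltach_Q(l)\int_{N_{Q,X}}\phi(ln)\,dn$. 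For $\Ht_Q\phi(l) \neq 0$ there must exist $n \in N_{Q,X}$ with $ln \in \supp(\phi) \subseteq K\exp(C)\cdot H$, i.e. $ln \in K\exp(C)H$. Writing $l = m a$ with $m \in M$, $a \in \Aq$ (the $\fa_\fh$-part being absorbed into $H$ and $M$ up to the character, which is irrelevant for support), the problem becomes: if $ma\, n \in K\exp(C)H$ with $n \in N_{Q,X}$, then $a \in \exp(C + \Gamma(Q))$ modulo the relevant subgroups.

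The key geometric input is a convexity/horospherical estimate: projecting along the Iwasawa decomposition associated to a $\gs\Cartan$-stable parabolic, the $\faq$-component of $an$ for $n$ in the nilpotent radical lies in $a\cdot\exp(-\Gamma)$ for an appropriate cone $\Gamma$. Concretely, I would choose $P \in \cP_\gs(A,Q)$ (nonempty by Lemma \ref{l: extremity and ordering}(a)) and the associated $P_0 \in \cP_\gs(\Aq)$, so that $N_{Q,X} \subseteq N_{P_0}$ (as in the proof of Lemma \ref{l: int over P dot H}, with $X$ chosen dominant for $\gS(P_0)$). Then I use the known convexity theorem for the $\Aq$-valued Iwasawa projection on the symmetric space — this is exactly the setting of \cite{vdBan_ConvexityThm} and \cite{BalibanuVdBan_ConvexityTheorem} — which says that for $n \in N_{P_0}$ the $\faq$-part of the Iwasawa decomposition of $an$ relative to $P_0$ lies in $\log a - \ch(\text{image of the } \faq\text{-weights})$, and more precisely that the relevant polyhedral cone is generated by the $\pr_\fq H_\ga$ with $\ga$ ranging over $\gS(Q)_-$; the distinction between $\fg_{\ga,+}$ and $\fg_{\ga,-}$ enters precisely because root vectors in $\fg_{\ga,+}$ contribute trivially after passing to $G/H$ (cf. Remark \ref{r: PH open} and \cite[Lemma 4.1]{vdBanKuit_EisensteinIntegrals}), so only $\gS(Q)_-$ survives in the cone $\Gamma(Q)$.

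Having located $a$, I would combine this with the $\Nor_{K\cap H}(\faq)$-invariance and convexity of $C$: the $K\exp(C)H$ condition, after applying the Cartan-type decomposition $G = K\exp(\overline{\faq^+})H$ and the fact that $\supp(\phi)$ is $K$-biinvariant-compatible with $C$, forces the $\faq$-projection of $\log(\text{the }\Aq\text{-part of }ln)$ to lie in $C$; running the Iwasawa estimate backwards then gives $\log a \in C + \Gamma(Q)$, hence $l \in M\exp(C + \Gamma(Q)) H_L$ as claimed. I would assemble these pieces: (1) reduce to $\Rt_Q\phi(l) = \int_{N_{Q,X}}\phi(ln)\,dn$; (2) the existence of $n$ with $ln \in \supp\phi$; (3) the Iwasawa convexity estimate placing $\log a$ in a translate of $-\Gamma(Q)$ relative to the $\faq$-part of $ln$; (4) the support condition on $\phi$ bounding that $\faq$-part in $C$; (5) conclude $\log a \in C + \Gamma(Q)$.

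The main obstacle I anticipate is step (3): making the Iwasawa convexity estimate precise in the $G/H$ setting, with the correct cone $\Gamma(Q)$ generated by $\pr_\fq H_\ga$ for $\ga \in \gS(Q)_-$ rather than all of $\gS(Q,\gs\Cartan)$. This requires carefully identifying which root directions actually contribute — equivalently, showing that conjugating $\exp(Y_\ga)$ for $Y_\ga \in \fg_{\ga,+}$ produces no displacement in $\faq$ modulo $H$, while $Y_\ga \in \fg_{\ga,-}$ does — and then invoking (or re-deriving a special case of) the convexity theorem of \cite{BalibanuVdBan_ConvexityTheorem}. The bookkeeping with $M$ versus $H_M$, and ensuring the character $\Deltach_Q$ plays no role in the support (only in values), is routine but must be done carefully; I expect no essential difficulty there.
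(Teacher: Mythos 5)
Your overall skeleton agrees with the paper's proof: non-vanishing of $\Ht_Q\phi$ at $ma$ forces $maN_Q\cap K\exp(C)H\neq\emptyset$, and then a convexity theorem pins down $\log a$; you even name the correct cone $\GammaQ$ and the correct reference \cite{BalibanuVdBan_ConvexityTheorem}. But the way you set up the decisive step (your (3)--(4)) contains a genuine gap. First, step (4) is false as stated: for $ln\in K\exp(C)H$, the $\faq$-component of $ln$ in the horospherical decomposition attached to the $\gs\Cartan$-stable parabolic $P_0$ does \emph{not} lie in $C$; by the convexity theorem of \cite{vdBan_ConvexityThm} it only lies in $C$ plus the cone attached to $P_0$. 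Second, even after correcting this, your route through $P_0$ can only produce the cone belonging to $P_0$, which is in general strictly larger than $\GammaQ$ (note that $\gS(Q)_-$ depends on $Q$, not just on $P_0$; e.g.\ in the group case with $Q$ $\fh$-extreme one has $\gS(Q,\gs\Cartan)=\emptyset$, hence $\GammaQ=\{0\}$, while the $P_0$-cone is nontrivial). So your two-step substitute cannot reach the sharp statement, and the "main obstacle" you flag is exactly the missing content: you would have to invoke the convexity theorem in a form adapted to $Q$ itself, at which point the detour through $P_0$, the $N_{Q,X}$-reduction, and the polar decomposition are all unnecessary.

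The paper's proof does precisely this and is essentially one step: let $\Acomp_Q:G\to\faq$ be defined by $g\in K\exp\big(\Acomp_Q(g)\big)(A\cap H)N_Q$. If $\Ht_Q\phi(ma)\neq0$, choose $n\in N_Q$ with $man=k\exp(Y)h$, $Y\in C$, $h\in H$; since $k^{-1}man\in K\exp(\log a)(A\cap H)N_Q$, this gives $\log a=\Acomp_Q(\exp(Y)h)$. By \cite[Thm.~10.1]{BalibanuVdBan_ConvexityTheorem}, $\Acomp_Q(\exp(Y)H)=\ch\big(\Nor_{K\cap H}(\faq)\cdot Y\big)+\GammaQ$, and the convexity and $\Nor_{K\cap H}(\faq)$-invariance of $C$ turn the union over $Y\in C$ into $C+\GammaQ$. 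Finally one needs the small closedness remark you omit: $C+\GammaQ$ is closed (compact plus closed cone), so $M\exp(C+\GammaQ)\cdot H_L$ is closed and contains the closure of the non-vanishing set, i.e.\ the support. To repair your write-up you should replace your steps (3)--(4) by this single application of the $Q$-adapted convexity theorem; as they stand they would only yield $\supp(\Ht_Q\phi)\subseteq M\exp\big(C+\Gamma(P_0)\big)\cdot H_L$ at best, with one intermediate claim that is not true.
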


This proposition generalizes \cite[Thm.~5.1]{AndersenFlenstedJensenSchlichtkrull_CuspidalDiscreteSerieseForSemisimpleSymmetricSpaces}, which deals with the special case in which $G/H$ is a real hyperbolic space and $Q$ is $\fh$-extreme. See also \cite[Sect.~4]{Kuit_SupportTheorem}, where similar results are proved for $\sigma\theta$-stable parabolic subgroups.

\begin{proof}[Proof of Proposition \ref{Prop support of H phi}]
Let $\phi\in C_{c}(G/H)$. Assume that $m\in M$ and $a\in A_{\fq}$ are such that $\Ht_{Q}\phi(ma)\neq 0$. Then
$$
maN_{Q}\cap K\exp(C)H\neq\emptyset.
$$
Let $\Acomp_{Q}$ be the map $G\to \faq$ determined by $g\in K\exp\big(\Acomp_{Q}(g)\big)(A\cap H)N_{Q}$. Then $\log(a)\in\Acomp_{Q}\big(\exp(C)H\big)$.
By \cite[Thm.~10.1]{BalibanuVdBan_ConvexityTheorem}
$$
\Acomp_{Q}\big(\exp(C)H\big)
=\bigcup_{X\in C}\ch (\Nor_{K\cap H}(\faq)\cdot X)+\GammaQ.
$$
Since $C$ is convex and $\Nor_{K\cap H}(\faq)$-invariant, it follows that the right-hand side equals $C+\GammaQ$. Therefore, $\log(a)\in C+\GammaQ$. The compactness of $C$ and the fact that $\GammaQ$ is closed imply that $C+\GammaQ$ is closed, hence $M\exp(C+\GammaQ)\cdot H_{L}$ is closed. The support of $\Ht_{Q}\phi$ equals the closure of the subset of $L/H_{L}$ on which $\Ht_{Q}\phi$ is nonzero, hence
$$
\supp(\Ht_{Q}\phi)
\subseteq    M\exp(C+\GammaQ)\cdot H_{L}.
$$
\end{proof}

For a compact subset $U$ of $G/H$, let $C_{U}^\infty(G/H)$ be the space of smooth functions on $G/H$ with support contained in $U$, equipped with the usual Fr\'echet topology.
As usual, we equip the space
$C_{c}^\infty(G/H)$ with the inductive limit topology of the family of spaces
$C_{U}^\infty(G/H)$ where $U$ runs over all compact subsets of $G/H$.

\begin{Prop}\label{Prop Delta_P_0^(-1/2)H_Q maps C_c^infty to C(L/L cap H)}
Let $P \in \cP_\gs(A,Q).$ Then  $\Deltach_P^{-1} \Ht_{Q}$ is a continuous linear map
$C_c^\infty(G/H) \to L^1(L/H_L)^\infty.$
\end{Prop}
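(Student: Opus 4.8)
The plan is to combine the $L^1$-level statement of Proposition~\ref{Prop sqrt(1/Delt_P)H_Q phi vanishing at infinity for phi smooth L^1 vector} with the support estimate of Proposition~\ref{Prop support of H phi}, and then upgrade from continuity on each $C_U^\infty(G/H)$ to continuity on the inductive limit $C_c^\infty(G/H)$. First I would recall from Proposition~\ref{Prop sqrt(1/Delt_P)H_Q phi vanishing at infinity for phi smooth L^1 vector} that for every $\phi\in C_c^\infty(G/H)\subseteq L^1(G/H)^\infty$ the function $\Deltach_P^{-1}\Ht_Q\phi$ already lies in $L^1(L/H_L)^\infty$, so the only real issue is \emph{continuity} of the resulting linear map. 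By the definition of the inductive limit topology on $C_c^\infty(G/H)$, it suffices to fix a compact subset $U\subseteq G/H$ and prove that the restriction of $\Deltach_P^{-1}\Ht_Q$ to $C_U^\infty(G/H)$ is continuous into $L^1(L/H_L)^\infty$, i.e. that each Sobolev-type seminorm $\phi\mapsto \|u(\Deltach_P^{-1}\Ht_Q\phi)\|_{L^1(L/H_L)}$, $u\in\Ua(\fl)$, is bounded by a continuous seminorm on $C_U^\infty(G/H)$.

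The key point that makes this work on $C_c^\infty$ but not on all of $L^1(G/H)^\infty$ is that the support of $\Ht_Q\phi$ stays inside a \emph{fixed} closed subset of $L/H_L$ once $\supp\phi$ is confined to $U$. Concretely, enlarging $U$ if necessary, I may assume $U\subseteq K\exp(C)\cdot H$ for a compact, convex, $\Nor_{K\cap H}(\faq)$-invariant set $C\subseteq\faq$; then by Proposition~\ref{Prop support of H phi} we get $\supp(\Ht_Q\phi)\subseteq M\exp(C+\GammaQ)\cdot H_L$, a fixed closed subset of $L/H_L$ depending only on $U$. On this set the weight $\Deltach_P^{-1}$ (equivalently $a^{-(\rho_P-\rho_{P,\fh})}$ on the $\Aq$-factor, which is bounded on $C+\GammaQ$ precisely because $\GammaQ$ is the cone spanned by projections of coroots from $\gS(Q)_-$ and $P\succeq Q$ forces $\rho_P-\rho_{P,\fh}$ to be nonnegative on $\GammaQ$) is bounded by a constant $M_U$. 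Now I would retrace the computation in the proof of Proposition~\ref{Prop sqrt(1/Delt_P)H_Q phi vanishing at infinity for phi smooth L^1 vector}: the Leibniz rule gives
\[
u\bigl(\Deltach_P^{-1}\Ht_Q\phi\bigr)=\sum_{j=1}^{n} c_j\,\Deltach_P^{-1}\Ht_Q(v_j\phi),
\]
with $v_j\in\Ua(\fl)$ and $c_j$ absolute constants, so it is enough to bound $\|\Deltach_P^{-1}\Ht_Q(v_j\phi)\|_{L^1(L/H_L)}$. Since $\Ht_Q(v_j\phi)=\Deltach_Q\cdot\Rt_Q(v_j\phi)|_L$ and $\Rt_Q(v_j\phi)(l)=\int_{N_Q/H_{N_Q}}(v_j\phi)(ln)\,dn$, I estimate
\[
\bigl\|\Deltach_P^{-1}\Ht_Q(v_j\phi)\bigr\|_{L^1(L/H_L)}
\le M_U\int_{L/H_L}\frac{\Deltach_Q(l)}{\Deltach_P(l)}\int_{N_Q/H_{N_Q}}|(v_j\phi)(ln)|\,dn\,dl,
\]
and the inner double integral is controlled by Lemma~\ref{l: int Radon over K L HL} (applied with the constant function $1$ in place of the $K$-average, or rather by integrating the pointwise bound) by $c\,\|v_j\phi\|_{L^1(G/H)}$. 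Finally $\|v_j\phi\|_{L^1(G/H)}\le \mathrm{vol}(U)\,\|v_j\phi\|_{\infty}$, which is a continuous seminorm on $C_U^\infty(G/H)$.

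Assembling these estimates yields, for each $u\in\Ua(\fl)$, a bound of the form $\|u(\Deltach_P^{-1}\Ht_Q\phi)\|_{L^1(L/H_L)}\le C_{U,u}\sum_j\|v_j\phi\|_\infty$ valid for all $\phi\in C_U^\infty(G/H)$, which is exactly continuity of $\Deltach_P^{-1}\Ht_Q\colon C_U^\infty(G/H)\to L^1(L/H_L)^\infty$; letting $U$ vary gives continuity on $C_c^\infty(G/H)$. The main obstacle I anticipate is the bookkeeping needed to see that $\Deltach_P^{-1}$ is genuinely bounded on $M\exp(C+\GammaQ)\cdot H_L$: one must check that the character $\rho_P-\rho_{P,\fh}$, restricted to $\faq$, is $\ge 0$ on the cone $\GammaQ$, which is where the hypothesis $P\in\cP_\gs(A,Q)$ (i.e. $P\succeq Q$) enters — via $\gS(P,\gs)\subseteq\gS(Q,\gs)$ and the description of $\GammaQ$ in terms of $\gS(Q)_-\subseteq\gS(Q,\gs\Cartan)$ — and this is the only place one really uses $P$-specific information beyond what Lemma~\ref{l: int over P dot H} already provides. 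Everything else is a routine repackaging of the two cited propositions together with the elementary fact that an inductive-limit map is continuous iff its restriction to each step is.
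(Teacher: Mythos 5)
Your reduction via the Leibniz rule and the inductive limit topology is the same as the paper's, and your observation that Proposition \ref{Prop sqrt(1/Delt_P)H_Q phi vanishing at infinity for phi smooth L^1 vector} already yields membership of $\Deltach_P^{-1}\Ht_Q\phi$ in $L^1(L/H_L)^\infty$ (so that only continuity is at stake) is correct. However, your key estimate contains a genuine gap: you claim that
$$
\int_{L/H_{L}}\frac{\Deltach_{Q}(l)}{\Deltach_{P}(l)}\int_{N_{Q}/H_{N_{Q}}}|(v_{j}\phi)(ln)|\,dn\,dl
\;\leq\; c\,\|v_{j}\phi\|_{L^1(G/H)},
$$
invoking Lemma \ref{l: int Radon over K L HL} ``with the constant function $1$ in place of the $K$-average.'' The $K$-integral in that lemma cannot be dropped: the lemma comes from Lemma \ref{l: int over P dot H}, where the $L^1(G/H)$-norm controls the \emph{triple} integral including the outer integral over $N_P\cap\bar N_Q$, and averaging over $K$ is what absorbs that extra variable. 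Without the $K$-average, the left-hand side above is the integral of $|v_j\phi|$ against a measure carried by the set $LN_Q\cdot H/H$, which has positive codimension $\dim(N_P\cap\bar N_Q)$ in $G/H$ whenever $Q\neq P$ (i.e.\ whenever $Q$ is not itself $\fq$-extreme). Taking nonnegative bumps $\phi_\epsilon$ equal to $1$ on an $\epsilon$-ball around a point of this slice and supported in a fixed compact set, the left-hand side decays like $\epsilon^{\,\dim(LN_Q\cdot H/H)}$ while $\|\phi_\epsilon\|_{L^1}$ decays like $\epsilon^{\,\dim G/H}$, so no constant $c$ exists, even for functions supported in a fixed compact $U$. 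Your preparatory use of Proposition \ref{Prop support of H phi} and the boundedness of $\Deltach_P^{-1}$ on $M\exp(C+\GammaQ)\cdot H_L$ does not repair this, because the obstruction lies in the geometry of the slice, not in the weight (and in your displayed estimate the constant $M_U$ in fact plays no role, since you kept the full ratio $\Deltach_Q/\Deltach_P$ inside the integral).

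The correct conclusion of your argument would need a bound of the form $C_U\,\sup|v_j\phi|$ rather than $c\,\|v_j\phi\|_{L^1}$, and the efficient way to obtain it is the paper's: fix $\vartheta\in C_c^\infty(G/H)$, $\vartheta\geq 0$, $\vartheta=1$ on a neighborhood of $U$; then $|v_j\phi|\leq(\sup|v_j\phi|)\,\vartheta$ pointwise, so by monotonicity of the absolutely convergent integrals defining $\Ht_Q$ one gets
$$
\Big|u\Big(\Deltach_P^{-1}\Ht_Q\phi\Big)\Big|
\leq\Big(\sum_{j}|c_j|\sup|v_j\phi|\Big)\,\Deltach_P^{-1}\Ht_Q\vartheta ,
$$
and the right-hand side lies in $L^1(L/H_L)$ by Proposition \ref{Prop sqrt(1/Delt_P)H_Q phi vanishing at infinity for phi smooth L^1 vector} applied to $\vartheta\in L^1(G/H)^\infty$. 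This yields the required seminorm estimate on $C_U^\infty(G/H)$ directly, with no appeal to Lemma \ref{l: int Radon over K L HL} or to the support theorem.
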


\begin{proof}
Let $\phi\in C_{c}^{\infty}(G/H)$ and let $u\in \Ua(\fg)$. Let  $c_{j}$ and $v_{j}$ be as in the proof for Proposition \ref{Prop sqrt(1/Delt_P)H_Q phi vanishing at infinity for phi smooth L^1 vector}. Then
$$
u\left(\frac{\Ht_{Q}\phi}{\Deltach_{P}}\right)
=\sum_{j=1}^{n}c_{j}\frac{\Ht_{Q}(v_{j}\phi)}{\Deltach_{P}}.
$$
Let $U$ be a compact subset of $G/H$ such that $\supp\phi\subseteq    U$ and let $\vartheta\in C_{c}^{\infty}(G/H)$ be non-negative and equal to $1$ on an open neighborhood of
$U$. Then
\begin{equation}\label{eq estimate u(HQ phi)}
\left|u\left(\frac{\Ht_{Q}\phi}{\Deltach_{P}}\right)\right|
\leq\Big(\sum_{j=1}^{n}|c_{j}|\sup|v_{j}\phi|\Big)\frac{\Ht_{Q}\vartheta}{\Deltach_{P}}.
\end{equation}
It follows from Proposition \ref{Prop sqrt(1/Delt_P)H_Q phi vanishing at infinity for phi smooth L^1 vector} that $\Deltach_{P}^{-1}\Ht_{Q}\vartheta\in L^1(L/H_{L})^{\infty}$. From (\ref{eq estimate u(HQ phi)}) we now conclude that $\Deltach_{P}^{-1}\Ht_{Q}$ is a continuous linear
map $C_{c}^{\infty}(G/H)\to L^1(L/H_{L})^{\infty}$.
\end{proof}

We write $\Gamma(Q)^{\circ}$ for the dual cone of $\Gamma(Q)$, i.e.,
$$
\Gamma(Q)^{\circ}
=\{\lambda\in\faq^{*}:\lambda\geq 0 \textnormal{ on } \Gamma(Q)\}.
$$
Furthermore, we define
\begin{equation}\label{eq def Omega_Q}
\Omega_{Q}
:=\bigcup_{P\in\cP_{\sigma}(A,Q)}-(\rho_P-\rho_{P,\fh})-\Gamma(Q)^\circ  + i\faq^{*}.
\end{equation}
\begin{Cor}
Let $\lambda\in\Omega_{Q}$ and let
the character $\chi_\gl: L \to \R_{> 0}$   be given by
$$
\chi_{\lambda}(ma)
=a^{\lambda}\qquad(m\in M, a\in A).
$$
Then $\chi_{\lambda}\Ht_{Q}\phi\in L^1(L/H_{L})^{\infty}$ for every $\phi\in C_{c}^{\infty}(G/H)$.
Moreover, the map
$$
C_{c}^{\infty}(G/H)\to L^1(L/H_{L})^{\infty};
    \qquad
    \phi\mapsto\chi_{\lambda}\Ht_{Q}\phi
$$
is continuous.
\end{Cor}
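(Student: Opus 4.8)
The plan is to reduce the assertion to Proposition~\ref{Prop Delta_P_0^(-1/2)H_Q maps C_c^infty to C(L/L cap H)} --- the borderline case $\lambda = -(\rho_P - \rho_{P,\fh})$ --- by absorbing the extra exponential factor into a character of $L$ and then invoking the support estimate of Proposition~\ref{Prop support of H phi} to keep that character bounded where it matters.

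First I would unpack the definition (\ref{eq def Omega_Q}): for $\lambda \in \Omega_Q$ fix $P \in \cP_\gs(A,Q)$, $\mu \in \Gamma(Q)^\circ$ and $\nu \in \faq^*$ with $\lambda = -(\rho_P - \rho_{P,\fh}) - \mu + i\nu$. Put $\psi := \Deltach_P^{-1}\Ht_Q\phi$ and $\eta := \chi_\lambda\Deltach_P$, so that $\chi_\lambda\Ht_Q\phi = \eta\,\psi$. By (\ref{e: Delta Q and rho}) the function $\eta$ is a character of $L$, trivial on $M$ and on $A \cap H$ (hence right $H_L$-invariant, so $\eta\,\psi$ is a genuine function on $L/H_L$), with $|\eta(ma)| = a^{-\mu}$ for $m \in M$, $a \in A$. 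Proposition~\ref{Prop Delta_P_0^(-1/2)H_Q maps C_c^infty to C(L/L cap H)} gives $\psi \in L^1(L/H_L)^\infty$ together with the continuity of $\phi \mapsto \psi$ from $C_{c}^{\infty}(G/H)$ to $L^1(L/H_L)^\infty$; it therefore remains to show that multiplication by $\eta$ preserves $L^1(L/H_L)^\infty$ and acts continuously on functions of the form $\psi$.

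The step I expect to carry the real content is the support bound. Given $\phi \in C_{c}^{\infty}(G/H)$, compactness of $\supp\phi$ together with the polar decomposition of $G/H$ lets me choose a compact $C_0 \subseteq \faq$ with $\supp\phi \subseteq K\exp(C_0)\cdot H$; replacing $C_0$ by the convex hull of its (finite) $\Nor_{K\cap H}(\faq)$-orbit, I obtain a compact, convex, $\Nor_{K\cap H}(\faq)$-invariant $C$ with $\supp\phi \subseteq K\exp(C)\cdot H$. Proposition~\ref{Prop support of H phi} then yields $\supp(\Ht_Q\phi) \subseteq M\exp(C + \GammaQ)\cdot H_L$, and hence $\supp\psi$ lies there too. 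Writing an element of this set as $m\exp(X_0 + Y)h$ with $m \in M$, $X_0 \in C$, $Y \in \GammaQ$ and $h \in H_L$, and using that $\mu \geq 0$ on $\GammaQ$ (that is, $\mu \in \Gamma(Q)^\circ$), one finds $|\eta| = e^{-\mu(X_0) - \mu(Y)} \leq e^{-\mu(X_0)} \leq e^{R_C}$, where $R_C := \sup_{X \in C}|\mu(X)| < \infty$ depends only on $C$. So, although $\eta$ need not be bounded on $L/H_L$, it is bounded by $e^{R_C}$ on $\supp\psi$. This is exactly where the shape of $\Omega_Q$ is used: the real part of $\lambda$ lies below $-(\rho_P - \rho_{P,\fh})$ by an element of the cone $\Gamma(Q)^\circ$ dual to the support cone $\GammaQ$ of Proposition~\ref{Prop support of H phi}.

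The remainder is the familiar character--Leibniz bookkeeping of this section. For $u \in \Ua(\fl)$ the Leibniz rule, combined with the fact that $\eta$ is a character, gives $u(\eta\,\psi) = \sum_{j=1}^{n} c_j\, \eta\, (v_j\psi)$ with scalars $c_j$ and $v_j \in \Ua(\fl)$, exactly as in the proof of Proposition~\ref{Prop sqrt(1/Delt_P)H_Q phi vanishing at infinity for phi smooth L^1 vector}. Each $v_j\psi$ lies in $L^1(L/H_L)$ and is supported in $\supp\psi$, so $\|\eta\,(v_j\psi)\|_{L^1} \leq e^{R_C}\|v_j\psi\|_{L^1}$ and hence $u(\eta\,\psi) \in L^1(L/H_L)$; since this holds for every $u$, we get $\chi_\lambda\Ht_Q\phi = \eta\,\psi \in L^1(L/H_L)^\infty$. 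For continuity I would restrict to $C_U^{\infty}(G/H)$ for a fixed compact $U \subseteq G/H$, so that $C = C_U$ and $R_C$ may be taken fixed; then the estimate above together with the continuity of $\phi \mapsto \psi$ bounds $\|u(\chi_\lambda\Ht_Q\phi)\|_{L^1}$ by a continuous seminorm of $\phi$ on $C_U^{\infty}(G/H)$ for each $u \in \Ua(\fl)$. As these seminorms generate the Fr\'echet topology of $L^1(L/H_L)^\infty$ and $C_{c}^{\infty}(G/H)$ carries the inductive limit topology, the desired continuity follows. I do not foresee any serious obstacle beyond the support estimate of the third paragraph.
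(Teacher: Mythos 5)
Your argument is correct and is essentially the paper's own proof spelled out: the paper likewise factors $\chi_\lambda\Ht_Q\phi = (\chi_\lambda\Deltach_P)\cdot(\Deltach_P^{-1}\Ht_Q\phi)$, observes that $u(\chi_\lambda\Deltach_P)$ is a constant multiple of $\chi_\lambda\Deltach_P$ and bounded on $\exp(C+\Gamma(Q))$ because $\lambda\in\Omega_Q$, and then combines the Leibniz rule with Propositions \ref{Prop support of H phi} and \ref{Prop Delta_P_0^(-1/2)H_Q maps C_c^infty to C(L/L cap H)}. Your explicit verification of the right $H_L$-invariance of $\chi_\lambda\Deltach_P$ and the reduction to $C_U^\infty(G/H)$ for continuity are exactly the details the paper leaves implicit.
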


\begin{proof}
For every $u\in\Ua(\fl)$ the function $u(\chi_{\lambda}\Deltach_{P})$ is bounded on $\GammaQ$. The
result now follows by application of the Leibniz
rule and Propositions \ref{Prop support of H phi} and \ref{Prop Delta_P_0^(-1/2)H_Q maps C_c^infty to C(L/L cap H)}.
\end{proof}

\section{Harish-Chandra -- Schwartz functions}
\label{s: Schwartz functions}
\subsection{Definitions}
In this subsection we recall some basic facts on the Harish-Chandra space of
$L^p$-Schwartz functions on $G/H$ from \cite[Sect.\ 17]{vdBan_PrincipalSeriesII}, and give a
characterization that will be useful in the next subsection.

Let $\tau:G/H\to[0,\infty[\,$ and  $\Theta:G/H\to\,]0,1]$ be defined by
$$
\tau(kaH) =  \|\log a\|,\qquad \Theta(g\cdot H)
=\sqrt{\Xi(g\sigma(g)^{-1})}.
$$
Here $\Xi$ is Harish-Chandra's bi-$K$-invariant elementary spherical function $\phi_{0}$ on $G,$
see, e.g.,  \cite[p.~329]{Varadarajan_HarmonicAnalysisOnRealReductiveGroups}.
Let $V$ be a complete locally convex Hausdorff space and let $\mathcal{N}(V)$
denote the set of continuous seminorms on $V$.
Let $1 \leq p < \infty.$
A
smooth function $\phi:G/H\to V$ is said to be $L^p$-Schwartz if
 all seminorms
$$
\mu_{u,N,\eta}(\phi):=\sup\ \Theta^{-\frac{2}{p}}(1+\tau)^{N}\eta(u\phi)\qquad
	\big(u\in\Ua(\fg),\ N\in\N, \eta\in\mathcal{N}(V)\big)$$ are finite.
The space of
such functions is denoted by $\cC^{p}(G/H,V)$.
Equipped with the topology induced by the mentioned semi-norms, $\cC^{p}(G/H,V)$ is a complete locally convex space. Furthermore, it is Fr\'echet if $V$ is Fr\'echet.

Let $\fv$ be a $\sigma$ and $\theta$-stable central subalgebra of $\fg$ such that $G={}^{\circ}G\times\exp(\fv)$, where ${}^{\circ}G=K\exp\big(\fp\cap[\fg,\fg])$.
Define the functions $\Phi_1, \Phi_2: G \to [1,\infty [\,$ by
$$
\Phi_1
:=  1+|\log\after\Theta|=1-\log\after \Theta,
$$
$$
\Phi_{2}\big(g\exp(v_{\fh}+v_{\fq})\big):=\sqrt{1+\|v_{\fq}\|^{2}}
    \qquad(g\in{}^{\circ}G, v_{\fh}\in\fv\cap\fh, v_{\fq}\in\fv\cap\fq).
$$
By \cite[Lemma 17.10]{vdBan_PrincipalSeriesII} there exists a positive constant $C$ such that
$$C^{-1}(1+\tau)\leq\Phi_1+\Phi_{2}\leq C(1+\tau).$$
Moreover, $\Phi_1$ and $\Phi_{2}$ are real analytic and for every $u\in \Ua(\fg)$ there exists a constant $c>0$ such that
\begin{equation}
\label{e: derivative Phi}
|u\Phi_{j}|\leq c\Phi_{j} \qquad (j=1,2).
\end{equation}
The following result is now straightforward.

\begin{Lemma}\label{L: Cor characterization of Schwartz functions}
Let $\phi:G/H\to V$ be smooth. Then $\phi\in\cC^{p}(G/H,V)$ if and only if all seminorms
$$\nu_{u,N,\eta}(\phi):=
	\sup\ e^{\frac{2}{p}\Phi_1}(\Phi_1+\Phi_{2})^{N}\eta(u\phi)\qquad
	\big(u\in\Ua(\fg),\ N\in\N, \eta\in\mathcal{N}(V)\big)
$$
are finite.
\end{Lemma}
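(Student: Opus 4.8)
The plan is to derive the equivalence directly from the two-sided estimate $C^{-1}(1+\tau)\leq\Phi_1+\Phi_2\leq C(1+\tau)$ together with the observation that, up to a bounded factor, $\Theta^{-2/p}$ and $e^{\frac{2}{p}\Phi_1}$ coincide. Indeed, $\Phi_1 = 1 - \log\after\Theta$, so $e^{\Phi_1} = e\cdot\Theta^{-1}$, whence $e^{\frac{2}{p}\Phi_1} = e^{2/p}\,\Theta^{-2/p}$. Thus the two weight functions differ only by the fixed positive constant $e^{2/p}$, independent of all the parameters $u, N, \eta$.

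First I would record these two facts: (i) $e^{\frac{2}{p}\Phi_1} = e^{2/p}\Theta^{-2/p}$ as functions on $G/H$, and (ii) by \cite[Lemma 17.10]{vdBan_PrincipalSeriesII} there is $C>0$ with $C^{-1}(1+\tau)\leq\Phi_1+\Phi_2\leq C(1+\tau)$ pointwise on $G/H$. From (ii) one gets, for each fixed $N\in\N$, the two-sided bound $C^{-N}(1+\tau)^N\leq(\Phi_1+\Phi_2)^N\leq C^N(1+\tau)^N$.

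Then I would argue the equivalence of the seminorm families. Suppose $\phi\in\cC^p(G/H,V)$, i.e.\ every $\mu_{u,N,\eta}(\phi)<\infty$. Fix $u, N, \eta$. Using (i) and the upper bound in (ii),
$$
e^{\frac{2}{p}\Phi_1}(\Phi_1+\Phi_2)^N\eta(u\phi) \;\leq\; e^{2/p}C^N\,\Theta^{-2/p}(1+\tau)^N\eta(u\phi),
$$
so $\nu_{u,N,\eta}(\phi)\leq e^{2/p}C^N\mu_{u,N,\eta}(\phi)<\infty$. Conversely, if every $\nu_{u,N,\eta}(\phi)<\infty$, then using (i) and the lower bound in (ii),
$$
\Theta^{-2/p}(1+\tau)^N\eta(u\phi) \;\leq\; e^{-2/p}C^N\,e^{\frac{2}{p}\Phi_1}(\Phi_1+\Phi_2)^N\eta(u\phi),
$$
so $\mu_{u,N,\eta}(\phi)\leq e^{-2/p}C^N\nu_{u,N,\eta}(\phi)<\infty$. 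Taking suprema over $G/H$ throughout is legitimate since all factors are nonnegative. Hence the two seminorm families are comparable, and $\phi\in\cC^p(G/H,V)$ if and only if all $\nu_{u,N,\eta}(\phi)$ are finite.

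There is essentially no obstacle here: the statement is called "straightforward" in the excerpt, and the only mild point worth a sentence is that the comparison constants $e^{\pm2/p}C^{\pm N}$ depend on $N$ but not on $u$ or $\eta$, which is all that is needed since the finiteness of a seminorm is checked one $(u,N,\eta)$ at a time. One could also remark that the same reasoning shows the two families of seminorms generate the same topology on $\cC^p(G/H,V)$, though only the finiteness statement is asserted in the lemma.
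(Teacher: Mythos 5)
Your argument is correct and is exactly the "straightforward" comparison the paper has in mind: the paper gives no separate proof, relying precisely on the identity $e^{\frac{2}{p}\Phi_1}=e^{2/p}\,\Theta^{-2/p}$ and the two-sided bound $C^{-1}(1+\tau)\leq\Phi_1+\Phi_2\leq C(1+\tau)$ from \cite[Lemma 17.10]{vdBan_PrincipalSeriesII}, which is what you use. Nothing is missing; your remark that the comparison constants depend only on $N$ is the right observation, though not strictly needed for the finiteness statement.
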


We write $\cC^{p}(G/H)$ for $\cC^{p}(G/H,\C)$ and $\nu_{u,N}$ for $\nu_{u,N,|\cdot|}$. For convenience, we suppress  the super-script $p$ if $p=2$.

\subsection{Domination by $K$-fixed Schwartz functions}
We start this subsection with an important result which further on
will be applied to reduce the convergence of certain integrals to the case of $K$-finite functions.

\begin{Prop}\label{Prop domination by K-inv Schwartz functions}
There exists a map  $\cC^p(G/H)\to\cC^p(G/H)^{K};$  $\phi\mapsto \widehat{\phi}$ with the following properties.
\begin{enumerate}
\itema
$ |\phi|\leq \widehat{\phi},\;\;$ for all $\phi \in \cC^p(G/H).$
\itemb
Let $\nu$ be a continuous seminorm on $\cC^p(G/H).$ Then there exist constants $k \in \N$
and $C > 0$ such that, for all $\phi \in \cC^p(G/H),$
\begin{equation}
\label{e: estimate nu of hat phi}
\nu ( \widehat \phi )  \leq C \,\nu_{0, k} (\phi).
\end{equation}
\end{enumerate}
\end{Prop}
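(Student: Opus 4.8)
The plan is to construct $\widehat\phi$ by averaging $|\phi|$ over $K$ and then dominating the result by a $K$-fixed Schwartz function built from the structural functions $\Phi_1, \Phi_2$ introduced above. More precisely, first I would define the $K$-average
$$
\phi^\natural(x) := \int_K |\phi(k\cdot x)|\, dk \qquad (x \in G/H).
$$
This is a non-negative $K$-invariant function with $|\phi| \le \phi^\natural$ pointwise, but it is only continuous, not smooth, so it cannot serve as $\widehat\phi$ directly. The idea is to control $\phi^\natural$ by a genuinely smooth $K$-fixed function. Using the characterization in Lemma \ref{L: Cor characterization of Schwartz functions}, for each $N$ there is a constant bounding $e^{\frac2p \Phi_1}(\Phi_1+\Phi_2)^N |\phi|$ by $\nu_{0,N}(\phi)$, hence
$$
\phi^\natural(x) \le \nu_{0,N}(\phi)\, e^{-\frac2p \Phi_1(x)}(\Phi_1(x)+\Phi_2(x))^{-N},
$$
where I have used that $\Phi_1$ is $K$-invariant (being a function of $\Theta$) and $\Phi_2$ is essentially $K$-invariant on $G/H$, or can be replaced by its $K$-average up to bounded factors. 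So the natural candidate is
$$
\widehat\phi := C_0\,\nu_{0,k}(\phi)\, \Psi,
$$
where $\Psi$ is a fixed non-negative $K$-fixed smooth function on $G/H$ that decays like $e^{-\frac2p\Phi_1}(\Phi_1+\Phi_2)^{-k}$ for a suitable fixed $k$, and $C_0, k$ are chosen once and for all (not depending on $\phi$).

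The key steps in order: (1) exhibit such a universal dominating Schwartz function $\Psi \in \cC^p(G/H)^K$ — one can take for instance $\Psi = \Theta^{2/p}(\Phi_1+\Phi_2)^{-k}$ for $k$ large, or build it from a heat-kernel / resolvent type construction, and verify using \eqref{e: derivative Phi} and the product rule that all the Schwartz seminorms $\nu_{u,N}(\Psi)$ are finite; the estimates $|u\Phi_j| \le c\Phi_j$ make every derivative of $\Psi$ bounded by a constant times $\Psi$ itself times polynomial factors in $\Phi_1+\Phi_2$, which is exactly the decay needed. (2) Fix $k$ large enough that, for the chosen $\Psi$, one has the pointwise lower bound $\Psi(x) \ge c\, e^{-\frac2p\Phi_1(x)}(\Phi_1(x)+\Phi_2(x))^{-k}$ for all $x$ (this holds by construction up to equivalence of $\Phi_1+\Phi_2$ with $1+\tau$). (3) Combine: for $\phi \in \cC^p(G/H)$, using step (2) of the characterization lemma with that same $k$,
$$
|\phi(x)| \le \phi^\natural(x) \le \nu_{0,k}(\phi)\, e^{-\frac2p\Phi_1(x)}(\Phi_1(x)+\Phi_2(x))^{-k} \le c^{-1}\,\nu_{0,k}(\phi)\,\Psi(x),
$$
so defining $\widehat\phi := c^{-1}\nu_{0,k}(\phi)\,\Psi$ gives (a). (4) For (b): any continuous seminorm $\nu$ on $\cC^p(G/H)$ satisfies $\nu(\widehat\phi) = c^{-1}\nu_{0,k}(\phi)\,\nu(\Psi)$, and since $\Psi$ is a fixed element, $\nu(\Psi)$ is a fixed finite constant; setting $C := c^{-1}\nu(\Psi)$ yields \eqref{e: estimate nu of hat phi} with this $k$ and $C$. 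Strictly, one must take $k$ to be the maximum of the exponent needed in step (2) and any exponent that the seminorm $\nu$ demands, but since $\nu$ is controlled by finitely many $\nu_{u,N}$ and $\Psi$ lies in the Schwartz space, $\nu(\Psi)$ is automatically finite regardless, so no adjustment of $k$ is needed once it is large enough for (a).

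The main obstacle I anticipate is handling the function $\Phi_2$ under $K$-averaging: $\Phi_2$ is defined via the split-central part $\exp(\fv_\fq)$ and is not literally $K$-invariant, so one needs either to observe that $\Phi_2(k\cdot x)$ is comparable to $\Phi_2(x)$ uniformly in $k$ (because $K$ acts trivially on the central $\exp(\fv)$ factor, $G = {}^\circ G \times \exp(\fv)$ and $K \subseteq {}^\circ G$, so actually $\Phi_2$ \emph{is} $K$-invariant), or replace $1+\tau \asymp \Phi_1 + \Phi_2$ directly and average $(1+\tau)$, which is only $K$-semi-invariant in a controlled way. Getting this comparison cleanly, and then checking that the chosen universal $\Psi$ genuinely lies in $\cC^p(G/H)^K$ with all seminorms finite (a routine but slightly tedious induction on the order of $u$ using \eqref{e: derivative Phi}), are the two points requiring care; everything else is bookkeeping with the seminorm characterization of Lemma \ref{L: Cor characterization of Schwartz functions}.
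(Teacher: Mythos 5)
Your construction cannot work as stated, and the obstruction is fundamental rather than technical. You propose to take $\widehat\phi := c^{-1}\nu_{0,k}(\phi)\,\Psi$ with a \emph{fixed} $K$-invariant function $\Psi$ satisfying a lower bound $\Psi \geq c\, e^{-\frac2p\Phi_1}(\Phi_1+\Phi_{2})^{-k}$ for a fixed $k$. Such a $\Psi$ is never an element of $\cC^p(G/H)$: since $e^{-\frac2p\Phi_1}\asymp \Theta^{2/p}$ and $\Phi_1+\Phi_{2}\asymp 1+\tau$, one gets $\nu_{0,N}(\Psi)\gtrsim \sup_{G/H}(1+\tau)^{N-k}=\infty$ for every $N>k$ (as $\tau$ is unbounded whenever $G/H$ is noncompact). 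So step (1) of your plan fails: the estimates $|u\Phi_j|\leq c\Phi_j$ control derivatives of $\Psi$ by $\Psi$ itself, but membership in the Schwartz space requires decay faster than \emph{every} polynomial rate relative to $\Theta^{2/p}$, and your $\Psi$ has only the fixed rate $k$. Nor can the defect be repaired by choosing a better universal $\Psi$: no single $\Psi\in\cC^p(G/H)$ can dominate all $\phi\in\cC^p(G/H)$ up to a constant times one fixed seminorm, since given any strictly positive Schwartz $\Psi$ one can manufacture a Schwartz function comparable to $\Theta^{2/p}\sqrt{\Psi\,\Theta^{-2/p}}$, which decays strictly more slowly than $\Psi$, so that the ratio to $\Psi$ is unbounded. (A smaller slip: your $K$-average $\phi^\natural=\int_K|\phi(k\cdot x)|\,dk$ does not dominate $|\phi|$ pointwise; you would need a supremum over $K$, not an average. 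This part is fixable, unlike the choice of a universal majorant.)

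The point of the proposition is precisely that $\widehat\phi$ must inherit the full rapid decay of $\phi$ itself, at all polynomial orders, while the constant $k$ in (b) is allowed to depend on the seminorm $\nu$. This forces a genuinely $\phi$-dependent construction, which is what the paper does: it forms the two-variable decreasing majorant $S_\phi(x,y)=\sup\{|\phi|\ {\rm on}\ \Phi_1\geq i,\ \Phi_{2}\geq j\}$ for $(x,y)\in[i,i+1[\,\times[j,j+1[$, smooths it by convolving with a compactly supported bump $\chi\geq 0$ with $\supp\chi\subseteq\,]0,1[^{2}$ and $\int\chi=1$, and sets $\widehat\phi=(\chi*S_\phi)\circ(\Phi_1\times\Phi_{2})$. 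Monotonicity of $S_\phi$ and the support condition on $\chi$ give $\widehat\phi\geq|\phi|$, left $K$-invariance of $\Phi_1,\Phi_{2}$ gives $K$-invariance, and the convolution estimate of Lemma \ref{Lemma condions S => condition chi*S} together with the Leibniz/chain rule and (\ref{e: derivative Phi}) yields $\nu_{u,N}(\widehat\phi)\leq c_{u,N}\,\nu_{0,N+n}(\phi)$ with $n$ the order of $u$, which is exactly the shape of estimate (b). You would need to switch to an argument of this type (a $\phi$-dependent rearranged majorant, or some equivalent device) rather than a scaled universal dominating function.
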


We first prove two lemmas.

Let $\cL(\R^{2})$ be the space of locally integrable functions $\R^{2}\to\C$ which are constant on $\R^{2}\setminus[2,\infty[{\,}^{2}$.

\begin{Lemma}\label{Lemma condions S => condition chi*S}
Let $\chi\in C_{c}^{\infty}(\R^{2})$ satisfy
$\supp(\chi)\subseteq \,\,] -1,1[{\,}^{2}$. For every $N\in\N$ there exists a constant $c_{N}>0$ such that for  all  $S\in\cL(\R^{2})$
$$
\sup_{(x,y)\in[1,\infty[{\,}^{2}}e^{\frac{2}{p}x}(x+y)^{N}\Big|\big(\chi*S\big)(x,y)\Big|
\leq c_{N}\sup_{(x,y)\in[1,\infty[{\,}^{2}}e^{\frac{2}{p}x}(x+y)^{N}|S(x,y)|.
$$
\end{Lemma}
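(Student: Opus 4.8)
**Proof plan for Lemma \ref{Lemma condions S => condition chi*S}.**

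The plan is to estimate the convolution $(\chi * S)(x,y)$ at a point $(x,y) \in [1,\infty[^2$ by writing it as $\int_{\R^2} \chi(s,t) S(x-s, y-t)\, ds\, dt$ and using that $\chi$ is supported in $]-1,1[^2$, so the integration variables satisfy $|s|, |t| < 1$ and hence $x-s, y-t \in ]0,\infty[$. The first issue is that $S(x-s,y-t)$ need not be controlled by the stated supremum, since the point $(x-s,y-t)$ may have a coordinate in $]0,1[$ rather than in $[1,\infty[$; here I would use that $S \in \cL(\R^2)$, so $S$ is constant on the complement of $[2,\infty[^2$, together with the observation that moving from $(x-s,y-t)$ to a nearby point in $[1,\infty[^2$ changes $e^{\frac2p x}(x+y)^N$ only by a bounded factor (depending on $N$ but not on $S$), because $|s|,|t|<1$. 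Concretely, for any $(x,y) \in [1,\infty[^2$ and $|s|,|t|<1$ one has $\max(x-s,1) - (x-s) \le 1$ and similarly for the second coordinate, and the value $|S(x-s,y-t)|$ equals $|S(\max(x-s,2), \max(y-t,2))|$ when a coordinate drops below $2$, by the definition of $\cL(\R^2)$; in all cases $|S(x-s,y-t)|$ is bounded by $|S|$ evaluated at a point of $[1,\infty[^2$ lying within Euclidean distance $O(1)$ of $(x,y)$.

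Next I would carry out the elementary comparison: for $(x,y) \in [1,\infty[^2$ and any point $(x',y') \in [1,\infty[^2$ with $|x-x'| \le 2$ and $|y-y'| \le 2$, we have
$$
e^{\frac2p x}(x+y)^N \le e^{\frac4p}\, e^{\frac2p x'} (x+y)^N \le e^{\frac4p}\, 2^N\, e^{\frac2p x'}(x'+y')^N,
$$
using $x+y \le (x'+y') + 4 \le 2(x'+y')$ since $x'+y' \ge 2$. Equivalently,
$$
e^{\frac2p x}(x+y)^N |S(x-s,y-t)| \le C_N' \sup_{(x',y') \in [1,\infty[^2} e^{\frac2p x'}(x'+y')^N |S(x',y')|,
$$
with $C_N'$ depending only on $N$ and $p$. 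Integrating this bound against $|\chi(s,t)|$ over $\R^2$ and using $\int |\chi| < \infty$ gives the desired inequality with $c_N = C_N' \|\chi\|_{L^1}$.

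The main obstacle, such as it is, is purely bookkeeping: handling the case distinctions coming from the definition of $\cL(\R^2)$ (namely whether $x-s$ or $y-t$ falls below $2$) and verifying that in every case the weighted value of $|S|$ at the shifted point is dominated, up to an $N$-dependent constant, by the supremum on $[1,\infty[^2$. Once that is in place the convolution estimate is immediate from $\int_{\R^2}|\chi| < \infty$ and the translation invariance of Lebesgue measure. No deep input is needed; the key structural facts used are the compact support of $\chi$ inside $]-1,1[^2$ and the eventual constancy of members of $\cL(\R^2)$.
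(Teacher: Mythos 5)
Your overall strategy is the same as the paper's: bound $|(\chi*S)(x,y)|$ by $\|\chi\|_{L^1}$ times the supremum of $|S|$ over $(x,y)+\,]-1,1[^2$, compare the weights $e^{\frac{2}{p}x}(x+y)^N$ at points within bounded distance, and use the eventual constancy of $S$ to move comparison points back into $[1,\infty[^2$. However, the one step where the structure of $\cL(\R^2)$ actually enters is stated incorrectly. You claim that $|S(x-s,y-t)|=|S(\max(x-s,2),\max(y-t,2))|$ when a coordinate drops below $2$. This is false: the clamped point $(\max(x-s,2),\max(y-t,2))$ lies in $[2,\infty[^2$, which is exactly the set on which the definition of $\cL(\R^2)$ imposes no condition. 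For instance, if $S$ equals $1$ on $\R^2\setminus[2,\infty[^2$ and $0$ on $[2,\infty[^2$, then for $(x-s,y-t)=(3/2,5)$ the left-hand side is $1$ while the right-hand side is $0$; so neither the asserted equality nor even the one-sided bound you need holds, and the chain of estimates breaks at this point.

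The repair is immediate and lands you exactly on the paper's argument: clamp at level $1$ rather than $2$. If $(u,v)=(x-s,y-t)$ has a coordinate in $]0,1[$, replace it by $(u',v')=(\max(u,1),\max(v,1))\in[1,\infty[^2$; the clamped coordinate equals $1<2$, so both $(u,v)$ and $(u',v')$ lie in $\R^2\setminus[2,\infty[^2$, where $S$ is constant, and hence $|S(u,v)|=|S(u',v')|$. Moreover $x<u'+1$ and $y<v'+1$, so $e^{\frac{2}{p}x}(x+y)^N\le e^{\frac{2}{p}}\,2^N\,e^{\frac{2}{p}u'}(u'+v')^N$, using $u'+v'\ge 2$; integrating against $|\chi(s,t)|$ then yields the lemma with $c_N=e^{\frac{2}{p}}2^N\|\chi\|_{L^1}$, which is precisely the paper's proof carried out pointwise inside the integral rather than at the level of suprema. (A minor separate slip: your inequality $(x'+y')+4\le 2(x'+y')$ requires $x'+y'\ge 4$; with $|x-x'|,|y-y'|\le 2$ one only gets a factor $3^N$, which is of course harmless.)
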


\begin{proof}
Let $c=\int_{\R^{2}}|\chi(\xi)|\,d\xi$. Then
$$
|\chi*S(x,y)|
\leq c\sup_{(x,y)+ \,]-1,1\,[ ^{2}} |S|,
$$
hence
\begin{align*}
\sup_{(x,y)\in[1,\infty[{\,}^{2}}\Big|e^{\frac{2}{p}x}(x+y)^{N}\big(\chi*S\big)(x,y)\Big|
&\leq c \sup_{(x,y)\in[1,\infty[{\,}^{2}}
    \Big(e^{\frac{2}{p}x}(x+y)^{N}\sup_{(x,y)+\,]-1,1\,[{\,}^{2}} |S|\Big)\\
&\leq c \sup_{(u,v)\in \,]\,0,\infty\,[{\,}^{2}}
    \Big(e^{\frac{2}{p}(u+1)}(u+v+2)^{N} |S(u,v)|\Big).
\end{align*}
Since $S$ is constant on $]\,0 ,\infty\,[{\,}^{2}\setminus [2,\infty\,[{\,}^{2}$,
the supremum over $]\,0,\infty\,[{\,}^{2}$ can be replaced by a supremum over $[1,\infty\,[{\,}^2$. Using that
$$
\frac{u+v+2}{u+v}
\leq2
\qquad\big((u,v)\in[1,\infty\,[{\,}^{2}\big),
$$
we find
$$
\sup_{(x,y)\in[1,\infty\,[{\,}^{2}}\Big|e^{\frac{2}{p}x}(x+y)^{N}\big(\chi*S\big)(x,y)\Big|
\leq c\,e^{\frac{2}{p}}2^{N}\sup_{(x,y)\in[1,\infty,[{\,}^{2}}\Big(e^{\frac{2}{p}x}(x+y)^{N}|S(x,y)|\Big).
$$
This establishes the estimate.
\end{proof}

For $\phi\in\cC^{p}(G/H)$, let $S_{\phi}:\R^{2}\to\R$ be the function which for $(x,y)\in[i,i+1 [\,\times
[j,j+1[\,$ with $i,j\in\Z$ is given by
$$
S_{\phi}(x,y)
=\sup_{\Phi_1^{-1}([i,\infty\,[\,)\cap\Phi_{2}^{-1}([j,\infty\,[\,)}|\phi|.
$$
Note that $\Phi_j^{-1}([1, \infty\,[\,) = \Phi^{-1}_j(\R),$ so that $ S_{\phi}\in\cL(\R^{2})$.
For $\chi\in C_{c}^{\infty}(\R^{2})$ we define the smooth function
\begin{equation}
\label{e: defi hat phi chi}
\widehat{\phi}_{\chi}:G/H\to\C,\qquad x\mapsto \big(\chi*S_{\phi}\big)\big(\Phi_1(x),\Phi_{2}(x)\big).
\end{equation}
Since $\Phi_1$ and $\Phi_2$ are
left $K$-invariant, so is the function (\ref{e: defi hat phi chi}).

\begin{Lemma}\label{Lemma construction $K$-invariant Schwartz function}
Let  $\chi\in C_{c}^{\infty}(\R^{2})$ have support contained in $\,]-1,1\,[{\,}^{2}$.
\begin{enumerate}
\itema
If  $\phi\in\cC^p(G/H)$ then $\widehat{\phi}_{\chi}\in\cC^{p}(G/H)^{K}$.
\itemb
Let $u \in \Ua(\fg)$ be of order $n.$ Then for every $N\in\N$ there exists a  constant $c_{u,N}>0$
such that
\begin{equation}\label{e: estimate nu hat phi}
\nu_{u, N}(\widehat{\phi}_{\chi})
\leq c_{u, N}\,\nu_{0,N + n}(\phi) \qquad (\phi \in \cC^p(G/H)).
\end{equation}

\end{enumerate}
\end{Lemma}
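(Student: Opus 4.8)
\emph{Proof plan.} The plan is to deduce (a) from (b) and to concentrate all the work on the estimate (\ref{e: estimate nu hat phi}). For (a): the function $\widehat\phi_\chi=(\chi*S_\phi)\after(\Phi_1,\Phi_2)$ is the composition of the smooth function $\chi*S_\phi$ on $\R^{2}$ with the real analytic map $(\Phi_1,\Phi_2)\colon G/H\to\R^{2}$, hence smooth, and it is left $K$-invariant since $\Phi_1$ and $\Phi_2$ are, as recorded below (\ref{e: defi hat phi chi}). Once (b) is available we obtain, for every $u\in\Ua(\fg)$ of order $n$ and every $N\in\N$, that $\nu_{u,N}(\widehat\phi_\chi)\le c_{u,N}\,\nu_{0,N+n}(\phi)<\infty$ because $\phi\in\cC^{p}(G/H)$; by the characterization in Lemma \ref{L: Cor characterization of Schwartz functions} this gives $\widehat\phi_\chi\in\cC^{p}(G/H)^{K}$.

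For (b) I would first set $F:=\chi*S_\phi\in C^{\infty}(\R^{2})$, so that $\widehat\phi_\chi=F\after(\Phi_1,\Phi_2)$, and expand $u\widehat\phi_\chi$ for $u$ of order $n$ by iterating the identity $X\big(F\after(\Phi_1,\Phi_2)\big)=\sum_{k}\big((\partial_{k}F)\after(\Phi_1,\Phi_2)\big)X\Phi_{k}$ over left-invariant vector fields $X$. This exhibits $u\widehat\phi_\chi$ as a finite sum of terms $\big((\partial^{\alpha}F)\after(\Phi_1,\Phi_2)\big)\prod_{i}(u_{i}\Phi_{j_{i}})$ with $|\alpha|\le n$, with each $u_{i}\in\Ua(\fg)$ of order $\ge1$, with $\sum_{i}\mathrm{ord}(u_{i})\le n$, and with at most $n$ factors in the product. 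Using $|u_{i}\Phi_{j_{i}}|\le c\,\Phi_{j_{i}}\le c(\Phi_1+\Phi_2)$, which is (\ref{e: derivative Phi}), together with $\Phi_1+\Phi_2\ge2$, the product is dominated by $C(\Phi_1+\Phi_2)^{n}$ with $C$ depending only on $u$, whence
\[
|u\widehat\phi_\chi|\ \le\ C\,(\Phi_1+\Phi_2)^{n}\sum_{|\alpha|\le n}\big|(\partial^{\alpha}F)\after(\Phi_1,\Phi_2)\big|.
\]

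The next step is to observe that $\partial^{\alpha}F=(\partial^{\alpha}\chi)*S_\phi$, where $\partial^{\alpha}\chi$ again lies in $C_{c}^{\infty}(\R^{2})$ with support in $\,{]-1,1[}^{2}$, so that Lemma \ref{Lemma condions S => condition chi*S} applies to each summand with $N+n$ in place of $N$. Since $(\Phi_1(z),\Phi_2(z))\in{[1,\infty[}^{2}$ for all $z\in G/H$, this yields, with a new constant $C'$ depending on $u,N,\chi$,
\[
e^{\frac{2}{p}\Phi_1}(\Phi_1+\Phi_2)^{N}\,|u\widehat\phi_\chi|\ \le\ C'\sup_{{[1,\infty[}^{2}}e^{\frac{2}{p}x}(x+y)^{N+n}|S_\phi|.
\]
Finally I would bound the right-hand supremum by $\nu_{0,N+n}(\phi)$: for integers $i,j\ge1$ and $(x,y)\in[i,i+1[\,\times\,[j,j+1[$ the definition of $S_\phi$ produces points $z$ with $\Phi_1(z)\ge i$, $\Phi_2(z)\ge j$ and $|\phi(z)|$ arbitrarily close to $S_\phi(x,y)$, whence $e^{\frac{2}{p}i}(i+j)^{N+n}|\phi(z)|\le e^{\frac{2}{p}\Phi_1(z)}(\Phi_1(z)+\Phi_2(z))^{N+n}|\phi(z)|\le\nu_{0,N+n}(\phi)$; combined with $e^{\frac{2}{p}x}(x+y)^{N+n}\le e^{2/p}2^{N+n}\,e^{\frac{2}{p}i}(i+j)^{N+n}$ on that cell, this gives $\sup_{{[1,\infty[}^{2}}e^{\frac{2}{p}x}(x+y)^{N+n}|S_\phi|\le e^{2/p}2^{N+n}\nu_{0,N+n}(\phi)$. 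Taking the supremum over $z\in G/H$ in the previous display then produces (\ref{e: estimate nu hat phi}).

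The step I expect to be the main obstacle is the chain-rule expansion: one must organize the derivatives of $u\widehat\phi_\chi$ so that the growth of the iterated derivatives of $\Phi_1$ and $\Phi_2$ — controlled only through the crude inequality (\ref{e: derivative Phi}) — is absorbed into the single power $(\Phi_1+\Phi_2)^{n}$, while the derivatives of $F$ are kept in the convolution shape $(\partial^{\alpha}\chi)*S_\phi$ needed to invoke Lemma \ref{Lemma condions S => condition chi*S}. Everything after that is routine bookkeeping with the weights $e^{\frac{2}{p}x}$ and $(x+y)^{M}$, using only $\Phi_j\ge1$ and the definition of $S_\phi$.
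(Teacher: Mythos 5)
Your proposal is correct and follows essentially the same route as the paper: reduce (a) to (b) via the smoothness and $K$-invariance of $\widehat\phi_\chi$ and the characterization of $\cC^p$ by seminorms, expand $u\widehat\phi_\chi$ by Leibniz/chain rule so that the derivatives of $\Phi_1,\Phi_2$ are absorbed into $(\Phi_1+\Phi_2)^n$ via (\ref{e: derivative Phi}) while the derivatives of $F=\chi*S_\phi$ stay in the form $(\partial^\alpha\chi)*S_\phi$, then apply Lemma \ref{Lemma condions S => condition chi*S} and the cellwise comparison of $S_\phi$ with $\nu_{0,N+n}(\phi)$. The only cosmetic difference is that the paper first reduces to the case $u=1$ and applies the convolution lemma with exponent $N$ to each $\widehat\phi_{\partial^\mu\chi}$, whereas you apply it directly with $N+n$; the estimates are identical.
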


\begin{proof}
Since the function (\ref{e: defi hat phi chi}) is smooth and left $K$-invariant, it suffices to prove (b).
Let $u\in\Ua(\fg)$ and let $n$ be the order of $u$. Then by repeated application of the Leibniz and the chain rule it follows that there exists a finite set $F \subseteq   \Ua(\fg)$ and
for every multi-index $\mu$ in two variables, with $|\mu|\leq n$, a polynomial expression
$P_\mu$ in $(v\Phi_j :  v\in F, j =1,2),$
of total degree at most $n,$ such that
$$
u\widehat{\phi}_{\chi} =
\sum_{|\mu|\leq n} P_\mu \cdot \widehat{\phi}_{\partial^{\mu}\chi} \qquad (\phi \in \cC^p(G/H)).
$$
In view of (\ref{e: derivative Phi}) this leads to the existence of a constant $C > 0$ such that
$$
|u\widehat{\phi}_{\chi}|
\leq C \, (\Phi_1+\Phi_{2})^n \, \sum_{|\mu|\leq n} |\widehat{\phi}_{\partial^{\mu}\chi}|.
$$
Therefore,
$$
\nu_{u,N}(\widehat{\phi}_{\chi})
\leq C \, \sum_{|\mu|\leq n} \nu_{0,N+ n}(\widehat{\phi}_{\partial^{\mu}\chi}).
$$
Thus, in order to prove the lemma, it suffices to prove that  for every $N\in\N$ and
$\chi\in C_{c}^{\infty}(\R^{2})$ the estimate (\ref{e: estimate nu hat phi}) holds for
$u=1$.

Let $N\in\N$. Then
\begin{align*}
\nu_{0,N}(\widehat{\phi}_{\chi})
&=\sup_{G/H}\ e^{\frac{2}{p}\Phi_1}(\Phi_1+\Phi_{2})^{N}
    \big|\big(\chi*S_{\phi}\big)\after(\Phi_1\times\Phi_{2})\big| \nonumber \\
&=\sup_{(x,y)\in[1,\infty\,[{\,}^{2}}\ e^{\frac{2}{p}x}(x+y)^{N}\big|\chi*S_{\phi}(x,y)\big|.
\end{align*}
By Lemma \ref{Lemma condions S => condition chi*S} we now infer the
existence of a constant $c_{N}>0$ such that
\begin{equation}
\label{e: estimate nu zero N}
\nu_{0,N}(\widehat{\phi}_{\chi}) \leq
c_{N}\sup_{(x,y)\in[1,\infty\,[{\,}^{2}}e^{\frac{2}{p}x}(x+y)^{N}|S_{\phi}(x,y)|.
\end{equation}
Let $(x,y)\in[1,\infty\,[{\,}^{2}$.   There exist unique $i,j\in\Z_{\geq0}$ such that $i\leq x<i+1$ and $j\leq y<j+1$. Then
\begin{align*}
&e^{\frac{2}{p}x}(x+y)^{N}|S_{\phi}(x,y)|
=e^{\frac{2}{p}x}(x+y)^{N}
    \Big(\sup_{\Phi_1^{-1}([i,\infty\,[\,)\cap\Phi_{2}^{-1}([j,\infty\,[\,)}|\phi|\Big)\\
&\qquad\leq\sup_{\Phi_1^{-1}([i,\infty\,[\,)\cap\Phi_{2}^{-1}([j,\infty\,[\,)}
    e^{\frac{2}{p}(\Phi_1+1)}(\Phi_1+\Phi_{2}+2)^{N}|\phi|\\
&\qquad\leq\sup\ e^{\frac{2}{p}(\Phi_1+1)}(\Phi_1+\Phi_{2}+2)^{N}|\phi|\\
& \qquad\leq
\sup\ e^{\frac{2}{p}(\Phi_1+1)}2^N (\Phi_1+\Phi_{2})^{N}|\phi| = 2^N e^{\frac2{p}} \nu_{0,N} (\phi).
\end{align*}
Combining this estimate with (\ref{e: estimate nu zero N}) we obtain
the estimate of (b) with $u = 1.$
\end{proof}

\begin{proof}[Proof of Proposition \ref{Prop domination by K-inv Schwartz functions}]
Let $\chi\in C_{c}^{\infty}\big(\,]\,0,1\,[{\,}^{2}\big)$ be a non-negative function such that $\int_{\R}\chi(x)\,dx=1.$ If $\phi \in \cC^p(G/H)$
then $\widehat{\phi}_{\chi}\in C^\infty(G/H)^{K}.$
 Moreover, since $S_{\phi}$ is decreasing in both variables, it follows from the condition on $\supp \chi$ that
$$
\widehat{\phi}=
\big(\chi*S_{\phi}\big)\circ(\Phi_1\times\Phi_1)
\geq S_{\phi}\circ(\Phi_1\times\Phi_1)
\geq |\phi|.
$$
This establishes (a).

In order to complete the proof, it suffices to prove (b) for $\nu = \nu_{u, N},$ with
$u \in U(\fg)$ of order at most $n$ and for $N \in \N.$ Let $k = N + n.$ Then the estimate
(\ref{e: estimate nu of hat phi}) follows by application of Lemma \ref{Lemma construction $K$-invariant Schwartz function}.
\end{proof}

For the application of Proposition \ref{Prop domination by K-inv Schwartz functions} we will need the following useful lemma.

\begin{Lemma}
\label{l: monotone approximation Schwartz}
Let $\psi \in \cC(G/H)^K$ be non-negative.
Then there exists a monotonically increasing sequence $(\psi_j)_{j\in \N}$ in $C_c^\infty(G/H)^K$
such that $\psi_j \to \psi$ in $\cC(G/H)^K,$ for  $j \to \infty.$
\end{Lemma}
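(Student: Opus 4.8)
The plan is to exhaust $\psi$ by multiplying with a fixed family of cut-off functions that depends only on the $K$-invariant ``radial'' variable, and then to verify the convergence directly from the characterization of the Schwartz topology in Lemma \ref{L: Cor characterization of Schwartz functions}. Concretely, fix a smooth function $\eta\colon \R \to [0,1]$ with $\eta \equiv 1$ on $]-\infty,1]$ and $\eta \equiv 0$ on $[2,\infty[$, and for $j \in \N$ set $\eta_j(t) := \eta(t/j)$. Using that $G/{}^\circ G \simeq \exp(\fv\cap\fq)$ and that the function $x \mapsto \Phi_2(x)$ is essentially the Euclidean norm on that factor, one sees that $x \mapsto \eta_j(\Phi_1(x)) \, \eta_j(\Phi_2(x))$ is a well-defined smooth function on $G/H$, left $K$-invariant (since $\Phi_1,\Phi_2$ are), and compactly supported (because the sets $\{\Phi_1 \leq 2j\} \cap \{\Phi_2 \leq 2j\}$ have compact image in $G/H$, by the lower bound $C^{-1}(1+\tau) \leq \Phi_1 + \Phi_2$). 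Define
$$
\psi_j := \big(\eta_j \circ \Phi_1\big)\cdot\big(\eta_j\circ \Phi_2\big)\cdot \psi.
$$
Then $\psi_j \in C_c^\infty(G/H)^K$, and since $\eta$ is $[0,1]$-valued and non-decreasing when read from the right, $0 \leq \psi_1 \leq \psi_2 \leq \cdots \leq \psi$ pointwise (here I use $\psi \geq 0$); moreover $\psi_j \to \psi$ pointwise because eventually $\Phi_1(x), \Phi_2(x) \leq j$ for any fixed $x$.

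It remains to prove convergence in $\cC(G/H)^K$, i.e.\ that $\nu_{u,N}(\psi - \psi_j) \to 0$ for every $u \in \Ua(\fg)$ and $N \in \N$. First I would record that $\psi - \psi_j = (1 - (\eta_j\circ\Phi_1)(\eta_j\circ\Phi_2))\,\psi$ is supported in the region where $\Phi_1 \geq j$ or $\Phi_2 \geq j$, hence (by the lower bound for $\Phi_1 + \Phi_2$) in the region where $\tau \geq cj$ for a suitable $c > 0$. On that region the weight $e^{\frac{2}{p}\Phi_1}(\Phi_1 + \Phi_2)^N$ controlling $\nu_{u,N}$ is bounded by a constant multiple of $j^{-1}$ times $e^{\frac2p\Phi_1}(\Phi_1+\Phi_2)^{N+1}$, which yields an estimate $\nu_{u,N}(\psi - \psi_j) \leq C j^{-1}\,\mu(\psi)$ with $\mu$ a fixed continuous seminorm on $\cC(G/H)^K$ — provided one controls the derivatives $u\big((\eta_j\circ\Phi_1)(\eta_j\circ\Phi_2)\big)$. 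This is where the Leibniz and chain rules enter, together with the crucial bounds $|v\Phi_j| \leq c\,\Phi_j$ from \eqref{e: derivative Phi}: differentiating $\eta_j \circ \Phi_i$ along $v\in\fg$ produces $\eta'(\Phi_i/j)\cdot j^{-1}\, v\Phi_i$, and $|v\Phi_i| \leq c\Phi_i \leq 2cj$ on the support of $\eta'(\Phi_i/j)$, so each such derivative is uniformly bounded in $j$. Higher-order derivatives are handled the same way, yielding a constant independent of $j$ bounding all derivatives of the cut-off up to the order of $u$.

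The main obstacle, then, is purely bookkeeping: one must show that for $u$ of order $n$ the function $u(\psi - \psi_j)$ satisfies, on the relevant region, a pointwise bound of the form $C\, j^{-1}\, e^{\frac2p\Phi_1}(\Phi_1+\Phi_2)^{N+1} \cdot (\text{sum of }|v\psi|,\ v \in F)$ for a fixed finite $F \subseteq \Ua(\fg)$ and $C$ independent of $j$, after which taking the supremum over $G/H$ and recalling the definition of $\nu_{v, N+1}(\psi)$ gives $\nu_{u,N}(\psi-\psi_j) \leq C' j^{-1}\sum_{v\in F}\nu_{v,N+1}(\psi) \to 0$. Expanding $u(\psi - \psi_j)$ by the Leibniz rule splits it into a term $(1 - (\eta_j\circ\Phi_1)(\eta_j\circ\Phi_2))\, u\psi$, supported where $\tau \gtrsim j$ and hence gaining the factor $j^{-1}$ from the weight as explained, plus terms in which at least one derivative hits the cut-off; the latter terms are supported where $\Phi_i \in [j, 2j]$ for some $i$, carry a factor $j^{-1}$ explicitly from the chain rule, and have all cut-off derivatives bounded uniformly in $j$ by the argument above. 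Assembling these estimates completes the proof.
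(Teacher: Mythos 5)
Your argument is correct and essentially the paper's: the paper likewise multiplies $\psi$ by a monotonically increasing sequence of compactly supported, $K$-invariant cut-off functions with $j$-uniformly bounded derivatives (obtained from \cite[Lemma 2.2]{vdBan_finite_multiplicities_Plancherel}) and then verifies convergence in the Schwartz seminorms, citing \cite[Thm.\ 2, p.\ 343]{Varadarajan_HarmonicAnalysisOnRealReductiveGroups} for exactly the bookkeeping you carry out by hand with the bounds $|u\Phi_i|\leq c\,\Phi_i$. The only small points are that your construction of the cut-offs from $\Phi_1,\Phi_2$ makes the proof self-contained, and that you should state explicitly that $\eta$ is chosen non-increasing, since otherwise the monotonicity of $(\psi_j)_j$ is not guaranteed.
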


\begin{proof}
For $r > 0$ we define $B(r): = \{x \in G/H : \tau(x) \leq r\}.$
By \cite[Lemma 2.2]{vdBan_finite_multiplicities_Plancherel} and its proof,
 there exists a sequence of functions $g_j \in C_c^\infty(G/H)$ such that the following conditions are
 fulfilled,
\begin{enumerate}
\item[{\rm (1)}]
$0 \leq g_j  \leq g_{j+1} \leq 1,$  for $j\geq 0;$
\item[{\rm (2)}]
$g_j = 1$ on $B(j)$ and $\supp g_j \subseteq B(j+1),$ for $j \geq 0;$
\item[{\rm (3)}]
for every $u \in U(\fg)$ there exists $C_u > 0$ such that $\sup_{G/H} |L_u g_j| \leq C_u$ for all $j \geq 1;$
\end{enumerate}
By using the argument of \cite[Thm.\ 2, p.\ 343]{Varadarajan_HarmonicAnalysisOnRealReductiveGroups} one now readily checks
that the sequence $\psi_j = g_j \psi$ satisfies our requirements.
\end{proof}

Proposition \ref{Prop domination by K-inv Schwartz functions} now
leads to the following results concerning the Radon and Harish-Chandra transforms.

\begin{Prop}\label{Prop If H_Q extends to C(G/H)^K, then R_Q extends to C(G/H) with conv integrals}
Assume that the restriction of $\Ht_{Q}$ to $C_{c}^{\infty}(G/H)^{K}$ extends to a continuous linear map
$\cC(G/H)^{K}\to C(L/H_{L}).$
Then $\Rt_{Q}$ extends to a continuous linear map
$$
\Rt_{Q}:\cC(G/H)\to C^{\infty}(G/N_{Q})
$$
and for every $\phi\in\cC(G/H)$,
\begin{equation}\label{eq Rt_Q phi=int_N/(N cap H)phi}
\Rt_{Q}\phi(g)
=\int_{N_{Q}/H_{N_{Q}}}\phi(gn)\,dn
    \qquad(g\in G)
\end{equation}
with absolutely convergent integrals.
Furthermore, the restriction of $\Ht_{Q}$ to $C_{c}^{\infty}(G/H)$ extends to a continuous linear map
$$
\Ht_{Q}:\cC(G/H)\to C^{\infty}(L/H_{L})
$$
and for  every $\phi\in\cC(G/H),$
\begin{equation}\label{eq Ht_Q phi=delta int_N/(N cap H)phi}
\Ht_{Q}\phi(l)
=\Deltach_{Q}(l)\int_{N_{Q}/H_{N_{Q}}}\phi(ln)\,dn
\qquad(l\in L)
\end{equation}
with absolutely convergent integrals.
\end{Prop}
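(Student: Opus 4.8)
The strategy is to reduce the Schwartz-space statement to the $K$-fixed hypothesis via domination, then to promote continuity to pointwise convergence of the defining integrals using the monotone approximation from Lemma~\ref{l: monotone approximation Schwartz} and Fatou/monotone convergence. First I would treat the Radon transform $\Rt_Q$. Given $\phi \in \cC(G/H)$, apply Proposition~\ref{Prop domination by K-inv Schwartz functions} to obtain $\widehat\phi \in \cC(G/H)^K$ with $|\phi| \leq \widehat\phi$, together with the seminorm estimate \eqref{e: estimate nu of hat phi}. By the hypothesis, $\Ht_Q$ extends continuously to $\cC(G/H)^K$, so by Proposition~\ref{Prop If H_Q extends ...} — no wait, that is what we are proving. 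Instead I would argue directly: since $\widehat\phi$ is $K$-fixed, pick by Lemma~\ref{l: monotone approximation Schwartz} a monotone sequence $\psi_j \uparrow \widehat\phi$ in $C_c^\infty(G/H)^K$. For compactly supported smooth $\psi_j$ the integral $\int_{N_Q/H_{N_Q}} \psi_j(ln)\,dn$ converges (it is even a compactly supported integral by properness considerations, or directly by Corollary~\ref{c: intro bp R Q}), equals $\gd_Q(l)^{-1}\Ht_Q\psi_j(l)$, and $\Ht_Q\psi_j \to \Ht_Q\widehat\phi$ in $C(L/H_L)$ by the assumed continuous extension. By monotone convergence the integrals $\int_{N_Q/H_{N_Q}} \widehat\phi(ln)\,dn$ converge for every $l \in L$ and equal $\gd_Q(l)^{-1}(\Ht_Q\widehat\phi)(l)$; in particular $n \mapsto \widehat\phi(ln)$ is integrable over $N_Q/H_{N_Q}$ for all $l$.

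**Upgrading from $L$ to all of $G$, and from $\widehat\phi$ to $\phi$.** The integrability just obtained is only along $L = MA$, whereas we need convergence of $\int_{N_Q/H_{N_Q}} \phi(gn)\,dn$ for all $g \in G$. Here I would invoke the Iwasawa decomposition $G = KAN_Q$ (so $G = KL N_Q$ modulo $M$, and $G/N_Q \simeq K \times_M L$): writing $g = k l n_Q$ with $k \in K$, $l \in L$, $n_Q \in N_Q$, the substitution $n \mapsto n_Q n n_Q^{-1}$ together with invariance of the measure on $N_Q/H_{N_Q}$ reduces $\int_{N_Q/H_{N_Q}} \phi(gn)\,dn$ to $\int_{N_Q/H_{N_Q}} (L_{k^{-1}}\phi)(ln)\,dn$, and $L_{k^{-1}}\phi$ is again Schwartz with $|L_{k^{-1}}\phi| \leq L_{k^{-1}}\widehat\phi = \widehat\phi$ since $\widehat\phi$ is left $K$-invariant. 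Thus the pointwise convergence for $\widehat\phi$ along $L$ already yields, by domination, absolute convergence of $\int_{N_Q/H_{N_Q}} \phi(gn)\,dn$ for \emph{every} $g \in G$. This defines $\Rt_Q\phi(g)$ as a function on $G/N_Q$ and $\Ht_Q\phi(l) := \gd_Q(l)\Rt_Q\phi(l)$ on $L/H_L$ via \eqref{eq Rt_Q phi=int_N/(N cap H)phi}, \eqref{eq Ht_Q phi=delta int_N/(N cap H)phi}.

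**Continuity and smoothness.** For continuity of $\Ht_Q : \cC(G/H) \to C(L/H_L)$, note that for each $l$, $|\Ht_Q\phi(l)| = \gd_Q(l)\big|\int \phi(ln)\,dn\big| \leq \gd_Q(l)\int \widehat\phi(ln)\,dn = \Ht_Q\widehat\phi(l) = (\Ht_Q\widehat\phi)(l)$, and on the right the assumed continuous extension combined with the seminorm bound \eqref{e: estimate nu of hat phi} gives $\|\Ht_Q\widehat\phi\|_\infty \leq C\,\nu_{0,k}(\phi)$; hence $\|\Ht_Q\phi\|_\infty \leq C\,\nu_{0,k}(\phi)$, so $\Ht_Q$ is continuous into $C(L/H_L)$, and since $C_c^\infty(G/H)$ is dense in $\cC(G/H)$ and the extension agrees with the original $\Ht_Q$ there (by \eqref{eq Ht_Q phi=delta int_N/(N cap H)phi} applied to $C_c^\infty$), this is genuinely the continuous extension. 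To get the smooth $G/N_Q$-valued version of $\Rt_Q$, apply the same argument with $\phi$ replaced by $L_u\phi$ for $u \in \Ua(\fg)$: since $L_u\phi \in \cC(G/H)$ and $\Rt_Q$ is $G$-equivariant on $C_c^\infty$, the extension satisfies $L_u(\Rt_Q\phi) = \Rt_Q(L_u\phi)$, which is bounded; by the characterization of smooth vectors recalled after Corollary~\ref{c: intro bp R Q} (derivatives of all orders bounded, Taylor with remainder), $\Rt_Q\phi \in C^\infty(G/N_Q)$ and the map $\cC(G/H) \to C^\infty(G/N_Q)$ is continuous for the seminorms $\phi \mapsto \|L_u \Rt_Q\phi\|_{C(\text{compact})}$. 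The analogous statement for $\Ht_Q : \cC(G/H) \to C^\infty(L/H_L)$ follows by multiplying by the smooth character $\gd_Q$ and applying the Leibniz rule.

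**Main obstacle.** The delicate point is the interchange of limit and integral that converts the \emph{continuity} hypothesis on $\Ht_Q|_{C_c^\infty(G/H)^K}$ into genuine \emph{pointwise absolute convergence} of $\int_{N_Q/H_{N_Q}}\phi(gn)\,dn$: one must ensure that the limit function $\Ht_Q\widehat\phi$ produced abstractly by the continuous extension actually coincides, pointwise in $l$, with $\gd_Q(l)\int_{N_Q/H_{N_Q}}\widehat\phi(ln)\,dn$. This is exactly where Lemma~\ref{l: monotone approximation Schwartz} is essential — monotonicity of $\psi_j \uparrow \widehat\phi$ lets the monotone convergence theorem identify the two, whereas mere convergence in $\cC(G/H)^K$ would only give convergence in $C(L/H_L)$ without control of the integrals. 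Everything else — the Iwasawa reduction from $G$ to $L$, the domination passage from $\widehat\phi$ to $\phi$, and the promotion to smoothness via differentiation — is routine given the results already established.
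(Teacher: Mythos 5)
Your proposal is correct and follows essentially the same route as the paper: dominate $\phi$ by a non-negative $K$-fixed Schwartz function, use the monotone approximation of Lemma \ref{l: monotone approximation Schwartz} together with the monotone convergence theorem to identify the abstract extension with the pointwise absolutely convergent integrals, then pass to general $\phi$ by domination, obtain continuity from the sup estimate combined with Proposition \ref{Prop domination by K-inv Schwartz functions}, and get smoothness from $G$-equivariance and the smooth-vectors characterization. The only cosmetic difference is that you run the monotone-convergence identification along $L$ and then invoke the Iwasawa decomposition separately, whereas the paper encodes the same reduction at the outset via $\Rt_Q\phi(kan)=\Deltach_Q(a)^{-1}\Ht_Q\phi(a)$ and works on $G/N_Q$ directly.
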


\begin{proof}
Since
$$
\Rt_{Q}\phi(kan)
=\Deltach_{Q}(a)^{-1}\Ht_{Q}\phi(a)
\qquad\big(\phi\in C_{c}^{\infty}(G/H)^{K}, k\in K, a\in A, n\in N_{Q}\big),
$$
it follows from the assumption in the proposition that the restriction of $\Rt_{Q}$ to $C_{c}^{\infty}(G/H)^{K}$ extends to a continuous linear map $\cC(G/H)^{K}\to C(G/N_{Q})^K$.

Let $\psi \in\cC(G/H)^{K}$ be non-negative. We claim that $\Rt_{Q}\psi$ is given by (\ref{eq Rt_Q phi=int_N/(N cap H)phi}).
To see this, let $(\psi_{j})_{j\in\N}$ be a monotonically increasing sequence as in Lemma
\ref{l: monotone approximation Schwartz}.
Then for every $g\in G$, we have
$$
\Rt_{Q}{\psi}(g)
=\lim_{j\to\infty}\Rt_{Q}{\psi}_{j}(g)
=\lim_{j\to\infty}\int_{N_{Q}/H_{N_{Q}}}{\psi}_{j}(gn)\,dn.
$$
Since the sequence ${\psi}_{j}$ is monotonically increasing, the monotone convergence theorem implies that (\ref{eq Rt_Q phi=int_N/(N cap H)phi}) holds and that the integral is absolutely convergent, for every $g \in G.$

By Proposition \ref{Prop domination by K-inv Schwartz functions} every element of $\cC(G/H)$ can be dominated by an element of $\cC(G/H)^{K}$.  Hence, for every $\phi\in\cC(G/H)$ and $g\in G$ the integral in (\ref{eq Rt_Q phi=int_N/(N cap H)phi}) is absolutely convergent. For $\phi\in\cC(G/H)$ we now define $\Rt_{Q}\phi$ and $\Ht_{Q}\phi$ by (\ref{eq Rt_Q phi=int_N/(N cap H)phi}) and (\ref{eq Ht_Q phi=delta int_N/(N cap H)phi}), respectively. To finish the proof of the proposition, it suffices to show that $\Rt_{Q}\phi$ is smooth and that the map $\Rt_{Q}:\cC(G/H)\to C^{\infty}(G/N_{Q})$ is continuous.

By assumption, there exists a continuous seminorm $\nu$ on $\cC(G/H)$ such that for
all $\psi \in \cC(G/H)^K,$
$$
\sup_{G}|\Rt_Q(\psi)| \leq \nu(\psi).
$$
Let $\phi\mapsto\widehat{\phi}$ be a map $\cC(G/H) \to \cC(G/H)^K$
as in Proposition \ref{Prop domination by K-inv Schwartz functions}, with $p =2.$
Let $C > 0$ and $n \in \N$ be associated with $\nu$ as in the mentioned proposition.
Then it follows that for all $\phi \in \cC(G/H),$
$$
|\Rt_Q(\phi) | \leq \Rt_Q(|\phi|) \leq \Rt_Q(\widehat \phi) \leq \nu(\widehat \phi) \leq C \nu_{0, n}(\phi).
$$
We thus see that $\Rt_Q$ defines a continuous linear map
$$
\Rt_Q: \cC(G/H) \to C(G/N_Q).
$$
Since this map intertwines the left $G$-actions, whereas the left regular representation
of $G$ in $\cC(G/H)$ is smooth, it follows that $\Rt_Q$ maps continuously into
the space of smooth vectors of $C(G/N_Q),$ which equals $C^\infty(G/N_Q)$
as a topological linear space.
\end{proof}

\section{Fourier transforms}
\label{s: Fourier transforms}
\subsection{Densities and a Fubini theorem}
In this section we introduce some notation related to densities on homogeneous spaces. Further details can be found in \cite[App.~A]{vdBanKuit_EisensteinIntegrals}. For our purposes it is more convenient to consider right-quotients $S/T$ of Lie groups instead of left-quotients $T\bs S$, which we used in the aforementioned article.

If $V$ is a real finite dimensional vector space, then we write $\cD_{V}$ for
the space of complex-valued densities on $V$, i.e., the $1$-dimensional complex vector space of functions
$\omega:\wedge^{\mathrm{top}}(V)\to\C$ transforming according to the rule
$$
\omega(t\upsilon)
= |t|\omega(\upsilon)\qquad(t\in\R,\upsilon\in\wedge^{\mathrm{top}}V).
$$

For a Lie group $S$ and a closed subgroup $T$, let $\Delta_{\scriptscriptstyle S/T}: T \to \R_+$ be the positive character given by
$$
\Delta_{\scriptscriptstyle S/T}(t) = |\det \Ad_S(t)_{\fs/\ft}|^{-1}
\qquad(t\in T),
$$
where $\Ad_{S}(t)_{\fs/\ft} \in \GL(\fs/\ft)$ denotes the map induced by the adjoint map $\Ad_{G}(t) \in \GL(\fs).$
We denote by $C(S:T:\Delta_{\scriptscriptstyle S/T})$ the space of continuous functions $f: S \to \C$ transforming according to the rule
$$
f(st)
=\Delta_{\scriptscriptstyle S/T}(t)^{-1} f(s)\qquad(s\in S, t\in T).
$$
We denote by $\cM(G:L:\xi)$ the space of measurable functions $f:G \to \C$ transforming according to the same rule.

Given $f \in C(S:T:\Delta_{\scriptscriptstyle S/T})$ and
$\omega \in \cD_{\fs/\ft},$ we denote by $f_{\omega}$ the continuous density on $S/T$ determined by
by
$$
f_\omega(s)
=f(s)\, dl_{s}(e)^{-1*}\omega
\qquad(s\in S).
$$

We fix non-zero elements $\omega_{\scriptscriptstyle S/U} \in \cD_{\fs/\fu},$ $\omega_{\scriptscriptstyle T/U} \in \cD_{\ft/\fu}$
and $\omega_{\scriptscriptstyle S/T} \in \cD_{\fs/\ft}$ such that
$$
\omega_{\scriptscriptstyle T/U} \otimes \omega_{\scriptscriptstyle S/T}= \omega_{\scriptscriptstyle S/U}
$$
with respect to the identification determined by the short
exact sequence
$$
0 \to \ft/\fu \to \fs/\fu \to \fs/\ft \to 0.
$$
See  Equation  (A.10) and the subsequent text in  \cite[App.\ A]{vdBanKuit_EisensteinIntegrals}.
We then have the following Fubini theorem \cite[Thm.\  A.8]{vdBanKuit_EisensteinIntegrals}.

\begin{Thm}\label{Thm Fubini theorem for densities}
Let $\phi \in \cM(S:U:\Delta_{\scriptscriptstyle S/U})$ and let $\phi_{\omega_{\scriptscriptstyle S/U}}$ be the associated measurable density on $S/U$.
Then the following assertions (a) and (b) are equivalent.
\begin{enumerate}
\itema
The density $\phi_{\omega_{\scriptscriptstyle S/U}}$ is absolutely integrable.
\itemb
There exists a right $T$-invariant subset $\cZ$ of measure zero in $S$ such that
\begin{enumerate}
\item[{\rm (1)}]
for every $x \in S\setminus \cZ,$ the integral
$$
I_x(\phi) = \int_{T/U\ni [t]} \Delta_{\scriptscriptstyle S/T}(t)\, \phi(xt)\, dl_{t}([e])^{-1*} \omega_{\scriptscriptstyle T/U},
$$
is  absolutely convergent;
\item[{\rm (2)}]
the function $I(\phi): x \mapsto I_x(\phi)$ belongs to
$\cM(S:T:\Delta_{\scriptscriptstyle S/T});$
\item[{\rm (3)}]
the associated density
$I(\phi)_{\omega_{\scriptscriptstyle S/T}}$ is absolutely integrable.
\end{enumerate}
\end{enumerate}
Furthermore, if any of the conditions (a) and (b) are fulfilled, then
$$
\int_{S/U} \phi_{\omega_{\scriptscriptstyle S/U}} = \int_{S/T} I(\phi)_{\omega_{\scriptscriptstyle S/T}}.
$$
\end{Thm}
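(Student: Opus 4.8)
The plan is to deduce this from Weil's integration formula for the tower of closed subgroups $U\subseteq T\subseteq S$, with the non-unimodularity absorbed into the modular characters $\Delta_{\scriptscriptstyle S/U}$, $\Delta_{\scriptscriptstyle S/T}$, $\Delta_{\scriptscriptstyle T/U}$. First I would record the formal consistency of the statement. For $u\in U$ one has $\det\Ad_S(u)_{\fs/\fu}=\det\Ad_S(u)_{\fs/\ft}\cdot\det\Ad_S(u)_{\ft/\fu}$, i.e.\ $\Delta_{\scriptscriptstyle S/U}=\Delta_{\scriptscriptstyle S/T}\,\Delta_{\scriptscriptstyle T/U}$ on $U$; hence for fixed $x\in S$ the function $t\mapsto\Delta_{\scriptscriptstyle S/T}(t)\phi(xt)$ lies in $\cM(T:U:\Delta_{\scriptscriptstyle T/U})$, so that the integrand of $I_x(\phi)$ is a bona fide measurable density on $T/U$ and $I_x(\phi)$ is well defined for those $x$ at which it converges absolutely. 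A one-line change of variables on $T/U$ then shows that $x\mapsto I_x(\phi)$ transforms by $\Delta_{\scriptscriptstyle S/T}$, i.e.\ lies in $\cM(S:T:\Delta_{\scriptscriptstyle S/T})$; in particular, convergence of $I_x(|\phi|)$ depends only on the coset $xT$, which is exactly what allows the exceptional set $\cZ$ in (b) to be taken right $T$-invariant.

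The analytic content is then obtained in two standard steps. By splitting $\phi$ into real/imaginary and positive/negative parts and using $\R$-linearity of $\phi\mapsto\phi_\omega$, it suffices to prove the Tonelli version: for $\phi\geq0$ measurable, $x\mapsto I_x(\phi)$ is measurable and $\int_{S/U}\phi_{\omega_{\scriptscriptstyle S/U}}=\int_{S/T}I(\phi)_{\omega_{\scriptscriptstyle S/T}}$ as an identity in $[0,\infty]$. Granting this, the equivalence of (a) and (b) and the final equality follow by the usual Fubini bootstrap: if (a) holds, then $\int_{S/T}I(|\phi|)_{\omega_{\scriptscriptstyle S/T}}<\infty$, hence $I_x(|\phi|)<\infty$ off a right $T$-invariant null set $\cZ$, which is (b)(1); (b)(2) is the transformation rule just recorded together with measurability; (b)(3) follows from $|I(\phi)_\omega|\leq I(|\phi|)_\omega$; and the equality results from applying the Tonelli identity to the four nonnegative parts of $\phi$ and recombining. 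To prove the Tonelli identity, note that both sides, viewed as functionals of the nonnegative density $\phi_\omega$, are positive Radon measures on $S/U$ — the left-hand side tautologically, the right-hand side because of the fibered structure of $S/U\to S/T$ described below — and two Radon measures agreeing on $C_c(S/U)$ coincide; hence it is enough to prove the identity when $\phi_{\omega_{\scriptscriptstyle S/U}}\in C_c(S/U)$.

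For a continuous compactly supported density the identity is local over $S/T$. I would cover $S/T$ by open sets $V_i$ over which the principal $T$-bundle $S\to S/T$ trivializes, with smooth sections $s_i\colon V_i\to S$, so that over $V_i$ the space $S/U$ is identified with $V_i\times(T/U)$ via $(v,tU)\mapsto s_i(v)\,t\,U$. The compatibility $\omega_{\scriptscriptstyle T/U}\otimes\omega_{\scriptscriptstyle S/T}=\omega_{\scriptscriptstyle S/U}$ attached to the short exact sequence $0\to\ft/\fu\to\fs/\fu\to\fs/\ft\to0$ then says that, in this chart and after parametrizing the fiber by $s_i$, the measure of $\omega_{\scriptscriptstyle S/U}$ is the product of the measure of $\omega_{\scriptscriptstyle S/T}|_{V_i}$ and the measure of $\omega_{\scriptscriptstyle T/U}$, the discrepancy between the chosen section $s_i$ and the intrinsic fiber coordinate being corrected precisely by the factor $\Delta_{\scriptscriptstyle S/T}(t)$ in $I_x(\phi)$. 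Choosing a partition of unity $\{\chi_i\}$ on $S/T$ subordinate to $\{V_i\}$ and writing the given density as $\sum_i(\chi_i\circ\pi)\,\phi_\omega$ reduces the identity on each chart to the classical Fubini theorem on a product; summing over $i$ finishes the proof. (Alternatively, one may replace trivializations and partitions of unity by a Bruhat section argument, which avoids any manifold hypotheses.) The only step that goes beyond routine bookkeeping is the identification in this last paragraph: verifying that $\Delta_{\scriptscriptstyle S/T}$ is exactly the twist needed, equivalently that the fiber functionals $I_x$ assemble into a genuine system of measures along $S/U\to S/T$ compatible with the chosen densities. This is what is carried out in \cite[Thm.~A.8]{vdBanKuit_EisensteinIntegrals}, which we simply cite.
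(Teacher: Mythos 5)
The paper contains no proof of this statement: Theorem \ref{Thm Fubini theorem for densities} is imported verbatim from \cite[Thm.~A.8]{vdBanKuit_EisensteinIntegrals}, so there is no in-paper argument to compare yours against. Your outline is the standard proof of such a Fubini theorem, and its preliminary steps are correct: the character identity $\Delta_{\scriptscriptstyle S/U}=\Delta_{\scriptscriptstyle S/T}\,\Delta_{\scriptscriptstyle T/U}$ on $U$ (so that $t\mapsto\Delta_{\scriptscriptstyle S/T}(t)\phi(xt)$ indeed lies in $\cM(T:U:\Delta_{\scriptscriptstyle T/U})$), the computation showing $I_{xs}(\phi)=\Delta_{\scriptscriptstyle S/T}(s)^{-1}I_x(\phi)$ for $s\in T$ (which is what makes the exceptional set right $T$-invariant), and the reduction, via trivializing charts for $S\to S/T$, the density compatibility attached to $0\to\ft/\fu\to\fs/\fu\to\fs/\ft\to0$, and a partition of unity, to the classical Fubini--Tonelli theorem on a product.

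Two caveats. First, your closing sentence defers ``the only step that goes beyond routine bookkeeping'' to \cite[Thm.~A.8]{vdBanKuit_EisensteinIntegrals} --- the very statement being proved --- so as a self-contained proof the proposal is circular; as a citation it simply reproduces the paper's treatment. Second, the reduction ``both sides are positive Radon measures on $S/U$, they agree on $C_c(S/U)$, hence it suffices to treat $\phi_{\omega_{\scriptscriptstyle S/U}}\in C_c(S/U)$'' only yields equality of the two measures. Your Tonelli statement also asserts, for an arbitrary measurable $\phi\geq 0$, that $x\mapsto I_x(\phi)$ is measurable, and that assertion (together with (b)(1)--(3) in the $L^1$ bootstrap) does not follow formally from equality of measures: to even write the right-hand side for general $\phi$ you need the measurability of the fiber integral. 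The fix is standard and uses exactly the structure you set up: in each trivializing chart the relevant measure is an honest product measure, so classical Tonelli gives measurability and the identity there, and second countability of $S$ provides a countable trivializing cover so that a partition of unity (or monotone class) argument patches the local statements into the global one. That step should be made explicit rather than absorbed into the phrase ``Tonelli version''; with it, your outline is a correct proof scheme for the cited result.
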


\subsection{Eisenstein integrals}
We start by recalling notation, definitions and results from \cite{vdBanKuit_EisensteinIntegrals}.

Let $(\tau,V_{\tau})$ be a finite dimensional representation of $K.$
We write $C^{\infty}(G/H\colon \tau)$ for the space of smooth $V_{\tau}$-valued functions $\phi$ on $G/H$ that satisfy the transformation property
$$
\phi(kx)
=\tau(k)\phi(x)\qquad(k\in K,x\in G/H).
$$
We further write $C_c^{\infty}(G/H\colon \tau)$ and $\cC(G/H\colon \tau)$ for the subspaces of $C^{\infty}(G/H\colon\tau)$ consisting of compactly supported functions and  $L^2$-Schwartz functions respectively.

Let $W(\faq)$ be the Weyl group of the root system of
$\faq$ in $\fg.$ Then
$$
W(\faq)=\Nor_{K}(\faq)/\Cen_{K}(\faq).
$$
 Let $W_{K\cap H}(\faq)$ be the subgroup of $W(\faq)$ consisting
of elements that can be realized in $\Nor_{K\cap H}(\faq)$.
We choose a set $\cW$ of representatives  for $W(\faq)/W_{K\cap H}(\faq)$ in $\Nor_{K}(\faq)\cap\Nor_{K}(\fa_{\fh})$ such that $e \in \cW$.
This is possible because of the following lemma.

\begin{Lemma}
\label{l: about NorKfaq}
$\Nor_{K}(\faq)
=\big(\Nor_{K}(\faq)\cap\Nor_{K}(\fa_{\fh})\big)\Cen_{K}(\faq)$.
\end{Lemma}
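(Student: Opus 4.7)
The plan is to show that any $k \in \Nor_K(\faq)$ can, after left multiplication by a suitable element of $\Cen_K(\faq)$, be arranged to normalize $\fa_\fh$ as well, and then to rearrange this into a product of the desired form.

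First, I would consider the $\theta$-stable reductive subalgebra $\fl_{0}:=\Cen_\fg(\faq)$, with Cartan decomposition $\fl_{0}=\Cen_\fk(\faq)\oplus \fp_{0}$ where $\fp_{0}:=\Cen_\fp(\faq)$. Since $\fa$ is maximal abelian in $\fp$ and contained in $\fp_{0}$, it is maximal abelian in $\fp_{0}$. For $k\in \Nor_K(\faq)$ the subspace $\Ad(k)\fa$ lies in $\fp$ and commutes with $\Ad(k)\faq=\faq$, so $\Ad(k)\fa\subseteq \fp_{0}$ and is maximal abelian there as well.

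Next, I would apply the standard conjugacy theorem for maximal abelian subspaces of the $\fp$-part of a reductive Lie algebra, applied to the (compact) group $\Cen_K(\faq)$ acting on $\fp_{0}$. This yields an $m\in \Cen_K(\faq)$ with $\Ad(m)\Ad(k)\fa=\fa$, so that $mk\in \Nor_K(\fa)$. Since $m$ fixes $\faq$ pointwise and $k$ normalizes $\faq$, the product $mk$ still lies in $\Nor_K(\faq)$.

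The key additional step is to show that $mk$ also normalizes $\fa_\fh$. Here I would use that, by $\sigma$-invariance of $B$, the $+1$ and $-1$ eigenspaces $\fa_\fh$ and $\faq$ of $\sigma|_{\fa}$ are $B$-orthogonal, so $\fa_\fh$ is the $B$-orthogonal complement of $\faq$ inside $\fa$. Since $\Ad(mk)$ restricted to $\fp$ is orthogonal with respect to the positive definite inner product $-B(\,\cdot\,,\theta\,\cdot\,)=B(\,\cdot\,,\,\cdot\,)|_{\fp}$ and preserves both $\fa$ and $\faq$, it must preserve $\fa_\fh$ as well. Thus $mk\in \Nor_K(\faq)\cap \Nor_K(\fa_\fh)$.

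Finally, setting $k':=mk$ and $c:=(k')^{-1}m^{-1}k'=k^{-1}m^{-1}k$, we have $k=k'c$. Since $k\in\Nor_K(\faq)$ and $m^{-1}\in\Cen_K(\faq)$, conjugation of $m^{-1}$ by $k$ lands again in $\Cen_K(\faq)$ (as $\Nor_K(\faq)$ normalizes $\Cen_K(\faq)$), hence $c\in\Cen_K(\faq)$. This gives the decomposition $k=k'c$ of the required form. The main obstacle is simply to cite the appropriate version of the conjugacy theorem for maximal abelian subspaces of $\fp_{0}$ under $\Cen_K(\faq)$ (or its identity component, which suffices here since $m$ may be chosen in the identity component without affecting the rest of the argument).
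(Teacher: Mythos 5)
Your argument is correct and is essentially the paper's proof: both rest on the conjugacy, under $\Cen_{K}(\faq)$, of maximal abelian subspaces of $\Cen_{\fg}(\faq)\cap\fp$, followed by the same group-theoretic rearrangement expressing $k$ as an element of $\Nor_{K}(\faq)\cap\Nor_{K}(\fa_{\fh})$ times an element of $\Cen_{K}(\faq)$ (the reverse inclusion being trivial in both treatments). The only, harmless, difference is that you conjugate $\Ad(k)\fa$ back to $\fa$ and then use the $B$-orthogonality of $\faq$ and $\fa_{\fh}$ inside $\fa$ to conclude that $\fa_{\fh}$ is preserved as well, whereas the paper conjugates $\Ad(k^{-1})\fa_{\fh}$ directly; your variant is if anything the more precise formulation, since every maximal abelian subspace of $\Cen_{\fg}(\faq)\cap\fp$ contains the central subspace $\faq$, so it is $\Ad(k^{-1})\fa$ rather than $\Ad(k^{-1})\fa_{\fh}$ that is literally maximal abelian there.
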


This result can be found in \cite[p.\ 165]{Rossmann_structure_ss_spaces}.
For the reader's convenience we give the concise proof.
\begin{proof}
It is clear that $\big(\Nor_{K}(\faq)\cap\Nor_{K}(\fa_{\fh})\big)\Cen_{K}(\faq)\subseteq    \Nor_{K}(\faq)$. To prove the other inclusion, assume that $k\in\Nor_{K}(\faq)$. Then $\Ad(k^{-1})\fa_{\fh}$ is a maximal abelian subspace of $\Cen_{\fg}(\faq)\cap \fp$. Each such maximal abelian subspace is conjugate to $\fa_{\fh}$ by an element from $\Cen_{G}(\faq)\cap K$, i.e., there exists a $k'\in\Cen_{K}(\faq)$ such that $\Ad(k')\Ad(k^{-1})\fa_{\fh}=\fa_{\fh}$. Note that $k'k^{-1}\in\Nor_{K}(\faq)\cap\Nor_{K}(\fa_{\fh})$.
Hence,
$$
k=(kk'^{-1})k'\in\big(\Nor_{K}(\faq)\cap\Nor_{K}(\fa_{\fh})\big)\Cen_{K}(\faq).
$$
\vspace{-40pt}

\end{proof}
\medbreak
 We define
$$
M_{0}
:=\Cen_{K}(\faq)\exp\big(\fp\cap[\Cen_{\fg}(\faq),\Cen_{\fg}(\faq)]\big).
$$
If  $P_{0} \in \cP(\Aq),$
 then $M_{0}A$ is a Levi-subgroup of $P_{0}$.
We write  $\fm_{0n}$ for the direct sum of the non-compact ideals of $\fm_{0}$.
The associated connected subgroup of $M_{0}$ is denoted by $M_{0n}.$

We denote by $\tau_{M}$ the restriction of $\tau$ to $M.$ Since $M$ is a subgroup
of $M_0 \cap K,$ it normalizes $M_{0n} \cap K,$ so that $(V_{\tau})^{M_{0n}\cap K}$
is an $M$-invariant subspace of $V_\tau.$ The restriction of $\tau_M$ to this subspace
is denoted by $\tau_M^0.$
We define
$$
\cA_{M,2}(\tau)
:=\bigoplus_{v \in \cW}\;C^\infty(M/M\cap vHv^{-1}\colon \tau_{M}^{0}).
$$
Each component in the sum is finite dimensional and thus a Hilbert space equipped with the
restriction of the inner product of $L^2(M/\cap vHv^{-1}, \Vtau);$
the direct sum is equipped with the direct sum Hilbert structure, and thus becomes a finite dimensional Hilbert space.

If $\psi\in\cA_{M,2}(\tau)$, we accordingly write $\psi_{v}$ for the
component  of $\psi$ in the space
 $C^\infty(M/M\cap vHv^{-1}:\tau_{M}^{0})$.

Let  $Q \in \cP(A).$ For $v\in\cW$ we define the parabolic subgroup $Q^v \in \cP(A)$ by
\begin{equation}
\label{e: defi Q v}
Q^{v}: = v^{-1}Qv.
\end{equation}
For each $v\in\cW$ we choose a positive density
$$
\omega_{\scriptscriptstyle H/H_{Q^{v}}}\in\cD_{\fh/\fh_{Q^{v}}}
$$ as follows. Fix positive densities $\omega_{\scriptscriptstyle G/H}\in\cD_{\fg/\fh}$ and $\omega_{\scriptscriptstyle G/H_{L}}\in\cD_{\fg/\fh_{L}}$. Furthermore, let $\omega_{v}\in\cD_{\fn_{Q^{v}}\cap \fh}$ be the positive density that corresponds to the Haar measure on $N_{Q^{v}}\cap H,$
which was chosen to be the push-forward of the Lebesgue measure on $\fn_{Q^{v}}\cap \fh$ along the exponential map (see text below Lemma \ref{Lemma N_(Q,X) x (N_Q cap H) to N_Q diffeo}). Then we choose $\omega_{\scriptscriptstyle H/H_{Q^{v}}}$ to be the unique density such that
\begin{equation}\label{eq omega_(G/H) otimes omega_(H/HQv) otimes omega_v=omega_(G/HL)}
\omega_{\scriptscriptstyle G/H}\otimes\omega_{\scriptscriptstyle H/H_{Q^{v}}}\otimes\omega_{v}
=\omega_{\scriptscriptstyle G/H_{L}}.
\end{equation}

The inner product $ B|_{\faq}$ on $\faq$ induces a linear isomorphism
$B:  \faq \to \faq^{*}.$
If $Q \in \cP(A),$ we define the cone $\GammaQ \subseteq   \faq$ as in  (\ref{eq def Gamma_Q}).
Then
$B(\Gamma(Q))$ equals the cone
spanned by the elements $\alpha+\sigma\theta\alpha$, with $\alpha\in\Sigma(Q)_{-}$, see (\ref{e: defi gS Q minus}).
Let $\Omega_Q \subseteq   \faq$ be defined as in (\ref{eq def Omega_Q})  and let
$\widehat \Omega_{Q}$ denote its hull in $\faqc$ with respect to the functions $\Re\, \langle\dotvar,\alpha\rangle$ with $\alpha\in\Sigma(\faq)\cap B( \Gamma(Q))$, i.e.,
$$
\widehat \Omega_{Q}
:= \{\lambda \in \faqc^{*} : \Re \langle\lambda,\alpha\rangle\leq \sup \,\Re \langle\Omega_Q,\alpha\rangle,
    \;\;\forall \alpha\in\Sigma(\faq)\cap B(\Gamma(Q))\}.
$$
Since $\langle\alpha,\lambda\rangle\leq0$ for all $\alpha\in\Sigma(\faq)\cap B(\GammaQ)$ and $\lambda\in-\Gamma(Q)^{\circ}$, it follows that we can describe the given hull by means of inequalities as follows:
$$
\widehat \Omega_{Q}
=\{\lambda \in \faqc^{*} :
    \Re \langle\lambda,\alpha\rangle\leq\max_{P\in \cP_{\sigma}(A,Q)}\langle-\rho_{P},\alpha\rangle,
    \;\; \forall \alpha\in\Sigma(\faq)\cap B(\Gamma(Q))\}.
$$
We define the following closed subsets of $\faqc$,
\begin{equation}\label{eq def Upsilon_Q}
\Upsilon_{Q} = \bigcap_{v \in \cW} v \Omega_{v^{-1}Qv},
\qquad \widehat\Upsilon_{Q} =  \bigcap_{v \in \cW} v \widehat\Omega_{v^{-1}Qv}.
\end{equation}

Given $v \in \cW$ we will use the notation
$$
\vH: = v H v^{-1},\qquad \mbox{\rm and} \qquad \vH_Q: = \vH \cap Q.
$$
Furthermore, we define the density ${}^v\omega_v $ on ${}^v\fh/{}^v\fh_Q$ by
$$
{}^v\omega:= \Ad(v^{-1})^{*}   \omega_{H/H_{Q^v}}.
$$
Given $\psi_v \in C^\infty(M/\vH \cap M: \tau_M^0)$ and $\lambda\in\Upsilon_{Q}$ we define
the function  $\psi_{v, Q,\lambda}: G \to V_{\tau}$ by
\begin{equation}
\label{e: defi psi v Q}
\psi_{v, Q,\lambda} (kman)
= a^{\lambda - \rho_{Q} - \rho_{Q,\fh}}\,\tau(k) \psi_v(m)
\qquad(k\in K, m\in M, a\in A, n\in N_{Q}).
\end{equation}
Then for  every $x \in G$ the function
$$
y \mapsto \psi_{v, Q,\lambda}(xy)\;dl_y(e)^{-1*}\, ({}^v\omega )
$$
defines a $V_{\tau}$-valued density on $\vH/\vH_{Q},$
which is integrable by \cite[Prop.\ 8.2]{vdBanKuit_EisensteinIntegrals}.

We define
$$
E_{v H v^{-1}}(Q:\psi_v: \lambda) : G \to V_{\tau}
$$
for $x\in G$ by
\begin{align*}
E_{v H v^{-1}}(Q:\psi_v: \lambda)(x)
&= \int_{\vH / \vH_Q}\; \psi_{v,Q,\lambda}(xy)\,dl_y(e)^{-1*}({}^v \omega)\\
&= \int_{H/ H_{Q^{v}}}\; \psi_{v,Q,\lambda}(xvhv^{-1})\,dl_h(e)^{-1*}\omega_{\scriptscriptstyle H/H_{Q^{v}}}.
\end{align*}
For $\psi \in \cA_{M,2}$ and $\lambda\in\Upsilon_{Q}$ we define the Eisenstein integral
\begin{equation}
\label{e: defi Eisenstein}
\Etau(Q:\psi:\lambda) = E_\tau(Q: \psi: \gl) : \;\; G \to V_{\tau}
\end{equation}
by
$$
\Etau(Q:\psi:\lambda)(x)
=\sum_{v \in \cW}  E_{v H v^{-1}}(Q: \pr_v \psi : \lambda)(xv^{-1})
\qquad(x\in G).
$$
It is readily verified that this function belongs to $C^\infty(G/H: \tau).$
The map $\Etau(Q:\psi:\dotvar)$ extends to a meromorphic $C^{\infty}(G/H:\tau)$-valued function on $\faqc^{*}$ and is holomorphic on an open neighborhood of $\widehat{\Upsilon}_{Q},$ see \cite[Cor.~8.5]{vdBanKuit_EisensteinIntegrals}.

Let $Q_1,Q_{2}\in \cP(A)$. Then there exists a unique meromorphic $\End(\cA_{M,2}(\tau))$-valued function
$\ctau(Q_{2}:Q_1:\dotvar)$ on $\faqc^{*}$ such that
\begin{equation}
\label{eq relation E_Q2 and E_Q1}
\Etau(Q_{2}:\psi:\lambda) = \Etau(Q_1:\ctau(Q_1:Q_{2}:\lambda)\psi:\lambda)
\end{equation}
for generic $\lambda\in\faqc^{*}$, see \cite[Cor.\ 8.14]{vdBanKuit_EisensteinIntegrals}.

In order to describe the relation of these Eisenstein integrals with those defined in terms
of a parabolic subgroup from the set $\cP_\gs(\Aq),$ see the text preceding (\ref{e: relation cP gs}), we need to introduce a bit more notation.

Let  $\Mzerohat$ denote the collection of (equivalence classes of) finite dimensional
irreducible unitary representations of $M_0.$ For $\xi \in \Mzerohat$ and
$v\in\cW$ we define the finite dimensional Hilbert space
$$
V(\xi,v):=\cH_{\xi}^{M_0 \cap v H v^{-1} }.
$$
The formal direct sum of these gives a finite dimensional Hilbert space
$$
V(\xi)
:=\bigoplus_{v\in\cW} V(\xi,v).
$$
We define
$C(K: \xi: \tau)$ to be the space of functions
$f: K \to \cH_{\xi} \otimes V_{\tau}$ transforming according to the rule:
$$
f(m k_{0} k )
= \big(\xi(m)\otimes\tau(k)^{-1}\big)f(k_{0}), \qquad (k,k_{0} \in K, m \in M_{0}).
$$
Let $\bar V(\xi)$ denote the conjugate space of $V(\xi)$ and consider
the natural map
\begin{equation}
\label{e: defi map psi T}
T\mapsto \psi_T,\;\; C(K:\xi: \tau) \otimes \bar V(\xi)\to\cA_{M,2}(\tau),
\end{equation}
which for $v\in\cW$ and $T=f\otimes\eta\in C(K:\xi: \tau) \otimes \bar V(\xi)$ is given by
$$
\big(\psi_{T}\big)_{v}(m)
=\langle f(e),\xi(m)\eta_{v}\rangle_{\xi}
\qquad(m\in M_{0}).
$$
Then the sum of the maps $T\mapsto(\dim \xi)^{\frac12}\psi_{T}$
over all $\xi \in \Mzerohat$ gives a surjective isometry
\begin{equation}
\label{e: direct sum over xi}
\bigoplus_{\xi \in \Mzerohat }\;  C(K:\xi: \tau) \otimes \bar V(\xi)  \;\;{\buildrel\simeq \over \longrightarrow}
\;\;\cA_{M,2}(\tau);
\end{equation}
see \cite[Lemma 3]{vdBanSchlichtkrull_FourierTransformOnASemisimpleSymmetricSpace}. Note that
only finitely many terms in the direct sum are non-zero.

Let now $Q \in \cP(A)$ and $R \in \cP_\gs(\Aq).$ Then we define the $C$-functions
$C_{R|Q}(s: \dotvar)$ for $s \in W(\faq)$ as
in \cite[Thm.~8.13]{vdBanKuit_EisensteinIntegrals}. These are $\End(\cA_{M,2}(\tau))$-valued meromorphic
functions on $\faqdc,$ with meromorphic inverses.
Moreover, by uniqueness of asymptotics, they are uniquely determined by the requirement that
\begin{equation}
\label{e: asymptotics E Q}
E(Q:\psi:\gl)(av)   \sim \sum_{s \in W(\fa_{\fq})} a^{s\gl - \rho_R}\; [C_{R|Q}(s:\gl)\psi ]_v(e) ,\quad (a \to \infty
\;\;{\rm in} \;\; \Aq^+(R))
\end{equation}
for all $\psi \in \cA_{M,2}(\tau),$ $v \in \cW$  and generic $\gl \in i \faqd.$

For $P_0\in \cP_\gs(\Aq)$ and $\psi \in \cA_{M,2}(\tau)$ we denote by $E(P_0:\psi:\gl)$
the Eisenstein integral as defined in \cite[Sect.\ 2]{vdBanSchlichtkrull_FourierTransformOnASemisimpleSymmetricSpace}.
Then $\gl \mapsto \Etau(P_0:\psi :\gl)$ is a meromorphic function on $\faqdc$
with values in $C^\infty(G/H:\tau).$ Given $R \in \cP_\gs(\Aq)$ the $C$-functions
$$
C_{R|P_0}(s:\gl) \in \End(\cA_{M,2}(\tau)),
$$
for $s \in W(\faq)$  are defined as in  \cite[Eqn.\ (46)] {vdBanSchlichtkrull_FourierTransformOnASemisimpleSymmetricSpace}.  These are meromorphic
functions with values in $\End(\cA_{M,2}(\tau))$ and with meromorphic inverses. Moreover, they
are uniquely determined by the asymptotic behavior of the Eisenstein integral $E(P_0:\psi:\gl)$, described
by (\ref{e: asymptotics E Q}) with everywhere $Q$ replaced by $P_0.$
\begin{Lemma}
Let $P_0 \in \cP_\gs(\Aq)$ and assume that $P \in \cP(A)$ is $\fq$-extreme, and satisfies
$P \subseteq   P_0.$ Then for generic $\gl \in \faqdc,$
\begin{equation}
\label{e: Eis P zero is Eis P}
E(P_0: \psi:\gl) = E(P:\psi:\gl).
\end{equation}
Furthermore, for all $R \in \cP_\gs(\Aq),$ $s \in W(\faq)$ and generic $\gl \in \faqdc,$
\begin{equation}
\label{e: C P zero is C P}
C_{R|P_0}(s:\gl) = C_{R|P}(s:\gl).
\end{equation}
\end{Lemma}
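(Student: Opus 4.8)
The plan is to deduce (\ref{e: Eis P zero is Eis P}) by a direct comparison of the integral formula defining $\Etau(P:\psi:\gl)$ recalled above with the van den Ban -- Schlichtkrull integral defining $E(P_0:\psi:\gl)$ in \cite[Sect.~2]{vdBanSchlichtkrull_FourierTransformOnASemisimpleSymmetricSpace}; the equality of $C$-functions (\ref{e: C P zero is C P}) will then follow formally.

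First I would record the geometry of the inclusion $P\subseteq P_0$. Since $P$ is $\fq$-extreme, $\gS(P,\gs)=\gS(P)\cap\fahd$, and since $P\subseteq P_0$ we have $\gS(P_0)=\gS(P,\gs\Cartan)$, so that $\gS(P)\setminus\gS(P_0)=\gS(P)\cap\fahd$. Put $\fn_{P,\fh}:=\bigoplus_{\ga\in\gS(P)\cap\fahd}\fg_\ga$. By Remark \ref{r: PH open} each root space $\fg_\ga$ with $\ga\in\gS\cap\fahd$ lies in $\fh$, hence $\fn_{P,\fh}\subseteq\fh$, $\Cartan\fn_{P,\fh}\subseteq\fh$, $\fn_P=\fn_{P_0}\oplus\fn_{P,\fh}$, and $\fp_0=\fp\oplus\Cartan\fn_{P,\fh}$. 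It follows that for every $v\in\cW$ one has $\vH\cap P\subseteq\vH\cap P_0$ and that multiplication gives a diffeomorphism $\Ad(v)(\Cartan N_{P,\fh})\times(\vH\cap P)\to\vH\cap P_0$; hence the projection $\vH/(\vH\cap P)\to\vH/(\vH\cap P_0)$ is a fibre bundle with fibre $\Ad(v)(\Cartan N_{P,\fh})$.

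Next, for $\gl$ in a translate of $i\faqd$ lying deep enough in the cone of absolute convergence of the integral defining $E_{\vH}(P:\psi_v:\gl)$ (which exists by \cite[Prop.~8.2]{vdBanKuit_EisensteinIntegrals}), I would apply the Fubini theorem for densities, Theorem \ref{Thm Fubini theorem for densities}, with $S=\vH$, $T=\vH\cap P_0$ and $U=\vH\cap P$. This expresses $E_{\vH}(P:\psi_v:\gl)$ as an integral over $\vH/(\vH\cap P_0)$ of a fibre integral of $\psi_{v,P,\gl}$ over $\Ad(v)(\Cartan N_{P,\fh})$. The heart of the matter is to evaluate that fibre integral. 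Using the right $P$-equivariance of $\psi_{v,P,\gl}$ together with the facts that $\Cartan N_{P,\fh}\subseteq M_0$ centralises $\Aq$ and normalises $N_{P_0}$, the fibre integral becomes an intertwining-type integral over $\Cartan N_{P,\fh}$ for the reductive group $M_0$ relative to its minimal parabolic subgroup $M_0\cap P=M\exp(\fa_\fh)N_{P,\fh}$. Two observations make it collapse. On the one hand, $\rho_P-\rho_{P,\fh}=\rho_{P_0}$ and $\rho_{P,\fh}$ vanishes on $\faq$, so the exponent matches the $P_0$-normalisation on $\faq$; on $\fa_\fh$ the roots in $\gS(P,\gs\Cartan)$ contribute nothing to $\rho_P$, since $\gs\Cartan$ reverses sign on $\fa_\fh$ and pairs $\ga$ with $\gs\Cartan\ga$, so that $(\rho_P+\rho_{P,\fh})|_{\fa_\fh}$ equals twice the half-sum of the roots of $M_0\cap P$, and the fibre integral is, up to the $\tau$-factor, the classical convergent integral $\int_{\Cartan N_{P,\fh}}a_{M_0}(\bar n)^{-2\rho_{M_0\cap P}}\,d\bar n$ of the group $M_0$. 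On the other hand, $\Cartan N_{P,\fh}$ lies in the product $M_{0n}$ of the non-compact simple factors of $M_0$, so the compact Iwasawa component of $\bar n\in\Cartan N_{P,\fh}$ lies in $M_{0n}\cap K$, and since the values of $\psi_v$ lie in $\Vtau^{M_{0n}\cap K}$ the $\tau$-factor acts trivially on them. Hence, after normalising the invariant measures (equivalently the densities $\omega_{\scriptscriptstyle H/H_{Q^{v}}}$) compatibly, the fibre integral reproduces exactly the integrand of the $v$-th summand of $E(P_0:\psi:\gl)$, the datum $\psi_v\in\cA_{M,2}(\tau)$ being read as a section for $M_0$ via the identification (\ref{e: direct sum over xi}). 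Summing over $v\in\cW$ gives (\ref{e: Eis P zero is Eis P}) for such $\gl$, and the general case follows by analytic continuation, both sides being meromorphic on $\faqdc$ and holomorphic near $\widehat\Upsilon_Q$ by \cite[Cor.~8.5]{vdBanKuit_EisensteinIntegrals} and the corresponding property of $E(P_0:\cdot:\cdot)$.

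Finally, (\ref{e: C P zero is C P}) follows at once: $C_{R|P}(s:\gl)$ is uniquely characterised by the asymptotic behaviour (\ref{e: asymptotics E Q}) of $\Etau(P:\psi:\gl)$ along $\Aq^+(R)$, and $C_{R|P_0}(s:\gl)$ by the same asymptotic formula with $P$ replaced by $P_0$; since the two Eisenstein integrals coincide for generic $\gl$ by the previous step, so do their asymptotic expansions and hence their leading coefficients, i.e.\ $C_{R|P}(s:\gl)=C_{R|P_0}(s:\gl)$ for all $R\in\cP_\gs(\Aq)$, $s\in W(\faq)$ and generic $\gl$. The main obstacle in this plan is the evaluation of the fibre integral: showing that this intertwining integral for $M_0$ degenerates -- by virtue of the $\gs\Cartan$-antisymmetry on $\fa_\fh$ and the $M_{0n}\cap K$-invariance of the values of $\psi_v$ -- to a constant times the datum entering the van den Ban -- Schlichtkrull construction, and checking that this constant is $1$ for the chosen normalisations of the invariant measures.
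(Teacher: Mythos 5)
Your deduction of (\ref{e: C P zero is C P}) from (\ref{e: Eis P zero is Eis P}) by uniqueness of asymptotics is exactly what the paper does. For (\ref{e: Eis P zero is Eis P}) itself, however, the paper gives no geometric argument at all: it simply quotes \cite[Cor.\ 8.6]{vdBanKuit_EisensteinIntegrals}, so your plan amounts to reproving that corollary, and as it stands it has a genuine gap at the very point you flag. The inner integral produced by Theorem \ref{Thm Fubini theorem for densities} over the fibre $(\vH\cap P_0)/(\vH\cap P)$ is a Gindikin--Karpelevich type intertwining integral for $M_0$, and two things remain unproved. First, the collapse itself: writing $\bar n=\kappa\,\mu\, a\, n$ according to an Iwasawa decomposition in $M_{0n}$, the component $\mu\in M\cap M_{0n}$ enters through \emph{right} translation of $\psi_v$, so a priori the fibre integral produces some element of the module induced from $M_0\cap P$, not a multiple of the same spherical datum; to see that it reproduces the $P_0$-integrand you need the structural facts $M_0=M(M_0\cap \vH)$ and the triviality of the $M_{0n}$-action on $M_0/M_0\cap \vH$ (\cite[Lemma 4.3]{vdBanKuit_EisensteinIntegrals}), and this step is only asserted, not carried out. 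Second, and more seriously, the constant: what comes out is essentially $\int_{\Cartan N_{P,\fh}}a(\bar n)^{-2\rho_{P,\fh}}\,d\bar n$, and with the measures fixed in this paper (push-forward of Lebesgue measure under $\exp$, normalized by $-B(\dotvar,\Cartan\dotvar)$, together with the density compatibility fixing $\omega_{\scriptscriptstyle H/H_{Q^{v}}}$) such an integral is not $1$ in general --- already for $\mathrm{SL}(2,\R)$ it yields a multiple of $\pi$. Whether this constant matches the normalization built into $E(P_0:\psi:\gl)$ is precisely the nontrivial content of \cite[Cor.\ 8.6]{vdBanKuit_EisensteinIntegrals}; postponing it as ``check that the constant is $1$'' leaves the heart of the proof open.

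Two further points need attention before analytic continuation can be invoked. You compare with ``the integral defining $E(P_0:\psi:\gl)$'', but in \cite{vdBanSchlichtkrull_FourierTransformOnASemisimpleSymmetricSpace} the Eisenstein integral attached to a $\gs\Cartan$-stable parabolic is defined through $H$-fixed distribution vectors of the induced representation (going back to \cite{vdBan_PrincipalSeriesI}), and it is given by a convergent integral over $\vH/(\vH\cap P_0)$ only for $\gl$ in a suitable cone; you must check that this cone meets the convergence region of \cite[Prop.\ 8.2]{vdBanKuit_EisensteinIntegrals} for $E(P:\psi:\gl)$. Also, the identification of the fibre and the global statement that multiplication $\Cartan N_{P,\fh}\times(\vH\cap P)\to \vH\cap P_0$ is a diffeomorphism are asserted from the Lie algebra identity $\fh\cap\Lie(P_0)=(\fh\cap\Lie(P))\oplus\Cartan\fn_{P,\fh}$; the group-level claim (and its $v$-conjugated version) needs an argument in the spirit of Lemma \ref{Lemma N_(Q,X) x (N_Q cap H) to N_Q diffeo}. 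None of these obstacles looks fatal, but with them open the proposal does not yet prove (\ref{e: Eis P zero is Eis P}); the efficient route is the paper's, namely to cite \cite[Cor.\ 8.6]{vdBanKuit_EisensteinIntegrals} and then obtain (\ref{e: C P zero is C P}) from the characterization (\ref{e: asymptotics E Q}) by uniqueness of asymptotics, as you do in your final paragraph.
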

\begin{proof}
The first assertion is made in \cite[Cor.\ 8.6]{vdBanKuit_EisensteinIntegrals}.  The second assertion
follows by uniqueness of asymptotics.
\end{proof}
For $\psi\in \cA_{M,2}(\tau)$ we define the normalized Eisenstein integral $E^\circ(\bar P_0: \psi: \dotvar)$
as in  \cite[Sect. 5,6]{vdBanSchlichtkrull_FourierTransformOnASemisimpleSymmetricSpace}.
It is
a meromorphic  $C^\infty(H/H:\tau)$-valued function of $\gl \in \faqdc.$ Furthermore, for any
$R \in \cP_\gs(\Aq)$ we have
\begin{equation}
\label{l: expression in terms of normalized Eisenstein integral}
E(R:  \psi: \gl) = E^\circ(\bar P_0: C_{\bar P_0|R}(1: \gl) \psi : \gl),
\end{equation}
see
\cite[Eqn.\ (58)]{vdBanSchlichtkrull_FourierTransformOnASemisimpleSymmetricSpace}.

\begin{Lemma}
\label{l: relation EQ with Enorm}
Let $Q \in \cP(A)$ and $P_0 \in \cP_\gs(\Aq).$
Then
\begin{equation}\label{e: rel EQ and Enorm}
\Etau(Q:\psi:\lambda)
=\Etau^{\circ}(\bar{P_{0}}: C_{\bar P_0|Q}(1:\gl)\psi:\lambda),
\end{equation}
for all $\psi \in \cA_{M,2}(\tau)$ and generic $\gl \in \faqdc.$
\end{Lemma}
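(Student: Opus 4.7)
The plan is to reduce the lemma to the already-established identity \eqref{l: expression in terms of normalized Eisenstein integral}, which expresses $\Etau(R:\psi:\lambda)$ for $R \in \cP_\gs(\Aq)$ in terms of the normalized Eisenstein integral, by transferring from the arbitrary $Q \in \cP(A)$ to a $\fq$-extreme parabolic subgroup of $\cP_\gs(A)$ sitting inside $P_{0}$.

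First, by surjectivity of the map \eqref{e: relation cP gs}, I would choose a $\fq$-extreme $P \in \cP_\gs(A)$ with $P \subseteq P_{0}$. Applying \eqref{eq relation E_Q2 and E_Q1} to the pair $(P,Q)$ then gives, for generic $\lambda$,
$$
\Etau(Q:\psi:\lambda) \;=\; \Etau\bigl(P:\ctau(P:Q:\lambda)\psi:\lambda\bigr). \qquad (*)
$$
By \eqref{e: Eis P zero is Eis P} the right-hand side coincides with $\Etau(P_{0}:\ctau(P:Q:\lambda)\psi:\lambda)$, at which point \eqref{l: expression in terms of normalized Eisenstein integral} applied with $R=P_{0}$ converts it to a normalized Eisenstein integral, and \eqref{e: C P zero is C P} allows one to rewrite $C_{\bar P_{0}|P_{0}}(1:\lambda)$ as $C_{\bar P_{0}|P}(1:\lambda)$. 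The net result is
$$
\Etau(Q:\psi:\lambda) \;=\; \Etau^{\circ}\bigl(\bar{P_{0}}: C_{\bar P_{0}|P}(1:\lambda)\,\ctau(P:Q:\lambda)\psi:\lambda\bigr).
$$

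Comparing with the target formula, the lemma thus reduces to the $C$-function composition identity
$$
C_{\bar P_{0}|Q}(1:\lambda) \;=\; C_{\bar P_{0}|P}(1:\lambda)\,\ctau(P:Q:\lambda).
$$
This is the one step that requires a genuine argument rather than a citation, and is the main (mild) obstacle. I would establish it from the uniqueness-of-asymptotics characterization \eqref{e: asymptotics E Q}: substituting $(*)$ into the asymptotic expansion along the positive chamber of $\bar P_{0}$ and matching the coefficients of $a^{s\lambda-\rho_{\bar P_{0}}}$ for each $v\in\cW$ yields
$$
C_{\bar P_{0}|Q}(s:\lambda)\;=\;C_{\bar P_{0}|P}(s:\lambda)\,\ctau(P:Q:\lambda),
$$
for all $s \in W(\faq)$ and generic $\lambda$. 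Specializing to $s=1$ completes the proof. Apart from the composition identity, every remaining ingredient is a direct invocation of results already collected in this section, so I expect no further difficulties.
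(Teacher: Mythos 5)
Your proposal is correct and follows essentially the same route as the paper's proof: pick a $\fq$-extreme $P\in\cP_\gs(A)$ with $P\subseteq P_0$, pass from $\Etau(Q)$ to $\Etau(P)=\Etau(P_0)$ via (\ref{eq relation E_Q2 and E_Q1}) and (\ref{e: Eis P zero is Eis P}), apply (\ref{l: expression in terms of normalized Eisenstein integral}) and (\ref{e: C P zero is C P}), and finish with the composition identity $C_{\bar P_0|Q}(1:\gl)=C_{\bar P_0|P}(1:\gl)\,\ctau(P:Q:\gl)$. The only difference is that the paper cites this identity from \cite[Cor.\ 8.14 (a)]{vdBanKuit_EisensteinIntegrals}, whereas you rederive it by uniqueness of asymptotics, which is legitimate since that is exactly how the paper characterizes the $C$-functions in (\ref{e: asymptotics E Q}).
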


\begin{proof}
Let $P \in \cP_\gs(A)$ be such that $P_0 \supseteq P.$ Then it follows from
(\ref{eq relation E_Q2 and E_Q1}) and (\ref{e: Eis P zero is Eis P}) that
$$
E(Q:\psi: \gl) = E(P: C(P:Q:\gl)\psi: \gl ) = E(P_0: C(P:Q:\gl)\psi: \gl).
$$
Using
(\ref{l: expression in terms of normalized Eisenstein integral})
with $R = \bar P_0,$
we infer that
\begin{equation}
\label{e: Eis Q as nEis bar P zero}
E(Q:\psi: \gl) = E^\circ (\bar P_0: C_{\bar P_0| P_0}(1:\gl) C(P:Q:\gl)\psi: \gl).
\end{equation}
By application of (\ref{e: C P zero is C P}) and \cite[Cor.\ 8.14 (a)]{vdBanKuit_EisensteinIntegrals}
with $R = \bar P_0,$ we find that
\begin{eqnarray}
C_{\bar P_0| P_0}(1:\gl) C(P:Q:\gl) &=& C_{\bar P_0| P}(1:\gl) C(P:Q:\gl)\nonumber \\
&=& C_{\bar P_0| Q}(1:\gl). \label{e: formula for C bar P zero Q}
\end{eqnarray}
Substituting the latter expression in (\ref{e: Eis Q as nEis bar P zero}), we obtain (\ref{e: rel EQ and Enorm}).
\end{proof}

Our next goal is to describe the $C$-function in (\ref{e: rel EQ and Enorm}) in terms of a standard intertwining
operator in case $Q$ and $P_0$ are suitably related. For this, we need to introduce additional
notation.

Let $\xi \in \Mzerohat.$ We define  $C^\infty(K:\xi)$
to be the space of functions  $f: K \to \cH_\xi$ transforming according to the rule
\begin{equation}
\label{e: transformation rule C K xi}
f(mk): = \xi(m) f(k),\qquad (k \in K, m \in M_0 \cap K).
\end{equation}
Furthermore, we put $\xi_M = \xi|_M$ and define $C^\infty(K:\xi_M)$ to be the space
of functions $f: K \to \cH_\xi$ transforming according to the same rule
(\ref{e: transformation rule C K xi}) but for $k \in K$ and $m \in M.$ Since $M \subseteqq M_0,$
we have a natural inclusion map
$$
\inj: C(K:\xi) \to C(K:\xi_M).
$$
Following \cite[Sect.\ 4]{vdBanKuit_EisensteinIntegrals}, we denote by
$$
\proj: C(K:\xi_M) \to C(K:\xi)
$$
the transpose of this map for the natural sesquilinear pairings coming from the $L^2$-inner product
on $L^2(K, \cH_\xi, dk).$

If $P_0 \in \cP_\gs(\Aq),$ and $\gl \in \faqdc,$ we denote the realization of the normalized
induced representation $\Ind_{P_0}^G(\xi \otimes \gl \otimes 1)$ of $G$ in $C^\infty(K: \xi)$
according to the compact picture by $\pi_{P_0, \xi, \gl}.$ Given a second parabolic subgroup
$P_1 \in \cP_\gs(\Aq)$ we denote by
$$
A(P_1: P_0: \xi: \gl): C^\infty(K:\xi) \to C^\infty(K:\xi)
$$
the (meromorphic continuation of) the standard intertwining operator
which intertwines the representations
$\Ind_{P_j}^G(\xi \otimes \gl \otimes 1),$ for $j=0,1,$ respectively.

Likewise, if $Q \in \cP(A), \xi \in \Mzerohat$ and $\mu \in \fadc,$
we denote the realization of the normalized induced representation
$\Ind_Q^G(\xi_M \otimes \mu \otimes 1)$ of $G$ in $C^\infty(K: \xiM)$ according to
the compact picture by $\pi_{Q, \xi_M, \mu}.$

Given $Q_1, Q_2 \in \cP(A)$ we denote by
$$
A(Q_1:Q_2:\xiM: \mu ): C^\infty(K:\xiM) \to C^\infty(K:\xiM),
$$
the (meromorphic continuation of) the standard intertwining operator which intertwines the representations
$$
\Ind_{Q_j}^G(\xi_M \otimes \mu \otimes 1),
$$ for $j=2,1,$ respectively.

The two types of parabolically induced representations are related by the maps
$\inj$ and $\proj$ defined above.
Let $P \in \cP(A)$ be a $\fq$-extreme parabolic subgroup, and let $P_0 $ be
the unique parabolic subgroup in $\cP_\gs(\Aq)$ containing $P.$
Then $\inj$ intertwines $\pi_{P_0, \xi, \gl}$ with $\pi_{P, \xiM, \gl - \rho_{P,\fh}}$
and $\proj$ intertwines $\pi_{P, \xiM, \gl + \rho_{P,\fh}}$ with $\pi_{P_0, \xi, \gl},$
for every $\gl \in \faqdc.$ We refer to \cite[Sect.~4]{vdBanKuit_EisensteinIntegrals} for further details.

We denote by
\begin{equation}
\label{e: defi Pi Sigma}
\Pi_{\Sigma,\R}(\faqd)
\end{equation}
the set of polynomial functions $\faqc^{*} \to \C$ that can be expressed as non-zero products of affine functions of the form $\lambda\mapsto \langle\lambda,\alpha\rangle-c$, where $\alpha\in\Sigma\setminus\fahd $ and $c\in\R$.

Finally, we arrive at the mentioned description of the $C$-function in
(\ref{e: rel EQ and Enorm}).

\begin{Prop}\label{Prop relation E(Q) and E^circ(cP0)}
Let $Q \in \cP(A),$ $P \in \cP_\gs(A, Q),$ see (\ref{e: defi P gs A Q}), and let $P_0$ be the unique minimal
$\gs\Cartan$-stable parabolic subgroup containing $P.$
Then the following assertions are valid.
\begin{enumerate}
\itema
If $\xi \in \Mzerohat$ and
$T \in C^{\infty}(K:\xi:\tau)\otimes \barV(\xi)$ then, for generic $\gl \in \faqdc,$
\begin{equation}
\label{e: C by intertwining operator}
C_{\bar{P}_{0}|Q}(1:\lambda) \psi_{T}
=\psi_{[\proj \after A(\gs P:Q:\xiM:-\lambda+\rho_{P,\fh})\after \inj \otimes I]T}.
\end{equation}
\itemb
The function $\ctau_{\bar{P}_{0}|Q}(1:\dotvar)$ is holomorphic on the set
\begin{equation}
\label{e: set of holomorphy for C}
\big\{\lambda\in\faqc^{*}:\Re\langle-\lambda+\rho_{Q,\fh},\alpha\rangle>0
    \textnormal{ for all }\alpha\in\Sigma(P_{0})\cap\Sigma(Q)\big\}.
\end{equation}
\itemc
Let $B\subseteq    \faq^{*}$ be open and bounded. There exists a $p\in\Pi_{\Sigma,\R}(\faqd)$ such that
$$
\lambda\mapsto p(\lambda)\ctau_{\bar{P}_{0}|Q}(1:\lambda)
$$
is holomorphic and of polynomial growth on $B+i\faq^{*}$.
\end{enumerate}
\end{Prop}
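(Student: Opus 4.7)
The plan is to derive (a) from the factorisation (\ref{e: formula for C bar P zero Q}), $C_{\bar P_0|Q}(1:\gl) = C_{\bar P_0|P}(1:\gl) \circ C(P:Q:\gl)$, and then obtain (b) and (c) from standard analytic properties of standard intertwining operators, using the fact that composition with the $\inj$ and $\proj$ maps does not affect holomorphy or polynomial growth.

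For (a), since $P \in \cP(A)$ is minimal and $Q \in \cP(A)$, I would identify the $C$-function $C(P:Q:\gl)$, via the surjective isometry (\ref{e: direct sum over xi}), with the standard intertwining operator
$$
A(P:Q:\xi_M:-\gl + \rho_{P,\fh}) \otimes I
$$
acting on $C^\infty(K:\xi_M:\tau) \otimes \bar V(\xi)$. The shift by $\rho_{P,\fh}$ reflects that $\inj$ intertwines $\pi_{P_0, \xi, \gl}$ with $\pi_{P, \xi_M, \gl - \rho_{P,\fh}}$, see the discussion below (\ref{e: defi Pi Sigma}). Such an identification should essentially be contained in the construction of $C(P:Q:\gl)$ in \cite[Sect.~8]{vdBanKuit_EisensteinIntegrals}; alternatively one can verify it by comparing asymptotic expansions via (\ref{e: asymptotics E Q}). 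Similarly, $C_{\bar P_0|P}(1:\gl) = C_{\bar P_0|P_0}(1:\gl)$ (by (\ref{e: C P zero is C P})) is matched with $\proj \circ A(\sigma P: P: \xi_M : -\gl + \rho_{P,\fh}) \circ \inj$, the $\proj$ serving to pass back to the $\xi$-picture for $\bar P_0 = \sigma P_0$. Composing the two identifications and applying the transitivity $A(\sigma P:P) A(P:Q) = A(\sigma P:Q)$ of standard intertwining operators yields (\ref{e: C by intertwining operator}).

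For (b), the standard intertwining operator $A(\sigma P: Q : \xi_M : \mu)$ is defined in the non-compact picture by an integral over $N_{\sigma P} \cap \bar N_Q$, which converges absolutely and defines a holomorphic operator-valued function of $\mu$ on the open cone determined by positivity of $\Re \langle \mu, \alpha\rangle$ on $\Sigma(\sigma P) \cap \Sigma(\bar Q)$. A direct root-system computation, using $P \subseteq P_0$, $Q \preceq P$, and $\sigma \Sigma(P_0) = -\Sigma(P_0)$, identifies $\Sigma(\sigma P) \cap \Sigma(\bar Q)$ (modulo roots in $\fahd$, which are absorbed by the shift $\rho_{P,\fh} \to \rho_{Q,\fh}$) with $-(\Sigma(P_0) \cap \Sigma(Q))$. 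Substituting $\mu = -\gl + \rho_{P,\fh}$ and rewriting in terms of $\rho_{Q,\fh}$ then converts the half-space condition into (\ref{e: set of holomorphy for C}), and $\inj$, $\proj$ preserve holomorphy.

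For (c), I would invoke the Gindikin--Karpelevi{\v{c}} product formula, which factorises $A(\sigma P:Q:\xi_M:\mu)$ as a finite composition of rank-one standard intertwining operators indexed by $\alpha \in \Sigma(\sigma P) \cap \Sigma(\bar Q)$. Each rank-one factor is rational in $\mu$ with poles along real hyperplanes $\langle \mu, \alpha \rangle = c$, $c \in \R$, and is polynomially bounded on vertical strips after multiplication by the linear forms cutting out its poles. Assembling these pole-cancelling forms (pulled back via $\mu = -\gl + \rho_{P,\fh}$) into a polynomial $p \in \Pi_{\Sigma,\R}(\faqd)$ yields the required holomorphy and polynomial growth on $B + i\faqd$. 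The main obstacle is the bookkeeping in (a): tracking the shifts by $\rho_{P,\fh}$ between the $\xi$- and $\xi_M$-pictures and reconciling the $C$-functions from \cite{vdBanKuit_EisensteinIntegrals} with standard compact-picture intertwining operators; once (a) is in hand, (b) and (c) follow from well-known analytic properties of intertwining operators.
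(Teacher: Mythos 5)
Your overall strategy coincides with the paper's: factor $C_{\bar P_0|Q}(1:\gl)=C_{\bar P_0|P_0}(1:\gl)\after C(P:Q:\gl)$, express both factors through standard intertwining operators, and finish with Knapp--Stein multiplicativity for (a) and with convergence/estimates of intertwining operators for (b), (c). However, the two identities on which your proof of (a) rests are not correct as stated, and they are exactly where the content lies. First, by \cite[Prop.\ 8.7]{vdBanKuit_EisensteinIntegrals} one has $C(P:Q:\gl)\psi_T=\psi_{S(\gl)}$ with $S(\gl)=[\proj\after A(Q:P:\xiM:-\gl+\rho_{P,\fh})^{-1}\after\inj\otimes I]T$, i.e.\ the factor is the \emph{inverse} of the oppositely directed unnormalized operator, not $A(P:Q:\xiM:-\gl+\rho_{P,\fh})$; since $A(P:Q:\xiM:\mu)\after A(Q:P:\xiM:\mu)=\eta(\mu)\,\mathrm{Id}$ with $\eta$ a nonconstant meromorphic scalar, your identification is off by that factor. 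Second, the multiplicativity you invoke, $A(\gs P:P)\after A(P:Q)=A(\gs P:Q)$, fails for unnormalized operators: $P$ is not ``between'' $Q$ and $\gs P$ (the roots in $\gS(\bar P)\cap\gS(Q)$ are crossed twice), and the product equals $\eta(\mu)\,A(\gs P:Q:\xiM:\mu)$. The valid product formula, used in the paper via \cite[Cor.\ 7.7]{KnappStein_IntertwiningOperatorsForSemisimpleGroups_II}, is $A(\gs P:P:\xiM:\mu)=A(\gs P:Q:\xiM:\mu)\after A(Q:P:\xiM:\mu)$, legitimate because $\gS(\gs P)\cap\gS(P)=\gS(P,\gs)\subseteq\gS(Q)\cap\gS(P)$; it is precisely what cancels the inverse $A(Q:P)^{-1}$. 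Your two inaccuracies happen to compensate (each is off by the same factor $\eta$), so you land on the correct formula (\ref{e: C by intertwining operator}), but neither step is justified as written. Moreover, the ``bookkeeping'' you defer is a genuine ingredient, not routine: to pass between the $\xi$- and $\xiM$-pictures the paper needs $C_{\bar P_0|P_0}(1:\gl)\psi_S=\psi_{[A(\bar P_0:P_0:\xi:-\gl)\otimes I]S}$ from \cite[Prop.\ 3.1]{vdBanSchlichtkrull_FourierTransformOnASemisimpleSymmetricSpace} together with the commutation relation $\proj\after A(\gs P:P:\xiM:-\gl+\rho_{P,\fh})=A(\bar P_0:P_0:\xi:-\gl)\after\proj$ of \cite[Lemma 8.10]{vdBanKuit_EisensteinIntegrals}; your sketch supplies no substitute for these.

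Parts (b) and (c) are essentially fine modulo (a). For (b) your argument (holomorphy of $A(\gs P:Q:\xiM:\mu)$ on the convergence region of its defining integral, plus a root-system identification of the relevant set of roots with $\gS(P_0)\cap\gS(Q)$, sign conventions aside) is the paper's argument; the needed fact is that $\gS(\gs\bar P)\cap\gS(Q)=\gS(P_0)\cap\gS(Q)$, which follows from $\fq$-extremeness of $P$ and $\gS(Q,\gs\Cartan)\subseteq\gS(P_0)$, and this verification should be spelled out. For (c) the paper does not redo the Gindikin--Karpelevi{\v c} reduction but cites the polynomial estimates of \cite[Thm.\ 1.5]{vdBanSchlichtkrull_estimates_cfunction}; in either route one must first restrict the operator to the finitely many $K$-types meeting $C(K:\xi:\tau)$ and use that only finitely many $\xi$ contribute in (\ref{e: direct sum over xi}) before ``polynomial growth'' can be asserted --- a point your sketch omits.
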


\begin{proof}
We first turn to (a).
From (\ref{e: formula for C bar P zero Q}) we recall that
\begin{equation}
\label{e: C bar P zero Q as composition}
C_{\bar P_0|Q} (1:\gl) =  C_{\bar P_0 | P_0}(1:\gl)\, C(P:Q: \gl).
\end{equation}
Let $\xi$ and $\psi$ be as in assertion (a).
Then it follows from \cite[Prop.\ 8.7]{vdBanKuit_EisensteinIntegrals} that
\begin{equation}
\label{e: C and S}
C(P:Q: \gl) \psi_T = \psi_{S(\gl)}
\end{equation}
with
\begin{equation}
\label{e: formula S gl}
S(\gl) =  [\proj \after A(Q:P:\xiM: - \gl + \rho_{P,\fh})^{-1}\after \inj \otimes I]T.
\end{equation}
On the other hand, by
\cite[Prop.\ 3.1]{vdBanSchlichtkrull_FourierTransformOnASemisimpleSymmetricSpace},
\begin{equation}
\label{e: C  and S prime}
C_{\bar P_0 | P_0}(1:\gl) \psi_{S(\gl)} = \psi_{S'(\gl)},
\end{equation}
with
\begin{equation}
\label{e: formula S prime gl}
S'(\gl) = [A(\bar P_0 : P_0 : \xi : - \gl) \otimes I]S(\gl).
\end{equation}
From (\ref{e: C bar P zero Q as composition}), (\ref{e: C and S})
and (\ref{e: C and S prime}) we obtain that
\begin{equation}
\label{e: second C and S prime}
C_{\bar P_0|Q}(1:\gl) \psi_T = \psi_{S'(\gl)};
\end{equation}
we will prove (a) by determining $S'(\gl).$

It follows from \cite[Lemma 8.10]{vdBanKuit_EisensteinIntegrals}, that the following diagram commutes, for generic $\gl \in \faqdc,$
$$
\begin{array}{ccc}
C(K:\xiM) &\buildrel {\scriptscriptstyle A(\gs P : P: \xiM: -\gl +\rho_{P,\fh})} \over \longrightarrow & C(K:\xiM)\\
{\scriptscriptstyle \proj } \downarrow && \downarrow {\scriptscriptstyle \proj} \\
C(K:\xi) &\buildrel  {\scriptscriptstyle  A(\bar P_0 : P_0:  \xi: - \gl)} \over  \longrightarrow & C(K:\xi)
\end{array}
$$
Taking the commutativity of this diagram into account,
we infer by combining
(\ref{e: formula S gl}) and (\ref{e: formula S prime gl}) that
\begin{eqnarray}
\nonumber
\lefteqn{
S'(\gl) = }\\
&=& [ \proj \!\after A(\gs P\!:  \!P \!: \! \xiM \! : \! -\gl  \!+  \!\rho_{P,\fh}) A(Q \!: \!P \!: \!\xiM \!: \! - \gl  \!+  \!\rho_{P,\fh})^{-1} \!\after \inj \otimes I]T\:\:\:\:
\label{e: formula for S' gl with prod}
\end{eqnarray}
Since $P\succeq Q$, we have
$
\Sigma(\sigma P)\cap\Sigma(P)
=\Sigma(P,\sigma)
\subseteq   \Sigma(Q)\cap\Sigma(P).
$
By application of
\cite[Cor.\ 7.7]{KnappStein_IntertwiningOperatorsForSemisimpleGroups_II} we find that
\begin{align}
\nonumber
&A(\gs P :P:\xiM:-\lambda+\rho_{Q,\fh})\\
\label{e: product intertwining operators}
&\qquad=A(\gs P \!:Q \!: \xiM :-\lambda+\rho_{P,\fh})\after A(Q  \!: \! P \! :\xiM : -\lambda+\rho_{Q,\fh}).
\end{align}
The identity in (\ref{e: C by intertwining operator})
now follows from
(\ref{e: second C and S prime}), (\ref{e: formula for S' gl with prod})
and (\ref{e: product intertwining operators}).
Thus, (a) holds.

We turn to (b) and (c).  Let $\xi \in \Mzerohat$ and let $\End(C(K:\xiM))$ denote the space of bounded linear endomorphisms of the Banach space $C(K:\xiM).$ Then as a $\End(C(K:\xi_M))$-valued function, the standard intertwining operator  $A(\gs P\!:\!Q:\xiM : \mu)$ depends holomorphically on $\mu \in \fa^{*}_\iC$ satisfying
\begin{equation}
\label{e: condition for holomorphy A}
\Re\langle \mu ,\alpha\rangle > 0,
\qquad\big(\alpha\in\Sigma(\gs \bar P)\cap\Sigma(Q)\big).
\end{equation}
Indeed, this is a straightforward consequence of the convergence of the integral
defining the intertwining operator, asserted in \cite[Thm.~4.2]{KnappStein_IntertwiningOperatorsForSemisimpleGroups_II}.

Since $\gS(P_0) = \gS(P)\setminus \fahd,$ we have $\gS(P_0) \cap \gS(Q) \subseteq   \gS(\gs \bar P) \cap \gS(Q).$
Thus, if $\gl \in \faqdc$ belongs to the set (\ref{e: set of holomorphy for C}) then $\mu = - \gl + \rho_{P,\fh}$ satisfies (\ref{e: condition for holomorphy A}). We infer that $A(\gs P: Q : \xi_M: - \gl + \rho_{P,\fh})$
is a holomorphic $\End(C(K:\xi_M))$-valued function of $\gl$ in the set
(\ref{e: set of holomorphy for C}).  In view of (\ref{e: C by intertwining operator}) we now infer (b).

Finally, we turn to (c).
Let $\xi \in \Mzerohat$ and let $\cA_{M,2}(\tau)_\xi$ denote the image of $C(K:\xi \:\tau) \otimes \bar V(\xi)$
under the map (\ref{e: direct sum over xi}).
We may select a finite set of $K$-types
 $\vartheta \subseteq   \widehat K$  such that $C(K:\xi: \tau) \subseteq   C(K:\xi)_\vartheta \otimes V_\tau.$
In view of \cite[Thm.\ 1.5]{vdBanSchlichtkrull_estimates_cfunction}  there exists
a polynomial function $q_\xi: \fadc \to \C$ which is a product of linear factors of the form
$\mu  \mapsto \inp{\mu}{\ga} - c,$ with $\ga \in \gS(\gs \bar P) \cap \gS(Q)$ and $c \in \R$ such that
\begin{equation}
\label{e: intertwiner on K types}
\mu \mapsto q_\xi(\mu) A(\gs P: Q : \xi_M: \mu)|_{C(K:\xi_M)_\vartheta}
\end{equation}
is holomorphic and polynomially bounded on the set $-B + \rho_{Q,\fh} + i\fad.$
 It follows that the function
of $\gl \in \faqdc$ arising from (\ref{e: intertwiner on K types}) by  the substitution $\mu = -\gl + \rho_{Q,\fh}$ is holomorphic and polynomially bounded on $B + i\faqd.$ Define
$$
p_\xi(\gl): = q_\xi(-\gl + \rho_{Q,\fh}).
$$
Then $p_\xi \in \Pi_{\gS, \R}(\faqd)$  because
$\gS(\gs\bar P) \cap \gS(Q) \subseteqq \gS\setminus \fahd,$ and in view of (\ref{e: C by intertwining operator})
it follows that
$$
\gl \mapsto p_\xi(\gl) C_{\bar P_0|Q}(1: \gl)_{\cA_{M,2}(\tau)_\xi}
$$
is holomorphic and polynomially bounded on $B + i\faqd.$
The result now follows by finiteness of the sum (\ref{e: direct sum over xi}).
\end{proof}

\subsection{The $\tau$-spherical Fourier transform}
Let $Q \in \cP(A)$ and let $(\tau, \Vtau)$ be a finite dimensional unitary representation of $K.$ For $\phi\in C_{c}^{\infty}(G/H:\tau)$, we define the $\tau$-spherical Fourier transform  $\Ft_{Q,\tau}\phi$  to be the meromorphic function $\faqc^{*}\to\oC(\tau)$ determined by
$$
\langle \Ft_{Q,\tau}\phi(\lambda),\psi\rangle
=\int_{G/H}\langle\phi(x)\, ,\, \Etau(Q:\psi:-\overline{\lambda})(x)\rangle_{\tau}\,dx
$$
for $\psi\in\cA_{M,2}(\tau)$ and generic $\lambda\in\faqc^{*}$.

\begin{Prop}
Let $\phi\in C_{c}^{\infty}(G/H:\tau)$. Then
$\Ft_{Q,\tau}\phi$ is holomorphic on an open neighborhood of $-\widehat{\Upsilon}_{Q}$,
see  (\ref{eq def Upsilon_Q}).
\end{Prop}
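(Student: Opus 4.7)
The plan is to transfer the known holomorphy of the Eisenstein integral on a neighborhood of $\widehat\Upsilon_Q$ (Cor.~8.5 of \cite{vdBanKuit_EisensteinIntegrals}) to holomorphy of $\Ft_{Q,\tau}\phi$ on the corresponding region, via the antiholomorphic substitution $\mu=-\bar\lambda$ followed by the standard holomorphy-under-the-integral-sign argument.

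First, I would observe that $\widehat\Upsilon_Q=\bigcap_{v\in\cW}v\widehat\Omega_{v^{-1}Qv}$ is cut out by inequalities of the form $\Re\langle\lambda,\beta\rangle\le C_\beta$ with $\beta\in\faqd$ real. In particular, its defining conditions depend only on $\Re\lambda$, so the antiholomorphic homeomorphism $\lambda\mapsto-\bar\lambda$ sends $-\widehat\Upsilon_Q$ bijectively onto $\widehat\Upsilon_Q$. Choosing an open neighborhood $U\subseteq\faqdc$ of $\widehat\Upsilon_Q$ on which $\mu\mapsto E_\tau(Q:\psi:\mu)$ is holomorphic for every $\psi\in\oC(\tau)$ (which is possible since $\oC(\tau)$ is finite-dimensional), the preimage
\[
  V:=\{\lambda\in\faqdc:-\bar\lambda\in U\}
\]
is an open neighborhood of $-\widehat\Upsilon_Q$.

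Next, for $\lambda\in V$ and $x\in G/H$ the $\Vtau$-valued map $\lambda\mapsto E_\tau(Q:\psi:-\bar\lambda)(x)$ is antiholomorphic in $\lambda$. Pairing with $\phi(x)$ via the Hermitian form $\langle\cdot,\cdot\rangle_\tau$, which is conjugate-linear in its second argument, produces the scalar function
\[
  f_\psi(\lambda,x):=\langle\phi(x),E_\tau(Q:\psi:-\bar\lambda)(x)\rangle_\tau,
\]
which is holomorphic in $\lambda$ for each fixed $x$ (the conjugate of an antiholomorphic scalar function is holomorphic). Since $\phi\in C_c^\infty(G/H:\tau)$, the function $x\mapsto f_\psi(\lambda,x)$ is continuous with support contained in $\supp\phi$, so the integral $\int_{G/H}f_\psi(\lambda,x)\,dx=\langle\Ft_{Q,\tau}\phi(\lambda),\psi\rangle$ is absolutely convergent on $V$.

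To conclude holomorphy I would invoke the standard differentiation-under-the-integral argument: on any compact subset $K\subseteq V$, joint continuity of $(\lambda,x)\mapsto E_\tau(Q:\psi:-\bar\lambda)(x)$ combined with compactness of $\supp\phi$ yields a uniform bound $|f_\psi(\lambda,x)|\le M\cdot\mathbf{1}_{\supp\phi}(x)$ for $(\lambda,x)\in K\times G/H$. Hence dominated convergence justifies interchanging a contour integral (Morera) or the $\bar\partial$ operator with the integration over $x$, and the resulting quantity vanishes because $f_\psi(\cdot,x)$ is holomorphic. Thus $\lambda\mapsto\langle\Ft_{Q,\tau}\phi(\lambda),\psi\rangle$ is holomorphic on $V$ for each $\psi\in\oC(\tau)$, and since $\oC(\tau)$ is finite-dimensional, $\Ft_{Q,\tau}\phi$ itself is holomorphic on $V$. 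The only delicate bookkeeping item is verifying that $\widehat\Upsilon_Q$ really is defined purely by real-part inequalities, which is immediate from the formulas for $\Omega_Q$ and $\widehat\Omega_Q$ in (\ref{eq def Omega_Q}) and (\ref{eq def Upsilon_Q}); the remaining steps are routine.
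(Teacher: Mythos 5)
Your argument is correct and is essentially the paper's proof: the paper simply cites \cite[Cor.~8.5]{vdBanKuit_EisensteinIntegrals}, which gives exactly the holomorphy of $\Etau(Q:\psi:\dotvar)$ on a neighborhood of $\widehat\Upsilon_Q$ that you use. Your additional steps (the observation that $\widehat\Upsilon_Q$ is cut out by conditions on $\Re\lambda$ only, so $\lambda\mapsto-\bar\lambda$ carries $-\widehat\Upsilon_Q$ onto $\widehat\Upsilon_Q$, and the Morera/dominated-convergence argument over the compact support of $\phi$) are just the routine details the paper leaves implicit.
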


\begin{proof}
This follows directly from \cite[Cor.~8.5]{vdBanKuit_EisensteinIntegrals}.
\end{proof}

Before proceeding we will first discuss how this Fourier transform is related to the
$\tau$-spherical Fourier transform $\nFt_{\bar P_0}\phi$ defined in
\cite[Eqn. (59)]{vdBanSchlichtkrull_FourierTransformOnASemisimpleSymmetricSpace},
for $P_0,$ hence $\bar P_0,$ a minimal $\gs\Cartan$-stable parabolic subgroup from $\cP_\gs(\Aq).$
The last mentioned transform is defined to be the meromorphic
function  $\faqc^{*}\to\oC(\tau)$ given by
$$
\inp{\nFt_{\bar P_0}\phi(\gl) }{\psi} = \int_{G/H} \inp{f(x)}{E^\circ(\bar P_0:\psi: -\bar \gl)(x)}_\tau \; dx,
$$
for $\psi \in \oC(\tau)$ and generic $\gl \in \faqdc.$

\begin{Prop}\label{Prop F_Q phi=c F^0_cP0 phi}
Let $P_0 \in \cP_\gs(\Aq)$ and $\phi\in C_{c}^{\infty}(G/H:\tau).$ Then
$$
\Ft_{Q,\tau}\phi(\lambda)
=\ctau_{\bar P_{0}|Q}(1: -\bar{\lambda})^{*}\nFt_{\bar P_0}\phi(\lambda).
$$
for generic $\lambda\in\faqc^{*}.$
\end{Prop}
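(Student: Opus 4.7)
The plan is to pair both sides of the claimed identity against an arbitrary element $\psi \in \cA_{M,2}(\tau)$ and reduce the whole statement to Lemma \ref{l: relation EQ with Enorm}. Since both sides are meromorphic in $\lambda \in \faqc^*$, it is enough to verify the identity for generic $\lambda$ in a nonempty open subset.

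First, I would unfold the defining integral,
\begin{equation*}
\langle \Ft_{Q,\tau}\phi(\lambda),\psi\rangle
=\int_{G/H}\langle \phi(x)\,,\,\Etau(Q:\psi:-\bar\lambda)(x)\rangle_{\tau}\,dx.
\end{equation*}
Next, I would invoke Lemma \ref{l: relation EQ with Enorm} with spectral parameter $-\bar\lambda$ in place of $\lambda$, which gives, for generic $\lambda$,
\begin{equation*}
\Etau(Q:\psi:-\bar\lambda)
=\Etau^\circ\bigl(\bar P_0 :\,\ctau_{\bar P_0|Q}(1:-\bar\lambda)\psi:\,-\bar\lambda\bigr).
\end{equation*}
Substituting this into the integral and recognizing the result as the defining integral for $\nFt_{\bar P_0}$ yields
\begin{equation*}
\langle \Ft_{Q,\tau}\phi(\lambda),\psi\rangle
=\bigl\langle \nFt_{\bar P_0}\phi(\lambda)\,,\,\ctau_{\bar P_0|Q}(1:-\bar\lambda)\psi\bigr\rangle.
\end{equation*}

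Finally, I would use that $\cA_{M,2}(\tau)$ is a finite dimensional Hilbert space, so that by the very definition of the Hilbert-space adjoint on this space, the right-hand side equals
\begin{equation*}
\bigl\langle \ctau_{\bar P_0|Q}(1:-\bar\lambda)^{*}\,\nFt_{\bar P_0}\phi(\lambda)\,,\,\psi\bigr\rangle.
\end{equation*}
Because $\psi \in \cA_{M,2}(\tau)$ is arbitrary, the identity of functionals gives the identity of vectors in $\oC(\tau)$ for generic $\lambda$, and hence as meromorphic functions on $\faqc^*$ by analytic continuation.

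There is no real obstacle here: the proof is an unpacking of definitions, the only nontrivial input being Lemma \ref{l: relation EQ with Enorm}. The only mild point of care is the genericity issue; one must observe that both sides are meromorphic $\oC(\tau)$-valued functions on $\faqc^*$ (the left side by holomorphy of $\Etau(Q:\psi:\dotvar)$ on a neighborhood of $-\widehat\Upsilon_Q$ paired against a compactly supported smooth function and meromorphic continuation; the right side because $\ctau_{\bar P_0|Q}(1:\dotvar)$ is meromorphic and $\nFt_{\bar P_0}\phi$ is meromorphic), so that equality on the open set of generic $\lambda$ forces equality as meromorphic functions.
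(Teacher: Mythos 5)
Your proposal is correct and is exactly the paper's argument: the paper's proof consists of the single sentence that the identity follows directly from Lemma \ref{l: relation EQ with Enorm}, and your computation (substituting $-\bar\lambda$, recognizing the defining integral for $\nFt_{\bar P_0}$, and passing to the adjoint on the finite dimensional Hilbert space $\cA_{M,2}(\tau)$) is just the expected unpacking of that one-line proof. The remarks on meromorphy and genericity are consistent with how the paper treats these objects.
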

\begin{proof}
The identity follows directly from Lemma \ref{l: relation EQ with Enorm}.
\end{proof}

Given $R>0$ we write $B_{R}$ for the open ball in $\faq$ with center $0$ and radius $R$. Furthermore, we define
$$
C_{R}^{\infty}(G/H:\tau):= \{ \phi \in C_{c}^{\infty}(G/H:\tau): \; \supp\phi\subseteq    K\exp(B_{R})\cdot H\}.
$$
\begin{Prop}\label{Prop descr of singularities of E(Q:psi:.) and Paley-Wiener estimate for F_(Q,tau)}
Let $B\subseteq    \faq^{*}$ be open and bounded.
There exists a $p\in\Pi_{\Sigma,\R}(\faqd)$ such that $\lambda\mapsto p(-\lambda)\Etau(Q:\psi:-\lambda)$ is holomorphic on $B+i\faq^{*}$ for every $\psi\in\oC(\tau)$. Moreover, if $\phi\in C_{c}^{\infty}(G/H,\tau)$, then
$$
\lambda\mapsto p(\lambda)\Ft_{Q,\tau}\phi(\lambda)
$$
is holomorphic on $B+i\faq^{*}$.

Let $R>0$. There exist a constant $C_{R}>0$ and for every $N\in \N$  a
continuous seminorm $\nu_{N}$ on $C_{R}^{\infty}(G/H:\tau)$ such that
\begin{equation}\label{eq Paley-Wiener estimate for F_(Q,tau)}
\|p(\lambda)\Ft_{Q,\tau}\phi(\lambda)\|
\leq(1+\|\lambda\|)^{-N}e^{C_{R}\|\Re\lambda\|}\nu_{N}(\phi)
\end{equation}
for every $\phi\in C_{R}^{\infty}(G/H:\tau)$ and all $\lambda\in B+i\faq^{*}$.
\end{Prop}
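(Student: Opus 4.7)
The strategy is to reduce the proposition to the corresponding statements for the normalized Eisenstein integral $\Etau^{\circ}(\bar P_0:\cdot:\cdot)$ and its normalized Fourier transform $\nFt_{\bar P_0}$ attached to a suitable $P_0 \in \cP_\gs(\Aq)$, both of which are controlled by the most-continuous Plancherel theory for $G/H$ of van den Ban--Schlichtkrull. To set this up I invoke Lemma \ref{l: extremity and ordering}(a) to pick a $\fq$-extreme $P \in \cP_\gs(A)$ with $P \succeq Q$, and let $P_0 \in \cP_\gs(\Aq)$ be the unique $\gs\Cartan$-stable minimal parabolic containing $P$.

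For the holomorphy of $\lambda \mapsto p(-\lambda)\Etau(Q:\psi:-\lambda)$ I use the identity from Lemma \ref{l: relation EQ with Enorm},
\[
\Etau(Q:\psi:-\lambda) \;=\; \Etau^{\circ}\bigl(\bar P_0 : \ctau_{\bar P_0|Q}(1:-\lambda)\psi : -\lambda\bigr).
\]
By Proposition \ref{Prop relation E(Q) and E^circ(cP0)}(c) there is $p_1 \in \Pi_{\Sigma,\R}(\faqd)$ such that $\lambda \mapsto p_1(-\lambda)\ctau_{\bar P_0|Q}(1:-\lambda)$ is holomorphic and polynomially bounded on $B + i\faqd$. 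From the analysis underlying the $\gs\Cartan$-Plancherel formula, the singular set of $\mu \mapsto \Etau^{\circ}(\bar P_0:\psi:\mu)$ on a bounded tube is a finite union of real hyperplanes $\langle\mu,\alpha\rangle = c$ with $\alpha \in \Sigma\setminus\fahd$ and $c \in \R$; hence there is $p_2 \in \Pi_{\Sigma,\R}(\faqd)$ clearing these on $-B + i\faqd$. Setting $p := p_1 p_2$ then yields the first assertion.

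For the Fourier transform, Proposition \ref{Prop F_Q phi=c F^0_cP0 phi} gives
\[
\Ft_{Q,\tau}\phi(\lambda) \;=\; \ctau_{\bar P_0|Q}(1:-\bar\lambda)^{*}\,\nFt_{\bar P_0}\phi(\lambda).
\]
A direct computation shows that the singular set of $\lambda \mapsto \ctau_{\bar P_0|Q}(1:-\bar\lambda)$ on $B + i\faqd$ coincides with that of $\lambda \mapsto \ctau_{\bar P_0|Q}(1:-\lambda)$, since each is a real hyperplane $\langle\lambda,\alpha\rangle = -c$ determined by $\alpha \in \Sigma\setminus\fahd$; similarly, the singular set of $\nFt_{\bar P_0}\phi$ coincides with that of $\mu \mapsto \Etau^\circ(\bar P_0:\psi:-\mu)$. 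Hence the same polynomial $p = p_1 p_2$ clears all singularities of $\lambda \mapsto \Ft_{Q,\tau}\phi(\lambda)$ on $B + i\faqd$. For the Paley--Wiener estimate I combine the polynomial bound of Proposition \ref{Prop relation E(Q) and E^circ(cP0)}(c), say $\|p_1(-\bar\lambda)\ctau_{\bar P_0|Q}(1:-\bar\lambda)^{*}\| \leq C(1+\|\lambda\|)^d$ on $B+i\faqd$, with the topological Paley--Wiener bound for $\nFt_{\bar P_0}$: for each $N \in \N$ there exist $C_R > 0$ and a continuous seminorm $\mu_{N+d}$ on $C_R^\infty(G/H:\tau)$ with
\[
\bigl\|p_2(\lambda)\nFt_{\bar P_0}\phi(\lambda)\bigr\| \leq (1+\|\lambda\|)^{-N-d}e^{C_R\|\Re\lambda\|}\mu_{N+d}(\phi)
\]
on $B + i\faqd$. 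Multiplying these bounds yields (\ref{eq Paley-Wiener estimate for F_(Q,tau)}) with $\nu_N$ a constant multiple of $\mu_{N+d}$.

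The main obstacle is rigorously locating the singular sets of $\Etau^{\circ}(\bar P_0:\psi:\cdot)$ and of $\nFt_{\bar P_0}\phi$ on a bounded tube and invoking a Paley--Wiener estimate with uniform control in $\phi \in C_R^\infty(G/H:\tau)$; both must be extracted from the Plancherel analysis of the minimal $\gs\Cartan$-stable parabolic case, where the location and order of singularities of $\Etau^\circ$ and of the relevant $C$-functions $C_{\bar P_0|P_0}(1:\cdot)$ are studied in detail. Once those ingredients are in place, the combination with Proposition \ref{Prop relation E(Q) and E^circ(cP0)} is essentially mechanical.
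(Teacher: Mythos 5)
Your proposal is correct and follows essentially the same route as the paper: fix a $\fq$-extreme $P\succeq Q$ with $P\subseteq P_0$, transfer everything to $\Etau^{\circ}(\bar P_0:\cdot:\cdot)$ and $\nFt_{\bar P_0}$ via Lemma \ref{l: relation EQ with Enorm} and Proposition \ref{Prop F_Q phi=c F^0_cP0 phi}, clear the $C$-function singularities and get polynomial bounds from Proposition \ref{Prop relation E(Q) and E^circ(cP0)}(b,c), and take the product of the two polynomials. The ingredients you flag as the remaining obstacle (holomorphy of $p_1(-\lambda)E^{\circ}(\bar P_0:\psi:-\lambda)$ on bounded tubes and the uniform Paley--Wiener estimate for $p_1\nFt_{\bar P_0}$) are exactly what the paper imports from \cite[Prop.\ 3.1]{vdBanSchlichtkrull_FourierInversionOnAReductiveSymmetricSpace} and \cite[Lemma 4.4]{vdBanSchlichtkrull_FourierInversionOnAReductiveSymmetricSpace}.
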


\begin{proof}
We fix a $\fq$-extreme parabolic subgroup $P \in \cP(A)$ such that $P \succeq Q.$ Let $P_0$ be the unique
subgroup in $\cP_\gs(\Aq)$ such that $P_0 \supseteq P.$
By \cite[Prop.\ 3.1]{vdBanSchlichtkrull_FourierInversionOnAReductiveSymmetricSpace} there exists a $p_1\in\Pi_{\Sigma,\R}(\faqd)$ such that
$$
\lambda\mapsto p_1(-\lambda)E^{\circ}(\bar{P_{0}}:\psi:-\lambda)
$$
is holomorphic on $B+i\faq^{*}$ for every $\psi\in\oC(\tau)$. This implies that $p_1\nFt_{\bar{P_{0}}}\phi$
is holomorphic on $B+i\faq^{*}$ for every $\phi\in C_{c}^{\infty}(G/H:\tau)$. Furthermore, by \cite[Lemma 4.4]{vdBanSchlichtkrull_FourierInversionOnAReductiveSymmetricSpace} there exist a constant $C_{R}>0$ and for every $N\in \N$  a continuous seminorm $\nu_{N}$ on $C_{R}^{\infty}(G/H:\tau)$ such that
$$
\|p_1(\lambda)\nFt_{\bar P_0,\tau}\phi(\lambda)\|
\leq(1+\|\lambda\|)^{-N}e^{C_{R}\|\Re\lambda\|}\nu_{N}(\phi)
$$
for every $\phi\in C_{R}^{\infty}(G/H:\tau)$ and all $\lambda\in B+i\faq^{*}$.

Choose $p_{2}\in\Pi_{\Sigma,\R}(\faqd)$ as in Proposition \ref{Prop relation E(Q) and E^circ(cP0)}(b,c)
and put $p = p_1p_2.$ Then the result follows in view of
Proposition \ref{Prop F_Q phi=c F^0_cP0 phi}, by combining the above assertions with those of  Proposition \ref{Prop relation E(Q) and E^circ(cP0)}(b,c).
\end{proof}

\section{The $\tau$-spherical Harish-Chandra transform}
\label{s: tau spherical HC transform}
\subsection{Definition and relation with the spherical Fourier transform}
We assume that $Q \in \cP(A)$ and that $(\tau, \Vtau)$ is a finite
dimensional unitary representation of $K.$
 Recall the definition of the character $\gd_Q$ on $L$
by  (\ref{e: defi Delta Q}),  see also
(\ref{e: Delta Q and rho}). The following definition makes use of the notation (\ref{e: defi Q v}).

\begin{Defi}
\label{d: tau HCT}
For a function $\phi\in C_{c}^{\infty}(G/H:\tau)$ we define its $\tau$-spherical Harish-Chandra transform  $\Ht_{Q,\tau}\phi$ to be the function $A_{\fq}\to \oC(\tau)$ given by
\begin{equation}\label{eq def H_(Q,tau)}
\Big(\Ht_{Q,\tau}\phi(a)\Big)_{\!v}(m)
:=\Deltach_{Q}(a)\int_{N_{Q^{v}}/H_{N_{Q^{v}}}}\phi(mavn)\,dn
\end{equation}
for $v\in\cW$, $m\in M$ and $a\in A_{\fq}$.
\end{Defi}

It is easily seen that $\Ht_{Q,\tau}$ defines a continuous linear map $C_{c}^{\infty}(G/H:\tau)\to C^{\infty}(A_{\fq})\otimes\oC(\tau)$.
The $\tau$-spherical Harish-Chandra transform $\Ht_{Q,\tau}$ is related to the Harish-Chandra transform introduced in  Definition \ref{Defi Radon and HC-transform}.
Namely, the following result is valid.

\begin{Lemma}\label{Lemma relation between H_(Q,tau) and H_Q}
Let $\phi\in C_{c}^{\infty}(G/H:\tau)$. Then for  $a\in A_{\fq}$ and $\psi\in\oC(\tau)$
\begin{equation}\label{eq relation H_(Q,tau) and H_Q}
\big\langle\Ht_{Q,\tau}\phi(a),\psi\big\rangle
=\sum_{v\in\cW}\Ht_{Q^{v}}\Big(\big\langle\phi(\dotvar),\tau(v^{-1})\psi_{v}(e)\big\rangle_{\tau}
\Big)(v^{-1}av).
\end{equation}
\end{Lemma}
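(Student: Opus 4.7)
The plan is to unfold both sides of (\ref{eq relation H_(Q,tau) and H_Q}) using the definitions of $\Ht_{Q,\tau}$ and $\Ht_{Q^v}$, and then identify them term by term in the sum over $v \in \cW$.

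For the left-hand side, I would first expand the pairing using the direct sum Hilbert structure on $\oC(\tau) = \bigoplus_{v\in\cW} C^\infty(M/M\cap vHv^{-1}:\tau_M^0)$. This gives
\[
\langle \Ht_{Q,\tau}\phi(a),\psi\rangle = \sum_{v\in\cW}\int_{M/M\cap vHv^{-1}} \langle (\Ht_{Q,\tau}\phi(a))_v(m),\psi_v(m)\rangle_\tau\,dm.
\]
Substituting (\ref{eq def H_(Q,tau)}) and interchanging the $m$- and $n$-integrals, the key observation is that the integrand is \emph{independent} of $m$: using the left $K$-covariance $\phi(mavn)=\tau(m)\phi(avn)$ and the $\tau_M^0$-spherical property $\psi_v(m)=\tau(m)\psi_v(e)$, unitarity of $\tau(m)$ gives $\langle\phi(mavn),\psi_v(m)\rangle_\tau=\langle\phi(avn),\psi_v(e)\rangle_\tau$. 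By the normalization convention that $M/M\cap vHv^{-1}$ has total measure $1$, the $m$-integral simply collapses, leaving
\[
\langle \Ht_{Q,\tau}\phi(a),\psi\rangle = \sum_{v\in\cW}\Deltach_Q(a)\int_{N_{Q^v}/H_{N_{Q^v}}}\langle\phi(avn),\psi_v(e)\rangle_\tau\,dn.
\]

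For the right-hand side, I would apply the definition of $\Ht_{Q^v}$ from Section~\ref{s: Harish-Chandra transforms}:
\[
\Ht_{Q^v}\bigl(\langle \phi(\dotvar),\tau(v^{-1})\psi_v(e)\rangle_\tau\bigr)(v^{-1}av) = \Deltach_{Q^v}(v^{-1}av)\int_{N_{Q^v}/H_{N_{Q^v}}}\langle\phi(v^{-1}avn),\tau(v^{-1})\psi_v(e)\rangle_\tau\,dn.
\]
Since $v^{-1}\in K$, the left-$K$-covariance of $\phi$ yields $\phi(v^{-1}avn)=\tau(v^{-1})\phi(avn)$, and unitarity of $\tau(v^{-1})$ then simplifies the integrand to $\langle\phi(avn),\psi_v(e)\rangle_\tau$.

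Comparing the two expressions, what remains to verify is the equality of density factors $\Deltach_Q(a)=\Deltach_{Q^v}(v^{-1}av)$. This is where the careful choice $\cW\subseteq \Nor_K(\faq)\cap \Nor_K(\fa_\fh)$ enters: from $Q^v=v^{-1}Qv$ we have $\fn_{Q^v}=\Ad(v^{-1})\fn_Q$, and since $\Ad(v)$ stabilizes $\Cen_\fg(\faq)$ it follows that $\fn_{Q^v}\cap\Cen_\fg(\faq)=\Ad(v^{-1})(\fn_Q\cap\Cen_\fg(\faq))$. A conjugation computation then shows $\det\Ad(v^{-1}av)|_{\fn_{Q^v}}=\det\Ad(a)|_{\fn_Q}$ and similarly on the centralizer part, so $\Deltach_{Q^v}(v^{-1}av)=\Deltach_Q(a)$ by (\ref{e: defi Delta Q}). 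Summing over $v\in\cW$ yields (\ref{eq relation H_(Q,tau) and H_Q}). No step is genuinely hard here; the only subtle points are the bookkeeping of unitary covariances that makes the $m$-integral trivial, and the normalization of conventions for measures on compact quotients.
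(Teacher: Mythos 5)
Your proof is correct and follows essentially the same route as the paper: expand the pairing over the direct sum $\oC(\tau)=\bigoplus_v C^\infty(M/M\cap vHv^{-1}:\tau_M^0)$, use unitarity of $\tau$, sphericality of $\phi$ and $\psi_v$, and the probability normalization of the measure on $M/(M\cap vHv^{-1})$ to collapse the $m$-integral, then identify the result with $\Ht_{Q^v}$ via the identity $\Deltach_Q(a)=\Deltach_{Q^v}(v^{-1}av)$. The only difference is cosmetic (you unfold both sides and match, while the paper transforms the left side into the right in one chain), and your explicit check of the $\Deltach$ identity, which the paper uses silently, is a correct and welcome detail.
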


\begin{proof}
Let $\phi\in C_{c}^{\infty}(G/H:\tau)$, $\psi\in\oC(\tau)$ and $a\in A_{\fq}$.
Recall that $\vH$ denotes $vHv^{-1}$ for $v\in\cW$.
Then
\begin{align*}
&\langle\Ht_{Q,\tau}\phi(a),\psi\rangle
=\sum_{v\in\cW}\int_{M/(M\cap\vH)}\Big\langle\big(\Ht_{Q,\tau}\phi(a)\big)_{v}(m),\psi_{v}(m)\Big\rangle_\tau\\
&\qquad=\sum_{v\in\cW}\int_{M/(M\cap\vH)}\Deltach_{Q}(a)
    \int_{N_{Q^{v}}/H_{N_{Q^{v}}}}\big\langle\phi(mavn),\psi_{v}(m)\big\rangle_\tau       \\
&\qquad=\sum_{v\in\cW}\int_{M/(M\cap\vH)}\Deltach_{Q^{v}}(v^{-1}av)
    \int_{N_{Q^{v}}/H_{N_{Q^{v}}}}\big\langle\phi(mvv^{-1}avn),\psi_{v}(m)\big\rangle_{\tau}.
\end{align*}
We now use that $\tau$ is unitary and that the measure on $M/(M\cap\vH)$
is normalized, and thus we conclude that the last expression is equal to
$$
\sum_{v\in\cW}\Deltach_{Q^{v}}(v^{-1}av)
    \int_{N_{Q^{v}}/H_{N_{Q^{v}}}}\langle\phi(v^{-1}avn),\tau(v^{-1})\psi_{v}(e)\rangle_\tau.
$$
Finally, the claim follows from the definition of the Harish-Chandra transform (Definition \ref{Defi Radon and HC-transform}).
\end{proof}

\begin{Cor}{\ }
\begin{enumerate}
\itema
Let $P \in \cP_\gs(A,Q).$ Then the spherical Harish-Chandra transform $\Ht_{Q,\tau}$ is a continuous linear map
$C_c^\infty(G/H:\tau) \to L^1(\Aq, \gd_P^{-1}da) \otimes \oC(\tau).$
\itemb
Let
$\phi \in C_c^\infty(G/H:\tau)$ be supported in $K \exp C \cdot H,$ with $C \subseteqq \faq$ compact, convex and invariant under the action of $N_{K\cap H}(\faq).$
Then
\begin{equation}
\label{e: inclusion support HQtau}
\supp \Ht_{Q,\tau}(\phi) \subseteq \bigcup_{v \in \cW} \exp (C  + v\Gamma(Q^v))
\end{equation}
\end{enumerate}
\end{Cor}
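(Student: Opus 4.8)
The plan is to derive both statements from Lemma \ref{Lemma relation between H_(Q,tau) and H_Q}, which reduces the $\tau$-spherical Harish-Chandra transform to a sum of ordinary Harish-Chandra transforms $\Ht_{Q^v}$ applied to scalar functions, evaluated at $v^{-1}av$. First I would fix $P \in \cP_\gs(A,Q)$ as in the hypothesis of (a); since $P \succeq Q$, for each $v \in \cW$ the conjugate $P^v = v^{-1}Pv$ lies in $\cP_\gs(A)$ and satisfies $P^v \succeq Q^v$, because conjugation by $v \in \Nor_K(\faq) \cap \Nor_K(\fa_\fh)$ preserves the root-space data $\gS(\cdot,\gs)$ and $\gS(\cdot,\gs\Cartan)$ up to the action of $v$ on $\faq$. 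Thus $P^v \in \cP_\gs(A, Q^v)$, and Proposition \ref{Prop Delta_P_0^(-1/2)H_Q maps C_c^infty to C(L/H_L)} applies to each summand: $\Deltach_{P^v}^{-1} \Ht_{Q^v}(\psi)$ lies in $L^1(L/H_L)^\infty$ for $\psi \in C_c^\infty(G/H)$, in particular is integrable against $da$ on $\Aq$ after restricting to $A$.

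For part (a), the key point is to track how the character $\Deltach_P$ transforms under the substitution $l \mapsto v^{-1}lv$. From (\ref{e: Delta Q and rho}) one has $\Deltach_P(a) = a^{\rho_P - \rho_{P,\fh}}$ on $A$, and since $v \in \Nor_K(\faq)$ normalizes $\fn_{P^v}$-data appropriately, $\Deltach_{P^v}(v^{-1}av) = a^{v(\rho_{P^v}-\rho_{P^v,\fh})}$; I would check that $v(\rho_{P^v} - \rho_{P^v,\fh})|_{\faq} = (\rho_P - \rho_{P,\fh})|_{\faq}$, so that the weight governing integrability over $\Aq$ is the same for every $v$. Combining this with Lemma \ref{Lemma relation between H_(Q,tau) and H_Q} and the integrability of each $\Deltach_{P^v}^{-1}\Ht_{Q^v}(\langle \phi(\cdot), \tau(v^{-1})\psi_v(e)\rangle_\tau)$, one concludes that $\langle \Ht_{Q,\tau}\phi(\cdot), \psi\rangle$, multiplied by $\gd_P^{-1}$, is in $L^1(\Aq, da)$; letting $\psi$ range over an orthonormal basis of $\oC(\tau)$ gives the claimed map into $L^1(\Aq, \gd_P^{-1}da) \otimes \oC(\tau)$, with continuity following from the continuity assertion in Proposition \ref{Prop Delta_P_0^(-1/2)H_Q maps C_c^infty to C(L/H_L)}.

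For part (b), I would start from the support statement for the scalar Harish-Chandra transform, Proposition \ref{Prop support of H phi}. If $\supp \phi \subseteq K\exp(C)\cdot H$ with $C$ compact, convex and $\Nor_{K\cap H}(\faq)$-invariant, then for each $v \in \cW$ the function $\langle \phi(\cdot), \tau(v^{-1})\psi_v(e)\rangle_\tau$ has support contained in the same set, so Proposition \ref{Prop support of H phi} gives $\supp \Ht_{Q^v}(\langle \phi(\cdot), \tau(v^{-1})\psi_v(e)\rangle_\tau) \subseteq M\exp(C + \Gamma(Q^v))\cdot H_L$. The term $\Ht_{Q^v}(\cdots)(v^{-1}av)$ in (\ref{eq relation H_(Q,tau) and H_Q}) is therefore nonzero only if $v^{-1}av \in M\exp(C+\Gamma(Q^v))\cdot H_L$, i.e. (passing to $\faq$ and using that $v$ normalizes $\faq$ and $C$ is $\Nor_{K\cap H}(\faq)$-invariant, while noting $\log a$ is determined mod nothing here since $a \in \Aq$) $\log a \in v(C + \Gamma(Q^v)) = C + v\Gamma(Q^v)$. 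Taking the union over $v \in \cW$ yields (\ref{e: inclusion support HQtau}).

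The main obstacle I anticipate is the bookkeeping in the second and third paragraphs: verifying that $P \succeq Q$ implies $P^v \succeq Q^v$ and that the relevant $\rho$-weights restrict compatibly to $\faq$ under the $\cW$-action — these are elementary but require care with the identifications of $\faqd$, $\fahd$ inside $\fad$ and with the fact that elements of $\cW$ were chosen in $\Nor_K(\faq) \cap \Nor_K(\fa_\fh)$, hence act on $\fa$ preserving the decomposition $\fa = \faq \oplus \fa_\fh$. Once these conjugation-equivariance facts are in hand, both (a) and (b) are immediate consequences of the already-established scalar results applied termwise.
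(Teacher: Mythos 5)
Your proposal is correct and follows essentially the same route as the paper's proof: both statements are deduced termwise from Lemma \ref{Lemma relation between H_(Q,tau) and H_Q}, part (a) from Proposition \ref{Prop Delta_P_0^(-1/2)H_Q maps C_c^infty to C(L/L cap H)} applied with $P^v \in \cP_\gs(A,Q^v)$ (you make explicit the conjugation bookkeeping, namely $P \succeq Q \Rightarrow P^v \succeq Q^v$ and $\gd_{P^v}(v^{-1}av)=\gd_P(a)$, which the paper leaves implicit), and part (b) from Proposition \ref{Prop support of H phi} applied to each $Q^v$. The one delicate spot in (b) --- that $v \in \cW$ need not lie in $\Nor_{K\cap H}(\faq)$, so the substitution a priori yields $\log a \in vC + v\Gamma(Q^v)$ and the identification of $vC$ with $C$ requires comment --- is passed over in your write-up exactly as tersely as in the paper's own proof (which likewise just invokes the $\Nor_{K\cap H}(\faq)$-invariance of $C$ at this point), so no gap is introduced relative to the paper.
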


\begin{proof}
It follows from Proposition \ref{Prop Delta_P_0^(-1/2)H_Q maps C_c^infty to C(L/L cap H)} that  $\gd_{P^v}^{-1} (\Ht_{Q^v} \otimes I)$ defines a continuous linear map
$C^\infty_c(G/H: \tau) \to L^1(L/H_L: \tau_{M}).$ Since $L/H_L \simeq M/M\cap H \times \Aq),$
it follows that restriction to $\Aq$ defines a continuous linear map $ L^1(L/H_L: \tau_{M})
\to \Aq \otimes \Vtau.$ In view of  Lemma  \ref{Lemma relation between H_(Q,tau) and H_Q} assertion (a)
of the corollary now follows.

For (b), assume that $\phi \in C_c^\infty(G/H: \tau)$ has a support as stated.
Then by Proposition \ref{Prop support of H phi} the support of $(\Ht_{Q^v} \otimes I)(\phi)|_{\Aq} $ is contained in
$\exp(C + \Gamma(Q^v)). $ In view of the $N_{K\cap H}(\faq)$-invariance
of $C,$ the inclusion (\ref{e: inclusion support HQtau}) now follows by application of Lemma \ref{Lemma relation between H_(Q,tau) and H_Q}.
\end{proof}

It follows from this corollary that for $\phi \in C_c^\infty(G/H\colon \tau)$  the Euclidean Fourier transform
$$
\eFt_{\!\Aq}(\Ht_{Q,\tau} \phi)(\gl) = \int_{\Aq} \Ht_{Q, \tau}\phi(a) a^{-\gl} \; da,
$$
is well defined for $\gl$ in the subset $- \Upsilon_Q \subseteqq \faqdc,$ with absolutely convergent integral,
and defines a holomorphic $\oC(\tau)$-valued function on the interior of this set.

\begin{Prop}\label{Prop Ft_(Q,tau)=F_A circ H_Q}
Let $\phi\in C_{c}^{\infty}(G/H:\tau)$. Then for $\lambda\in-\Upsilon_{Q},$
$$
\Ft_{Q,\tau}\phi(\lambda)
=\eFt_{\!A_{\fq}}\big(\Ht_{Q,\tau}\phi\big)(\lambda).
$$
\end{Prop}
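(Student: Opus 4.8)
The statement claims that the $\tau$-spherical Fourier transform $\Ft_{Q,\tau}\phi$ coincides, on the domain $-\Upsilon_Q$, with the Euclidean Fourier transform $\eFt_{\!\Aq}(\Ht_{Q,\tau}\phi)$. The plan is to unwind both sides into integrals over $G/H$ (respectively over $\Aq$ and the nilpotent fibers) and match them by a Fubini argument. First I would start from the definition of $\langle\Ft_{Q,\tau}\phi(\lambda),\psi\rangle$ as $\int_{G/H}\langle\phi(x),\Etau(Q:\psi:-\bar\lambda)(x)\rangle_\tau\,dx$, expand the Eisenstein integral $\Etau(Q:\psi:-\bar\lambda)$ using its definition as a sum over $v\in\cW$ of integrals $E_{{}^v\!H}(Q:\pr_v\psi:-\bar\lambda)(xv^{-1})$, and use the explicit formula \eqref{e: defi psi v Q} for $\psi_{v,Q,-\bar\lambda}$. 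The point is that $\psi_{v,Q,-\bar\lambda}(kman)=a^{-\bar\lambda-\rho_Q-\rho_{Q,\fh}}\tau(k)\psi_v(m)$, so when one pairs with $\phi$ and uses $\tau$-sphericity of $\phi$, the $K$-variable integrates away and one is left with an integral over the remaining $MA N_Q$-type coordinates against the character $a^{\lambda}$ (the conjugate killing the bar).

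The key computational step is to reorganize $\int_{G/H}$ using the decomposition of $G/H$ coming from Lemma \ref{l: int over P dot H} (with $P\in\cP_\gs(A,Q)$, which is non-empty), or more directly the Fubini theorem for densities, Theorem \ref{Thm Fubini theorem for densities}, applied to the tower of subgroups relating $G$, the relevant ${}^v\!H$, and $N_{Q^v}$. Concretely, on each $v$-summand one writes $x$ in coordinates adapted to $Q^v$ so that the Eisenstein integrand over ${}^v\!H/{}^v\!H_{Q^v}$ becomes an integral over $N_{Q^v}/H_{N_{Q^v}}$; combined with the $A$-integration this produces exactly $\int_{\Aq}\int_{M/M\cap{}^v\!H}\int_{N_{Q^v}/H_{N_{Q^v}}}\langle\phi(mavn),\tau(v^{-1})\psi_v(e)\rangle_\tau\,dn\,dm\,a^{\lambda}\,da$ times the appropriate power of $a$ from $\rho_Q,\rho_{Q,\fh}$, which after collecting is precisely $\Deltach_Q(a)$ times the inner Radon integral. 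Matching this against the definition \eqref{eq def H_(Q,tau)} of $\Ht_{Q,\tau}\phi$ and the definition of $\eFt_{\!\Aq}$ gives the identity. In fact the cleanest route is to invoke Lemma \ref{Lemma relation between H_(Q,tau) and H_Q}, which already expresses $\langle\Ht_{Q,\tau}\phi(a),\psi\rangle$ as a sum over $v$ of ordinary Harish-Chandra transforms $\Ht_{Q^v}$, and then integrate that identity against $a^{-\lambda}\,da$ over $\Aq$; so the real content reduces to the scalar (non-$\tau$) statement that $\int_{\Aq}\Ht_{Q^v}(\langle\phi(\cdot),\tau(v^{-1})\psi_v(e)\rangle_\tau)(v^{-1}av)\,a^{-\lambda}\,da$ equals the corresponding summand of $\langle\Ft_{Q,\tau}\phi(\lambda),\psi\rangle$ obtained from the definition of the Eisenstein integral $E_{{}^v\!H}(Q:\pr_v\psi:-\bar\lambda)$.

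The main obstacle I expect is the bookkeeping of measures, densities, and exponents: one must verify that the normalizations of the invariant measure on $G/H$, the $L$-invariant measure on $L/H_L$, the Haar measures on the nilpotent groups $N_{Q^v}$ and $N_{Q^v}\cap H$, and the densities $\omega_{H/H_{Q^v}}$ chosen so that \eqref{eq omega_(G/H) otimes omega_(H/HQv) otimes omega_v=omega_(G/HL)} holds, all fit together so that no stray positive constant appears. The exponent accounting is that $a^{-\bar\lambda-\rho_Q-\rho_{Q,\fh}}$, after conjugation by $\lambda\mapsto-\bar\lambda$ in the pairing, the Jacobian $a^{2\rho_Q}$ arising from rewriting the $N_Q$-integral in the $G/H$-decomposition, and the $A$-invariance normalization together leave exactly $a^{\lambda}\Deltach_Q(a)$, since $\Deltach_Q(a)=a^{\rho_Q-\rho_{Q,\fh}}$ by \eqref{e: Delta Q and rho}. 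Convergence on $-\Upsilon_Q$ is not an issue: holomorphy of $\Etau(Q:\cdot:\cdot)$ on a neighborhood of $\widehat\Upsilon_Q$ (hence $\Upsilon_Q$) guarantees the pairing side is well defined there, and the Corollary preceding the Proposition already established absolute convergence of $\eFt_{\!\Aq}(\Ht_{Q,\tau}\phi)(\lambda)$ on $-\Upsilon_Q$, so Fubini applies; by meromorphic (indeed holomorphic) continuation it then suffices to check the identity for $\lambda$ with $-\lambda$ in the interior of $\Upsilon_Q$, where everything is absolutely convergent and the interchange of integrals is legitimate.
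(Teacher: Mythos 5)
Your proposal is correct and follows essentially the same route as the paper's proof: unwind $\langle\Ft_{Q,\tau}\phi(\lambda),\psi\rangle$ via the definition of the Eisenstein integral as a sum over $v\in\cW$, merge the $G/H$- and $\vH/\vH_{Q^v}$-integrations by the Fubini theorem for densities (Theorem \ref{Thm Fubini theorem for densities}), decompose the resulting $G/H_{Q^v}$-integral through $K\times A_\fq\times N_{Q^v}/H_{N_{Q^v}}$ with the Jacobian $a^{2\rho_{Q^v}}$ (the paper isolates this as Lemma \ref{Lemma int_(H_Q / G)=int_K int_A_q int_(N/N cap H)}), and recognize the result via Lemma \ref{Lemma relation between H_(Q,tau) and H_Q}. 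Your exponent bookkeeping and the observation that absolute convergence on $-\Upsilon_Q$ legitimizes the interchanges match the paper's treatment.
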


Before turning to the proof of this result, we first prove a lemma.

\begin{Lemma}\label{Lemma int_(H_Q / G)=int_K int_A_q int_(N/N cap H)}
Let $\omega\in\cD_{\fg/\fh_{Q}}$. Let $\psi\in C(G:H_{Q}:\Delta_{\scriptscriptstyle G/H_{Q}})$ and assume that the associated density $\psi_{\omega}\in\cD_{G/H_{Q}}$ is integrable. Then
$$
\int_{G/H_{Q}}\psi_{\omega}
=\int_{K}\int_{A_{\fq}}\int_{N_{Q}/H_{N_{Q}}}a^{2\rho_{Q}}\psi(kan)\,dn\,da\,dk
$$
up to a positive constant which only depends on the normalization of the measures and the densities.
\end{Lemma}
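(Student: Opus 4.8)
The plan is to obtain the formula by iterating the Fubini theorem for densities (Theorem \ref{Thm Fubini theorem for densities}) along a chain of closed subgroups interpolating between $H_Q$ and $G$, and then to match the resulting Jacobian factors against the character $\Delta_{\scriptscriptstyle G/H_Q}$ using the Iwasawa-type decomposition $G = K A N_Q$. Concretely, I would apply the theorem first with the triple $(S,T,U) = (G, Q, H_Q)$, writing $Q = MAN_Q$; since $H_Q = H \cap Q$ and (because $Q \in \cP(A)$) the relevant intersection is $H_{N_Q}(L\cap H) = H_{N_Q} H_L$, the fibre integral over $Q/H_Q$ decomposes further. In fact $Q/H_Q \simeq (L/H_L) \times (N_Q/H_{N_Q})$ up to the twisting character, so a second application of the Fubini theorem, now to $(Q, LN_Q, H_Q)$ and then to $(LN_Q, N_Q\!\cdot\!H_L, H_{N_Q})$, reduces the fibre integral to an iterated integral over $L/H_L$ and $N_Q/H_{N_Q}$. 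The factor $\Delta_{\scriptscriptstyle Q/H_{N_Q}}$ on $L$ is, by the definition of $\Delta_{\scriptscriptstyle S/T}$ and (\ref{e: defi rho 2x})--(\ref{e: Delta Q and rho}), exactly the one producing the weight that renders $l \mapsto \gd_Q(l)\int_{N_Q/H_{N_Q}}\psi(l\,\cdot\,)$ right $H_L$-invariant; this is precisely the content of Lemma \ref{Lemma sqrt(Delta) int_N/(N cap H) is L cap H invariant}, which I would invoke to legitimize writing the $L$-integral as an integral over $L/H_L \simeq M/H_M \times \Aq$.

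Next I would handle the passage from the fibre $Q/H_Q$ to the whole space. Using the decomposition $G = KAN_Q$ (the Iwasawa decomposition attached to $Q$, valid since $Q$ is a minimal parabolic containing $A$), together with $M \subseteq K$, one has $G/H_Q = K \times_M (AN_Q/H_{N_Q})$ up to normalization, so that integrating a density over $G/H_Q$ amounts to integrating over $K$, then over $A$, then over $N_Q/H_{N_Q}$. The only point requiring care is the exponential weight: by Theorem \ref{Thm Fubini theorem for densities} the fibre integral over $Q/H_Q$ (equivalently over $AN_Q/H_{N_Q}$, after absorbing the compact $M$ and $H_M$ into the $K$-integration, both having volume $1$ by our normalization) carries the factor $\Delta_{\scriptscriptstyle G/Q}(q)$ evaluated on the $A$-part; and $\Delta_{\scriptscriptstyle G/Q}(a) = |\det\Ad(a)|_{\fg/\fq}|^{-1} = a^{2\rho_Q}$, since $\fg/\fq \simeq \bar\fn_Q$ as an $A$-module and $\tr(\ad(\,\cdot\,)|_{\bar\fn_Q}) = -2\rho_Q$. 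Collecting the contributions: the $K$- and $M$-integrations contribute the outer $\int_K dk$ (with $\int_M dm = 1$, $\int_{H_M} = 1$), the $A$-integration contributes $\int_{\Aq} a^{2\rho_Q}\,da$ after the split $A = \Aq \times A_\fh$ and absorption of the $A_\fh \cap H = A_\fh$ factor consistently with the normalization of $L/H_L \simeq M/H_M \times \Aq$, and the inner $N_Q/H_{N_Q}$-integration contributes $\int_{N_Q/H_{N_Q}}\psi(kan)\,dn$. This yields exactly the claimed identity, the undetermined positive constant arising from the choices of the reference densities $\omega$, $\omega_{\scriptscriptstyle G/H_L}$, etc., relative to the Haar and invariant measures.

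The main obstacle I expect is bookkeeping rather than anything deep: one must verify that the chain of short exact sequences $0 \to \ft/\fu \to \fs/\fu \to \fs/\ft \to 0$ used at each stage is compatible, so that the density decompositions multiply correctly and no spurious modular factor is introduced or dropped. In particular I would need to check that the character $\Delta_{\scriptscriptstyle G/H_Q}$ restricted to $A$ factors as the product of the $\Delta_{\scriptscriptstyle Q/H_Q}$-type weight on $L$ (which is $\gd_Q^2$ on $\Aq$, by (\ref{e: Delta Q and rho}), and which gets cancelled against the Jacobian from Lemma \ref{Lemma sqrt(Delta) int_N/(N cap H) is L cap H invariant} when we pass to $L/H_L$) and the $a^{2\rho_Q}$ coming from $\Delta_{\scriptscriptstyle G/Q}$ — equivalently, that $2\rho_Q$ on the whole of $\fn_Q$ splits consistently with the $\faq$-$\fah$ decomposition of $\fa$. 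Since the normalization constant is left undetermined in the statement, all such discrepancies can be absorbed into it; the essential claim is just that the iterated integral on the right-hand side is absolutely convergent with the stated weight $a^{2\rho_Q}$ whenever $\psi_\omega$ is integrable, and this is guaranteed stage by stage by the equivalence (a) $\Leftrightarrow$ (b) in Theorem \ref{Thm Fubini theorem for densities}.
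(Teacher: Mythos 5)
Your proposal follows essentially the same route as the paper's proof: iterated application of Theorem \ref{Thm Fubini theorem for densities} along a chain of subgroups between $H_Q$ and $G$, the identification $G/Q \simeq K/M$, absorption of the compact factors $M$, $H_M$ by the normalization of measures, and the computation $\Delta_{\scriptscriptstyle G/Q}(a) = a^{2\rho_Q}$. The only slips are cosmetic: your intermediate triple $(Q, LN_Q, H_Q)$ is vacuous since $LN_Q = Q$ — the paper instead uses $H_L N_Q$ as the intermediate subgroup, shows $\Delta_{\scriptscriptstyle Q/H_L N_Q} = 1$, and realizes $Q/H_L N_Q \simeq \Aq \times M/H_M$ and $H_L N_Q/H_Q \simeq N_Q/H_{N_Q}$ via equivariant diffeomorphisms, rather than invoking Lemma \ref{Lemma sqrt(Delta) int_N/(N cap H) is L cap H invariant}.
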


\begin{proof}
In this proof we will need to introduce several densities. For each quotient $S/T$ of a Lie group $S$ by a closed subgroup $T$ that appears below, we choose a positive density $\omega_{\scriptscriptstyle S/T}\in\cD_{\fs/\ft}$. We leave it to the reader to check that these densities may be normalized in such a manner that the stated equalities
are valid.

By Theorem \ref{Thm Fubini theorem for densities},
\begin{equation}\label{eq int_(G/HQ)=int_(G/Q)int_(Q/HQ)}
\int_{G/H_{Q}}\psi_{\omega}
=\int_{G/Q}
        I_g(\psi)\,
        dl_{g}([e])^{-1*}\omega_{\scriptscriptstyle G/Q}.
\end{equation}
where
\begin{equation}
\label{e: defi I one}
I_g(\psi) =
\int_{Q/H_{Q}} \psi(gq)\,\Delta_{\scriptscriptstyle G/Q}(q)\,dl_{q}([e])^{-1*}\omega_{\scriptscriptstyle Q/H_{Q}}.
\end{equation}
Since the canonical map
$
\zeta:  K/M\to G/Q
$
is a $K$-equivariant diffeomorphism we may rewrite the integral on the right-hand side of
(\ref{eq int_(G/HQ)=int_(G/Q)int_(Q/HQ)}) as an integral over $K/M$ of the pull-back density
$$
\zeta^{*}\big(g\mapsto I_g(\psi)dl_{g}([e])^{-1*}\omega_{\scriptscriptstyle G/Q}\big)_k =
I_k(\psi) \, dl_k(e)^{-1*}\omega_{K/M}.
$$
Now $k\mapsto I_k(\psi)$ is right $M$-invariant, and $k \mapsto dl_k(e)^{-1*}\omega_{K/M}$
defines a left $K$-equivariant density on $K/M.$
Hence,
\begin{equation}
\label{e: second integral psi omega}
\int_{G/Q} \psi_\omega = \int_{K} I_k(\psi) \,dk.
\end{equation}
Next, we fix $k \in K.$ Applying  Theorem \ref{Thm Fubini theorem for densities} to the integral for
$I_k(\psi),$ given by (\ref{e: defi I one}) with $g = k,$ we infer that
\begin{equation}
\label{e: rewrite I one}
I_k (\psi) =
\int_{Q/H_{L}N_{Q}} J_y(l_k^*\psi \, \Delta_{\scriptscriptstyle G/Q})
        \,dl_{y}([e])^{-1*}\omega_{\scriptscriptstyle Q/ \LH N_{Q}},
\end{equation}
where
\begin{eqnarray}
\nonumber
J_y(l_k^*\psi \, \Delta_{\scriptscriptstyle G/Q}  )   &=&
\int_{H_L N_{Q}/H_{Q}}
        \psi(kyx)\,\Delta_{\scriptscriptstyle G/Q}(yx)\Delta_{\scriptscriptstyle Q/ \LH  N_{Q}}(x)
        \,dl_{x}([e])^{-1*}\omega_{\scriptscriptstyle \LH N_{Q}/H_{Q}} \\
    &=& \label{e: rewrite I two}
               \int_{H_L  N_{Q}/H_{Q}}
        \psi(kyx)\,\Delta_{\scriptscriptstyle G/Q}(yx)        \,dl_{x}([e])^{-1*}\omega_{\scriptscriptstyle \LH N_{Q}/H_{Q}} .
\end{eqnarray}
In the latter equality we have used that    $\Delta_{\scriptscriptstyle Q/ \LH  N_{Q}} = 1.$
Indeed, by nilpotency of $N_Q$ it is evident that $ \Delta_{\scriptscriptstyle Q/ \LH  N_{Q}}|_{N_Q} = 1.$
On the other hand,  $\Delta_{\scriptscriptstyle Q/ \LH  N_{Q}}|_{\LH} =
\Delta_{L/H_L} = 1$ by unimodularity of $L$ and $H_L.$

To complete the proof we will rewrite both integrals  (\ref{e: rewrite I one}) and (\ref{e: rewrite I two}), respectively.
Starting with the first, we note that the map
$$
\eta:A_{\fq}\times M/H_{M}\to Q/\LH  N_{Q};\qquad (a,m)\mapsto am\LH  N_{Q}
$$
is a $\Aq \times  M$-equivariant diffeomorphism. Hence, the density $\eta^{*}\big(y\mapsto\,dl_{y}([e])^{-1*}\omega_{\scriptscriptstyle Q/L_{H}N_{Q}}\big)$ is  left
 $A_{\fq}\times M$ invariant. Accordingly, the integral (\ref{e: rewrite I one})
 may be rewritten as
\begin{eqnarray}
\nonumber
I_k (\psi)  & = & \int_{M/H_{M}}\int_{A_{\fq}} J_{am}(l_k^*\psi \, \Delta_{\scriptscriptstyle G/Q}  )
        \,da\,d\bar m \\
        \label{e: second rewrite I one}
        &=&
        \int_{M}\int_{A_{\fq}} J_{ma}(l_k^*\psi \, \Delta_{\scriptscriptstyle G/Q}  )
        \,da\,d m
\end{eqnarray}
Likewise, the map
$$
 \vartheta:    N_{Q}/H_{N_{Q}}\to \LH  N_{Q}/H_{Q};\qquad nH_{N_{Q}}\mapsto nH_{Q}
$$
is a left $N_Q$-equivariant  diffeomorphism. Therefore,   $\vartheta^{*}\big(x\mapsto dl_{x}([e])^{-1*}\omega_{\scriptscriptstyle \LH  N_{Q}/H_{Q}}\big)$ is an $N_{Q}$-invariant density on $N_{Q}/H_{N_{Q}}$.
Accordingly, we find that (\ref{e: rewrite I two}) may be rewritten as
\begin{equation}
\label{e: second rewrite I two}
J_y(l_k^*\psi \, \Delta_{\scriptscriptstyle G/Q}  )
=
\int_{N_{Q}/H_{N_Q}}
        \psi(kyn)\,\Delta_{\scriptscriptstyle G/Q}(yn) \, dn
\end{equation}
Combining (\ref{e: second integral psi omega}), (\ref{e: second rewrite I one}) and (\ref{e: second rewrite I two}),
we obtain  that
\begin{align*}
\int_{G/H_{Q}}\psi_{\omega}
&=\int_{K}\int_{M}\int_{A_{\fq}}\int_{N_{Q}/H_{N_{Q}}}\, \gD_{G/Q}(man)
\,\psi(kman)\,dn\,da\,dm\,dk\\
&=\int_{K}\int_{A_{\fq}}\int_{N_{Q}/H_{N_{Q}}}a^{2\rho_{Q}}\psi(kan)\,dn\,da\,dk.
\end{align*}
\end{proof}

\begin{proof}[Proof of Proposition \ref{Prop Ft_(Q,tau)=F_A circ H_Q}]
For each $v\in\cW$ let $\omega_{\scriptscriptstyle H/H_{Q^{v}}}\in\cD_{\fh/\fh_{Q^{v}}}$ be as in (\ref{eq omega_(G/H) otimes omega_(H/HQv) otimes omega_v=omega_(G/HL)}).
Let $\phi\in C_{c}^{\infty}(G/H:\tau)$ and  $\psi\in\oC(\tau). $ Then for $\lambda\in-\Upsilon_{Q}$,
\begin{align*}
&\langle\Ft_{Q,\tau}\phi(\lambda),\psi\rangle
=\int_{G/H}\langle\phi(x),\Etau(Q:\psi:-\bar{\lambda})(x)\rangle_\tau\,dx\\
&\qquad=\sum_{v\in\cW}\int_{G/H}\langle\phi(x),E_{\vH}(Q:\psi_{v}:-\bar{\lambda})(xv^{-1})\rangle_\tau\,dx\\
&\qquad=\sum_{v\in\cW}\int_{G/H}
    \left(\int_{H/H_{Q^{v}}}\;\langle\phi(x),\psi_{v,Q,-\bar{\lambda}}(xhv^{-1})\rangle_\tau
    \,dl_h(e)^{*-1}\,\omega_{\scriptscriptstyle H/H_{Q^{v}}}\right)
    \,dl_x(e)^{*-1}\,\omega_{\scriptscriptstyle G/H}.
\end{align*}
Here  $\psi_{v,Q,-\bar{\lambda}}$  is defined as in (\ref{e: defi psi v Q}).
We now apply Theorem \ref{Thm Fubini theorem for densities} to the term for $v$ in order
to rewrite the repeated integral as a single integral over $G/H_{Q^{v}}$ and obtain
$$
\langle\Ft_{Q,\tau}\phi(\lambda),\psi\rangle
 =  \sum_{v\in\cW}\int_{G/H_{Q^{v}}}\langle\phi(y),\psi_{v,Q,-\overline{\lambda}}(yv^{-1})\rangle_\tau
    \,dl_{y}(e)^{*-1}\,\omega_{\scriptscriptstyle G/H_{Q^{v}}}.
$$
By Lemma \ref{Lemma int_(H_Q / G)=int_K int_A_q int_(N/N cap H)} this expression is equal to
$$
\sum_{v\in\cW}
\int_{K}\int_{A_{\fq}}\int_{N_{Q^{v}}/H_{N_{Q^{v}}}}a^{2\rho_{Q^{v}}}
    \langle\phi(kan),\psi_{v,Q,-\overline{\lambda}}(kanv^{-1})\rangle_\tau
    \,dn\,da\,dk
$$
By  $\tau$-sphericality and unitarity of $\tau$ it follows that each integrand is independent of $k.$
Furthermore, by our chosen normalization of Haar measure, $dk(K) = 1$ so that the integral over $K$ can
be removed. By substituting $a^v:= v^{-1}  a v$ for $a$ and using the right $A N_{Q}$-equivariance of
$\psi_{Q,v, -\bar \gl},$ we thus find
\begin{align*}
\langle\Ft_{Q,\tau}\phi(\lambda),\psi\rangle
 & =
 \sum_{v\in\cW}
\int_{A_{\fq}}\int_{N_{Q^{v}}/H_{N_{Q^{v}}}}a^{2\rho_{Q}}
    \langle\phi(a^v  n), \tau(v)^{-1} \psi_{v,Q,-\overline{\lambda}}(a)\rangle_\tau
    \,dn\,da\,dk
     \\
    &=
     \sum_{v\in\cW}
\int_{A_{\fq}}\int_{N_{Q^{v}}/H_{N_{Q^{v}}}}
    a^{-\gl + \rho_Q - \rho_{Q,\fh}}\langle\phi(a^v n), \tau(v)^{-1} \psi_{v}(e)\rangle_\tau
    \,dn\,da\,dk
    \\
& =
\int_{A_{\fq}} a^{-\gl } \sum_{v \in \cW} \; \Ht_{Q^v}\left(\inp{\phi(\dotvar)}{\tau(v)^{-1} \psi(e) }_\tau\right)(a^v) \; da
\end{align*}
Using Lemma \ref{Lemma relation between H_(Q,tau) and H_Q}
we finally obtain
$$
\langle\Ft_{Q,\tau}\phi(\lambda),\psi\rangle =
\int_{A_{\fq}} a^{-\gl } \inp{\Ht_{Q,\tau} (\phi)(a)}{\psi} \; da.
$$
Since $\psi$ was arbitrary, the result follows.
\end{proof}

\subsection{Invariant differential operators}
In this section we assume that $P_{0}$ is a parabolic subgroup from $\cP_{\sigma}(A_{\fq})$
and write $P_{0}=M_{0}A_{0}N_{0}$ for its Langlands decomposition;
then  $A_{0}\subseteq A$ and $\faq = \fa_0 \cap \fq.$
Furthermore,  $M_{0}/M_{0}\cap H=M/M\cap H$ as homogeneous spaces for $M$, see \cite[Lemma 4.3]{vdBanKuit_EisensteinIntegrals}.
Accordingly,
\begin{equation}
\label{e: fg deco fnzero fl fh}
\fg  =\fn_{0} \oplus (\fl  + \fh),
\end{equation}
where $\fl = \fm \oplus \fa$ is the Lie algebra of $L = MA.$
Let $\DGH$ be the algebra of invariant differential operators
on $G/H.$ Then the right-regular representation of $G$ on $C^\infty(G)$
induces an isomorphism
\begin{equation}
\label{e: r and DGH}
r: U(\fg)^H/(U(\fg)^H \cap U(\fg)\fh) \;\;  {\buildrel\simeq\over \longrightarrow}\;\; \DGH,
\end{equation}
see \cite[Sect.\ 2]{vdBan_PrincipalSeriesII} for details. Let
$$
r_0: U(\fm_0)^{H_{M_0}}/(U(\fm_0)^{H_{M_0}} \cap \fh_{M_0})
 \;\;  {\buildrel\simeq\over \longrightarrow}\;\;
\D(M_0/M_0\cap H)
$$
be the similar isomorphism onto the algebra of left $M_0$-invariant differential
operators on $M_0/M_0\cap H.$ Let $\D(\Aq)$ denote the algebra
of bi-invariant differential operators on $\Aq.$ Then the right regular representation
induces an algebra isomorphism $U(\faq) = S(\faq) \simeq \D(\Aq).$
We define the canonical algebra embedding
$
\mu: \DGH \embeds  \D(M_0/M_0\cap H) \otimes \D(\Aq)
$
as in \cite[Sect.\ 2]{vdBan_PrincipalSeriesII}. It is independent of the choice of parabolic subgroup
$P_0.$ We will give a suitable description of $\mu$ in terms of $P_0,$
which is somewhat different from the one in \cite{vdBan_PrincipalSeriesII}.

To prepare for this, let $\fm_{0n}$ be the ideal of $\fm_0$ generated by $\fm_0 \cap \fa$
and let $M_{0n}$ be the corresponding analytic subgroup of $M_0.$ Then
$M_0 = MM_{0n}$ and $M_{0n}$ acts trivially on $M_0/M_0\cap H,$
see \cite[Lemma 4.3]{vdBanKuit_EisensteinIntegrals}.
Therefore, the inclusion $M \to M_0$ induces a natural isomorphism
\begin{equation}
\label{e: identification of DMH}
\D(M_0/M_0\cap H) \simeq \D(M/M\cap H),
\end{equation}
via which we shall identify their elements.
As before, the right regular representation induces an isomorphism
$
U(\fm)^{H_M} / U(\fm)^{H_M}\cap U(\fm)\fh_M \simeq \D(M/M\cap H).$ Furthermore,
since $\fm_{0n} \subseteq \fh,$ the inclusion $\fm \embeds \fm_0$ induces an isomorphism
$$
U(\fm)^{H_M} / U(\fm)^{H_M} \cap U(\fm)\fh_M \simeq U(\fm_0)^{H_{M_0}}/(U(\fm_0)^{H_{M_0}} \cap \fh_{M_0})
$$
which is compatible with $r_0$ and the identification (\ref{e: identification of DMH}).
Accordingly, we may view $\mu$ as an algebra embedding
$$
\mu:  \DGH \embeds \D(M/M\cap H) \otimes \D(\Aq).
$$
\begin{Rem}
\label{r: multiplication with exponential}
In the formulation of the following result, we will write $e^{\pm \rho_{P_0}}$ for the continuous linear endomorphism
of $C^\infty((M/M \cap H)\times \Aq)$ given by multiplication with the similarly denoted function
$e^{\pm \rho_{P_0}}: (m,a) \mapsto a^{\pm \rho_{P_0}}.$
\end{Rem}

\begin{Lemma}
\label{l: char of mu}
Let $D \in \DGH$ and let $D_0 \in \D(M/M\cap H) \otimes  \D(\Aq)$ be the element
determined by $\mu(D) = e^{-\rho_{P_0}} \after D_0 \after e^{\rho_{P_0}},$ see
Remark \ref{r: multiplication with exponential}.
Let $u \in U(\fg)^H$ be a representative of $D$ and let
$u_0 \in U(\fm)^{H_M} \otimes\D(\Aq)$ be a representative of $D_0.$ Then
\begin{enumerate}
\itema $u - u_0 \in \fn_{P_0} U(\fg) \oplus U(\fg)\fh.$
\itemb Furthermore, if $Q \in \cP(A)$ satisfies $\gS(Q, \gs \Cartan) \subseteq \gS(P_0),$
then
$$
u - u_0 \in (\fn_{P_0}\cap \fn_Q )U(\fg) + U(\fg) \fh.
$$
\end{enumerate}
\end{Lemma}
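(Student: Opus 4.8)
The plan is to reduce both assertions to algebraic identities in $U(\fg)$, using the description of $\mu$ from \cite{vdBan_PrincipalSeriesII}. Write $\fn := \fn_{P_0}$, $\fbn := \fbn_{P_0}$; the Levi of $P_0$ is $M_0 A$, so $\Cen_\fg(\faq) = \fm_0 \oplus \fa_0$ with $\faq = \fa_0\cap\fq$. By Poincar\'e--Birkhoff--Witt applied to $\fg = \fn\oplus\Cen_\fg(\faq)\oplus\fbn$ one has $U(\fg) = U(\Cen_\fg(\faq))\oplus(\fn U(\fg) + U(\fg)\fbn)$; since $P_0$ is $\Cartan\gs$-stable, $\gs(\fn) = \fbn$, so $\fbn\subseteq\fh + \fn$ (for $\bar X\in\fbn$ write $\bar X = (\bar X + \gs\bar X) - \gs\bar X$ with $\bar X + \gs\bar X\in\fh$ and $\gs\bar X\in\fn$), and an induction on the filtration degree yields
\begin{equation*}
U(\fg) = U(\Cen_\fg(\faq)) + \bigl(\fn U(\fg) + U(\fg)\fh\bigr).
\end{equation*}
Recall that $\mu$ is, up to its built-in twist by $\rho_{P_0}$, the map on invariant differential operators induced by the projection of $u$ onto $U(\Cen_\fg(\faq))$ along $\fn U(\fg) + U(\fg)\fh$, followed by the canonical maps $U(\Cen_\fg(\faq)) \to \D(M_0/M_0\cap H)\otimes\D(\Aq)\simto\D(M/M\cap H)\otimes\D(\Aq)$; equivalently, $D_0$ is represented by the image of that projection.

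Making this precise proves (a). For $u\in U(\fg)^H$: since $[X,\cdot]$ preserves both $U(\Cen_\fg(\faq))$ and $\fn U(\fg) + U(\fg)\fh$ whenever $X\in\fh_{\fm_0} := \fh\cap\fm_0$ (using that $\fm_0$ normalizes $\fn$), the $H$-invariance of $u$ forces its $U(\Cen_\fg(\faq))$-component $u_1$ to satisfy $[X,u_1]\in U(\Cen_\fg(\faq))\cap(\fn U(\fg)+U(\fg)\fh)$ for all such $X$, and this intersection equals $U(\fm_0)\fh_{\fm_0}\otimes U(\fa_0)$ by PBW and the fact that $\fn\cap\fh = 0$ (because $\Sigma(P_0)\cap\fahd = \emptyset$). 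Since $M\cap H$ is compact, $u_1$ can be corrected, modulo $U(\fm_0)\fh_{\fm_0}\otimes U(\fa_0)\subseteq U(\fg)\fh$, to an $H_{M_0}$-invariant element of $U(\fm_0)\otimes U(\fa_0)$; discarding its $\fm_{0n}$-part and its $U(\fa_0\cap\fah)$-part (both corrections lying in $U(\fg)\fh$, since $\fm_{0n}\subseteq\fh$ and $\fa_0\cap\fah\subseteq\fh$) yields a representative $u_0\in U(\fm)^{H_M}\otimes\D(\Aq)$ of $D_0$ with $u - u_0\in\fn U(\fg)+U(\fg)\fh$.

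For (b), by Lemma \ref{l: Q  P zero and P} the hypothesis $\Sigma(Q,\gs\Cartan)\subseteq\Sigma(P_0)$ is equivalent to the existence of a $\fq$-extreme $P\in\cP_\gs(A)$ with $Q\preceq P\subseteq P_0$, whence $\Sigma(P_0) = \Sigma(P,\gs\Cartan)$ and, as in the proof of Lemma \ref{l: deco NPX and NQX}, $\Sigma(P)\cap\Sigma(\bar Q)\subseteq\Sigma(P_0)$. The key elementary consequence is the inclusion $\fn_Q\subseteq\fh + (\fn\cap\fn_Q)$: for $\ga\in\Sigma(Q,\gs\Cartan)$ one has $\ga\in\Sigma(P_0)$, so $\fg_\ga\subseteq\fn\cap\fn_Q$; for $\ga\in\Sigma(Q,\gs)$ either $\ga\in\fahd$ and $\fg_\ga\subseteq\fh$, or the $\gs$-orbit $\{\ga,\gs\ga\}\subseteq\Sigma(Q,\gs)$ contains a unique $\gamma\in\Sigma(P_0)\cap\Sigma(Q)$, and then $\fg_\ga\subseteq\{X+\gs X : X\in\fg_\gamma\}+\fg_\gamma\subseteq\fh + (\fn\cap\fn_Q)$; symmetrically $\fbn_Q\subseteq\fh + (\fbn\cap\fbn_Q)$. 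Using these together with $\fbn\subseteq\fh+\fn$, one re-runs the projection of (a) with $(\fn\cap\fn_Q)U(\fg)+U(\fg)\fh$ in place of $\fn U(\fg)+U(\fg)\fh$: the remaining obstruction is the part of the remainder lying, modulo $U(\fg)\fh$, in $(\fn\cap\fbn_Q)U(\fg)$ — coming from the ``bad'' roots $\ga\in\Sigma(P_0)\cap\Sigma(\bar Q)$, for which one checks $-\ga\in\Sigma(Q,\gs)$ and $\gs\Cartan\ga\in\Sigma(P_0)\cap\Sigma(Q)$. These are removed by combining the relation $Y\equiv-\gs Y\pmod{U(\fg)\fh}$ for $Y\in\fg_\ga$ (which re-routes a bad left factor into $\fbn\cap\fbn_Q$) with the $H$-invariance of $u$, reducing the filtration degree at each step so that the induction terminates; the outcome is $u-u_0\in(\fn\cap\fn_Q)U(\fg)+U(\fg)\fh$.

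The step I expect to be the main obstacle is the last one: the bad root directions $\ga\in\Sigma(P_0)\cap\Sigma(\bar Q)$ genuinely occur in $\fn$ and are not individually contained in $\fh+(\fn\cap\fn_Q)$, so a purely Lie-algebraic reduction does not suffice; it is precisely here that the $H$-invariance of $u$ (rather than mere membership in $U(\fg)$) must be used, keeping careful track, at each filtration degree, of which factor of a Poincar\'e--Birkhoff--Witt monomial to rewrite in order to push the remainder into $(\fn\cap\fn_Q)U(\fg)+U(\fg)\fh$.
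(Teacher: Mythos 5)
Your treatment of (a) is consistent with the paper, which simply reduces to a representative in $U(\fm_0)^{H_{M_0}}\otimes\D(\Aq)$ (using $U(\fm)^{H_M}\subseteq U(\fm_0)^{H_{M_0}}+U(\fm_0)\fh_{M_0}$) and then quotes the definition of $\mu$ from \cite[Sect.\ 2]{vdBan_PrincipalSeriesII}; your projection argument is essentially a reconstruction of that definition (your identification of the intersection $U(\Cen_\fg(\faq))\cap(\fn_{P_0}U(\fg)+U(\fg)\fh)$ is slightly imprecise, but this does not affect the outcome).

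The genuine gap is in (b), and it is exactly the step you flag as "the main obstacle": you never actually remove the components along the bad roots $\ga\in\gS(P_0)\cap\gS(\bar Q)$. The rewriting you sketch does not obviously terminate: the congruence $w\,Y\equiv -w\,\gs Y \pmod{U(\fg)\fh}$, $Y+\gs Y\in\fh$, is only available for a rightmost factor, and commuting a bad left factor $Y\in\fg_\ga\subseteq\fn_{P_0}\cap\bar\fn_Q$ past the rest of the monomial merely trades it for $\gs Y\in\bar\fn_{P_0}\cap\bar\fn_Q$ plus lower-order terms, after which the same obstruction reappears; appealing to "the $H$-invariance of $u$" without exhibiting how it kills the leading bad terms is precisely the missing idea. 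The paper resolves this not by a rewriting procedure but by a weight argument: using (a) and PBW for $\fg=\fn_{P_0}\oplus(\fl+\fh)$, it suffices to consider the image $u_1$ of $u-u_0$ in $\fn_{P_0}U(\fn_{P_0})\otimes U(\fl)/U(\fl)\fh_L$; since $u$ and $u_0$ are $\ad(\fah)$-invariant and $\fah$ centralizes $\fl$, $u_1$ is $\ad(\fah)$-invariant, while in the $\ad(\fah)$-stable decomposition $\fn_{P_0}U(\fn_{P_0})=(\fn_{P_0}\cap\fn_Q)U(\fn_{P_0})\oplus(\fn_{P_0}\cap\bar\fn_Q)U(\fn_{P_0}\cap\bar\fn_Q)$ the second summand carries only $\fah$-weights $\ga_1+\cdots+\ga_k$, $k\geq 1$, with $\ga_j\in\gS(P_0)\cap\gS(\bar Q)\subseteq\gS(\bar Q,\gs)$, and every such weight is strictly positive on $Y=X+\gs X\in\fah$ for $X\in\fa^+(\bar Q)$, hence nonzero; so $u_1$ lies in $(\fn_{P_0}\cap\fn_Q)U(\fn_{P_0})\otimes U(\fl)/U(\fl)\fh_L$, which is (b). Note that only invariance under $\ad(\fah)$ is used, not full $H$-invariance, and that your correct inclusion $\fn_Q\subseteq\fh+(\fn_{P_0}\cap\fn_Q)$ cannot substitute for this, because the $\fh$-components it produces sit on the left of the monomials and cannot be absorbed into the right ideal $U(\fg)\fh$.
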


\begin{proof}
We start with (a).
Note that $U(\fm)^{H_M} \subseteq U(\fm_0)^{H_{M_0}} + U(\fm_0)\fh_{M_0}.$
Thus, if $v_0$ is a representative for $D_0$ in $U(\fm_0)^{H_{M_0}} \otimes \D(\Aq),$
then $u_0 - v_0 \in U(\fg)\fh$ and it suffices to prove the assertion (a) with $v_0$
in place of $u_0.$ The resulting assertion immediately follows from the definition of $\mu$ in
\cite[Sect.~2]{vdBan_PrincipalSeriesII}.

We turn to (b). In view of  (\ref{e: fg deco fnzero fl fh}) and the PBW theorem,
it suffices to show that the image $u_1$ of $u - u_0$ in $\fn_{P_0} U(\fn_{P_0}) \otimes U(\fl) / U(\fl)\fh_L$
belongs to $(\fn_{P_0} \cap \fn_{Q})U(\fn_{P_0})\otimes U(\fl)/U(\fl)\fh_L. $
The element $u_1$ is invariant under $\ad(\fa_\fh),$ as both $u$ and $u_0$ are.
Since $\fah$ centralizes $\fl,$ we have
$$
u_1 \in [\fn_{P_0} U(\fn_{P_0}) ]^{\fah} \otimes U(\fl)/U(\fl)\fh_L.
$$
By the PBW theorem we have the following direct sum decomposition into $\ad(\fa_\fh)$-invariant subspaces,
$$
\fn_{P_0} U(\fn_{P_0}) = (\fn_{P_0} \cap \fn_Q)U(\fn_{P_0}) \oplus (\fn_{P_0} \cap \bar \fn_Q)U(\fn_{P_0} \cap \bar \fn_Q).
$$
The $\fah$-weights of the second summand are all of the form $\mu = \ga_1 + \cdots + \ga_k,$ with $k \geq 1$
and $\ga_j \in \gS(P_0) \cap \gS(\bar Q).$ The latter set is contained in $\gS(\bar Q, \gs),$ because
$\gS(Q, \gs \Cartan) \subseteq \gS(P_0).$ Let $X \in \fa^+(\bar Q).$ Then it follows that the roots
of $\gS(\bar Q, \gs)$ are positive on the element $Y = X + \gs(X)$ of $\fah.$ Hence $\mu(Y) > 0;$
in particular $\mu \neq 0.$ We thus see that
$$
\left[\fn_{P_0} U(\fn_{P_0})\right]^{\fah} = \left[(\fn_{P_0} \cap \fn_Q) U(\fn_{P_0})\right]^{\fah}.
$$
The result follows.
\end{proof}

\begin{Rem}
In view of the PBW theorem, the map  $\mu$ is entirely determined
either by the description in (a), or by the description in (b). For $\fh$-extreme
$Q$ the proof of (b) is basically a reformulation of the argument given in
the proof of \cite[Lemma 2.4]{AndersenFlenstedJensenSchlichtkrull_CuspidalDiscreteSerieseForSemisimpleSymmetricSpaces}.
\end{Rem}

Let $w\in\cW$ (see the definition preceding Lemma \ref{l: about NorKfaq}).
Then $\Ad(w)$ preserves $\fm$ and $\fa_{\fq}$. The action of $\Ad(w)$ on $\fm$ and $\fa_{\fq}$ induces an isomorphism of algebras
$$
\Ad(w):\D(M/H_M)\otimes\D(A_{\fq})\to\D(M/wH_Mw^{-1})\otimes\D(A_{\fq}).
$$
Accordingly, we define the algebra embedding
$$
\mu_{w}:\D(G/H)\to\D(M/wH_Mw^{-1})\otimes \, \D(A_{\fq})
$$
by
$$
\mu_{w}:=\Ad(w)\circ\mu.
$$

Let $(\tau,V_{\tau})$ be a finite dimensional unitary representation of $K.$
For each $w\in\cW$ the  natural action of $\D(M/wH_Mw^{-1})$ on $C^{\infty}(M/wH_Mw^{-1}:\tau_{M}^{0})$ induces an algebra homomorphism
$$
r_{w}:\D(M/wH_Mw^{-1})\to\End\big(C^{\infty}(M/wH_Mw^{-1}:\tau_{M}^{0})\big)
$$
In the following we will view
$\End(\oC(\tau)) \otimes \D(\Aq)$ as the algebra of invariant
differential operators with coefficients in $\End(\oC(\tau)),$
which naturally acts on $C^\infty(\Aq) \otimes \oC(\tau).$
Accordingly, we define the algebra homomorphism
\begin{equation}
\label{e: intro mu tau}
\mu(\dotvar:\tau) :\D(G/H)\to  \End\big(\oC(\tau)\big) \otimes \D(\Aq)
\end{equation}
by
$$
\big( \mu(D:\tau ) \Psi \big)_{w}
=[(r_{w}\otimes I)\after \mu_{w}(D)]\Psi_{w}
\qquad\big(\Psi\in C^\infty(\Aq) \otimes \oC(\tau), w\in \cW\big),
$$
for $D \in \DGH.$
\begin{Prop}\label{Prop H(D phi)=mu(D) H phi}
Let $Q\in\cP(A).$
If $D\in\D(G/H)$ and $\phi\in C_{c}^{\infty}(G/H:\tau)$ then
\begin{equation}\label{eq H(D phi)=mu(D)H phi}
\Ht_{Q,\tau}(\phi)
= \mu (D:\tau) \Ht_{Q,\tau}\phi.
\end{equation}
\end{Prop}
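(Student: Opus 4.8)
The plan is to prove the identity one $v$-component at a time: for each $v \in \cW$ I would conjugate the defining integral to the symmetric space $G/vHv^{-1}$ and reduce to a scalar statement to which Lemma \ref{l: char of mu} applies directly.

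\textbf{Reduction to a scalar statement.} Fix $v \in \cW$ and write $\vH := vHv^{-1}$. For $g \in G$ set $\Psi(g) := \phi(gv)$; then $\Psi \in C_c^{\infty}(G/\vH:\tau)$. Write $R_w$ for the action of $w \in U(\fg)$ on functions on $G$ by left-invariant differential operators, so that $D\phi = R_u\phi$ whenever $u \in U(\fg)^H$ represents $D$. Then $\Ad(v)u \in U(\fg)^{\vH}$ represents the corresponding operator $D^v \in \D(G/\vH)$, and a short computation gives $(R_u\phi)(gv) = (R_{\Ad(v)u}\Psi)(g)$. In the integral for $\bigl(\Ht_{Q,\tau}(D\phi)(a)\bigr)_v(m)$ one substitutes $n \mapsto v^{-1}n'v$; since $N_{Q^v} = v^{-1}N_Qv$ and $H_{N_{Q^v}} = v^{-1}(N_Q \cap \vH)v$, and since $\Ad(v)$ is orthogonal for $-B(\dotvar,\Cartan\dotvar)$ and hence preserves the measure normalizations, this yields
\begin{equation*}
\bigl(\Ht_{Q,\tau}(D\phi)(a)\bigr)_v(m) = \Deltach_Q(a)\int_{N_Q/(N_Q\cap\vH)}(R_{\Ad(v)u}\Psi)(man')\,dn',
\end{equation*}
and likewise, without $R_{\Ad(v)u}$, for the $v$-component of $\Ht_{Q,\tau}\phi$. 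Applying this componentwise in $\Vtau$, the proposition follows once one knows the following scalar statement: for $Q \in \cP(A)$, $D \in \D(G/H)$ with representative $u \in U(\fg)^H$, and $f \in C_c^{\infty}(G/H)$, one has $\Ht_Q(R_uf) = \mu(D)\,\Ht_Qf$ on $L$, where $\mu(D) \in \D(M/M\cap H)\otimes\D(\Aq)$ is regarded as a differential operator on $C^{\infty}(L/H_L) \simeq C^{\infty}\bigl((M/M\cap H)\times\Aq\bigr)$. Indeed, granting this for the pair $(G,\vH)$, the naturality of $\mu$ under conjugation by $v$ — which carries the $P_0$-description of Lemma \ref{l: char of mu} for $G/H$ to the one for $G/\vH$ with $P_0$ replaced by $vP_0v^{-1}$, and $e^{\pm\rho_{P_0}}$ to $e^{\pm\rho_{vP_0v^{-1}}}$ — identifies the operator occurring for $G/\vH$ with $\Ad(v)\mu(D) = \mu_v(D)$; since $(r_v\otimes I)\mu_v(D)$ acts on $\Vtau$-valued functions of $(m,a)$ exactly by the componentwise right action of $\mu_v(D)$, this is precisely the $v$-component of $\mu(D:\tau)\Ht_{Q,\tau}\phi$.

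\textbf{The scalar statement.} By Lemma \ref{l: Q  P zero and P}, choose $P_0 \in \cP_\gs(\Aq)$ with $\gS(Q,\gs\Cartan) \subseteq \gS(P_0)$, and write $\mu(D) = e^{-\rho_{P_0}}\after D_0\after e^{\rho_{P_0}}$ as in Lemma \ref{l: char of mu}, with representative $u_0 \in U(\fm)^{H_M}\otimes\D(\Aq)$ of $D_0$. By Lemma \ref{l: char of mu}(b), $u = u_0 + v_1 + v_2$ with $v_1 \in (\fn_{P_0}\cap\fn_Q)U(\fg)$ and $v_2 \in U(\fg)\fh$. Right $H$-invariance of $f$ gives $R_{v_2}f = 0$. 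For the $v_1$-term, use Corollary \ref{Cor int_N/(N cap H)=int_N_Q,X} with $X \in \faq$ dominant for $P_0$, for which $N_{Q,X} = N_{P_0}\cap N_Q$ as in the proof of Lemma \ref{l: int over P dot H}, to replace $\int_{N_Q/H_{N_Q}}$ by integration over the subgroup $N_{P_0}\cap N_Q$; writing $v_1 = \sum_i X_iw_i$ with $X_i \in \fn_{P_0}\cap\fn_Q = \Lie(N_{P_0}\cap N_Q)$, each summand $\int_{N_{P_0}\cap N_Q}(R_{X_i}(R_{w_i}f))(ln)\,dn$ is the integral over the unimodular group $N_{P_0}\cap N_Q$ of a vector field tangent to its right cosets and hence vanishes (absolute convergence licensing differentiation under the integral being supplied by Proposition \ref{p: smoothness N integral} and the compact support of $f$). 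There remains $\Ht_Q(R_{u_0}f)$. A direct computation shows $\Ht_Q\after R_X = (e^{-\rho_{P_0}}\after X\after e^{\rho_{P_0}})\after\Ht_Q$ for $X \in \fm$ — immediate, since conjugation by $\exp tX \in M$ preserves the measure on $N_Q/H_{N_Q}$ — and for $X \in \faq$: there the substitution $n \mapsto \Ad(\exp tX)n$ on $N_{P_0}\cap N_Q$ contributes a Jacobian $e^{2t\rho_{N_{P_0}\cap N_Q}(X)}$, and the resulting scalar terms combine, via $2\rho_{N_{P_0}\cap N_Q} = \rho_Q + \rho_{P_0}$ on $\faq$ and $\rho_{Q,\fh}|_{\faq} = 0$, to reproduce exactly the constant $\rho_{P_0}(X)$ occurring in $e^{-\rho_{P_0}}\after X\after e^{\rho_{P_0}}$; here $2\rho_{N_{P_0}\cap N_Q} = \rho_Q + \rho_{P_0}$ on $\faq$ follows from Lemma \ref{l: Jacobian of int over Q P} together with $\fn_{P_0} = (\fn_{P_0}\cap\fn_Q)\oplus(\fn_{P_0}\cap\bar\fn_Q)$. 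Since $R$ and conjugation by $e^{\rho_{P_0}}$ are both multiplicative, $\Ht_Q\after R_{u_0} = \mu(D)\after\Ht_Q$, whence $\Ht_Q(R_uf) = \mu(D)\,\Ht_Qf$.

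\textbf{Main obstacle.} The delicate point is this last step of the scalar argument: matching the exponential twist $e^{\pm\rho_{P_0}}$ built into $\mu$ with the character $\Deltach_Q$ built into $\Ht_Q$, uniformly over the $\fm$- and $\faq$-directions. This is what forces both the choice of $P_0$ with $\gS(Q,\gs\Cartan)\subseteq\gS(P_0)$ and the sharp form of Lemma \ref{l: char of mu}(b), and it is where the root identity $2\rho_{N_{P_0}\cap N_Q}=\rho_Q+\rho_{P_0}$ on $\faq$ is needed. A more routine but still necessary ingredient is the Fubini-type justification of the vanishing of the $(\fn_{P_0}\cap\fn_Q)U(\fg)$-terms, which rests on the absolute convergence established in Proposition \ref{p: smoothness N integral}.
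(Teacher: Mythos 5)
Your overall strategy is the paper's: reduce to each $v$-component, invoke Lemma \ref{l: char of mu}(b) for a $P_0$ with $\gS(Q,\gs\Cartan)\subseteq\gS(P_0)$, pass from the quotient $N_Q/H_{N_Q}$ to the group $N_{P_0}\cap N_Q$ via Corollary \ref{Cor int_N/(N cap H)=int_N_Q,X}, kill the $(\fn_{P_0}\cap\fn_Q)U(\fg)+U(\fg)\fh$ part of $u$, and match exponents via the identity $2\rho_{\fn_{P_0}\cap\fn_Q}=\rho_Q+\rho_{P_0}$ on $\faq$ (which is exactly the paper's computation $a^{\rho_Q-\rho_{Q,\fh}}|\det\Ad(a)|_{\fn_Q\cap\fn_{P_0}}|^{-1}=a^{-\rho_{P_0}}$, and your derivation of it from Lemma \ref{l: Jacobian of int over Q P} and $\fn_{P_0}=(\fn_{P_0}\cap\fn_Q)\oplus(\fn_{P_0}\cap\bar\fn_Q)$ is fine). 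The organizational differences are cosmetic: you conjugate $H$ to $\vH$ and keep $Q$, appealing to naturality of $\mu$ under $\Ad(v)$ (legitimate, since the Remark after Lemma \ref{l: char of mu} says $\mu$ is determined by either description), whereas the paper conjugates $Q,P_0$ to $Q^v,P_0^v$; and you prove the intertwining generator-by-generator for $X\in\fm$ and $X\in\faq$ and then use multiplicativity, whereas the paper performs a single change of variables $n\mapsto(ma)n(ma)^{-1}$ so that $R_{u_0}$ acts directly in the $L$-variable. These amount to the same computation.

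Two points need repair or at least explicit care. First, your justification of the $\fm$-direction is not correct as stated: conjugation by $\exp tX\in M$ does not act on $N_Q/H_{N_Q}$ at all, because $M$ is in general not contained in $H$ and hence does not normalize $H_{N_Q}=N_Q\cap H$ (already in the group case $G=\bp G\times\bp G$, $H=\diag(\bp G)$, conjugating $\diag(\bp N)$ by $(m_1,m_2)$ leaves the diagonal only if $m_1=m_2$). The step itself is true, and the fix is exactly the argument you use in the $\faq$-direction: after the passage to the group integral, $\Ad(\exp tX)$ for $X\in\fm$ preserves each root space, hence preserves $N_{P_0}\cap N_Q$ and its Haar measure (unimodularly, since $M$ is compact), and writing $f(ma\exp(tX)n)=f(ma\,n_t\exp(tX))$ with $n_t=\exp(tX)n\exp(-tX)$ gives the intertwining with no extra exponential factor. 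Second, in the multiplicativity step you apply the one-generator relations to functions $R_wf$ which are no longer right $H$-invariant; this is harmless precisely because your derivations only use the group integral over $N_{P_0}\cap N_Q$ (plus convergence, which follows by dominating $|R_wf|$ by a constant times a nonnegative function of $C_c^\infty(G/H)$ and invoking Proposition \ref{p: smoothness N integral}), but it should be said — the paper handles this by defining the auxiliary operator $T$ on all smooth functions of compact support modulo $H$. Finally, a trivial point: the existence of $P_0$ with $\gS(Q,\gs\Cartan)\subseteq\gS(P_0)$ comes from the non-emptiness of $\cP_\gs(A,Q)$ (text after (\ref{e: defi P gs A Q})), not from Lemma \ref{l: Q  P zero and P} itself, which only states an equivalence.
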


\begin{proof}
Let $v \in \cW.$
Fix  $P_{0} \in \cP_\gs(\Aq)$ such that $\gS(Q,\gs\Cartan) \subseteq \gS(P_0).$
In view of Lemma \ref{l: Q  P zero and P} there exists a unique $P \in \cP_\gs(A)$
such that $Q \preceq P \subseteq P_0.$
Then $\gS(Q^v, \gs\Cartan ) \subseteq \gS(P^v,\gs\Cartan) = \gS(P_0^v).$
Let $D \in \DGH$ and let $u$ and $u_0$ be associated with $D$ as in Lemma
\ref{l: char of mu}, but with $P_0^v, Q^v$ in place of $P_0, Q.$ Then
$$
u - u_0 \in (\fn_{P_0^v} \cap   \fn_{Q^v})U(\fg) + U(\fg)\fh
$$
and
$$
\mu(D) = d_v^{-1} \after D_0 \after d_v,
$$
where $D_0 = R_{u_0}$ and $d_v(a) = a^{v^{-1}\rho_{P_0}},$ for $a \in \Aq;$
see Remark \ref{r: multiplication with exponential}.

Let  $X\in\faq$ be such that $\alpha(X)>0$ for every $\alpha\in\Sigma(P_{0}^v)$. Then $X$ satisfies (\ref{eq condition on X})
for the pair $(P^v, Q^v).$
By Lemma \ref{l: deco NPX and NQX} we infer that
$N_{Q^v,X}=N_{Q^v}\cap N_{P^v} = N_{Q^v} \cap N_{P_0^v}.$
By Definition \ref{d: tau HCT} and
Corollary \ref{Cor int_N/(N cap H)=int_N_Q,X} it now follows that
$$
\big(\Ht_{Q,\tau}\phi(a)\big)_{v}(m)
=a^{\rho_{Q}-\rho_{Q,\fh}}\int_{N_{Q^{v}}\cap N_{P_{0}^{v}}}\phi(mavn)\,dn,
$$
for all $\phi\in C^\infty_c(G/H:\tau),$ $m \in M$ and $a \in \Aq.$
In the integral on the right-hand side, the function
$\phi$ should be viewed as a function in $C^{\infty}(G)\otimes V_{\tau}$
of compact support modulo $H$, i.e., with support in $G$ that
has compact image in $G/H.$ Accordingly, we define
$$
T\phi(m,a):= a^{\rho_{Q}-\rho_{Q,\fh}}\int_{N_{Q^{v}}\cap N_{P_{0}^{v}}}\phi(mavn)\,dn
\qquad ((m,a) \in M\times \Aq),
$$
for any such function $\phi.$
Note that $T\phi \in C^\infty(M \times \Aq) \otimes \Vtau.$
It is readily verified that
$$
T(R_Z \phi)(m,a) = 0
$$
for $\phi \in C^\infty_c(G/H:\tau)$ and  $Z \in (\fn_{P_0^v} \cap \fn_{Q^v}) U(\fg) + U(\fg)\fh.$
Therefore,
\begin{equation}
\label{e: Ht D and T}
\big(\Ht_{Q,\tau}D\phi(a)\big)_{v}(m) =  T(R_{u_0}\phi)(m,a).
\end{equation}
For any function $\phi \in C^\infty(G:\tau)$ of compact support modulo $H$ we have
\begin{align*}
T(\phi)(m,a)
&=a^{\rho_{Q}-\rho_{Q,\fh}}\int_{N_{Q}\cap N_{P_{0}}}\phi(manv)\,dn\\
&=a^{\rho_{Q}-\rho_{Q,\fh}}\big|\det\Ad(a)|_{\fn_{Q}\cap\fn_{P_{0}}}\big|^{-1}
    \int_{N_{Q}\cap N_{P_{0}}}\phi(nmav)\,dn.
\end{align*}
for $(m,a) \in M \times \Aq.$ Since
\begin{align*}
a^{\rho_{Q}-\rho_{Q,\fh}}\big|\det\Ad(a)|_{\fn_{Q}\cap\fn_{P_{0}}}\big|^{-1}
&=\big|\det\Ad(a)|_{\fn_{Q}\cap \fn_{P_{0}}}\big|^{-\frac{1}{2}}
    \big|\det\Ad(a)|_{\fn_{Q}\cap \theta\fn_{P_{0}}}\big|^{\frac{1}{2}}\\
&=\big|\det\Ad(a)|_{\fn_{Q}\cap \fn_{P_{0}}}\big|^{-\frac{1}{2}}
    \big|\det\Ad(a)|_{\theta\fn_{Q}\cap \fn_{P_{0}}}\big|^{-\frac{1}{2}}\\
&=a^{-\rho_{P_{0}}},
\end{align*}
we infer, writing $d(a) = a^{\rho_{P_0}},$ that
$$
T\phi(m,a)
= d(a)^{-1}
\int_{N_Q \cap N_{P_0}}\phi(nv v^{-1} mav)\,dn.
$$
Let now $\phi \in C_c^\infty(G/H),$ so that $T\phi \in C^\infty(M/vH_Mv^{-1} \times \Aq) \otimes \Vtau.$ Then
\begin{eqnarray*}
T(R_{u_0}\phi)(m,a)
&=&
[d^{-1} \after R_{\Ad(v)u_0} \after d ](T\phi)(m,a)\\
&=& [d^{-1} \after \Ad(v)(R_{u_0}) \after d ](T\phi)(m,a)\\
&=& \Ad(v) [d_v^{-1} \after R_{u_0} \after d_v]  (T\phi)(m,a)\\
&=& \mu_v(D) (T\phi)(ma).
\end{eqnarray*}
In view of (\ref{e: Ht D and T}), we finally conclude that
\begin{eqnarray*}
\big(\Ht_{Q,\tau}\phi(a)\big)_{v}(m) & = &
\mu_v(D)(T\phi)(ma)\\
& = &
\big([(r_w \otimes I)\after \mu_v(D) ] (\Ht_{Q,\tau}\phi)_v(a)\big)(m) \\
&= &
[\mu(D:\tau)(\Ht_{Q,\tau}\phi)(a)]_v(m).
\end{eqnarray*}
\end{proof}

\section{Extension to the Schwartz space}\label{section Extension to Harish-Chandra-Schwartz functions}
Throughout this section, we assume that $Q\in\cP(A)$ and that $P_{0}$ is a minimal $\sigma\theta$-stable parabolic subgroup that contains $A$ and satisfies $\Sigma(Q,\sigma\theta)\subseteq \Sigma(P_{0})$,
see Lemma \ref{l: Q  P zero and P}.

We define
$$
\faq^{*+}(P_{0})
:=\{\lambda\in\faq^{*}:\langle\lambda,\alpha\rangle>0\;\;\forall\alpha\in\Sigma(P_{0})\}
$$
and
$$
A_{\fq}^{+}(P_{0}) =  \{ a \in A_{\fq}: a^\ga  > 1\;\;\forall \alpha\in\Sigma(P_{0})\}.
$$

\subsection{Tempered term of the $\tau$-spherical Harish-Chandra transform}\label{subsection Tempered Term}

Let $(\tau,V_{\tau})$ be a finite dimensional unitary representation of $K$ as before. It is convenient
to denote by $\Etau(Q:\dotvar)$ the meromorphic map $\faqc^{*}\to\Hom\big(\cA_{M,2},C^{\infty}(G/H:\tau)\big)$ given by $$
\Etau(Q:\lambda)\psi
=\Etau(Q:\psi:\lambda)\qquad\big(\lambda\in\faqc^{*},\psi\in\cA_{M,2}(\tau)\big).
$$
By Proposition \ref{Prop descr of singularities of E(Q:psi:.) and Paley-Wiener estimate for F_(Q,tau)},
the singular locus of $\Etau(Q: - \dotvar )$
equals the union of a locally finite collection  $\HypQtau$
of hyperplanes of the form $\{\lambda\in\faqc^{*}:\langle\lambda,\alpha\rangle=c\}$ with $\alpha\in\Sigma\setminus\fa_{\fh}^{*}$ and $c\in\R.$
Each such hyperplane $H$ can
be written as $H:= \mu + \ga^\perp_\iC,$ where $\mu \in \faqd$ is real and where $\ga^\perp_\iC$ denotes the complexification of the real hyperplane $\ga^\perp \subset \faqd.$ We note
that $H_\R:= H \cap \faqd$ equals $\mu + \ga^\perp$ and that $H$ may be viewed
as the complexification of $H_\R.$
Moreover, we agree to write
$$
\HypRQtau: = \{ H_\R \colsep H \in \HypQtau\}.
$$
We note that for $\mu \in \faqdp(P_0) \setminus \cup \HypRQtau$ the function
$\Etau(Q: - \dotvar )$ is regular on $\mu + i\faqd.$ Furthermore, if
$\phi\in C_{c}^{\infty}(G/H:\tau)$
then from the Paley-Wiener type estimate (\ref{eq Paley-Wiener estimate for F_(Q,tau)}) in
Proposition \ref{Prop descr of singularities of E(Q:psi:.) and Paley-Wiener estimate for F_(Q,tau)} we infer that
$$
\lambda\mapsto \Ft_{Q,\tau}\phi(\lambda)\,a^{\lambda}
$$
is integrable on $\mu+i\faq^{*}$ for every $a\in A_{\fq}$.
In view of  the estimates in the same proposition, it follows by application of Cauchy's integral formula  that the map
$$
\faq^{*+}(P_{0})\setminus  \cup \HypRQtau  \; \ni
\mu\mapsto   \int_{\mu+i\faqd}
             \Ft_{Q,\tau}\phi(\lambda)a^{\lambda}\,d\lambda
$$
is locally constant, hence constant on each connected component of
$\faq^{*+}(P_{0})\setminus \cup \HypRQtau$.
Here $d\gl$ denotes the choice of (real) measure on $\mu + i\faqd$ obtained by
transferring $(2\pi)^{-\dim \faq}$ times the Lebesgue
measure on $\faqd$ under the map $\gl \mapsto \mu + i \gl.$

Since $E(Q:- \dotvar)$ is holomorphic on an open neighborhood of the
closed convex set $-\Upsilon_Q,$
see (\ref{eq def Upsilon_Q}), it follows that there exists a connected component $C_1$
of $\faq^{*+}(P_{0})\setminus \cup \HypRQtau$ such that
$$
C_1 \supseteq \faq^{*+}(P_0) \cap (-\Upsilon_Q).
$$
\begin{Lemma}
\label{l: HC transform as contour integral}
Let $\mu \in C_1.$ Then
\begin{equation}
\label{e: HC transform as contour integral}
\Ht_{Q,\tau} (\phi)(a)  =
\int_{\mu+i\faq^{*}}\Ft_{Q,\tau}\phi(\lambda)a^{\lambda}\,d\lambda  \qquad (a \in \Aq)
\end{equation}
\end{Lemma}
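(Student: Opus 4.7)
The strategy is to verify (\ref{e: HC transform as contour integral}) at a single conveniently chosen $\mu \in C_1$ and then invoke the local constancy of the contour integral in $\mu$, established in the paragraph preceding the lemma, to propagate the identity to all of $C_1$. A convenient choice is to take $\mu$ in the open set
$$
(-\Upsilon_Q)^{\circ}\cap \faq^{*+}(P_0),
$$
which is non-empty by the definitions of $\Omega_Q$ and $\Upsilon_Q$, and which is contained in $C_1$ by the very construction of that component.

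For such $\mu$ the verification reduces to classical Euclidean Fourier inversion on the vector group $\Aq$. By Proposition \ref{Prop Ft_(Q,tau)=F_A circ H_Q} we have $\Ft_{Q,\tau}\phi = \eFt_{\!\Aq}(\Ht_{Q,\tau}\phi)$ on all of $-\Upsilon_Q$, hence on the entire contour $\mu + i\faqd$. The support inclusion (\ref{e: inclusion support HQtau}) for $\Ht_{Q,\tau}\phi$, combined with the strict inequalities defining the interior of $-\Upsilon_Q$, guarantees that the map $X \mapsto \Ht_{Q,\tau}\phi(\exp X)\, e^{-\langle \mu, X\rangle}$ lies in $\cS(\faq)\otimes\oC(\tau)$ (indeed, it has support in a finite union of translates of $-\Gamma(Q^v)$ and decays exponentially along each of these cones thanks to the choice of $\mu$). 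On the other hand, since $\mu \notin \bigcup \HypRQtau$, the polynomial $p$ from Proposition \ref{Prop descr of singularities of E(Q:psi:.) and Paley-Wiener estimate for F_(Q,tau)} (applied to a small bounded open neighborhood $B$ of $\mu$) is bounded below in absolute value on $\mu + i\faqd$, so the Paley--Wiener estimate (\ref{eq Paley-Wiener estimate for F_(Q,tau)}) upgrades to rapid decay of $\Ft_{Q,\tau}\phi(\mu + i\,\cdot\,)$ itself. Thus the classical Fourier inversion theorem on $\faq$ applies and yields (\ref{e: HC transform as contour integral}) at the point $\mu$.

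Once the identity is verified at one such $\mu$, the local constancy of the right-hand side of (\ref{e: HC transform as contour integral}) in $\mu$ on $\faq^{*+}(P_0)\setminus\bigcup \HypRQtau$, which has already been established in the discussion immediately before the lemma, extends it to the entire component $C_1$. The main step requiring care is the Fourier inversion at the base point: the two pieces of input, namely the integrability and rapid decay of $\Ht_{Q,\tau}\phi(\exp \cdot)\, e^{-\langle \mu,\cdot\rangle}$ on one side and of $\Ft_{Q,\tau}\phi$ along the contour on the other, must be matched, and this is precisely what the support estimate (\ref{e: inclusion support HQtau}) and Proposition \ref{Prop descr of singularities of E(Q:psi:.) and Paley-Wiener estimate for F_(Q,tau)} provide in the present setting.
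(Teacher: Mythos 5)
Your proposal is correct and takes essentially the same route as the paper: the paper's proof also invokes the independence of the contour integral of $\mu\in C_1$ to reduce to a base point $\mu$ in (the real part of) $-\Upsilon_Q$, where Proposition \ref{Prop Ft_(Q,tau)=F_A circ H_Q} identifies $\Ft_{Q,\tau}\phi$ with the Euclidean Fourier transform of $\Ht_{Q,\tau}\phi$ and the identity then follows from Fourier inversion on $\Aq$. One minor imprecision: $\mu\notin\bigcup\HypRQtau$ does not by itself guarantee that $|p|$ is bounded below on $\mu+i\faqd$ (the real zero set of $p$ from Proposition \ref{Prop descr of singularities of E(Q:psi:.) and Paley-Wiener estimate for F_(Q,tau)} may be strictly larger than $\cup\HypRQtau$), but this is harmless, since the integrability of $\lambda\mapsto\Ft_{Q,\tau}\phi(\lambda)a^{\lambda}$ on such contours is already established in the discussion preceding the lemma, and alternatively one may perturb $\mu$ within the open set you chose so as to avoid the additional real hyperplanes where $p$ vanishes.
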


\begin{proof}
As the expression on the right-hand side of the equation is independent of $\mu \in C_1,$
we may
assume that $\mu \in - \Upsilon_Q.$ Then in view of
Proposition \ref{Prop Ft_(Q,tau)=F_A circ H_Q},
$$
\int_{\mu+i\faq^{*}}\Ft_{Q,\tau}\phi(\lambda)a^{\lambda}\,d\lambda
=
\int_{i\faq^*} \eFt_{\Aq}(\Ht_{Q,\tau}(\phi))(\mu + \gl)  a^\mu  a^{\lambda}\, d\gl
= \Ht_{Q,\tau}(a)
$$
where the latter equality is valid by application of the Fourier inversion formula.
\end{proof}

We intend to analyze $\Ht_{Q,\tau}(\phi)$ by applying a contour shift
to the integral on the right-hand side of (\ref{e: HC transform as contour integral})
with $\mu$ tending  to zero in a suitable way.
This will  result in residual terms. In the $\gs$-split rank one case, these are point residues,
which will be
analyzed in the next section. For general $\gs$-split rank, one may hope to analyze them by
using a multi-dimensional residue calculus in the spirit of
\cite{vdBanSchlichtkrull_ResidueCalculus}.

It is readily  seen that there exists a unique connected component $C_0$
of $\faq^{*+}(P_0)\setminus \HypRQtau$
with
$0 \in \overline{C}_0$.
For $\phi\in C_{c}^{\infty}(G/H:\tau)$ we define $\It_{Q,\tau}\phi:A_{\fq}\to\oC(\tau)$ by
$$
\It_{Q,\tau}\phi(a)
=\lim_{\epsilon\downarrow0}   \int_{\epsilon\nu+i\faq^{*}}
    \Ft_{Q,\tau}\phi(\lambda)a^{\lambda}\,d\lambda\qquad(a\in A_{\fq}).
$$
Here $\nu$ is any choice of element of $\faq^{*+}(P_{0})$; the definition is independent
of this choice and
\begin{equation}
\label{e: defi It Q tau}
\It_{Q,\tau}\phi(a)
=\int_{\mu+i\faq^{*}}\Ft_{Q,\tau}\phi(\lambda)a^{\lambda}\,d\lambda
\end{equation}
for $\mu\in C_0$.  The function
$\It_{Q,\tau}\phi:$ $\Aq \to \oC(\tau)$ will be called the tempered term
of the Harish-Chandra transform.

We define
\begin{equation}
\label{e: temp C infty Aq}
C^{\infty}_{\temp}\big(A_{\fq}\big)
\end{equation}
to be the space of
 smooth functions on $A_{\fq}$ which are tempered as distributions on $\Aq,$
viewed as
a Euclidean space, i.e., belong
to the dual of the Euclidean Schwartz space $\cS(\Aq).$
We equip the space (\ref{e: temp C infty Aq})
with the coarsest locally convex topology such that the
inclusion maps into $C^{\infty}(A_{\fq})$ and $\cS'(A_{\fq})$ are both continuous.
Here $C^\infty(\Aq)$ and $\cS(\Aq)$
are equipped with the usual Fr\'echet topologies and $\cS'(\Aq)$ is equipped
with the strong dual topology.

\begin{Prop}
\label{Prop JQ: C(G/H:tau) to C^infty(Aq)}
If $\phi\in C_{c}^{\infty}(G/H:\tau)$, then
$\It_{Q,\tau}\phi\in C^{\infty}_{\temp}(A_{\fq})\otimes\oC(\tau)$.
The map $C_{c}^{\infty}(G/H:\tau)\to C^{\infty}_{\temp}(A_{\fq})\otimes\oC(\tau)$ thus obtained has a
unique extension
to a continuous linear map
$$
\It_{Q,\tau}:\cC(G/H:\tau)\to C^{\infty}_{\temp}(A_{\fq})\otimes\oC(\tau).
$$
\end{Prop}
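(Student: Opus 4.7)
\medbreak\noindent
\textbf{Proof proposal.} The plan is to reduce the assertion to the known continuity of the normalized spherical Fourier transform
$\nFt_{\bar P_{0}} \colon \cC(G/H:\tau) \to \cS(i\faqd) \otimes \oC(\tau)$
established in \cite{vdBanSchlichtkrull_FourierTransformOnASemisimpleSymmetricSpace}, combined with the polynomial growth of the $C$-function $\ctau_{\bar P_{0}|Q}(1:\dotvar)$ provided by Proposition \ref{Prop relation E(Q) and E^circ(cP0)}(c). Here $\cS(i\faqd)$ denotes the usual Euclidean Schwartz space.

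For $\phi \in C_{c}^{\infty}(G/H:\tau)$ and $\mu \in C_{0}$, I would first apply Proposition \ref{Prop F_Q phi=c F^0_cP0 phi} to rewrite
\[
\It_{Q,\tau}\phi(a)
= \int_{\mu + i\faqd} \ctau_{\bar P_{0}|Q}(1:-\overline\lambda)^{*}\, \nFt_{\bar P_{0}}\phi(\lambda)\, a^{\lambda}\, d\lambda.
\]
Shifting the contour from $\mu + i\faqd$ to $i\faqd$ via Cauchy's theorem introduces residues at the finitely many singular points of $\ctau_{\bar P_{0}|Q}(1:-\overline\lambda)^{*}$ lying in the strip between the two contours; finiteness is ensured by local finiteness of $\HypRQtau$ together with Schwartz decay of $\nFt_{\bar P_{0}}\phi$ in $\Im\lambda$. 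These residues contribute exponential-polynomial terms of the form $P_{j}(\log a)\, a^{i\mu_{j}}$ with $\mu_{j} \in \faqd$ real and $P_{j}$ polynomial, which are tempered on $\Aq$. After multiplication by the polynomial $p$ of Proposition \ref{Prop relation E(Q) and E^circ(cP0)}(c) to regularize the poles of $\ctau_{\bar P_{0}|Q}$, the remaining principal-value integral along $i\faqd$ becomes the inverse Euclidean Fourier transform of a Schwartz $\oC(\tau)$-valued function on $\faqd$, hence defines a Schwartz, in particular tempered, function of $\log a$. Division by $p$ introduces only polynomial coefficients in $\log a$, preserving temperedness, and thus $\It_{Q,\tau}\phi \in C_{\temp}^{\infty}(\Aq) \otimes \oC(\tau)$.

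For the continuous extension to $\cC(G/H:\tau)$, I observe that each ingredient above depends continuously on $\nFt_{\bar P_{0}}\phi$: multiplication by the polynomially bounded regular part of $p\cdot\ctau_{\bar P_{0}|Q}(1:-\overline\lambda)^{*}$ maps $\cS(i\faqd)$ continuously to itself; the Euclidean Fourier transform is continuous $\cS(\faqd) \to \cS(\faq)$; and the finite residue sum amounts to finite-order differentiations and evaluations at fixed points of $i\faqd$, all continuous on $\cS(i\faqd)$. Composing with the continuous extension of $\nFt_{\bar P_{0}}$ to the Schwartz space yields the required continuous linear map $\cC(G/H:\tau) \to C_{\temp}^{\infty}(\Aq) \otimes \oC(\tau)$; uniqueness is immediate from the density of $C_{c}^{\infty}(G/H:\tau)$ in $\cC(G/H:\tau)$. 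The main technical obstacle I expect is the precise treatment of the principal-value integral and residues when singular hyperplanes of $\HypQtau$ intersect $i\faqd$, which is exactly the reason for introducing the one-sided limit from $C_{0}$ in the definition of $\It_{Q,\tau}$; bookkeeping of residues along the lower-dimensional strata of $(\cup\HypRQtau)\cap i\faqd$ will likely require care, but only affects polynomial prefactors and therefore does not threaten the temperedness conclusion.
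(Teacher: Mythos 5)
There is a genuine gap, and it sits exactly at the point your last sentence flags as a "technical obstacle": the behaviour of the integrand \emph{on} $i\faqd$. First, note that by construction there are no singularities strictly between the contours: $\mu$ is taken in $C_0$, the connected component of $\faqdp(P_0)\setminus\cup\HypRQtau$ whose closure contains $0$, and such components are convex, so the open segment from $\mu$ to $0$ meets no hyperplane of $\HypRQtau$. Hence your contour shift picks up no residues at all in the open strip, and the "exponential-polynomial terms $P_j(\log a)a^{i\mu_j}$" you invoke do not occur (had there been interior singularities, their residues would in fact carry \emph{real} non-zero exponents $a^{\mu_j}$ and would not be tempered). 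The only obstruction is that the limiting contour $i\faqd$ itself may meet singular hyperplanes (those through the origin, i.e.\ the zero set of the homogeneous factor $p_h$ of $p$), possibly with poles of order $\geq 2$, so Cauchy's theorem does not let you land on $i\faqd$, and no naive "principal value plus residues at points of $i\faqd$" bookkeeping is available. Your decisive step, "division by $p$ introduces only polynomial coefficients in $\log a$, preserving temperedness," is false as a mechanism: multiplication by $p_h(\gl)$ on the Fourier side corresponds to a constant-coefficient differential operator $D$ on $\Aq$, and undoing it is not multiplication by polynomials in $\log a$ but convolution with a (suitably chosen) tempered fundamental solution of $D$. Which fundamental solution is the correct one is precisely what the one-sided limit from the direction $\nu\in\faqdp(P_0)$ in the definition of $\It_{Q,\tau}$ selects.

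This is in fact how the paper proceeds, and it is the part your proposal is missing: after defining $\Kt_{Q,\tau}\phi$ as the inverse Euclidean Fourier transform of $p_h\Ft_{Q,\tau}\phi$ over $i\faqd$ (continuous into $\cS(\Aq)\otimes\oC(\tau)$, via $\nFt_{\bar P_0}$ and the polynomial bounds on $C_{\bar P_0|Q}(1:\dotvar)$ — this much agrees with your reduction), one invokes H\"ormander's regularization theorem to define the distribution $v=\lim_{\epsilon\downarrow 0} 1/p_h(\dotvar+\epsilon\nu)$ on $i\faqd$, observes that it is homogeneous and hence tempered, sets $u=\eFt_{\!\!\Aq}^{-1}v$, and then proves the identity $\It_{Q,\tau}\phi=u*\Kt_{Q,\tau}\phi$ by a careful interchange of the $\epsilon$-limit with the integrations; temperedness and continuity of the extension then follow because convolution with the tempered distribution $u$ maps $\cS(\Aq)$ continuously into $C^\infty_{\temp}(\Aq)$. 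Without this (or an equivalent rigorous treatment of the singularities of $\Ft_{Q,\tau}\phi$ on $i\faqd$, including higher-order poles), your argument neither establishes that $\It_{Q,\tau}\phi$ is tempered nor yields the continuity in $\phi$; note also that the proposition is stated without the split rank one hypothesis, so any argument based on finitely many point singularities would in any case have to be reformulated for hyperplane singularities.
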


\begin{proof}
Let $B\subseteq  \faq^{*}$ be a bounded neighborhood of $0$.
Let $p\in\Pi_{\Sigma,\R}(\faqd)$
be as in Proposition \ref{Prop relation E(Q) and E^circ(cP0)} (c).
Then $p(-\dotvar)$ belongs to
$\Pi_{\Sigma,\R}(\faqd),$ hence admits a
decomposition as a product of  a polynomial from $\Pi_{\gS, \R}(\faqd)$
which vanishes nowhere on
$i \faqd$ and  a polynomial $p_h \in \Pi_{\Sigma, \R}(\faqd)$
which is homogeneous. Then
$\lambda\mapsto p_{h}(-\lambda) C_{\bar P_0 | Q}(1 : \lambda)$
is holomorphic on an
open neighborhood of
$i \faqd$ in $\faqdc.$

According to \cite[Lemma 6.2]{vdBanSchlichtkrull_MostContinuousPart}
the Fourier transform
$\nFt_{\bar P_{0}}$
 extends to a continuous linear
 map from $\cC(G/H:\tau)$ to $\cS(i\faq^{*})\otimes\oC(\tau)$.
Hence, in view of  Proposition \ref{Prop F_Q phi=c F^0_cP0 phi},
also the map $\phi\mapsto p_{h}\Ft_{Q,\tau}\phi$ extends to a continuous linear map
 $\cC(G/H: \tau )\to\cS(i\faq^{*})\otimes\oC(\tau)$
 and  for all $\phi \in \cC(G/H:\tau)$ we have
 \begin{equation}
 \label{e: FQtau and FbarPzero}
 [p_{h} \Ft_{Q,\tau}\phi](\gl)  = p_{h}(\lambda) C_{\bar P_0 : Q}(1 : - \bar\lambda)^*\,\nFt_{\bar P_0}(\phi)(\gl),
 \qquad (\gl \in i\faqd).
\end{equation}
We now see that, for $\phi\in\cC(G/H:\tau),$
\begin{equation}\label{eq def K_Q}
\Kt_{Q,\tau}\phi(a)
:=
\int_{i\faq^{*}}
    p_{h}(\lambda)\Ft_{Q,\tau}\phi(\lambda)a^{\lambda}\,d\lambda
    \qquad(a\in A_{\fq})
\end{equation}
defines an element of $\cS(A_{\fq})\otimes \oC(\tau)$ and the map
$$
\Kt_{Q,\tau}:\cC(G/H:\tau)\to\cS(A_{\fq})\otimes\oC(\tau)
$$
thus obtained is continuous linear.

Let $\nu\in\faq^{*+}(P_{0})$.
It follows from \cite[Thm.~3.1.15]{Hormander_AnalysisOfPDOs-I} that the limit
\begin{equation}
\label{e: defi distribution v}
v(f)
:=\lim_{\epsilon\downarrow0}
    \int_{i\faq^{*}}\frac{f(\lambda)}{p_{h}(\lambda+\epsilon \nu)}\,d\lambda
\end{equation}
exists for every $f\in \cS(i\faq^{*}),$  and that accordingly $v$ defines a
distribution on $i\faq^{*}$. This distribution is
homogeneous, hence tempered, see \cite[Thm.~7.1.18]{Hormander_AnalysisOfPDOs-I}.
Put $u:=\eFt_{\!\!A_{\fq}}^{-1}v$.
Then $u$ is a tempered distribution on $\Aq,$ hence
the convolution operator $f \mapsto u* f$ defines a continuous linear map
$\cS(A_{\fq})\to C_{\temp}^{\infty}(A_{\fq})$.
Thus, to finish the proof, it suffices to prove the claim that
for every $\phi\in C_{c}^{\infty}(G/H:\tau),$
\begin{equation}\label{eq J_Q= u*K_Q}
\It_{Q,\tau}\phi
=u*\Kt_{Q,\tau}\phi.
\end{equation}

We set
$\Phi := p_{h}\Ft_{Q,\tau}\phi$
and note that
$\Kt_{Q,\tau}\phi=\eFt_{\!\!A_{\fq}}^{-1}\Phi$.
Therefore,
$$
u*\Kt_{Q,\tau}\phi
=(\eFt_{\!\!A_{\fq}}^{-1}v) *  \eFt_{\!\!A_{\fq}}^{-1}\Phi
=\eFt_{\!\!A_{\fq}}^{-1}\big(\Phi v\big).
$$
Let $\psi\in C^{\infty}_{c}(i\faq^{*})$. Then
\begin{align*}
\Phi v(\psi)
&=\lim_{\epsilon\downarrow0}\int_{i\faq^{*}}
 \frac{\Phi(\lambda)\psi(\lambda)}{p_{h}(\lambda+\epsilon\nu)}\,d\lambda\\
&=\lim_{\epsilon\downarrow0}\Bigg(
    \int_{i\faq^{*}}
           \frac{\Phi(\lambda+\epsilon\nu)\psi(\lambda)}{p_{h}(\lambda+\epsilon\nu)}\,d\lambda
    -\int_{i\faq^{*}}\Big(\int_{0}^{\epsilon}\partial_{t}\Phi(\lambda+t\nu)\,dt\Big)
        \frac{\psi(\lambda)}{p_{h}(\lambda+\epsilon\nu)}\,d\lambda
    \Bigg)\\
&=\lim_{\epsilon\downarrow0}\Bigg(
    \int_{i\faq^{*}}\Ft_{Q,\tau}\phi(\lambda+\epsilon\nu)\psi(\lambda)\,d\lambda
    -\int_{0}^{\epsilon}\int_{i\faq^{*}}
        \frac{\partial_{t}\Phi(\lambda+t\nu)\psi(\lambda)}{p_{h}(\lambda+\epsilon\nu)}\,d\lambda\,dt
    \Bigg).
\end{align*}
The function
$$
F:   \;\; (t,\epsilon)\mapsto
    \int_{i\faq^{*}}
    \frac{\partial_{t}\Phi(\lambda+t\nu)\psi(\lambda)}{p_{h}(\lambda+\epsilon\nu)}\,d\lambda
$$
is continuous on $[0,1]\times \,]0,1]. $ Moreover, since
$
f:  t \mapsto \partial_t\Phi(\dotvar +t \nu) \psi(\dotvar )
$
is a continuous function $[0,1] \to C_c^\infty(i\faqd),$ it follows that in
the Banach space $C([0,1]),$
$$
F(\dotvar, \epsilon) \to v(f(\dotvar)) \qquad (\epsilon \downarrow 0 ).
$$
We thus see that $F$  extends continuously to $[0,1]\times[0,1]$. This in turn implies that
$$
\lim_{\epsilon\downarrow0}
    \int_{0}^{\epsilon} F(t, \epsilon) \, dt  = 0,
$$
hence
$$
\Phi v(\psi)
=\lim_{\epsilon\downarrow0}
    \int_{i\faq^{*}}\Ft_{Q,\tau}\phi(\lambda+\epsilon\nu)\psi(\lambda)\,d\lambda.
$$
Now let $\chi\in C_{c}^{\infty}(A_{\fq})$. Then
\begin{align*}
\big(u*\Kt_{Q,\tau}\phi\big)(\chi)
&=\Phi v\Big(\lambda\mapsto\int_{A_{\fq}}\chi(a)a^{\lambda}\,da\Big)\\
&=\lim_{\epsilon\downarrow0}
     \int_{i\faq^{*}}\int_{A_{\fq}}\Ft_{Q,\tau}\phi(\lambda+\epsilon\nu)\chi(a)a^{\lambda}\,da\,d\lambda\\
&=\lim_{\epsilon\downarrow0}
     \int_{A_{\fq}}\Bigg(\int_{i\faq^{*}}\Ft_{Q,\tau}\phi(\lambda+\epsilon\nu)a^{\lambda+\epsilon\nu}
        \,d\lambda\Bigg)a^{-\epsilon\nu}\chi(a)\,da.
\end{align*}
If $\epsilon$ is sufficiently small, then
$$
\It_{Q,\tau}\phi(a)
=
\int_{i\faq^{*}}\Ft_{Q,\tau}\phi(\lambda+\epsilon\nu)a^{\lambda+\epsilon\nu}\,d\lambda,
$$
see (\ref{e: defi It Q tau}), hence
$$
\big(u*\Kt_{Q,\tau}\phi\big)(\chi)
= \lim_{\epsilon\downarrow 0} \int_{A_{\fq}}\It_{Q,\tau}\phi(a)\, a^{-\epsilon \nu} \chi(a) \,da
 =\int_{A_{\fq}}\It_{Q,\tau}\phi(a)\chi(a)\,da.
$$
This establishes the claim (\ref{eq J_Q= u*K_Q}).
\end{proof}

\begin{Rem}\label{r: Case absence of residues}
Assume that the Eisenstein integral $E(Q:-\dotvar) = E_\tau(Q:- \dotvar)$
is holomorphic on $\,]0,1]\cdot \xi$ for an element $\xi \in C_1.$
Then the chambers $C_0$ and $C_1$ are equal, and it follows
that $\Ht_{Q,\tau}\phi=\It_{Q,\tau}\phi$. In view of Proposition \ref{Prop JQ: C(G/H:tau) to C^infty(Aq)},  the spherical Harish-Chandra transform $\Ht_{Q,\tau}$
extends to a continuous linear map $\cC(G/H:\tau) \to C^\infty_{\temp}(\Aq) \otimes \oC(\tau).$

Now assume that the above condition of holomorphy is fulfilled for $(\tau, V_\tau)$ equal to the trivial representation $(1, \C)$ of $K.$ Then $\cC(G/H:\tau) = \cC(G/H)^K$ and $\oC(\tau) = \C^\cW$
and it follows by application of Lemma \ref{Lemma relation between H_(Q,tau) and H_Q} that the
restriction of  $\Ht_Q$  to $C_c^\infty(G/H)^K$ extends to a continuous linear map
$\cC(G/H)^{K}\to C^{\infty}(L/H_{L})^{M}.$
By application of Proposition \ref{Prop If H_Q extends to C(G/H)^K, then R_Q extends to C(G/H) with conv integrals}
it now follows that $\Ht_{Q}$ extends to a continuous linear map $\cC(G/H)\to C^{\infty}(L/H_{L})$ and is given by absolutely convergent integrals. Moreover, the image of $\Ht_{Q}$ consists of tempered functions.

Finally, assume that $\Sigma_{-}(Q)=\emptyset.$  Then $\Upsilon_{Q}=\faqc^{*}$. This implies that $\Ft_{Q,\tau}\phi$ is holomorphic on $\faqc^{*}$ for every $\phi\in C_{c}^{\infty}(G/H:\tau)$.
Now a stronger statement can be obtained than in the above more general setting.
 The polynomial $p$ in the proof for Proposition \ref{Prop JQ: C(G/H:tau) to C^infty(Aq)} can be taken equal to the constant function $1$. The distribution $u$ is then equal to the Dirac measure at the origin of $i\faq^{*}$ and as a consequence, $\It_{Q,\tau}$ is equal to $\Kt_{Q,\tau}$.
In particular, it follows that $\Ht_{Q,1}$ extends to a continuous linear map
$\cC(G/H)^K \to \cC(L/H_L)^M$ and is given by absolutely convergent integrals.
In view of Lemma \ref{Lemma relation between H_(Q,tau) and H_Q} it follows that
$\Ht_Q$ maps $\cC(G/H)^K$ continuous linearly into $\cC(L/H_L)^M.$

We will now apply domination to show that in this case $\Ht_Q$
is a continuous linear map from $\cC(G/H)$ to $\cC(L/H_L).$ In the above we established already
that for $\phi \in \cC(G/H)$ the function $\Ht_Q \phi \in C^\infty(L/H_L)$ is given
by absolutely convergent integrals.
For the purpose of estimation, let $\gf \mapsto \widehat\gf$ be a map as in
Proposition \ref{Prop domination by K-inv Schwartz functions}.  Let $u \in U(\fl).$ Then there exists
a $u' \in U(\fl)$ such that $L_u \after \gd_Q = \gd_Q\after L_{u'}$
on $C^\infty(L/H_L).$ Thus, for $\phi \in \cC(G/H)$ we have
\begin{equation}
\label{e: L u on Ht Q}
L_u \Ht_Q(\phi) = \gd_Q L_{u'} \Rt_Q(\phi) = \Ht_Q(L_{u'}\phi),
\end{equation}
by equivariance of the Radon transform. Let $N \in  \N.$
There exists a continuous seminorm $\nu$ on $\cC(G/H)$ such that for all
$\phi \in \cC(G/H)^K$ and $l \in L,$
\begin{equation}
\label{e: estimate HQ f}
(1 + \|l\|)^N |\Ht_Q(\phi)(l) | \leq \nu(\phi).
\end{equation}
It now follows by application of
Proposition \ref{Prop domination by K-inv Schwartz functions} that there exists a continuous seminorm $\mu$ on $\cC(G/H)$ such that

\begin{equation}
\label{e: nu of widehat phi}
\qquad \nu(\widehat \phi ) \leq \mu(\phi) \qquad\qquad  (\phi \in \cC(G/H)).
\end{equation}
Combining the equality (\ref{e: L u on Ht Q}) with the estimates
(\ref{e: estimate HQ f}) and (\ref{e: nu of widehat phi}), we find
\begin{eqnarray*}
(1 + \|l\|)^N |L_u \Ht_Q(\phi)(l) |  & \leq  & ( 1 + \|l\|)^N  \Ht_Q(|L_{u'} \phi|) \\
& \leq  & ( 1 + \|l\|)^N  \Ht_Q(\widehat{ L_{u'} \phi} ) \\
& \leq & \nu( \widehat{L_{u'} \phi}) \leq \mu(L_{u'} \phi).
\end{eqnarray*}
This establishes the continuity.
\end{Rem}

\begin{Ex}[\bf Group case]
\label{ex: Whittaker transform}
We use the notation of Example \ref{Ex Group case - HC-transform def compared to HC's def}. Assume that $\bp P$ and $\bp Q$ are minimal parabolic
subgroups of $\bp G$ containing $\bp A.$
Since $\Sigma_{-}(\bp P\times \bp P)=\emptyset$, the final analysis
in Remark \ref{r: Case absence of residues} applies to $\Ht_{\bp P\times\bp P}$.
Let $\bp \xi \in \bp \widehat M$ and define $\xi \in \widehat M$ by $\xi := \bp \xi \otimes \bp \xi^\vee.$
For $\bp \gl \in \bp \fadc$ we set $\gl =  (\bp \gl, - \bp \gl) \in \faqdc.$
Let $(\tau , \Vtau)$ be a finite dimensional unitary representation
of $K = \bp K \times \bp K.$
We recall from \cite[Eqn.~(8.16)]{vdBanKuit_EisensteinIntegrals}
that the $C$-function $C(\bp Q \times \bp Q : \bp P \times \bp Q :\gl) \in \End(\oC(\tau))$
is defined by the relation
\begin{equation}
\label{e: C QQ PQ}
E(\bp P \times \bp Q : \gl) = E(\bp Q \times \bp Q : \gl) \after C(\bp Q \times \bp Q : \bp P \times \bp Q :\gl).
\end{equation}
It follows from \cite[Cor. 9.6, 9.8]{vdBanKuit_EisensteinIntegrals} that
$$
C(\bp Q \times  \bp Q : \bp P \times \bp Q :\gl) \psi_{f \otimes I_{\bp \xi}} =
\psi_{[A(\bp Q : \bp P : \bp \xi : - \bp \gl) \otimes I)f]\otimes I_{\bp \xi}},
$$
for $\xi \in \widehat M$ and $f \in C^\infty(K:\xi: \tau).$ The intertwining operator
$A(\bp Q : \bp P: \bp \xi : \bp \gl)$ depends holomorphically on $\gl = (\bp \gl, -\bp \bar \gl)$
in the region
$$
U: = \{ \gl \in \faqdc :   \inp{\bp \gl}{\ga} > 0\;\; \textnormal{for all} \;\;\ga \in \gS(\bp P) \cap \gS( \bp \bar Q)\;\}.
$$
Clearly, $U$ is contained in $\fa_{\fq\iC}^{*+}(\bp P \times \bp \bar P).$
It follows that the $C$-function in
(\ref{e: C QQ PQ}) depends holomorphically on $\gl \in -U.$ From (\ref{e: C QQ PQ}) it follows that
$$
\Ft_{\bp P\times\bp Q,\tau}\phi(\lambda)
=\ctau(\bp Q\times\bp Q:\bp P\times\bp Q:-\bar{\lambda})^{*}\circ\Ft_{\bp Q\times\bp Q,\tau}\phi(\lambda).
$$

Hence,  $\Ft_{\bp P\times\bp Q}\phi$ is holomorphic on $\faq^{*+}(\bp P\times\bp\bar{P})$ for every $\phi\in C_{c}^{\infty}(G/H:\tau)$.
It follows from Remark \ref{r: Case absence of residues} that $\Ht_{\bp P\times\bp Q}$ extends to $\cC(G/H)$ and is given by absolutely convergent integrals. Moreover, $\Ht_{\bp P\times\bp Q}$ maps $\cC(G/H)$ to the space of smooth tempered functions on $L/H_{L}$.

The convergence of the integrals for $\Ht_{\bp P \times \bp Q}$ also follows from combining
\cite[Thm.~7.2.1]{Wallach_RealReductiveGroupsI} and \cite[Lemma 15.3.2]{Wallach_RealReductiveGroupsII}.

\begin{Rem}
\label{r: error in Wallach}
We should inform the reader that
\cite[Lemma 15.3.2]{Wallach_RealReductiveGroupsII} has an additional assertion that a certain
transform $f^P$ is of Schwartz behavior. However, the proof of this assertion is not correct.
In fact, in the right-hand side of the inequality at the top of page 377,
a factor $(1 + \|\log (aa_1)\|)^{2d}$ is missing.

From the given proof it can be concluded
that the map $f \mapsto f^{P}$ is given by absolutely convergent integrals, that it is continuous from
$\cC(N_0\bs G: \chi)$ to $C^\infty(N_0\cap M_P\bs M_P: \chi|_{N_0\cap M_P}),$ and that its
image consists of tempered smooth functions.
However, the second part of Lemma 15.3.2
of \cite{Wallach_RealReductiveGroupsII} cannot be true in the generality stated. Indeed, combined  with \cite[Thm.\  7.2.1]{Wallach_RealReductiveGroupsI} the validity of the lemma would imply that
 $\Ht_{\bp P\times \bp\bar{P}}$ maps $\cC(G/H)$ to $\cC(L/H_{L})$. The latter assertion
is already incorrect for  $\bp G=\mathrm{SL}(2,\R).$
This is established in the result below.
\end{Rem}

\begin{Lemma}
\label{l: counterexample wallach sl2}
Let $\bp G=\mathbf{SL}(2,\R)$ and let $\phi\in \cC(G/H)$. Assume that $\phi\geq0$ and
\begin{equation}\label{e: positive value phi}
\phi\left(\left(\scriptstyle{
      \begin{array}{cc}
        0 & -1 \\
        1 & 0 \\
      \end{array}}
    \right),e\right)
>0.
\end{equation}
Then
$$
\liminf_{\stackrel{a\to\infty}{a\in A_{\fq}^{+}(\bp P\times \bp \bar P)}}
    \Ht_{\bp P\times \bp \bar P}\,\phi(a)>0
$$
\end{Lemma}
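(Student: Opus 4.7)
The plan is to convert $\Ht_{\bp P \times \bp \bar P}\phi$ into an explicit integral over $\R^2$, perform a $t$-dependent change of variables so that the $t^2$ factor from $\Deltach_Q$ is absorbed into the measure, and then invoke Fatou's lemma together with the positivity hypothesis.

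First I would apply the identifications of Example~\ref{Ex Group case - HC-transform def compared to HC's def} and set $\tilde\phi(\bp g) := \phi((\bp g, e))$. Since $N_{\bp P} \cap N_{\bp \bar P} = \{e\}$, one has $H_{N_Q} = \{e\}$ and $N_Q/H_{N_Q} = N_{\bp P} \times N_{\bp \bar P}$, while a direct computation from (\ref{e: defi Delta Q}) yields $\Deltach_{\bp P \times \bp \bar P}\bigl((\bp a, \bp a^{-1})\bigr) = t^2$ for $\bp a = \diag(t, t^{-1})$. Parametrizing $N_{\bp P}$ and $N_{\bp \bar P}$ by $x, y \in \R$ in the standard way, expanding the product $\bp a n_1 n_2^{-1} \bp a$, and using that Haar measure on $N_{\bp \bar P}$ is invariant under inversion, I would arrive at
\[
\Ht_{\bp P \times \bp \bar P}\phi\bigl((\bp a, \bp a^{-1})\bigr) = t^{2} \int_{\R^2} \tilde\phi\!\left(\begin{array}{cc} t^2(1+xy) & x \\ y & t^{-2} \end{array}\right) dx\, dy.
\]

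Second, I would apply the change of variables $(x, y) \mapsto (s, w) := (x, t^2(1+xy))$. Its Jacobian determinant is $t^2 s$, and for $s \neq 0$ one recovers $y = (w - t^2)/(t^2 s)$. This cancels the prefactor $t^2$ and produces
\[
\Ht_{\bp P \times \bp \bar P}\phi\bigl((\bp a, \bp a^{-1})\bigr) = \int_{\R^2} \tilde\phi\!\left(\begin{array}{cc} w & s \\ (w - t^2)/(t^2 s) & t^{-2} \end{array}\right) \frac{dw\, ds}{|s|}.
\]
The measure is now $t$-independent. For each fixed $(w, s)$ with $s \neq 0$, the matrix inside $\tilde\phi$ converges as $t \to \infty$ to $M_\infty(w, s) := \bigl(\begin{smallmatrix} w & s \\ -1/s & 0 \end{smallmatrix}\bigr)$, and $M_\infty(0, -1)$ is exactly the element appearing in the hypothesis.

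Finally, since $\tilde\phi \geq 0$, Fatou's lemma combined with continuity of $\tilde\phi$ gives
\[
\liminf_{t \to \infty}\,\Ht_{\bp P \times \bp \bar P}\phi\bigl((\bp a, \bp a^{-1})\bigr) \;\geq\; \int_{\R^2} \tilde\phi(M_\infty(w, s))\,\frac{dw\, ds}{|s|}.
\]
Strict positivity of this lower bound then follows from continuity: the hypothesis $\tilde\phi(M_\infty(0, -1)) > 0$ yields an open neighborhood of $(0, -1)$ in $\{s \neq 0\}$ on which $\tilde\phi \circ M_\infty$ is strictly positive, and the everywhere non-negativity of $\tilde\phi$ prevents cancellation elsewhere. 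The only delicate point is the Jacobian computation that absorbs the blow-up factor $t^2$ into the measure; no substantive obstacle is expected.
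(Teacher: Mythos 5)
Your argument is correct, and it reaches the conclusion by a route that differs technically from the paper's. The preliminary reduction is the same in both: identify $G/H\simeq \bp G$, note $H_{N_Q}=\{e\}$, compute $\Deltach_{\bp P\times\bp\bar P}$ on $A_\fq$ (your $t^2$ versus the paper's $e^t$ is just a different parametrization of $A_\fq$), and expand $\bp a\, n_x\bar n_y^{-1}\bp a$ to get the same explicit double integral with upper-left entry $t^2(1+xy)$. From there the paper argues by hand: it restricts the integral to a $t$-dependent box $D_t$ on which the upper-left entry is at most $\epsilon$ and $y$ is near $1$, observes that the area of $D_t$ is comparable to $e^{-t}$, so the prefactor cancels, and bounds below by the infimum of $\phi$ over a fixed compact neighborhood $C_{\epsilon,\eta}$ of the matrix in the hypothesis. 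You instead make the cancellation exact by the substitution $w=t^2(1+xy)$, $s=x$, whose Jacobian $t^2|s|$ absorbs the prefactor into the $t$-independent measure $dw\,ds/|s|$, and then apply Fatou (along a sequence realizing the liminf; the excised set $\{s=0\}$ is null, so this is harmless for the nonnegative integrand) to obtain the explicit lower bound $\int\tilde\phi(M_\infty(w,s))\,dw\,ds/|s|$, whose positivity follows from continuity near $(w,s)=(0,-1)$. Your version is slightly more informative — it identifies a concrete limiting integral bounding the liminf from below, rather than just some positive constant — at the cost of invoking a change of variables and Fatou where the paper uses only an elementary area estimate; both proofs rest on the same underlying mechanism, namely that the region where the upper-left entry stays bounded shrinks at exactly the rate $e^{-t}$ dictated by $\Deltach_Q$.
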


\begin{proof}
For $t>0$ and $x,y\in\R$, we define
$$
a_{t}=\left(
        \begin{array}{cc}
          e^{t} & 0 \\
          0 & e^{-t} \\
        \end{array}
      \right),\;\;\;
n_{x}=\left(
        \begin{array}{cc}
          1 & x \\
          0 & 1 \\
        \end{array}
      \right),\;\;\;
\overline{n}_{y}=\left(
        \begin{array}{cc}
          1 & 0 \\
          -y & 1 \\
        \end{array}
      \right).
$$
To shorten notation, we write
$$
I(t):=\Ht_{\bp P\times \bp \bar P}\phi\big(a_{\nicefrac{t}{2}},a_{-\nicefrac{t}{2}}\big).
$$
Using the identification $G/H \buildrel \simeq\over\to  \bp G$ induced by $(x,y) \mapsto xy^{-1}$ to view $\phi$ as a function on $\bp G$ we obtain
$$
I(t)
=e^{t}\int_{\R}\int_{\R}\phi(a_{\nicefrac{t}{2}}n_{x}\overline{n}_{y}^{-1}a_{\nicefrac{t}{2}})\,dx\,dy
=e^{t}\int_{\R}\int_{\R}
    \phi\left(
        \begin{array}{cc}
          e^{t}(1+xy) & x \\
          y & e^{-t} \\
        \end{array}
      \right)\,dx\,dy.
$$
Let $0<\epsilon<1$ and $\eta>0$. We define the domain
$$
D_{t}
:=\{(x,y)\in\R^{2}:-\epsilon<e^{t}(1+xy)<\epsilon, 1< y< 1+\eta\}.
$$
Note that $(x,y)\in D_{t}$ if and only if
$$
1< y< 1+\eta\text{ and }\frac{-e^{-t}\epsilon-1}{y}<x<\frac{e^{-t}\epsilon-1}{y}.
$$
Hence, for $t>0,$
$$
D_{t}\subseteq R_{\epsilon, \eta}: = \big[-\epsilon-1,\frac{\epsilon-1}{1+\eta}\big]\times\big[1,1+\eta\big].
$$
We note that $R_{\epsilon, \eta}$ tends to $\{(-1, 1)\}$ for $\epsilon, \eta \downarrow 0$
and define
$$
C_{\epsilon,\eta}
:=\left\{\left(\scriptstyle{
      \begin{array}{cc}
        a & b \\
        c & d \\
      \end{array}}
    \right)\in\bp G:|a|\leq\epsilon, \; |d|\leq\epsilon, \; (b,c) \in R_{\epsilon, \eta} \right\}.
$$
Then $C_{\epsilon,\eta}$ is compact and tends to the singleton consisting of the matrix
in (\ref{e: positive value phi}). We may therefore
take $\epsilon$ and $\eta$ so close to zero that
the function $\phi$ is strictly positive on $C_{\epsilon,\eta}.$
We thus obtain, for $t>-\log\epsilon,$ that
\begin{align*}
I(t)
&
\geq e^{t}\int_1^{1+\eta}  \int_{\frac{-e^{-t}\epsilon-1}{y}}^{\frac{e^{-t}\epsilon-1}{y}}
    \phi\left(
        \begin{array}{cc}
          e^{t}(1+xy) & x \\
          y & e^{-t} \\
        \end{array}
      \right)\,dx\,dy\\
&\geq e^{t}\inf_{(u,v)\in D_{t}}
    \phi\left(
        \begin{array}{cc}
          e^{t}(1+uv) & u \\
          v & e^{-t} \\
        \end{array}
      \right)
\int_1^{1+\eta}\int_{\frac{-e^{-t}\epsilon-1}{y}}^{\frac{e^{-t}\epsilon-1}{y}}\,dx\,dy\\
&\geq 2\epsilon\,\log(1+\eta)\inf_{C_{\epsilon,\eta}}\phi \; >0.
\end{align*}
\end{proof}
\end{Ex}

\subsection{The residual term for spaces of split rank one}
\label{subsection Residues}
We retain the notation of the previous subsection. In particular, $Q \in \cP(A)$
and $P_0 \in \cP_\gs(\Aq)$ is such that $\gS(Q, \gs \Cartan) \subseteq \gS(P_0).$
As mentioned in the previous subsection,
the difference between $\Ht_{Q,\tau}\phi$ and $\It_{Q,\tau}\phi$ is equal to a finite sum of
residual integrals. These become point residues
in case $\dim \faq = 1.$ For the rest of this subsection, we make the
\medbreak\noindent
{\bf Assumption:\ } \em  $G/H$ is of split rank one, i.e., $\dim \faq =1.$
\medbreak\noindent\rm
By our assumption on the split rank, each hyperplanes from the set
$\HypQtau$
defined in the beginning
of Subsection \ref{subsection Tempered Term} consists of a single point in $\faqd.$
Furthermore, the union $\cup \HypQtau$ is a discrete subset of $\faqd,$
by
Proposition \ref{Prop descr of singularities of E(Q:psi:.) and Paley-Wiener estimate for F_(Q,tau)}.

We define
$$
S_{Q,\tau}: = \faqdp(P_0) \cap\big(\cup \HypQtau\big).
$$

\begin{Lemma}\label{Lemma S_(Q,tau) is real and finite}
The set $S_{Q,\tau}$ is finite.
\end{Lemma}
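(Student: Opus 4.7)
The plan begins with a discreteness observation. Since $\dim\faq=1$, every hyperplane in $\HypQtau$ reduces to a single point of $\faqd$. Combined with the local finiteness of $\HypQtau$ noted in the preceding text (itself a consequence of Proposition \ref{Prop descr of singularities of E(Q:psi:.) and Paley-Wiener estimate for F_(Q,tau)}), this forces $\cup\HypQtau$ to be discrete in $\faqd\cong\R$. Hence $S_{Q,\tau}$ is a discrete subset of the open half-line $\faqdp(P_0)$, and the task reduces to exhibiting a uniform upper bound on $\|\lambda\|$ for $\lambda\in S_{Q,\tau}$.

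To produce such a bound, I would exploit Lemma \ref{l: relation EQ with Enorm} in the form
\[
\Etau(Q:\psi:-\lambda)
=\Etau^\circ\bigl(\bar P_0\colon C_{\bar P_0|Q}(1:-\lambda)\psi\colon -\lambda\bigr),
\]
and argue that both factors on the right are holomorphic for $\lambda\in\faqdp(P_0)$ of sufficiently large norm. For the $C$-function this is immediate from Proposition \ref{Prop relation E(Q) and E^circ(cP0)}(b): the holomorphy set $\{\mu\in\faqdc\colon\Re\langle-\mu+\rho_{Q,\fh},\alpha\rangle>0,\ \forall\,\alpha\in\Sigma(P_0)\cap\Sigma(Q)\}$, evaluated at $\mu=-\lambda$, contains the far-out portion of $\faqdp(P_0)$ since $\Sigma(P_0)\cap\Sigma(Q)\subseteq\Sigma(P_0)$. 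For the normalized Eisenstein integral $\Etau^\circ(\bar P_0:\dotvar)$, one needs that its singular locus in the chamber $\faqdp(\bar P_0)=-\faqdp(P_0)$ is bounded; in the split rank one setting this follows from the Ban--Schlichtkrull Plancherel theory, since the real residues in this chamber give rise to discrete series contributions to the $\tau$-spherical Plancherel formula, and only finitely many discrete series have nontrivial $\tau$-isotypic component.

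I expect the second of these two points --- extracting from existing results the finiteness of the real singular set of $\Etau^\circ(\bar P_0:\dotvar)$ in the positive chamber of $\bar P_0$ --- to be the main obstacle. A more self-contained variant would use the intertwining-operator formula of Proposition \ref{Prop relation E(Q) and E^circ(cP0)}(a) together with the Knapp--Stein factorization of the standard intertwining operator $A(\gs P\colon Q\colon\xi_M\colon\mu)$ into rank-one intertwiners; in rank one each such intertwiner has only finitely many singularities on $\faqd$, from which one could directly bound the singularities of $C_{\bar P_0|Q}(1:\dotvar)$, while the remaining singularities coming from $\Etau^\circ(\bar P_0:\dotvar)$ would still need to be controlled either by invoking the Ban--Schlichtkrull theory or by a direct spectral analysis via the invariant differential operators through Proposition \ref{Prop H(D phi)=mu(D) H phi}.
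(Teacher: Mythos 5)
Your discreteness step matches the paper, but the boundedness step is where your argument has a genuine gap, and it is a gap the paper's own (much shorter) proof avoids entirely. Your plan reduces boundedness of $S_{Q,\tau}$ to two holomorphy claims via $\Etau(Q:\psi:-\lambda)=\Etau^\circ(\bar P_0: C_{\bar P_0|Q}(1:-\lambda)\psi:-\lambda)$. The claim about $C_{\bar P_0|Q}(1:-\lambda)$ for $\lambda$ far out in $\faqdp(P_0)$ is fine (Proposition \ref{Prop relation E(Q) and E^circ(cP0)}(b) does give it, since $\langle\lambda,\alpha\rangle$ eventually dominates $\langle\rho_{Q,\fh},\alpha\rangle$ on the half-line). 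But the second claim --- that the real singular locus of $\Etau^\circ(\bar P_0:\dotvar)$ in the chamber $-\faqdp(P_0)$ is bounded --- is not established, and the justification you sketch ("residues in this chamber give discrete series contributions, and only finitely many discrete series contain $\tau$") is not a proof at this point of the development: the assertion that a real pole of an Eisenstein integral in the chamber produces a (nonzero, square-integrable) discrete series matrix coefficient is essentially the content of the delicate analysis the paper carries out later (Sections \ref{ss: residues trivial K type}--\ref{ss: residues arbitrary K types}), and even there it is proved only in split rank one and, crucially, uses the present lemma as input. Your "more self-contained variant" via Knapp--Stein factorization only controls the $C$-function factor, so the same unproved claim about $\Etau^\circ(\bar P_0:\dotvar)$ remains, as you yourself acknowledge.

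The paper's proof bypasses the decomposition through $\bar P_0$ altogether: by \cite[Cor.~8.5]{vdBanKuit_EisensteinIntegrals}, already quoted when the Eisenstein integrals were introduced, $\Etau(Q:-\dotvar)$ is holomorphic on a neighborhood of $-\widehat{\Upsilon}_Q$, and this set contains a translate $\xi+\faq^{*+}(P_0)+i\faq^*$ of the closed chamber. Hence $S_{Q,\tau}\subseteq\faqdp(P_0)\setminus(\xi+\faqdp(P_0)+i\faqd)$, which is bounded, and boundedness plus the discreteness you already noted gives finiteness. If you want to salvage your approach, you would need an independent reference or argument bounding the real singularities of the normalized Eisenstein integral $\Etau^\circ(\bar P_0:\dotvar)$ in the chamber; as written, that step is the missing idea.
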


\begin{proof}
The Eisenstein integral $\Etau(Q:-\dotvar)$ is holomorphic on $-\widehat{\Upsilon}_{Q}$.
The latter set contains a set of the form $\xi + \faq^{*+}(P_{0})+i\faq^{*}$,
with $\xi \in \faq^*.$
Hence, $S_{Q,\tau}$ is contained in the set
$\faqdp(P_0)\setminus (\xi + \faqdp(P_0) + i\faqd)$ which is bounded.
Since $S_{Q,\tau}$ is discrete, the result follows.
\end{proof}

For a meromorphic function $f: \faqdc \to \C$ and a point $\mu \in \faqdc$ we define the
residue
\begin{equation}
\label{e: definition of residue}
\Res_{\gl = \mu}\; \gf(\gl) :=  \Res_{z = 0}\;  \gf(\mu + z \omega).
\end{equation}
Here $\omega$ is the unique vector in $\faq^{*+}(P_0)$ of unit length (relative to
the Killing form), $z$ is a variable in the complex plane, and the residue
on the right-hand side of (\ref{e: definition of residue}) is the usual residue from complex analysis, i.e., the coefficient
of $z^{-1}$ in the Laurent expansion of $z \mapsto \gf(\mu + z\omega)$ around $z = 0.$

\begin{Lemma}
\label{l: Ht minus Jphi as residues}
Let $\phi \in C_c(G/H:\tau).$ Then
$$
\Ht_{Q,\tau}\phi(a)
= \It_{Q,\tau}\phi(a)
+
\sum_{\mu \in S_{Q,\tau}}   \Res_{\gl = \mu} \; a^\gl\, \Ft_{Q,\tau}\phi(\gl)
$$
\end{Lemma}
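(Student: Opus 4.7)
The plan is to derive the identity by a contour shift, using the two integral representations already available for the transforms. By Lemma \ref{l: HC transform as contour integral} and formula (\ref{e: defi It Q tau}),
\begin{equation*}
\Ht_{Q,\tau}\phi(a) = \int_{\mu+i\faqd} \Ft_{Q,\tau}\phi(\gl)\, a^\gl\, d\gl, \qquad
\It_{Q,\tau}\phi(a) = \int_{\mu'+i\faqd} \Ft_{Q,\tau}\phi(\gl)\, a^\gl\, d\gl,
\end{equation*}
for arbitrary $\mu \in C_1$ and $\mu' \in C_0$. Since $\dim \faq = 1$, fix the positive unit vector $\omega$ spanning $\faqdp(P_0)$; all of $C_0$, $C_1$, and the finite set $S_{Q,\tau}$ lie on the ray $\R_{>0}\,\omega$. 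Because $0 \in \overline{C}_0$, every $\mu_0 \in S_{Q,\tau}$ satisfies $\mu_0 > \mu'$, and because $C_1$ meets $\faqdp(P_0) \cap (-\Upsilon_Q)$, which is unbounded in $\omega$-direction, we may take $\mu > \mu_0$ for every $\mu_0 \in S_{Q,\tau}$.

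Next I would integrate $\Ft_{Q,\tau}\phi(\gl)\,a^\gl$ counterclockwise along the boundary of the rectangle with vertices $\mu' \pm iR\omega$ and $\mu \pm iR\omega$ and let $R \to \infty$. To show that the horizontal top and bottom segments contribute nothing in the limit, I would invoke the Paley-Wiener type bound of Proposition \ref{Prop descr of singularities of E(Q:psi:.) and Paley-Wiener estimate for F_(Q,tau)}: choose a bounded open neighborhood $B \subseteq \faqd$ of the segment $[\mu', \mu]$ and an associated $p \in \Pi_{\Sigma,\R}(\faqd)$. Then (\ref{eq Paley-Wiener estimate for F_(Q,tau)}) gives
\begin{equation*}
\|p(\gl) \Ft_{Q,\tau}\phi(\gl)\| \leq (1+\|\gl\|)^{-N} e^{C_R \|\Re\gl\|} \nu_N(\phi)
\qquad (\gl \in B + i\faqd),
\end{equation*}
for every $N \in \N$. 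The polynomial $p$ is a product of real-linear factors, so its zero set in $\faqdc$ is a finite subset of $\faqd$; hence there exists $R_0 > 0$ such that on the horizontal segments $\{t + iR\omega : t \in [\mu', \mu],\; R \geq R_0\}$ one has a polynomial lower bound $|p(t+iR\omega)| \geq c R^{\deg p}$. Combined with $|a^\gl| = a^{\Re \gl}$, which is bounded on the rectangle, and taking $N$ large enough, the horizontal integrals are $O(R^{-k})$ with $k > 0$ and vanish as $R \to \infty$.

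The standard residue theorem then gives
\begin{equation*}
\int_{\text{rectangle, CCW}}\Ft_{Q,\tau}\phi(\gl)\,a^\gl\, d\gl_{\mathrm{complex}}
= 2\pi i \sum_{\mu_0} \Res_{\gl = \mu_0} \Ft_{Q,\tau}\phi(\gl)\,a^\gl,
\end{equation*}
the sum ranging over the poles strictly enclosed by the rectangle. Matching the normalization of the complex line element $d\gl_{\mathrm{complex}} = i\,ds$ on $\mu + is\omega$ with the measure $d\gl = (2\pi)^{-1} ds$ used in the definitions, the left-hand side becomes $2\pi i\,[\Ht_{Q,\tau}\phi(a) - \It_{Q,\tau}\phi(a)]$ in the limit $R \to \infty$. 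Since in split rank one each $H \in \HypQtau$ consists of a single real point, the poles enclosed are precisely the elements of $\faqdp(P_0) \cap (\cup \HypQtau) = S_{Q,\tau}$; the residue convention (\ref{e: definition of residue}) with the same $\omega$ is then consistent with the complex-variable residue above, and the identity follows.

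The main obstacle is the decay argument for the horizontal portions of the contour, which requires both the rapid decay of $p\Ft_{Q,\tau}\phi$ in imaginary directions and the polynomial lower bound on $|p|$ away from its real zeros; once these are in place, the rest is a routine application of the residue calculus and a careful comparison of the two normalizations of the line integral.
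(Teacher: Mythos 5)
Your proposal is correct and follows essentially the same route as the paper: the paper likewise rewrites the contour integrals from Lemma \ref{l: HC transform as contour integral} and (\ref{e: defi It Q tau}) in one complex variable via the normalization $d\gl = (2\pi i)^{-1}dz$, and obtains the identity by the Cauchy/residue theorem, with the Paley--Wiener estimates of Proposition \ref{Prop descr of singularities of E(Q:psi:.) and Paley-Wiener estimate for F_(Q,tau)} killing the horizontal parts of the shifted contour. Your added details (the lower bound on $|p|$ on the horizontal segments and the bookkeeping of which poles lie between $C_0$ and $C_1$) are exactly the points the paper leaves as "straightforward," and they check out.
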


\begin{proof}
By the chosen normalization of the measure $d\gl$  on $\mu + i\faqd,$
$$
\int_{\nu  \omega  + i\faqd}  \Ft_{Q,\tau}(\phi)(\gl) \; d\gl =  \frac{1}{2\pi i} \int_{ \nu  + i\R}
\Ft_{Q,\tau}(\phi)(\tau \omega) \; d\tau .
$$
In view of the estimates (\ref{eq Paley-Wiener estimate for F_(Q,tau)})
the result now follows by a straightforward application of the Cauchy integral formula.
\end{proof}

\begin{Lemma}
\label{l: residues one}
Let $\psi \in \oC(\tau)$ and $\mu \in S_{Q,\tau}$.
Then for all $\phi \in C_c(G/H:\tau),$
\begin{equation}
\label{e: residue of Ft Q}
\Res_{\gl = \mu} \;\inp{a^\gl  \,\Ft_{Q,\tau}(\phi)(\gl)}{\psi} = a^\mu \inp{ \phi} { \Restau (Q:  \mu: a:\dotvar )(\psi)},
\end{equation}
where $\Restau(Q:\mu)$ is the function $\Aq \times G/H \to \Hom(\oC(\tau), \Vtau)$ given by
$$
\Restau(Q:\mu:a:x) (\psi) =  - \Res_{\gl = -\mu} \big( a^{-\gl -\mu} E(Q:\psi:\gl)(x)\big).
$$
\end{Lemma}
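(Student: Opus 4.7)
The strategy is to reduce the identity to a pointwise statement on $G/H$ by interchanging the residue with the integration defining $\Ft_{Q,\tau}\phi$. Since $\phi$ is compactly supported, the residue at the isolated pole $\mu \in \faqdp(P_0)$ can be written as a contour integral $\tfrac{1}{2\pi i}\oint_{|z|=\epsilon}(\,\cdots\,)\,dz$ under the parametrization $\lambda = \mu + z\omega$, and Fubini then allows one to move the contour integral inside the integral over $G/H$. The task is thereby reduced to proving, for each fixed $x \in G/H$:
\begin{equation*}
\Res_{\lambda = \mu}\bigl[a^\lambda \inp{\phi(x)}{E(Q:\psi:-\bar\lambda)(x)}_\tau\bigr] = a^\mu \inp{\phi(x)}{\Restau(Q:\mu:a:x)(\psi)}_\tau.
\end{equation*}

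For the pointwise identity, I would factor out $a^\mu$ and use the sesquilinearity of $\inp{\cdot}{\cdot}_\tau$ to rewrite the left-hand integrand as $\overline{F(-\bar\lambda)}$, where
\begin{equation*}
F(\nu) := a^{-\nu-\mu}\inp{E(Q:\psi:\nu)(x)}{\phi(x)}_\tau
\end{equation*}
is a genuinely meromorphic scalar function of $\nu \in \faqdc$; the point of this rearrangement is that $F$ is linear (not conjugate linear) in the Eisenstein integral, so that the residue can be extracted by linearity into the first slot of the pairing. Since $\nu \mapsto F(\nu)$ is meromorphic and $\lambda \mapsto -\bar\lambda$ is antiholomorphic, the composition $\overline{F(-\bar\lambda)}$ is again meromorphic in $\lambda$, so the residue in $\lambda$ at $\mu$ is well-defined.

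The residue computation then proceeds by parametrizing $\lambda = \mu + z\omega$ with $z \in \C$ and writing the Laurent expansion $F(-\mu + w\omega) = \sum_{k} b_k\, w^k$ in the complex variable $w$. Substituting $w = -\bar z$ and using $\overline{w^k} = (-z)^k$ gives $\overline{F(-\bar\lambda)} = \sum_k (-1)^k\, \overline{b_k}\, z^k$, whose residue at $z = 0$ equals $-\overline{b_{-1}}$. On the other hand, $b_{-1} = \Res_{\nu=-\mu}F(\nu)$, which by linearity of the pairing in its first slot equals $\inp{\Res_{\nu=-\mu}[a^{-\nu-\mu}E(Q:\psi:\nu)(x)]}{\phi(x)}_\tau$. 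Taking the complex conjugate and absorbing the minus sign into the first argument of the sesquilinear pairing yields precisely $\inp{\phi(x)}{\Restau(Q:\mu:a:x)(\psi)}_\tau$, as desired.

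The only substantial obstacle is careful bookkeeping of the conjugation introduced by the $-\bar\lambda$ in the definition of the Fourier transform $\Ft_{Q,\tau}\phi$; once this antiholomorphic factor is systematically replaced by the auxiliary meromorphic function $F$ via sesquilinearity, the identity reduces to a direct comparison of Laurent coefficients and the signs work out because $(-1)^{-k} = (-1)^k$ for integer $k$. Smoothness of $\Restau(Q:\mu:a:\cdot)(\psi)$ in $x$ and linearity in $\psi$ follow automatically from the corresponding properties of the Eisenstein integral, ensuring that $\Restau(Q:\mu:a:\cdot)$ is indeed a well-defined $\Hom(\oC(\tau),\Vtau)$-valued smooth function on $G/H$.
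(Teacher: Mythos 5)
Your proposal is correct and follows essentially the same route as the paper: the heart of both arguments is the identity $\Res_{\gl=\mu}\,a^{\gl}\,\overline{\Phi(-\bar\gl)}=-a^{\mu}\,\overline{\Res_{\gl=-\mu}a^{-\gl-\mu}\Phi(\gl)}$ for meromorphic $\Phi$, obtained exactly as in your Laurent-coefficient computation in the coordinate $\gl=\mu+z\omega$, with the sign coming from $w=-\bar z$. The only cosmetic difference is that you interchange the contour integral with the $G/H$-integral by Fubini and argue pointwise, whereas the paper applies the scalar identity to the global pairing and then moves the residue onto the Eisenstein integral by conjugate linearity of the pairing; both justifications are sound.
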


\begin{proof}
First, assume that  $\Phi: \faqdc \to \C$ is a meromorphic function.
Then it is readily verified that
\begin{eqnarray*}
\Res_{\gl = \mu} a^\gl \overline{ \Phi(-\bar\gl)} &=&
a^\mu \Res_{z = 0} \overline{ a^{\bar z \omega} \Phi(- \mu - \bar z \omega) }\nonumber\\
&=&- a^\mu \overline{ \Res_{z = 0}  a^{- z \omega} \Phi(- \mu + z \omega) }\nonumber \\
&=&- a^\mu \overline {  \Res_{\gl = -  \mu} a^{-\gl -   \mu } \Phi(\gl)}
\end{eqnarray*}
By using conjugate linearity of the pairing $C^\infty_c(G/H:\tau) \times C^\infty(G/H:\tau) \to \C$
in the second factor, it now follows that the expression on the left-hand side of
(\ref{e: residue of Ft Q}) equals
$$
 \Res_{\gl = \mu}  a^\gl \,
 \inp{\phi}{E(Q:\psi:- \bar \gl)}  = - a^\mu \inp{\phi} {\Res_{\gl = - \mu} a^{-\gl -  \mu}
 E(Q:\psi: \gl)}.
 $$
 The latter expression equals the right-hand side of (\ref{e: residue of Ft Q}).
 It is clear that $\Restau(Q: \mu)$ is a function in
 $C^\infty(\Aq \times G/H) \otimes V_\tau \otimes \oC(\tau)^*,$ which is
 $\tau$-spherical in the second variable.
 \end{proof}

 The following result will be a useful tool for understanding the nature
 of the residues.
 We will use the notation $P_d(\faq)$ for the space of
 polynomial functions $\faqdc \to \C$ of degree at most $d,$
 and $P_d(\Aq)$ for the space of functions $\Aq \to \C$ of the form $a \mapsto p(\log a),$
 with $p \in P_d(\faq).$

\begin{Lemma}
\label{l: residue preparation}
Let $\mu \in S_{Q,\tau}$ and $\psi \in \oC(\tau).$ Then the following assertions hold.
\begin{enumerate}
\itema Let $\phi$ be a holomorphic function defined on a neighborhood
of $\mu$ such that the $C^\infty(G/H:\tau)$-valued function
$\gl \mapsto \phi(\gl) E(Q: \psi: \gl)$ is holomorphic at $\mu.$ Then
for every  $u \in S(\faqd),$
\begin{equation}
\label{e: derivative DGH finite}
\left.  \partial_u\right|_{\gl = \mu}  \phi(\gl) E(Q:\psi:\gl)
\end{equation}
is a $\DGH$-finite function in $C^\infty(G/H:\tau).$
\itemb
Let $\gf$ be a meromorphic function in a neighborhood of $\mu.$
Then
\begin{equation}
\label{e: res with meromorphic multiplier}
\Res_{\gl = \mu}\;  \gf(\gl) E(Q:\psi:\gl)
\end{equation}
is a $\DGH$-finite function in $C^\infty(G/H:\tau).$
\end{enumerate}
\end{Lemma}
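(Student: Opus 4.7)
The plan is to reduce both statements to membership in a suitable finite-dimensional, $\DGH$-invariant subspace of $C^\infty(G/H:\tau)$ built from the Laurent coefficients of the Eisenstein integral at $\mu$.

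First I would establish the following eigenfunction-type identity: for each $D\in\DGH$ there is a polynomial map $\pi(D,\dotvar)\colon\faqdc\to\End(\oC(\tau))$ such that
\begin{equation}
\label{e: eigenfunction property}
D\, E(Q:\eta:\lambda)=E(Q:\pi(D,\lambda)\eta:\lambda)
\end{equation}
for all $\eta\in\oC(\tau)$ and generic $\lambda\in\faqdc$. To derive this, I would apply the Euclidean Fourier transform (Proposition \ref{Prop Ft_(Q,tau)=F_A circ H_Q}) to the identity $\Ht_{Q,\tau}(D\phi)=\mu(D:\tau)\Ht_{Q,\tau}\phi$ of Proposition \ref{Prop H(D phi)=mu(D) H phi}. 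This converts the constant-coefficient differential operator $\mu(D:\tau)$ on $\Aq$ into a polynomial endomorphism of $\oC(\tau)$ acting on the Fourier-transform side. Matching this against the defining duality pairing of $\Ft_{Q,\tau}$ with $E(Q:\dotvar:-\bar\lambda)$, and using the closure of $\DGH$ under formal adjoints to pass from $D^*$ back to $D$, yields (\ref{e: eigenfunction property}). I expect this to be the main technical point, since it requires careful tracking of how the complex conjugation in the duality pairing interacts with the polynomial dependence on $\lambda$.

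Once (\ref{e: eigenfunction property}) is available, the rest is Laurent-expansion bookkeeping. Because $\dim\faq=1$, I introduce the local complex coordinate $z=\langle\lambda-\mu,\omega\rangle$ on $\faqdc$ and expand
\[
E(Q:\eta:\lambda)=\sum_{k\geq -N} z^k\, G_k(\eta),\qquad G_k(\eta)\in C^\infty(G/H:\tau),
\]
with a pole-order bound $N\geq 0$ uniform in $\eta\in\oC(\tau)$, as provided by Proposition \ref{Prop descr of singularities of E(Q:psi:.) and Paley-Wiener estimate for F_(Q,tau)}. Writing $\pi(D,\lambda)=\sum_{\alpha\geq 0} z^\alpha M_\alpha(D)$ as a finite polynomial expansion and equating coefficients of $z^n$ on both sides of (\ref{e: eigenfunction property}) yields the recursion
\[
D\, G_n(\eta)=\sum_{\alpha\geq 0} G_{n-\alpha}\bigl(M_\alpha(D)\eta\bigr).
\]
Since all contributing indices on the right satisfy $n-\alpha\leq n$, for every $d\geq 0$ the finite-dimensional subspace
\[
V_d:=\spn\{G_k(\eta):\,-N\leq k\leq d,\ \eta\in\oC(\tau)\}\ \subset\ C^\infty(G/H:\tau)
\]
is $\DGH$-invariant, so each of its elements is $\DGH$-finite.

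Finally I would identify the two expressions in the lemma as elements of suitable $V_d$. For (a), writing $\phi(\lambda)=\sum_{j\geq 0} a_j z^j$ and multiplying by the Laurent series for $E(Q:\psi:\lambda)$, the coefficient of $z^d$ in the product is a finite linear combination of the $G_k(\psi)$ with $-N\leq k\leq d$, hence lies in $V_d$. Since $\partial_u|_{\lambda=\mu}$ for $u\in S(\faqd)$ of degree at most $d$ is a scalar combination of Taylor coefficient extraction functionals of orders at most $d$, the element (\ref{e: derivative DGH finite}) lies in $V_d$ and is $\DGH$-finite. For (b), expanding the meromorphic $\phi$ as a Laurent series at $\mu$ with lowest power $z^{j_0}$, $j_0\in\Z$, the residue (\ref{e: res with meromorphic multiplier}) is by definition the $z^{-1}$-coefficient of the product $\phi(\lambda)E(Q:\psi:\lambda)$, which is a finite linear combination of $G_k(\psi)$ with $-N\leq k\leq -1-j_0$, hence lies in $V_d$ for any $d\geq\max(0,-1-j_0)$.
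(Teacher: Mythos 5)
Your argument is correct, and its skeleton is exactly the mechanism the paper relies on: a spectrally parametrized action of $\DGH$ on the Eisenstein family, fed into Taylor/Laurent bookkeeping that produces a finite-dimensional $\DGH$-stable subspace containing the derivative, respectively the residue. The paper, however, does not spell this out: for (a) it simply invokes the argument of Lemma 6.3 of \cite{vdBanSchlichtkrull_FourierInversionOnAReductiveSymmetricSpace}, which runs this bookkeeping on the normalized Eisenstein integral $E^\circ(\bar P_0:\cdot\,:\gl)$, whose differential equations $D\,E^\circ(\bar P_0:\psi:\gl)=E^\circ(\bar P_0:\mu(D:\tau:\gl)\psi:\gl)$ with polynomial multiplier are known, transported to $E(Q:\cdot\,:\gl)$ via Lemma \ref{l: relation EQ with Enorm}; and it obtains (b) from (a) by multiplying with a polynomial killing the pole, so that the residue becomes one of the derivatives in (a) — only cosmetically different from your direct extraction of the $z^{-1}$-coefficient. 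Where you genuinely diverge is in how the key input is obtained: you derive $D\,E(Q:\eta:\gl)=E(Q:\pi(D,\gl)\eta:\gl)$ with $\pi(D,\cdot)$ polynomial from Propositions \ref{Prop H(D phi)=mu(D) H phi} and \ref{Prop Ft_(Q,tau)=F_A circ H_Q} by Fourier duality and formal adjoints, rather than quoting the system for $E^\circ$. This buys something real: regularity of the multiplier at the singular point is precisely what makes your recursion close up, whereas the naive transport through Lemma \ref{l: relation EQ with Enorm} would only give the multiplier conjugated by the meromorphic $C$-function $C_{\bar P_0|Q}(1:\gl)$, a priori singular there; the paper sidesteps this by doing the bookkeeping on the $E^\circ$ side. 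The price is a few steps you gloss over, all fillable with tools already in the paper and none fatal: passing $\eFt_{\Aq}$ through the constant-coefficient operator $\mu(D:\tau)$ needs integration by parts without boundary terms, i.e.\ decay of $\Ht_{Q,\tau}\phi$ and its derivatives against $a^{-\gl}$ for $\Re\gl$ in the relevant tube (available from the weighted $L^1$ statements following Proposition \ref{Prop Delta_P_0^(-1/2)H_Q maps C_c^infty to C(L/L cap H)} together with Lemma \ref{l: vanishing at infinity}); one must upgrade the resulting identity of integrals against all $\phi\in C_c^\infty(G/H:\tau)$ to a pointwise identity of the smooth $\tau$-spherical Eisenstein integrals, extend it from the convergence tube to generic $\gl$ by meromorphic continuation, and use that the formal adjoint is a bijection of $\DGH$. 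Finally, note that holomorphy of $\pi(D,\cdot)$ at the point in question would already suffice for your recursion, and that the uniform pole bound $N$ at a single point is automatic since $\oC(\tau)$ is finite dimensional, so the appeal to Proposition \ref{Prop descr of singularities of E(Q:psi:.) and Paley-Wiener estimate for F_(Q,tau)} is convenient but not essential.
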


\begin{proof}
Assertion (a) follows by applying the argument of the proof
of  \cite[Lemma 6.3]{vdBanSchlichtkrull_FourierInversionOnAReductiveSymmetricSpace}.

For (b) we may fix a non-trivial polynomial function $q$ such that $\phi: = q(\gl) \gf $
is holomorphic at $\mu$ and satisfies the hypothesis of (a). Then there exists $u \in S(\faqd)$
such that (\ref{e: res with meromorphic multiplier}) equals (\ref{e: derivative DGH finite}).
\end{proof}

\begin{Lemma}
\label{l: residues two}
Let $\mu\in S_{Q,\tau}$ and let $d\geq 0$ be the pole order of $\gl \mapsto E(Q:\gl)$ at $-\mu.$
Then there exists a finite dimensional subspace  $\findim \subseteq C^\infty(G/H:\tau),$ consisting of $\DGH$-finite functions, such that
$$
\Restau(Q:\mu) \in P_{d-1}(\Aq)\otimes \findim \otimes\oC(\tau)^*.
$$
\end{Lemma}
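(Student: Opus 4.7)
The plan is to expand the two factors in
\[
\Restau(Q:\mu:a:x)(\psi) = -\Res_{\gl=-\mu}\big(a^{-\gl-\mu}\,E(Q:\psi:\gl)(x)\big)
\]
as Laurent/Taylor series at $\gl = -\mu$ along the one-dimensional direction $\omega\in\faq^{*+}(P_{0})$, multiply them, and read off the residue coefficient. Since $\dim\faq = 1$, this reduces the whole argument to a one-variable residue calculation, so the decomposition into the tensor product $P_{d-1}(\Aq)\otimes\findim\otimes\oC(\tau)^{*}$ will essentially drop out of the Laurent expansions. The only non-trivial step will be to realize the individual Laurent coefficients as $\DGH$-finite functions, for which Lemma \ref{l: residue preparation} has been prepared.

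Concretely, I will parametrize $\gl = -\mu + z\omega$ with $z\in\C$. By definition of the pole order there exist uniquely determined linear maps $F_{1},\ldots,F_{d}\colon\oC(\tau)\to C^{\infty}(G/H:\tau)$ such that
\[
E(Q:\psi:-\mu+z\omega) \;=\; \sum_{k=1}^{d} z^{-k}\,F_{k}(\psi) \;+\; G(\psi,z),
\]
with $G(\psi,\dotvar)$ holomorphic at $z=0$. Since $a^{-z\omega} = \sum_{j\geq 0}\tfrac{(-z)^{j}}{j!}\omega(\log a)^{j}$, extracting the coefficient of $z^{-1}$ in the product and prepending the overall minus sign yields
\[
\Restau(Q:\mu:a:x)(\psi) \;=\; \sum_{k=1}^{d}\frac{(-1)^{k}}{(k-1)!}\,\omega(\log a)^{k-1}\,F_{k}(\psi)(x).
\]
For $1\leq k\leq d$, the function $a\mapsto\omega(\log a)^{k-1}$ belongs to $P_{k-1}(\Aq)\subseteq P_{d-1}(\Aq)$, and the subspace $\findim := \sum_{k=1}^{d}F_{k}(\oC(\tau))$ of $C^{\infty}(G/H:\tau)$ is finite dimensional because $\oC(\tau)$ is. This exhibits $\Restau(Q:\mu)$ as an element of $P_{d-1}(\Aq)\otimes\findim\otimes\oC(\tau)^{*}$, modulo the claim that $\findim$ consists of $\DGH$-finite functions.

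To establish this last claim, I will use that along the chosen line $\inp{\gl+\mu}{\omega} = z$, which lets me recognize each coefficient as a residue of a polynomial multiple of the Eisenstein integral:
\[
F_{k}(\psi) \;=\; \Res_{\gl=-\mu}\big(\inp{\gl+\mu}{\omega}^{k-1}\,E(Q:\psi:\gl)\big).
\]
Since $\gl\mapsto\inp{\gl+\mu}{\omega}^{k-1}$ is polynomial, Lemma \ref{l: residue preparation}(b) applies directly and gives $\DGH$-finiteness of $F_{k}(\psi)$ for every $\psi\in\oC(\tau)$ and every $k$. No substantial obstacle arises in the proof; all the real work has been absorbed into Lemma \ref{l: residue preparation}, and the remaining content is just the bookkeeping of the one-variable Laurent expansions.
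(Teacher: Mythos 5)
Your proposal is correct and follows essentially the same route as the paper: a one-variable Laurent/residue computation at $\gl=-\mu$ (the paper phrases it as $\partial_u|_{\gl=-\mu}$ of $q(\gl)E(Q:\psi:\gl)$ with $\deg q=d$ and the Leibniz rule, which is the same bookkeeping as your explicit expansion), separating a $\log a$-polynomial of degree at most $d-1$ from finitely many $\gl$-coefficient functions, and then invoking Lemma \ref{l: residue preparation} for the $\DGH$-finiteness of those coefficients. Your identification $F_k(\psi)=\Res_{\gl=-\mu}\bigl(\inp{\gl+\mu}{\omega}^{k-1}E(Q:\psi:\gl)\bigr)$ is a clean way to feed part (b) of that lemma, but it is not a genuinely different argument.
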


\begin{proof}
There exists a non-zero polynomial function $q$ on $\faqdc$ of degree
$d$ such that
the $C^\infty(G/H)^K \otimes \oC(\tau)^*$ valued meromorphic function
$\gl \mapsto q(\gl) E(Q: \gl)$ is regular at $-\mu.$
It follows that there exists a $u \in S(\faqd)$ of order at most $d-1$ such that
$$
\Restau(Q:\mu:a:x)\psi = \left. \partial_u\right|_{\gl = -\mu}
a^{-\gl-\mu } q(\gl) E(Q:\psi:\gl)(x)
$$
for all $a \in \Aq, x \in G/H$ and $\psi \in \oCtau.$
By application of the Leibniz rule we infer that there exist finitely many
$u_1 , \ldots , u_k \in S(\faqd)$  and $p_1, \ldots, p_k \in P_{d-1}(\faq),$
such that
$$
\Restau(Q:\mu:a: \dotvar) = \sum_{j=1}^k p_j(\log a) \left. \partial_{u_j}\right|_{\gl = -\mu}
q(\gl) E(Q:\gl),
$$
for all $a\in \Aq.$
The assertion now readily follows by application of Lemma \ref{l: residue preparation}.
\end{proof}

In the sequel, we will need the following version of Lemma \ref{l: Ht minus Jphi as residues}.

\begin{Cor}
Let $\phi \in C_c(G/H:\tau).$ Then,
for every $\psi \in \oC(\tau),$
\begin{equation}
\label{e: cor Ht minus Jphi as residues}
\inp{\Ht_{Q,\tau}\phi(a)}{\psi}
= \inp{\It_{Q,\tau}\phi(a)}{\psi}
+
\sum_{\mu \in S_{Q,\tau}}   a^\mu \inp{\phi}{\Restau(Q:\mu: a:\dotvar)\psi}.
\end{equation}

\end{Cor}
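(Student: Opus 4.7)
The plan is to derive the corollary by pairing the identity supplied by Lemma~\ref{l: Ht minus Jphi as residues} against the fixed vector $\psi \in \oC(\tau)$ and then substituting the explicit expression for each residue provided by Lemma~\ref{l: residues one}. Since $\oC(\tau)$ is finite-dimensional, the linear functional $v \mapsto \inp{v}{\psi}$ is continuous, so the manipulation is essentially routine.

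Concretely, I would start from the identity
$$
\Ht_{Q,\tau}\phi(a) = \It_{Q,\tau}\phi(a) + \sum_{\mu \in S_{Q,\tau}} \Res_{\gl = \mu}\; a^\gl\, \Ft_{Q,\tau}\phi(\gl),
$$
valid for every $a \in \Aq$ by Lemma~\ref{l: Ht minus Jphi as residues}, and take the inner product with $\psi$ on both sides. Since the sum over $S_{Q,\tau}$ is finite by Lemma~\ref{Lemma S_(Q,tau) is real and finite}, the pairing may be brought under the summation.

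The next step is to interchange the pairing with the residue operation inside each summand. By the definition
$$
\Res_{\gl = \mu}\; a^\gl \, \Ft_{Q,\tau}\phi(\gl) = \Res_{z = 0}\; a^{\mu + z\omega}\,\Ft_{Q,\tau}\phi(\mu + z\omega),
$$
this residue is the coefficient of $z^{-1}$ in the Laurent expansion at $z=0$ of an $\oC(\tau)$-valued meromorphic function of $z$; because $\oC(\tau)$ is finite-dimensional and $v \mapsto \inp{v}{\psi}$ is continuous linear, this coefficient-extraction commutes with the pairing. Thus
$$
\inp{\Res_{\gl = \mu}\; a^\gl\, \Ft_{Q,\tau}\phi(\gl)}{\psi}
= \Res_{\gl = \mu}\; \inp{a^\gl\, \Ft_{Q,\tau}\phi(\gl)}{\psi}.
$$

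Finally, I would apply Lemma~\ref{l: residues one} to rewrite each scalar residue on the right as $a^\mu \inp{\phi}{\Restau(Q:\mu:a:\dotvar)\psi}$, and collect terms to obtain (\ref{e: cor Ht minus Jphi as residues}). I do not anticipate any genuine obstacle; the only point worth a sentence of justification is the commutation of the residue with the pairing, and that follows from finite-dimensionality of $\oC(\tau)$ together with the definition (\ref{e: definition of residue}) of the residue as a one-dimensional contour integral (equivalently, a Laurent coefficient) in the variable $z$.
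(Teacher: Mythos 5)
Your proposal is correct and follows exactly the paper's route: the paper's proof is precisely the combination of Lemma \ref{l: Ht minus Jphi as residues} with the identity (\ref{e: residue of Ft Q}) from Lemma \ref{l: residues one}, applied termwise over the finite set $S_{Q,\tau}$. Your extra remark on commuting the linear functional $\inp{\dotvar}{\psi}$ with the residue (Laurent-coefficient) extraction is a harmless and valid elaboration of what the paper leaves implicit.
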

\begin{proof}
This follows from combining Lemma \ref{l: Ht minus Jphi as residues} with
(\ref{e: residue of Ft Q}).
\end{proof}

\subsection{$\fh$-compatible parabolic subgroups}
The residual terms in (\ref{e: cor Ht minus Jphi as residues})
will turn out to have a special relation to representations
of the discrete series if we select the parabolic subgroup $Q$ so that its
positive system $\gS(Q)$ satisfies a certain geometric restriction.

 Although we will only apply the definitions and results of the present  subsection
to symmetric spaces of split rank $1$, everything in this subsection
is in fact valid for symmetric spaces of higher split rank
 as well.

\begin{Defi}
\label{d: fh compatible}
A parabolic subgroup  $Q\in\cP(A)$ is said to be
$\fh$-compatible if
$$
\langle\alpha,\rho_{Q,\fh}\rangle\geq0\quad \textnormal{for all}\quad\alpha\in\Sigma(Q),
$$
see (\ref{e: defi rho 2x}) for the definition of $\rho_{Q,\fh}.$
We write $\cPH(A)$ for the subset of $\cP(A)$ consisting of all such
parabolic subgroups.
\end{Defi}

\begin{Lemma}{\ }
\label{l: H compatible parabolics}
\begin{enumerate}
\itema
If $P_0 \in \cP_\gs(\Aq)$ then there exists an $\fh$-extreme $Q \in \cPH(\Aq)$ such that
$\gS(Q, \gs \Cartan) \subseteq \gS(P_0).$
\itemb
If $Q \in \cPH(A),$ then $\gS(Q, \gs\Cartan) \perp \rho_{Q,\fh}.$
\itemc
If $P,Q \in \cP(A)$ and $P \succeq Q,$ then
$
P \in \cPH(A) \implies Q \in \cPH(A).
$
\end{enumerate}
\end{Lemma}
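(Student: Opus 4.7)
All three parts rely on the involution identities $\sigma\Cartan|_{\fahd} = -1$ (so $\sigma\Cartan\rho_{Q,\fh} = -\rho_{Q,\fh}$) and $\sigma|_{\fahd} = +1$ (so $\sigma\rho_{Q,\fh} = \rho_{Q,\fh}$), both of which follow since $\rho_{Q,\fh} \in \fahd$. Part (b) is an immediate corollary: for $\alpha \in \Sigma(Q, \sigma\Cartan)$, both $\alpha$ and $\sigma\Cartan\alpha$ lie in $\Sigma(Q)$, while $\sigma\Cartan$-invariance of $B$ and the first identity yield $\langle\sigma\Cartan\alpha, \rho_{Q,\fh}\rangle = -\langle\alpha, \rho_{Q,\fh}\rangle$. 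The $\fh$-compatibility of $Q$ forces both inner products to be $\geq 0$, hence both zero.

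\textbf{Proof of (c).} The first step is to show $\Sigma(Q) \cap \fahd = \Sigma(P) \cap \fahd$, so that $\rho_{Q,\fh} = \rho_{P,\fh}$. The inclusion $\supseteq$ holds because any $\alpha \in \Sigma(P) \cap \fahd$ is $\sigma$-fixed, hence lies in $\Sigma(P,\sigma) \subseteq \Sigma(Q,\sigma) \subseteq \Sigma(Q)$; the reverse inclusion follows by the same argument, together with the impossibility of having $\pm\alpha$ simultaneously in $\Sigma(Q)$. Now fix $\alpha \in \Sigma(Q)$. If $\alpha \in \Sigma(P)$, we are done by $\fh$-compatibility of $P$. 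Otherwise $-\alpha \in \Sigma(P)$; since $\Sigma(Q,\sigma\Cartan) \subseteq \Sigma(P,\sigma\Cartan) \subseteq \Sigma(P)$, $\alpha$ must lie in $\Sigma(Q,\sigma)$, so $\sigma\alpha \in \Sigma(Q)$. I claim $\sigma\alpha \in \Sigma(P)$: otherwise $-\sigma\alpha$ and $-\alpha$ both lie in $\Sigma(P)$, placing $-\alpha$ in $\Sigma(P,\sigma) \subseteq \Sigma(Q)$ and contradicting $\alpha \in \Sigma(Q)$. Since $\sigma$ fixes $\rho_{P,\fh}$, $\fh$-compatibility of $P$ applied to $\sigma\alpha$ then yields $\langle\alpha, \rho_{P,\fh}\rangle = \langle\sigma\alpha, \rho_{P,\fh}\rangle \geq 0$.

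\textbf{Proof of (a).} The plan is to build $Q$ as the parabolic determined by a regular element $Z = \epsilon X + Y \in \fa$. Pick $X \in \faq$ in the positive Weyl chamber determined by $\Sigma(P_0, \faq)$. To choose $Y \in \fah$, first fix any positive system $\Sigma^+_\fh$ for the root system $\Sigma \cap \fahd$ with half-sum $\rho_\fh$, and pick a generic element $Y_0$ of the corresponding dominant chamber satisfying $\alpha(Y_0) \neq 0$ for every $\alpha \in \Sigma$ with $\alpha|_\fah \neq 0$ (possible since one only excludes finitely many proper hyperplanes from an open chamber). Set $Y := H_{\rho_\fh} + \delta Y_0$ with $\delta > 0$ small. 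The key computation is that for $\alpha$ with $\alpha|_\fah \neq 0$,
\[
\alpha(Y) \;=\; \langle\alpha|_\fah, \rho_\fh\rangle \,+\, \delta\,\alpha(Y_0),
\]
which is nonzero, and whose positivity (for small $\delta$) forces $\langle\alpha|_\fah, \rho_\fh\rangle \geq 0$. Setting $\Sigma(Q) := \{\alpha \in \Sigma : \alpha(Z) > 0\}$ with $\epsilon > 0$ sufficiently small, I would then verify: (i) $\Sigma(Q) \cap \fahd = \Sigma^+_\fh$, so $\rho_{Q,\fh} = \rho_\fh$; (ii) $\Sigma(Q) \cap \faqd \subseteq \Sigma(P_0)$, since on such roots $\alpha(Z) = \epsilon\alpha(X)$ and $X$ is $P_0$-dominant; (iii) $Q$ is $\fh$-extreme, because $\sigma\Cartan Z = \epsilon X - Y$ has opposite $Y$-component, so on any root with $\alpha|_\fah \neq 0$ the signs of $\alpha(Z)$ and $(\sigma\Cartan\alpha)(Z)$ are opposite; (iv) $Q$ is $\fh$-compatible, combining the dominance of $\rho_\fh$ on $\Sigma^+_\fh$, triviality of the pairing on $\faqd$-roots, and the key computation on mixed roots.

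\textbf{Main obstacle.} The subtle point in (a) is the treatment of mixed roots $\alpha$ with $\langle\alpha|_\fah, \rho_\fh\rangle = 0$: the naive choice $Y = H_{\rho_\fh}$ fails the necessary regularity condition exactly at such roots, and the tie-breaking perturbation $\delta Y_0$ is what repairs this without disturbing either the identity $\Sigma(Q) \cap \fahd = \Sigma^+_\fh$ or the $\fh$-compatibility inequality on the remaining positive roots.
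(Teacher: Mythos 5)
Your proposal is correct and takes essentially the same route as the paper: part (b) is the paper's argument verbatim, and in (a) you construct $Q$ from a regular element $\epsilon X + Y$ with $Y$ a small perturbation of $H_{\rho_\fh}$, which is exactly the paper's choice (the paper phrases the perturbation abstractly rather than as $H_{\rho_\fh}+\delta Y_0$). In (c) you replace the paper's device of the set $\gS(P)\cup-\gS(P,\gs\Cartan)$ (which invokes part (b)) by showing $\gs\ga\in\gS(P)$ and using $\inp{\gs\ga}{\rho_{P,\fh}}=\inp{\ga}{\rho_{P,\fh}}$, a cosmetic variation of the same root-combinatorial argument.
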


\begin{proof}
We start with (a). Let $P_0 \in \cP_\gs(\Aq).$ We fix $X$ in the associated positive
chamber $\faq^{*+}(P_0)$ and select
a positive system $\gS_\fh^+$ for the root system $\gS \cap \fahd.$
Put
$$
\rho_\fh : = \frac 12 \sum_{\ga \in \gS_\fh^+} m_\ga \ga.
$$
Then $\rho_\fh$ belongs to the positive chamber for $\gS_\fh^+$ in $\fahd.$
There exists $Y \in \fah$ such that for all $\ga \in \gS$
\begin{equation}
\label{e: inner prod ga and rho h}
\inp{\ga}{\rho_\fh} >0 \implies \ga(Y) >0.
\end{equation}

Replacing $Y$ by a small perturbation if necessary,
we may in addition assume that $Y$ belongs to the set
$\fah^{\rm reg}$ of elements $Z \in \fah$ such that for all $\ga \in \gS$ we have
$\ga(Z) = 0 \implies \ga|_{\fah} = 0.$

We fix $\epsilon > 0 $ sufficiently close to zero so that for all $\ga \in \gS,$
\begin{equation}
\label{e: sign of ga Y + eX}
\ga (Y) > 0 \implies \ga(Y + \epsilon X)  > 0,
\end{equation}
and so that $Y + \epsilon X$ is a regular element in $\fa.$
Let $Q \in \cP(A)$ be the
unique parabolic subgroup such that $Y + \epsilon X$ belongs to the positive chamber $ \fa^+(Q).$
We claim that $Q$ satisfies the requirements.
To see this, we start with the observation that for $\ga \in \gS\setminus\faqd$
we have $\ga(Y) \neq 0,$ so that
\begin{equation}
\label{e: equality of signs}
\sign \ga(Y) = \sign \ga(Y + \epsilon X).
\end{equation}
For such $\ga$ it follows by application of
(\ref{e: equality of signs})  to both $\ga$ and $\gs \ga$ that
$$
\sign \gs \ga (Y + \epsilon X) = \sign \ga(Y + \epsilon X).
$$
Thus, we see that for $\ga \in \gS\setminus \faqd$ we have
$\ga \in \gS(Q)$ if and only if $\gs \ga \in \gS(Q).$ Thus,
$\gS(Q) \setminus \faqd = \gS(Q, \gs)$ and we infer that $Q$ is $\fh$-extreme.

If $\ga \in \gS(Q)\cap \faqd,$ then
$$
\epsilon \ga(X) = \ga(Y + \epsilon X) > 0
$$
and we obtain that $\ga \in \gS(P_0).$
Hence, $\gS(Q, \gs\Cartan) = \gS(Q)\cap \faqd \subseteq \gS(P_0).$

Next, assume that  $\ga \in \gS$ satisfies $\inp{\ga}{\rho_\fh} \neq 0.$ Then
in view of (\ref{e: inner prod ga and rho h}) and (\ref{e: sign of ga Y + eX}) we have
\begin{equation}
\label{e: equality three signs}
\sign \inp{\ga}{\rho_\fh} = \sign \ga(Y) = \sign \ga(Y + \epsilon X).
\end{equation}
In particular, the above is valid for $\ga \in \gS_\fh^+.$
From this we see that
$\gS^+_\fh = \gS(Q)\cap \fahd,$ so that $\rho_{Q,\fh} = \rho_\fh.$

For the proof of (a), it remains to be shown that $Q$ is $\fh$-compatible.
Let $\ga \in \gS(Q).$ If $\inp{\ga}{\rho_{Q\fh} } \neq 0,$ then it follows
from (\ref{e: equality three signs}) that $\inp{\ga}{\rho_{Q,\fh}} = \inp{\ga}{\rho_\fh} > 0.$ This establishes (a).

We turn to (b). Let $\ga \in \gS(Q, \gs \Cartan).$ Then $\inp{\ga}{\rho_{Q,\fh}} \geq 0.$
On the other hand, $\gs\Cartan\ga \in \gS(Q)$ hence
$$
0 \leq \inp{\gs\Cartan \ga}{\rho_{Q,\fh}}  = - \inp{\ga}{\rho_{Q,\fh}}
$$
and we see that $\ga \perp \rho_{Q,\fh}.$

Finally, assume that $P,Q$ satisfy the hypotheses of (c) and that
$P \in \cPH(A).$ From $P\succeq Q$ it follows that $\gS(P)\cap \fa_\fh^* = \gS(Q) \cap \fa_\fh^*,$
hence $\rho_{P,\fh} = \rho_{Q, \fh}.$ Since $P$ is $\fh$-compatible and since (b) holds,
we see that every root $\ga$ from the set
$$
\Psi:= \gS(P) \cup - \gS(P, \gs \Cartan)
$$
 satisfies $\inp{\ga}{\rho_{Q,\fh}} \geq 0.$
We will finish the proof by showing that $\gS(Q) \subseteqq \Psi.$ Let $\ga \in \gS(Q).$ Then either
$\ga \in \gS(Q,\gs \Cartan)$ or $\ga \in \gS(Q, \gs).$ In the first case,
$\ga \in \gS(P, \gs\Cartan) \subseteqq \Psi.$
Thus, assume $\ga \in \gS(Q,\gs).$ If $\ga \in \gS(P, \gs)$ then $\ga \in \Psi.$ Thus, it remains
to consider the case that $\ga \notin \gS(P,\gs).$ Since
$-\ga \in \gS(P,\gs)$ would imply $-\ga \in \gS(Q,\gs) \subseteqq \gS(Q),$ contradiction,
both $\ga$ and $-\ga$ do not belong to $\gS(P, \gs).$ It follows that one of them must belong to
$\gS(P, \gs \Cartan),$ hence $\ga\in \Psi.$
\end{proof}

\begin{Ex}[\bf Group case]\label{Ex Group case - H-compatible parabolic subgroups}
We use notation as in Example \ref{Ex Group case - HC-transform def compared to HC's def}. Every element of $\cP(\bp A\times\bp A)$ is of the form $P=\bp P\times\bp Q$ where $\bp P$ and $\bp Q$ are minimal parabolic subgroups containing $\bp A$. All roots are non-zero on $\faq$, hence $\rho_{P,\fh}=0$. Each of these parabolic subgroups is therefore $\fh$-compatible.
\end{Ex}

The importance of the notion of $\fh$-compatibility
comes from the following result,
which implies that if $Q \in \cPH(A),$ then certain singularities of
$\Etau(Q:\dotvar)$ are caused by singularities of  $\Etau^{\circ}(\bar P_{0}:\dotvar)$, see also Lemma \ref{l: relation EQ with Enorm}.

\begin{Prop}\label{Prop c-function holomorphic on a_q*(P0,0)}
Let $P_{0}$ be a minimal $\sigma\theta$-stable parabolic subgroup containing $A$.
Let $Q \in \cPH(A)$ and assume
that $\Sigma(Q:\sigma\theta)\subseteq   \Sigma(P_{0})$.
Then $C_{\bar{P}_{0}|Q}(1 :\dotvar)$ is holomorphic on $-\faq^{*+}(P_{0})+i\faq^{*}$.
\end{Prop}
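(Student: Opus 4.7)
The plan is to deduce this as an essentially immediate consequence of Proposition \ref{Prop relation E(Q) and E^circ(cP0)}(b), combined with the defining inequality of $\fh$-compatibility. That proposition already tells us that $\ctau_{\bar P_{0}|Q}(1:\dotvar)$ is holomorphic on the set
$$
U := \bigl\{\lambda\in\faqdc : \Re\langle -\lambda+\rho_{Q,\fh},\alpha\rangle > 0 \;\text{ for all } \alpha\in\Sigma(P_{0})\cap\Sigma(Q)\bigr\}.
$$
So the entire task reduces to verifying the inclusion $-\faq^{*+}(P_{0})+i\faq^{*}\subseteq U$.

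To carry this out, I would fix $\lambda\in -\faq^{*+}(P_{0})+i\faq^{*}$ and write $\lambda=\mu+i\nu$ with $\mu\in -\faq^{*+}(P_{0})$ and $\nu\in\faq^{*}$. For any $\alpha\in\Sigma(P_{0})\cap\Sigma(Q)$ one then computes
$$
\Re\langle -\lambda+\rho_{Q,\fh},\alpha\rangle
= \langle -\mu,\alpha\rangle + \langle \rho_{Q,\fh},\alpha\rangle,
$$
since the contribution of $i\nu$ has vanishing real part ($\nu\in\faq^{*}$ is real). The first summand is strictly positive: $\alpha\in\Sigma(P_{0})$ has non-trivial restriction $\alpha|_{\faq}\in\Sigma(P_{0},\faq)$, which is a positive root of $\faq$, and $-\mu$ lies in the corresponding open positive chamber $\faq^{*+}(P_{0})$. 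The second summand is non-negative: since $Q\in\cPH(A)$ and $\alpha\in\Sigma(Q)$, Definition \ref{d: fh compatible} gives $\langle \rho_{Q,\fh},\alpha\rangle\geq 0$. Hence the sum is strictly positive, proving $\lambda\in U$.

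There is no serious obstacle here, as the argument is purely a matter of unwinding the two definitions; all the analytic work has already been done in Proposition \ref{Prop relation E(Q) and E^circ(cP0)}, whose proof relied on the convergence of the standard intertwining integral $A(\gs P:Q:\xi_{M}:-\lambda+\rho_{P,\fh})$ in a half-space. The role of the $\fh$-compatibility hypothesis is precisely to ensure that adding $\rho_{Q,\fh}$ (whose sign on $\Sigma(Q)$-roots is a priori not controlled) does not destroy the strict positivity coming from $-\mu\in\faq^{*+}(P_{0})$.
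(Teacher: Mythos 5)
Your proposal is correct and follows essentially the same route as the paper: the paper's proof likewise notes that for $\lambda\in-\faq^{*+}(P_{0})+i\faq^{*}$ one has $\Re\langle\lambda,\alpha\rangle<0$ for $\alpha\in\Sigma(P_{0})$ and $\langle\rho_{Q,\fh},\alpha\rangle\geq0$ for $\alpha\in\Sigma(Q)$, hence $\Re\langle-\lambda+\rho_{Q,\fh},\alpha\rangle>0$ on $\Sigma(P_{0})\cap\Sigma(Q)$, and then invokes Proposition \ref{Prop relation E(Q) and E^circ(cP0)}(b). Your extra remarks on the vanishing real part of $i\nu$ and the non-trivial $\faq$-restriction of roots in $\Sigma(P_{0})$ are just explicit versions of what the paper leaves implicit.
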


\begin{proof}
Assume that $\lambda\in-\faq^{*+}(P_{0})+i\faq^{*}.$
Then $\Re\,\langle\lambda,\alpha\rangle<0$ for all $\alpha\in\Sigma(P_{0})$ and $\langle\rho_{Q,\fh},\alpha\rangle\geq0$ for all $\alpha\in\Sigma(Q)$. Hence,
 $$
 \Re\,\langle-\lambda+\rho_{Q,\fh},\alpha\rangle>0 \quad \textnormal{for all} \;\;\alpha\in\Sigma(P_{0})\cap\Sigma(Q).
 $$
The result now follows by application of Proposition \ref{Prop relation E(Q) and E^circ(cP0)} (b).
\end{proof}

We end this section with a result about $\cW$-conjugates of $\fh$-compatible parabolic subgroups.

\begin{Lemma}
\label{l: H compatibility stable under cW}
If $Q\in\cPH(A)$, then $Q^{v}\in\cPH(A)$ for every $v\in N_K(\faq) \cap N_K(\fah).$
In particular, this is valid for $v \in \cW.$
\end{Lemma}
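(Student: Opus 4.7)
The plan is to show that the data entering the definition of $\fh$-compatibility, namely $\Sigma(Q)$ and $\rho_{Q,\fh}$, are transported by $v$ through the same isometric linear transformation of $\fad$, so that the inequalities defining $\fh$-compatibility are preserved.

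First I would set up the action of $v$ on $\fad$. Since $v \in N_K(\faq) \cap N_K(\fah)$, the map $\Ad(v)$ preserves both $\faq$ and $\fah$; moreover, as $v \in K$ and $B$ is $\Ad(K)$-invariant, $\Ad(v)$ acts isometrically on $\fa$ with respect to $B|_\fa$. Consequently, the contragredient action $v \cdot \alpha := \alpha \circ \Ad(v)^{-1}$ on $\fad$ is isometric and preserves the $B$-orthogonal decomposition $\fad = \faqd \oplus \fahd$.

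Next I would identify the root data of $Q^v$. Since $\Ad(v^{-1})\fg_\alpha = \fg_{v^{-1}\!\cdot\alpha}$, the identity $\fn_{Q^v} = \Ad(v^{-1})\fn_Q$ yields $\Sigma(Q^v) = v^{-1} \cdot \Sigma(Q)$, with matching multiplicities $m_{v^{-1}\!\cdot \alpha} = m_\alpha$. Intersecting with $\fahd$, which is stable under $v^{-1}$, and using the explicit formula
$$
\rho_{Q,\fh} = \frac12 \sum_{\alpha\in \Sigma(Q)\cap \fahd} m_\alpha\,\alpha
$$
from Section~1, I conclude that $\rho_{Q^v,\fh} = v^{-1} \cdot \rho_{Q,\fh}$.

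Finally, for any $\alpha \in \Sigma(Q^v)$, write $\alpha = v^{-1} \cdot \beta$ with $\beta \in \Sigma(Q)$; then by the isometry property,
$$
\langle \alpha,\,\rho_{Q^v,\fh}\rangle = \langle v^{-1}\cdot\beta,\; v^{-1}\cdot \rho_{Q,\fh}\rangle = \langle \beta,\,\rho_{Q,\fh}\rangle \geq 0,
$$
the last inequality by $\fh$-compatibility of $Q$. Hence $Q^v \in \cPH(A)$. The particular case $v\in \cW$ follows because $\cW$ was chosen inside $N_K(\faq) \cap N_K(\fah)$ in the paragraph preceding Lemma~\ref{l: about NorKfaq}. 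I do not anticipate any substantial obstacle: the whole argument is a coordinated bookkeeping observation, hinging only on the $\Ad(v)$-equivariance of root data and the $K$-invariance of $B$.
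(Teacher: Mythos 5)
Your proposal is correct and takes essentially the same route as the paper: one shows $\Sigma(Q^{v})\cap\fa_{\fh}^{*}=v^{-1}\cdot(\Sigma(Q)\cap\fa_{\fh}^{*})$, hence $\rho_{Q^{v},\fh}=v^{-1}\cdot\rho_{Q,\fh}$, and concludes from the fact that $v$ acts isometrically on $\fa^{*}$. Your write-up merely makes explicit the bookkeeping (root space conjugation, multiplicities, $K$-invariance of $B$) that the paper leaves implicit.
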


\begin{proof}
Since $v$ normalizes both $\fa$ and $\fah,$ it follows that
$$
\Sigma(Q^{v})\cap\fa_{\fh}^{*}
= v^{-1}  \cdot(\Sigma(Q)\cap\fa_{\fh}^{*}),
$$
hence $\rho_{Q^{v},\fh}= v^{-1} \cdot\rho_{Q,\fh}$. The lemma now follows from the fact that $v$ acts isometrically on $\fa^{*}$.
\end{proof}

\subsection{Residues for the trivial $K$-type}
\label{ss: residues trivial K type}
In this subsection we retain the
\medno
{\bf Assumption:\  } \em  $G/H$ is of split rank 1.
\medbreak\rm
Our goal is to analyze the residues of Eisenstein integrals $E(Q:\gl)$ as introduced in Lemma \ref{l: residues one}, for $Q \in \cPH(A)$ and for $(\tau, V_\tau)$ equal to the trivial
representation $(1, \C)$ of $K.$ To emphasize that $\tau = 1,$ we
 denote the associated Eisenstein
integrals with $E_1(Q: \gl),$ see also (\ref{e: defi Eisenstein}).

As before, we assume that $P_{0}$ is a minimal $\sigma\theta$-stable parabolic subgroup containing
$A$ and such that $\Sigma(Q,\sigma\theta)\subseteq  \Sigma(P_{0})$.

If $\pi$ is a discrete series representation for $G/H$, we agree to
 write $\cC(G/H)_{\pi}$ for the closed subspace of
 $\cC(G/H)$ spanned by left $K$-finite and right $H$-fixed generalized matrix coefficients of $\pi$.

\begin{Prop}\label{p: R Q  in span of finite sum of discrete series}
Assume that $Q\in\cPH(A)$. There exist finitely many spherical discrete series representations $\pi_1,\dots,\pi_{k}$ such that for all $\mu\in S_{Q,1}$,  all $\psi\in\oC(1)$ and all
$a \in \Aq,$
$$
\Resone(Q: \mu: a :\dotvar )(\psi) \in\bigoplus_{i=1}^{k}\;\cC(G/H)_{\pi_{i}}.
$$
\end{Prop}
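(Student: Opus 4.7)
The plan is to exploit the $\fh$-compatibility of $Q$ in order to reduce the analysis of $\Resone(Q:\mu)$ to residues of the normalized Eisenstein integral $E_1^\circ(\bar P_0:\cdot:\cdot)$ attached to a $\gs\Cartan$-stable minimal parabolic $P_0$. The latter residues at real points of the negative chamber are known from the Plancherel theory for $G/H$ to consist of matrix coefficients of spherical discrete series.

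First, I choose $P_0\in\cP_\gs(\Aq)$ with $\gS(Q,\gs\Cartan)\subseteq\gS(P_0)$, which is possible by Lemma \ref{l: Q  P zero and P}. By Lemma \ref{l: relation EQ with Enorm},
\[
E_1(Q:\psi:\lambda)=E_1^\circ\bigl(\bar P_0:C_{\bar P_0|Q}(1:\lambda)\psi:\lambda\bigr),
\]
as meromorphic $C^\infty(G/H)$-valued functions on $\faqdc$. Since $Q\in\cPH(A)$ and $-\mu\in-\faqdp(P_0)$ for $\mu\in S_{Q,1}\subseteq\faqdp(P_0)$, Proposition \ref{Prop c-function holomorphic on a_q*(P0,0)} shows that $C_{\bar P_0|Q}(1:\cdot)$ is holomorphic at $\lambda=-\mu$. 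Hence any singularity of $\lambda\mapsto E_1(Q:\psi:\lambda)$ at $\lambda=-\mu$ originates entirely from a singularity of $\lambda\mapsto E_1^\circ(\bar P_0:\varphi:\lambda)$ at the same point.

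Expanding $C_{\bar P_0|Q}(1:\lambda)\psi$ and $a^{-\lambda-\mu}$ in Taylor series about $\lambda=-\mu$, and applying the Leibniz rule to the residue in the definition of $\Resone(Q:\mu)$, I obtain a representation
\[
\Resone(Q:\mu:a:x)(\psi)=\sum_{j=1}^{N} p_j(\log a)\,\bigl(\partial_{u_j}|_{\lambda=-\mu} E_1^\circ(\bar P_0:T_j\psi:\lambda)\bigr)(x),
\]
where the $T_j\in\End(\oC(1))$ are independent of $a$, the $u_j\in S(\faqd)$ have order bounded by the pole order $d$ of $E_1^\circ(\bar P_0:\cdot:\cdot)$ at $-\mu$, and $p_j\in P_{d-1}(\Aq)$. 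Next I invoke the central structural fact from the Plancherel theory of $G/H$ that the residues and all their $u$-derivatives of $E_1^\circ(\bar P_0:\varphi:\lambda)$ at a real point $-\mu\in-\faqdp(P_0)$ produce $K$-invariant functions in the closed discrete series subspace $\cC_\ds(G/H)^K$, and that each such residue involves only finitely many spherical discrete series; see \cite{vdBan_Schlichtkrull_Plancherel1} and \cite{vdBanSchlichtkrull_FourierInversionOnAReductiveSymmetricSpace}. Since $S_{Q,1}$ is finite by Lemma \ref{Lemma S_(Q,tau) is real and finite} and $\oC(1)$ is finite-dimensional, the totality of discrete series appearing as $\mu$ ranges over $S_{Q,1}$ and $\psi$ over $\oC(1)$ is finite, yielding the desired $\pi_1,\dots,\pi_k$.

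The main obstacle is the third step: extracting precisely the statement that derivatives of $E_1^\circ(\bar P_0:\cdot:\lambda)$ at a real point of $-\faqdp(P_0)$ lie in the Schwartz-class span of finitely many spherical discrete series matrix coefficients. Lemma \ref{l: residues two} already furnishes that these are $\DGH$-finite smooth functions on $G/H$; the task is to upgrade this $\DGH$-finiteness to Schwartz-class behavior and to identify the finitely many $\DGH$-eigencharacters occurring with the infinitesimal characters of a fixed finite collection of discrete series. This will proceed via asymptotic analysis of the normalized Eisenstein integral on $\Aq^+$, combined with the characterization of discrete series for $G/H$ obtained by Oshima--Matsuki and the temperedness criteria from \cite{vdBan_AsymptoticBehaviourOfMatrixCoefficientsRelatedToReductiveSymmetricSpaces}, already used in the proof of Theorem \ref{t: vanishing Rt Q on Lone}.
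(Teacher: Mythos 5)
You have correctly identified the reduction that the paper also uses (choose $P_0$ with $\gS(Q,\gs\Cartan)\subseteq\gS(P_0)$, and exploit $\fh$-compatibility through Proposition \ref{Prop c-function holomorphic on a_q*(P0,0)} to get holomorphy of $C_{\bar P_0|Q}(1:\dotvar)$ at $-\mu$), but your third step is a genuine gap, not merely a deferred technicality, and the "central structural fact" you invoke in its place is not available and is false as stated. Once you Taylor-expand the holomorphic factor and rewrite $\Resone(Q:\mu)$ as a combination $\sum_j p_j(\log a)\,\partial_{u_j}|_{\gl=-\mu}E_1^\circ(\bar P_0:T_j\psi:\gl)$, you have destroyed exactly the structure that makes the residue well behaved: individual derivatives (or values) of $q(\gl)E^\circ(\bar P_0:\varphi:\gl)$ at a real point $-\mu\in -\faqdp(P_0)$ are in general not Schwartz, not square integrable, and not even tempered, because the expansion of $E^\circ(\bar P_0:\varphi:\gl)$ along $\Aqp(\bar P_0)$ contains the term $b^{\gl-\rho_{\bar P_0}}$ with leading coefficient $C^\circ_{\bar P_0|\bar P_0}(1:\gl)=I$, which at $\gl=-\mu$ produces exponential growth relative to $b^{-\rho_{\bar P_0}}$. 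There is no theorem in \cite{vdBan_Schlichtkrull_Plancherel1} or \cite{vdBanSchlichtkrull_FourierInversionOnAReductiveSymmetricSpace} asserting that arbitrary residues, let alone arbitrary $\gl$-derivatives, of normalized Eisenstein integrals at real points of the negative chamber lie in $\cC_\ds(G/H)^K$; in the residue calculus only very specific residual combinations are shown to be square integrable, and that is precisely the kind of statement that has to be proved here.

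What the paper does instead, and what your outline is missing, is a direct asymptotic argument for the residue itself (Lemma \ref{l: behavior Phi} and Proposition \ref{p: residue in Schwartz space}): expand $E_1(Q:\psi:\gl)(bw)=\sum_{s=\pm1}\Phi_{\bar P_0,w}(s\gl:b)[C_{\bar P_0|Q}(s:\gl)\psi]_w(e)$ along $\Aqp(\bar P_0)$, split it as $\Psi^++\Psi^-$ according to whether the exponents $s\gl-k\ga$ fail or satisfy the decay condition, and observe that $\Psi^+(\dotvar,b)$ is holomorphic at $\gl=-\mu$ — this uses both the recursion properties of the coefficients $\Gamma_{\bar P_0,w,k}$ and, crucially, the $\fh$-compatibility of $Q$ via the holomorphy of $C_{\bar P_0|Q}(1:\dotvar)$ — so that the residue only sees $\Psi^-$, whose exponents yield the decay estimate needed for membership in $\cC(G/H)^K$ by the criterion of \cite[Thm.~6.4]{vdBan_AsymptoticBehaviourOfMatrixCoefficientsRelatedToReductiveSymmetricSpaces}. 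It is this cancellation of the non-decaying exponents by the residue operation, not any general property of $E^\circ(\bar P_0:\dotvar)$, that gives Schwartz behavior. After that, the conclusion is obtained as you anticipate, but more efficiently than via Oshima--Matsuki: the residues span a finite dimensional space of $\DGH$-finite functions in $\cC(G/H)^K$, Harish-Chandra's finiteness theorem shows their $(\fg,K)$-span has finite length, and its closure in $L^2(G/H)$ is a finite sum of irreducible, necessarily spherical, discrete series subrepresentations. So your plan needs the entire exponent analysis of Proposition \ref{p: residue in Schwartz space} to be supplied before the citation step, and it must be applied to the residue of $E_1(Q:\psi:\dotvar)$ as a whole rather than to the individual derivative terms of $E^\circ(\bar P_0:\dotvar)$.
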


We will prove this proposition through a series of results. First, we need to prepare
for these.
Let $W(\faq)$ be the Weyl group of the root system of $\faq$ in $\fg.$
Then $W(\faq)=\{-1,1\},$ since $\dim(\faq)=1$ by assumption.
The map
\begin{equation}
\label{e: iso oC one}
\oC(1) \longrightarrow \C^\cW, \;\;\;\; \psi \mapsto (\psi_w(e): w\in \cW)
\end{equation}
is a linear isomorphism via which we shall identify the indicated spaces.

Let $\psi \in \oC(1).$ Then from  \cite[Thm.\ 8.13]{vdBanKuit_EisensteinIntegrals}
it follows that, for $R \in \cP_\gs(\Aq),$ $ w \in \cW,$  $b \in \Aq^+(R)$ and generic $\gl \in \faqdc,$
$$
E_1(Q:\psi: \gl)(bw) = \sum_{s \in \{\pm 1\}} \Phi_{R, w}(s\gl: b) [C_{R | Q}(s :\gl)\psi]_w(e).
$$
Here $\Phi_{R, w}(\gl, \dotvar),$ for generic $\gl \in \faqdc,$ is a certain function on $\Aqp(R)$ defined as in  \cite[Thm.\ 11.1]{vdBanSchlichtkrull_ExpansionsForEisensteinIntegralsOnSemisimpleSymmetricSpaces}, for $\tau = 1.$  We recall that the functions are related by the equations
\begin{equation}
\label{e: relations Phi R w}
\Phi_{R,w}(\gl, a) = \Phi_{w^{-1}Rw, 1}(w^{-1}\gl, w^{-1}aw),
\end{equation}
for generic $\gl \in \faqdc$ and all $a \in \Aqp(R),$ see \cite[Lemma 10.3]{vdBanSchlichtkrull_ExpansionsForEisensteinIntegralsOnSemisimpleSymmetricSpaces}.
It follows from these relations and \cite[Eqn.\  (15)]
{vdBanSchlichtkrull_ExpansionsForEisensteinIntegralsOnSemisimpleSymmetricSpaces}
that the function $\Phi_{R,w}(\gl, \dotvar)$ for generic $\gl \in \faqdc$ has a converging series expansion of the form
\begin{equation}
\label{e: expansion Phi R w}
\Phi_{R, w} (\gl , a) = a^{\gl - \rho_{R}} \sum_{k\geq 0}
\Gamma_{R,w,k}(\gl) a^{-k\ga}, \qquad (a \in \Aqp(\bar P_0)),
\end{equation}
where $\ga$ denotes the unique indivisible root in $\gS(R, \faq).$
The coefficients $\Gamma_{R,w,k},$ for $k \in \N,$ are meromorphic functions
on $\faqdc,$ which are uniquely determined by the following conditions,
see
\cite[Prop. 5.2, Eqn.\ (19)]{vdBanSchlichtkrull_ExpansionsForEisensteinIntegralsOnSemisimpleSymmetricSpaces},
taking into account that
$\tau(L_\ga^\pm) = 0$ and $\gg =0,$ because $\tau$ is trivial.
\begin{enumerate}
\item[{\rm (1)}]
$
\Gamma_{R,w,0}(\gl) = 1,
$
for all $\gl \in \faqdc.$
\item[{\rm (2)}] The function
$
\gl \mapsto \left(\prod_{l=1}^k  \inp{2 \gl - l\ga}{\ga}\right) \cdot \Gamma_{R,w,k}(\gl)
$
is entire holomorphic on $ \faqdc.$
\end{enumerate}
In the proof ahead, we will need the following additional properties of the functions $\Phi_{R,w}$
and their expansions.

\begin{Lemma}
\label{l: behavior Phi}
Let $\Omega$ be a bounded open subset of $\faqd.$ Then there exists a polynomial function
$q \in \Pi_{\gS, \R}(\faqd),$ see (\ref{e: defi Pi Sigma}), such that the following assertions
are valid.
\begin{enumerate}
\itema For every $k \geq 0$ the function $q \Gamma_{R,w,k}$ is holomorphic on
$\Omega + i\faq.$
\itemb The power series
$$
\sum_{k\geq 0} q(\gl) \Gamma_{R,w,k}(\gl) z^k
$$
converges absolutely on $D = \{z \in \C : |z| < 1\},$
locally uniformly  in $(\gl, z) \in (\Omega + i\faqd) \times D.$ In particular, it defines a holomorphic function on the mentioned set.
\itemc
The assignment $a \mapsto q(\gl) \Phi_{R, w}(\gl: a)$ defines a smooth function on $\Aq,$ depending holomorphically on $\gl \in \Omega \times i\faqd.$
\itemd For all  $\gl \in \Omega+i\faqd$ and $a \in A_\fq^+(R),$
$$
q(\gl) \Phi_{R, w} (\gl , a) = q(\gl) a^{\gl - \rho_{R}} \sum_{k\geq 0} \Gamma_{R,w,k}(\gl) a^{-k\ga}.
$$
\end{enumerate}
\end{Lemma}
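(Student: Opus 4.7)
\smallskip

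My plan is to deduce all four assertions from a careful analysis of the poles and a uniform polynomial bound on the coefficients, using condition (2) of the characterization of $\Gamma_{R,w,k}$ together with the recursion inherited from the Casimir eigenvalue equation for $\Phi_{R,w}(\gl,\cdot)$.

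First, for (a): condition (2) confines the singularities of $\Gamma_{R,w,k}$ to the affine hyperplanes $H_l := \{\gl \in \faqdc : \inp{2\gl - l\ga}{\ga} = 0\}$ for $l \in \{1,\ldots,k\}$. The real affine subspace $H_l \cap \faqd$ is the translate $(l/2)\ga + \ga^\perp$, so boundedness of $\Omega$ forces the set of indices $l \geq 1$ with $H_l \cap (\Omega + i\faqd) \neq \emptyset$ to be finite; call the largest such index $l_{\max}$ and set
\begin{equation*}
q(\gl) := \prod_{l=1}^{l_{\max}} \inp{2\gl - l\ga}{\ga}.
\end{equation*}
Since $\ga \in \Sigma \setminus \fahd$, each factor has the form required by the definition of $\Pi_{\Sigma,\R}(\faqd)$. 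For $l > l_{\max}$ the remaining factors $\inp{2\gl - l\ga}{\ga}$ do not vanish on $\Omega + i\faqd$, so condition (2) gives holomorphy of $q\,\Gamma_{R,w,k}$ on $\Omega + i\faqd$ for every $k \geq 0$, proving (a).

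For (b) the key is to establish a uniform estimate $|q(\gl)\Gamma_{R,w,k}(\gl)| \leq C_K (1+k)^{N_K}$ on every compact $K \subset \Omega + i\faqd$. I would obtain this from the scalar recursion that the $\Gamma_{R,w,k}(\gl)$ satisfy as a consequence of the radial Casimir eigenvalue equation for $\Phi_{R,w}(\gl,\cdot)$ (the $\tau$-trivial case decouples), which takes the shape
\begin{equation*}
\inp{k\ga}{k\ga - 2\gl}\, \Gamma_{R,w,k}(\gl) = \sum_{0 < j \leq k} p_{k,j}(\gl)\, \Gamma_{R,w,k-j}(\gl),
\end{equation*}
with $p_{k,j}$ polynomial of degree bounded independently of $k$ and with $p_{k,j} = 0$ outside a finite $j$-range fixed by the root string. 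For $k > l_{\max}$ the left-hand coefficient is bounded below on $K$ by a positive multiple of $k^2$, so an induction starting from $\Gamma_{R,w,0}=1$ propagates the polynomial bound. Locally uniform absolute convergence of $\sum_k q(\gl)\Gamma_{R,w,k}(\gl) z^k$ on $(\Omega + i\faqd) \times \D$ then follows, and the sum is jointly holomorphic by Weierstrass. The main obstacle is precisely this step: controlling the constants in the induction so that the growth remains polynomial rather than exponential, using that $\inp{k\ga}{k\ga - 2\gl} \sim 2k^2\|\ga\|^2$ compensates the degrees of the $p_{k,j}$.

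Assertions (c) and (d) are corollaries. For $a \in \Aq^+(R)$, the substitution $z = a^{-\ga} \in (0,1)$ applied to (b) gives a jointly smooth (in $a$) and holomorphic (in $\gl$) representative on $(\Omega + i\faqd) \times \Aq^+(R)$, and (\ref{e: expansion Phi R w}) identifies it with $q(\gl)\Phi_{R,w}(\gl,a)$, yielding (d). To obtain (c) globally on $\Aq$, I would invoke that $\Phi_{R,w}(\gl,\cdot)$ solves on $\Aq$ a regular-singular system whose indicial exponents at the wall are $\{\gl - \rho_R - k\ga\}_{k\geq 0}$; the resonances among these within $\Omega + i\faqd$ are exactly the zeros of $q(\gl)$, so multiplication by $q$ removes them and the Frobenius method provides the smooth extension with holomorphic dependence on $\gl$.
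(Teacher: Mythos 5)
Your treatment of (a) coincides with the paper's: boundedness of $\Omega$ leaves only finitely many $l$ for which $\inp{2\gl-l\ga}{\ga}$ can vanish on $\Omega+i\faqd$, and $q$ is the product of those factors (the paper first reduces to $w=1$ via (\ref{e: relations Phi R w}), which is harmless). The real divergence is in (b). The paper does not prove the coefficient bound at all: it quotes \cite[Thm.\ 7.4]{vdBanSchlichtkrull_ExpansionsForEisensteinIntegralsOnSemisimpleSymmetricSpaces}, which yields $|q(\gl)\Gamma_{R,1,k}(\gl)|\leq C(1+k)^{\kappa}$ uniformly on compact subsets of $\Omega+i\faqd$, and (b) then follows. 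You propose to rederive this from the Casimir recursion, which is indeed how such estimates are proved, but your sketch is off in two places: the recursion is not banded in $j$ (the coefficients of the radial Casimir operator are of the type $(1-a^{-2\ga})^{-1}$ and contribute at every level, so the sum genuinely runs over all $0<j\leq k$, with weights growing at most linearly in $k$), and the induction does not close at a fixed degree by the crude comparison you indicate; one needs the gain $\sum_{m<k}(1+m)^{N}\sim (1+k)^{N+1}/(N+1)$ played against the $k^{2}$ coming from $\inp{k\ga}{k\ga-2\gl}$, and then one chooses $\kappa$ large in terms of the constants (a Gangolli-type argument). So either carry that out in detail, or simply cite the estimate as the paper does.

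The last paragraph of your proposal contains a genuine error. The exponents $\gl-\rho_R-k\ga$ are the exponents of the regular singular point at infinity in the chamber, i.e.\ at $z=a^{-\ga}=0$ after your substitution; they are not indicial exponents at the wall of $\Aq$. The resonances removed by $q$ concern poles in the spectral variable $\gl$, and multiplying by a function of $\gl$ alone cannot produce regularity of $\Phi_{R,w}(\gl,\cdot)$ in the variable $a$ across the wall; generically these series solutions are singular there (already for the group case the second Harish-Chandra series is), and Frobenius theory at $z=0$ gives convergence precisely for $|z|<1$, that is on $\Aqp(R)$. This is also all the paper asserts or uses: $\Phi_{R,w}(\gl,\dotvar)$ is introduced there only as a function on $\Aqp(R)$, (c) and (d) are obtained as immediate consequences of (a), (b) and (\ref{e: expansion Phi R w}), and in the application (Proposition \ref{p: residue in Schwartz space}) only joint smoothness and holomorphy on a neighbourhood of $-\mu$ times $A_\fq^+(\bar P_0)$ is needed. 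Your substitution $z=a^{-\ga}$ already delivers exactly this, so the global-extension step should be deleted rather than repaired.
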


\begin{proof}
In view of the relations (\ref{e: relations Phi R w}), we may assume that $w =1.$
By boundedness of $\Omega$ there exists $n \geq 1$ such that
$\inp{2 \gl - l\ga}{\ga}  \neq 0$ for all $l >  n$ and $\gl \in \Omega + i\faqd.$ In view of conditions
(1) and (2) above, we see that the
polynomial
$$
q(\gl) = \prod_{l=1}^n  \inp{2 \gl - l\ga}{\ga}
$$
satisfies the requirements of (a).

In view of (a), it follows from the estimate for the coefficients given in \cite[Thm.\ 7.4]{vdBanSchlichtkrull_ExpansionsForEisensteinIntegralsOnSemisimpleSymmetricSpaces},
that for every compact subset $U \subseteqq \Omega + i\faqd$ there exists a constant $C >0$ and
an integer $\kappa > 0$ such that
$$
|q(\gl) \Gamma_{R,1,k}(\gl) | \leq C (1 + k)^\kappa, \qquad (\gl \in U).
$$
From this, (b) follows readily. Finally, (c) and (d) are immediate
consequences of (a), (b) and (\ref{e: expansion Phi R w}).
\end{proof}

Based on the lemma, we can prove another preparatory result.

\begin{Prop}
\label{p: residue in Schwartz space}
Let $\mu \in \faqdp(P_0).$
Then for every holomorphic function $f:\faqdc \to \C$
and every $\psi \in \oC(\tau)$  the function
\begin{equation}
\label{e: residue f Eis one}
\Res_{\gl = -  \mu} f(\gl) E_1(Q:\psi:\gl) : \;G/H \to \C
\end{equation}
is $\DGH$-finite and contained in $\cC(G/H)^{K}.$
\end{Prop}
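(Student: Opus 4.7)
The plan is to verify the two conclusions separately. For $\DGH$-finiteness, observe that $f$ is entire, so $f(\gl) E_1(Q:\psi:\gl)$ is meromorphic at $\gl = -\mu$; the residue is therefore $\DGH$-finite by Lemma \ref{l: residue preparation}(b). Since $\tau = 1$, the left $K$-invariance of $E_1(Q:\psi:\gl)$ descends to the residue, and smoothness of a $\DGH$-finite, $K$-invariant function on $G/H$ follows from elliptic regularity of the Casimir.

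The substantive task is to establish membership in the Schwartz space. The approach I would take is to control asymptotic behavior along each chamber $\Aq^+(R)$ for $R \in \cP_\gs(\Aq)$ by means of the convergent expansion
\[
E_1(Q:\psi:\gl)(aw) = \sum_{s = \pm 1} \Phi_{R,w}(s\gl, a)\,[C_{R|Q}(s:\gl)\psi]_w(e), \qquad a \in \Aq^+(R),
\]
valid for generic $\gl \in \faqdc$ and all $w \in \cW$. Lemma \ref{l: behavior Phi} supplies, for any bounded neighborhood $\Omega$ of $-\mu$ in $\faqd$, a polynomial $q_R \in \Pi_{\gS,\R}(\faqd)$ such that $(\gl, a) \mapsto q_R(\gl) \Phi_{R,w}(\gl, a)$ is smooth in $a \in \Aq$ and holomorphic in $\gl \in \Omega + i\faqd$, with the expansion $q_R(\gl)\Phi_{R,w}(\gl, a) = q_R(\gl)a^{\gl - \rho_R}\sum_k \Gamma_{R,w,k}(\gl) a^{-k\ga}$ converging absolutely and locally uniformly. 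Substituting this into the expansion above, multiplying by $q_R$, and computing the residue at $\gl = -\mu$ by the Leibniz rule, one obtains a finite sum
\[
\sum_{s, k} p_{s,k}(\log a)\, a^{s(-\mu) - \rho_R - k\ga}, \qquad a \in \Aq^+(R),
\]
in which the $p_{s,k}$ are polynomials in $\log a$.

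The main obstacle is then to show that only those exponents $\nu = s(-\mu) - \rho_R - k\ga$ with $\Re\nu < -\rho_R$ on $\Aq^+(R)$ actually contribute, so that each surviving summand decays exponentially. Since $\mu \in \faqdp(P_0)$ is nonzero, this is automatic for the value of $s$ that makes $s(-\mu)$ strictly negative on the chamber; the delicate case is the opposite value of $s$, for which the leading exponent is non-decaying and the associated residue contribution must vanish identically. I would verify this cancellation by passing through Lemma \ref{l: relation EQ with Enorm} to rewrite $E_1(Q:\psi:\gl) = E^\circ(\bar P_0: C_{\bar P_0|Q}(1:\gl)\psi:\gl)$ and then exploiting the Weyl-group functional equation $E^\circ(\bar P_0:\psi:\gl) = E^\circ(\bar P_0: C_{\bar P_0|\bar P_0}(s:\gl)\psi: s\gl)$ of the normalized Eisenstein integral: this pairs the two candidate exponents $\pm\mu$ at $\gl = -\mu$ and forces the non-decaying contribution to be absorbed into the good one. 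Once exponential decay along every $\Aq^+(R)$ has been established, the Schwartz property of a $\DGH$-finite, $K$-invariant function is a standard consequence of the asymptotic exponent characterization of $\cC(G/H)^K$ that underlies the Plancherel theory for $G/H$.
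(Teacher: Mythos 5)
Your first paragraph is fine and matches the paper: $\DGH$-finiteness and smoothness of the residue follow from Lemma~\ref{l: residue preparation}\,(b), and the overall plan (expand $E_1(Q:\psi:\gl)$ along a chamber via $\Phi_{R,w}$ and the $C$-functions, control the series with Lemma~\ref{l: behavior Phi}, and feed an exponential-decay estimate into the asymptotic criterion for membership in $\cC(G/H)$) is exactly the paper's route, with the simplification that it suffices to work on the single chamber $\Aqp(\bar P_0)$ and its $\cW$-translates.

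The gap is in the pivotal step, the vanishing of the non-decaying contribution. First, no cancellation between the two values of $s$ is possible: along $\Aqp(\bar P_0)$ the $s=+1$ family carries the exponents $-\mu-\rho_{\bar P_0}-k\ga$ and the $s=-1$ family the exponents $\mu-\rho_{\bar P_0}-k\ga$, which are distinct, so the functional equation $E^\circ(\bar P_0:\psi:\gl)=E^\circ(\bar P_0:C^\circ_{\bar P_0|\bar P_0}(s:\gl)\psi:s\gl)$ cannot ``absorb'' one family into the other; it merely re-expresses the same function and gives no information about $\Res_{\gl=-\mu}$ of the growing part. What actually kills that part in the paper is holomorphy: after splitting off the finitely many non-decaying terms, their coefficient is $[C_{\bar P_0|Q}(1:\gl)\psi]_w(e)$ times coefficients $\Gamma_{\bar P_0,w,k}(\gl)$ that are regular near $-\mu$, and $C_{\bar P_0|Q}(1:\dotvar)$ is holomorphic at $-\mu\in-\faqdp(P_0)+i\faqd$ by Proposition~\ref{Prop c-function holomorphic on a_q*(P0,0)}; since $f$ is holomorphic as well, the residue of this whole block is zero. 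This is precisely where the standing hypothesis $Q\in\cPH(A)$ ($\fh$-compatibility) enters, and your argument never uses it; for a $Q$ that is not $\fh$-compatible the function $C_{\bar P_0|Q}(1:\dotvar)$ may have a pole at $-\mu$, the residue then retains exponents $-\mu-\rho_{\bar P_0}-k\ga$ which grow on $\Aqp(\bar P_0)$, and the conclusion fails — so no proof that avoids this hypothesis can be complete. Two smaller points: for the ``bad'' $s$ it is not only the leading exponent but all $k$ below a threshold (determined by $\inp{-2s\mu-k\ga}{\ga}>0$) that must be shown not to contribute, and the residue of the full series is an infinite sum, so the final estimate has to be obtained from the contour-integral formula for the residue together with the locally uniform bounds of Lemma~\ref{l: behavior Phi}, rather than termwise by the Leibniz rule.
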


\begin{Rem}
Actually the result is valid for any holomorphic function $f$ defined on an open neighborhood
of $-\mu,$ but we will not need this.
\end{Rem}

\begin{proof}[Proof of Proposition \ref{p: R Q  in span of finite sum of discrete series}]
Let $F$ be the function (\ref{e: residue f Eis one}). It follows from Lemma \ref{l: residue preparation} (b) that $F$ belongs to $C^\infty(G/H)^K$ and is $\DGH$-finite.

We will complete the proof by showing that $F$ belongs to the Schwartz space.
Fix $w \in \cW,$ then by \cite[Thm.~6.4]{vdBan_AsymptoticBehaviourOfMatrixCoefficientsRelatedToReductiveSymmetricSpaces}
it suffices to establish,
for any $\geps >0,$ the existence of a constant $C= C_\geps >0$ such that
\begin{equation}
\label{e: estimate F}
|F(bw)| \leq C b^{-\mu + \geps \ga - \rho_{\bar P_0}}  \qquad (b \in \Aqp(\bar P_0)).
\end{equation}
Taking $R = \bar P_0$ we obtain that, for $w \in \cW,$
for $b \in \Aq^+(\bar P_0)$ and generic $\gl \in \faqdc,$
$$
E_1(Q:\psi:\gl)(bw) = \sum_{s \in \{\pm 1\}} \Phi_{\bar P_0, w}(s\gl: b) [C_{\bar P_0| Q}(s :\gl)\psi]_w(e).
$$
Here $C_{\bar P_0| Q}(1 : \dotvar)$ is holomorphic at the element $-\mu \in \faqdp(\bar P_0).$

For $k \geq 1$ we define the polynomial
function $q_k :=  \inp{\dotvar + \mu}{\ga}^k$ on $\faqdc.$
Let $\Omega$ be a bounded open neighborhood of $-\mu$ in $\faqd.$
Then we may select
an integer $k_1 \geq 0$ such that the polynomial $q_{k_{1}}$
satisfies all properties of Lemma \ref{l: behavior Phi} for $R = \bar P_0$ and all $w \in \cW.$
In addition we may fix $k_2 \geq 0$ such that the function
\begin{equation}
\label{e: holomorphy C}
\gl \mapsto q_{k_2}(\gl) C_{\bar P_0|Q}(s: \gl)
\end{equation}
is holomorphic on an open neighborhood $\nbhood$ of  $-\mu$
in $ \Omega + i\faqd$  for each $s \in \{\pm 1\}.$
Put $q = q_{k_{1}} q_{k_2},$
then it follows that $q(\gl)  E_1(Q:\psi:\gl)(bw)$
is smooth in $ (\gl, b) \in \nbhood  \times \Aqp(\bar P_0)$
and holomorphic in the first of these variables.

For each $s \in \{\pm 1\},$ we define the disjoint decomposition
$$
\N = \Nt(s,+) \cup  \Nt(s,-)
$$
by
$$
k \in \Nt(s, +) \iff  \inp{- 2 s \mu - k\ga}{\ga}  > 0.
$$
Accordingly, we put
$$
\Phi_{\bar P_0, w}^{\pm} (s \gl, b)
= b^{-\rho_{\bar P_0} } \sum_{k \in \Nt(s,\pm)}\Gamma_{\bar P_0, w, k}(s\gl) b^{s\gl - k\ga}.
$$
Then
$$
\Phi_{\bar P_0, w}(s \gl, b) = \Phi_{\bar P_0, w}^{+} (s \gl, b) + \Phi_{\bar P_0, w}^{-} (s \gl, b).
$$
It is easily seen that $\Nt(s,+ )$ is finite and without gaps in $\N.$ Furthermore,
we may shrink $\nbhood$ to arrange that
$\inp{2s \gl - k\ga}{\ga} > 0$ for all $s = \pm 1,$ $k \in \Nt(s,+)$ and $\gl \in \nbhood.$
In view of property (2) below (\ref{e: expansion Phi R w}) this implies that the function
$$
\Phi_{\bar P_0, w}^+(s\gl, b)
$$
is holomorphic in $\gl \in \nbhood,$ for each $s = \pm 1.$
Furthermore, if $s = -1,$ then $\inp{-s\mu}{\ga} < 0$ and we see that
$\Nt(-1,+)= \emptyset$ so that in fact
$$
\Phi_{\bar P_0, w}^+(- \gl, b) = 0.
$$
We agree to write
$$
\Psi^\pm(\gl,b)) =  \sum_{s \in \{\pm 1\}} \Phi_{\bar P_0, w}^\pm(s\gl: b) [C_{\bar P_0| Q}(s :\gl)\psi]_w(e),
$$
so that
$$
E_1(Q:\psi:\gl)(bw) = \Psi^+(\gl,b) + \Psi^-(\gl,b).
$$
Taking into account that  $C_{\bar P_0|Q}(1:\dotvar)$ is holomorphic
at the point $-\mu, $
It follows from the above that $\Psi^+(\gl, b)$ is holomorphic at $\gl= - \mu,$ so that
$$
 \Res_{\gl = -\mu}\;[ f(\gl) \Psi^+(\gl, b)] = 0 \qquad (b \in \Aqp(P_0)).
$$
We infer that
\begin{equation}
\label{e: residue F as Psi minu}
F(bw) = \Res_{\gl = -\mu} f(\gl) \Psi^-(\gl, b).
\end{equation}
We will now derive an estimate for $f(\gl)q(\gl)\Psi^-(\gl, b)$ by looking at the exponents of the
series expansion.
If $s =1,$ then it follows that for $k \in \Nt(s,-)$ we have
$$
\inp{-s\mu - k\ga}{\ga} \leq 0 + \inp{\mu}{\ga} <  0.
$$
Shrinking $U$ we may arrange that for all
$\gl \in U$  and all $k \in \Nt(s,-),$
\begin{equation}
\label{e: estimate inp s gl}
\Re\inp{s\gl - k \ga}{\ga} \leq \inp{\mu + \epsilon \ga}{\ga}.
\end{equation}
As described in Lemma \ref{l: behavior Phi},
the series for $q_1(\gl)\Phi_{\bar P_0, w}(s\gl: b)$
is essentially a power series in $b^{-\ga},$ with holomorphic dependence on
$\gl.$
Hence, we may shrink $U$ to arrange that there exists a constant $C >0$
 such that for $\gl \in U$ and $b\in \Aq$ with $b^\ga \geq 2$ we have
\begin{equation}
\label{e: estimate Phi}
|q_1(\gl) \Phi_{\bar P_0, w}^{-}(s\gl, b)| \leq C b^{-\mu -\rho_{\bar P_0} + \epsilon\ga}.
\end{equation}
On the other hand, if $s = -1,$ then for all $k \in \N$ we have
$$
\inp{-s\mu - k\ga}{\ga} \leq \inp{\mu}{\ga} < 0
$$
and we see that by shrinking $U$ even further if necessary, we may arrange
that (\ref{e: estimate inp s gl}) is valid for the present choice of $s,$ all $k \in \N$ and all
$\gl \in U.$ This leads to the estimate (\ref{e: estimate Phi}) for $s = -1.$
From the estimates obtained, combined with the holomorphy of the function
(\ref{e: holomorphy C}), for $s = \pm 1,$ we infer that there exists a constant $C_1 > 0$
such that
$$
|f(\gl)q(\gl) \Psi^-(\gl, b)| \leq C_1 b^{-\rho_{\bar P_0} + (- \mu  + \epsilon\ga) }
$$
for $\gl \in U$ and $b^\ga \geq 2.$
Using the integral formula for the residue in (\ref{e: residue F as Psi minu})
and taking into account that $q(\gl)^{-1}$ is bounded
on a circle around $-\mu$ in $U,$ we  infer that there exists
$C >0$ such that
(\ref{e: estimate F}) is valid for all $b \in \Aq$ with $b^\ga \geq 2.$ Since
$F$ is continuous, a similar estimate holds for all $b \in \Aqp(\bar P_0).$
\end{proof}

We finally come to the proof of Proposition \ref{p: R Q  in span of finite sum of discrete series}.

\begin{proof}[Proof of Proposition \ref{p: R Q  in span of finite sum of discrete series}]
Let $\mu \in S_{Q,1}.$
It follows from Lemma \ref{l: residues two} that there exists an integer
$d \geq 0$ and a finite dimensional subspace
$\findim \subseteqq C^\infty(G/H)^K,$
consisting of $\DGH$-finite functions, such that
\begin{equation}
\label{e: Resone in P tensor S}
\Resone(Q:\mu)\psi \in P_d(\Aq) \otimes \findim
\end{equation}
for all $\psi \in \oC(1). $ On the other hand, it follows by application of  Proposition
\ref{p: residue in Schwartz space} that $\Resone(Q:\mu:a:\dotvar)$
is a $\DGH$-finite function in $\cC(G/H)^K,$
for every $a \in \Aq.$
Hence, (\ref{e: Resone in P tensor S}) is valid with $\findim$ a finite dimensional subspace of $\cC(G/H)^K,$
consisting of $\DGH$-finite functions, which are therefore in particular $Z(\fg)$-finite.

It now follows
that the $(\fg,K)$-span of $\findim$ in $\cC(G/H)$ is a $(\fg,K)$-module of
finite length in view of a well known result of Harish-Chandra; see \cite[p.~312, Thm.~12]{Varadarajan_HarmonicAnalysisOnRealReductiveGroups} and \cite[p.~112, Thm.~4.2.1]{Wallach_RealReductiveGroupsI}.)
The closure of this span in $L^{2}(G/H)$ is therefore a finite direct sum of irreducible subrepresentations. Since $\findim$ consists of left $K$-invariant functions, each of these irreducible subrepresentations is spherical.
The result follows.
\end{proof}
\begin{Rem}
The proof of Proposition \ref{p: R Q  in span of finite sum of discrete series}
relies heavily on the assumption that $\faq$ is one-dimensional, which makes it possible to analyse
which exponents vanish from the expansion involved, by taking residues.
\end{Rem}

\subsection{Convergence for symmetric spaces of split rank one}
\label{ss: Convergence}
In this subsection we retain the
\medno
{\bf Assumption: \ }   $G/H$ is of split rank one.
\begin{Thm}\label{Thm convergence}
Let $Q\in\cPH(A)$. Then $\Ht_{Q}$ has a unique extension to a continuous linear map $\cC(G/H)\to C^{\infty}(L/H_{L}).$ Moreover,  for every $\phi\in\cC(G/H),$
$$
\Ht_{Q}\phi(l)
=\Deltach_{Q}(l)\int_{N_{Q}/H_{N_{Q}}}\phi(ln)\,dn
\qquad(l\in L)
$$
with absolutely convergent integrals.

Furthermore, the Radon transform $\Rt_Q$ has a unique
extension to a continuous linear map $\cC(G/H) \to C^\infty(G/N_Q)$ and for every $\phi \in \cC(G/H)$
$$
\Rt_Q \phi(g) = \int_{N_Q/H_{N_Q}} \phi(gn)\; dn
\qquad(g \in G),
$$
with absolutely convergent integrals.
\end{Thm}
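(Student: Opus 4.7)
The plan is to reduce Theorem~\ref{Thm convergence} to a single continuity statement, and then to establish that statement via the residue decomposition of the spherical Harish-Chandra transform for the trivial $K$-type. Proposition~\ref{Prop If H_Q extends to C(G/H)^K, then R_Q extends to C(G/H) with conv integrals} shows that it suffices to extend the restriction of $\Ht_Q$ to $C_c^\infty(G/H)^K$ continuously to a map $\cC(G/H)^K \to C(L/H_L)$; from this single input the proposition delivers both the convergent Radon integral formula and the convergent Harish-Chandra integral formula, together with the smoothness of the extensions. Using Lemma~\ref{Lemma relation between H_(Q,tau) and H_Q} with $\tau = 1$ (so that $\oC(1) \simeq \C^{\cW}$ via evaluation at $e$, and the $v = e \in \cW$ component recovers $\Ht_Q$), together with the obvious left $M$-invariance and right $H_L$-invariance of $\Ht_Q\phi$ for $K$-fixed $\phi$, this task reduces further to showing that $\Ht_{Q,1}$ extends continuously to a map $\cC(G/H)^K \to C^\infty(\Aq) \otimes \oC(1)$.

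For the latter I would use the decomposition~(\ref{e: cor Ht minus Jphi as residues}),
\begin{equation*}
\langle \Ht_{Q,1}\phi(a),\psi\rangle
= \langle\It_{Q,1}\phi(a),\psi\rangle
+ \sum_{\mu \in S_{Q,1}} a^\mu \langle\phi,\Resone(Q:\mu:a:\dotvar)\psi\rangle,
\end{equation*}
in which $S_{Q,1}$ is finite by Lemma~\ref{Lemma S_(Q,tau) is real and finite}. The tempered term $\It_{Q,1}$ already extends continuously to $\cC(G/H)^K \to C^\infty_{\temp}(\Aq)\otimes\oC(1)$ by Proposition~\ref{Prop JQ: C(G/H:tau) to C^infty(Aq)}, so it remains to treat each residual summand. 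By Lemma~\ref{l: residues two}, $\Resone(Q:\mu)$ lies in $P_{d-1}(\Aq)\otimes \findim \otimes \oC(1)^*$ for a finite-dimensional subspace $\findim \subseteq C^\infty(G/H)^K,$ so $a \mapsto \Resone(Q:\mu:a:\dotvar)\psi$ is polynomial in $\log a$ with values in $\findim$. Proposition~\ref{p: R Q  in span of finite sum of discrete series}, which invokes both the rank-one assumption and the $\fh$-compatibility of $Q$, refines this by placing $\findim$ inside the discrete-series Schwartz subspace $\bigoplus_i \cC(G/H)_{\pi_i}$. Consequently each residual pairing $\phi \mapsto \langle\phi,\Resone(Q:\mu:a:\dotvar)\psi\rangle$ is a continuous linear functional on $\cC(G/H)^K$ (via the continuous inclusion $\cC(G/H)\hookrightarrow L^2(G/H)$) depending polynomially on $\log a$; multiplication by the smooth factor $a^\mu$ and summing over the finite set $S_{Q,1}$ gives a continuous linear map $\cC(G/H)^K \to C^\infty(\Aq) \otimes \oC(1)$.

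The hard part, and the real content of the argument, is the identification of the residual Eisenstein contributions as $L^2$-Schwartz functions rather than merely $\DGH$-finite smooth functions; this is exactly what $\fh$-compatibility of $Q$ buys us in Proposition~\ref{p: R Q  in span of finite sum of discrete series}, and the rank-one hypothesis is what reduces the residual integrals to point residues that can be analysed by tracking exponents in the $\Phi_{\bar P_0,w}$-expansion. Once the continuous extension of $\Ht_{Q,1}$ is in hand, uniqueness follows from density of $C_c^\infty(G/H)^K$ in $\cC(G/H)^K,$ and Proposition~\ref{Prop If H_Q extends to C(G/H)^K, then R_Q extends to C(G/H) with conv integrals} simultaneously yields the continuous extensions of $\Ht_Q$ and $\Rt_Q$ to the full Schwartz space, together with the claimed absolutely convergent integral representations.
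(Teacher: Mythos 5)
Your proposal is correct and follows essentially the same route as the paper's own proof: reduce via Proposition \ref{Prop If H_Q extends to C(G/H)^K, then R_Q extends to C(G/H) with conv integrals} and Lemma \ref{Lemma relation between H_(Q,tau) and H_Q} to extending $\Ht_{Q,1}$ on $\cC(G/H)^K$, then use the residue decomposition (\ref{e: cor Ht minus Jphi as residues}) on the dense subspace $C_c^\infty(G/H)^K$, with Proposition \ref{Prop JQ: C(G/H:tau) to C^infty(Aq)} handling the tempered term and Proposition \ref{p: R Q  in span of finite sum of discrete series} together with Lemma \ref{l: residues two} showing the finitely many residual terms are Schwartz (discrete series) functions depending polynomially on $\log a$, hence give continuous functionals on $\cC(G/H)^K$. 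This matches the paper's argument step for step, including the role of $\fh$-compatibility and the split rank one hypothesis.
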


\begin{proof}
As in (\ref{e: iso oC one}) we identify  $\oC(1) \simeq \C^{\cW}$. Let $\psi_0\in  \oC(1)$  be the unique element determined by
$(\psi_0)_w = \gd_{1w}$ for $w \in \cW.$

By Lemma \ref{Lemma relation between H_(Q,tau) and H_Q} we see that, for $\phi\in C_{c}^{\infty}(G/H)^{K},$
$$
\big\langle\Ht_{Q,1}\phi(a),\psi_{0}\big\rangle
=\Ht_{Q}\phi(a)
\qquad(a\in A_{\fq}).
$$
It follows from Proposition \ref{p: R Q in span of finite sum of discrete series} that the functions
$\Resone(Q: \mu: a: \dotvar )(\psi_0),$ for $\mu \in S_{Q,\tau},$
belong to $\cC(G/H)$ for every $a \in \Aq.$ Furthermore, by
Lemma \ref{l: residues two}, these functions depend
polynomially on $a.$
If we combine this with
Proposition \ref{Prop JQ: C(G/H:tau) to C^infty(Aq)}, we infer that the expression on
 the right-hand side of (\ref{e: cor Ht minus Jphi as residues})
 is well defined for $\phi\in\cC(G/H: \oneK )=\cC(G/H)^{K}$  and depends linear continuously
on it with values in
$$
C^{\infty}(A_{\fq})=C^{\infty}(L/H_{L})^{M}.
$$
It follows that the restriction of $\Ht_{Q}$ to $C_{c}^{\infty}(G/H)^{K}$ extends to a continuous linear map from $\cC(G/H)^{K}$ to $C^{\infty}(L/H_{L})^{M}$.
The theorem now follows by application of Proposition \ref{Prop If H_Q extends to C(G/H)^K, then R_Q extends to C(G/H) with conv integrals}.
\end{proof}

\begin{Rem}
\label{r: convergence Radon in AFJS}
For the hyperbolic spaces $\SO(p,q+1)_{e}/ \SO(p,q)_e,$ this result is due to
\cite{AndersenFlenstedJensenSchlichtkrull_CuspidalDiscreteSerieseForSemisimpleSymmetricSpaces}.
\end{Rem}

In the following we assume more generally that $(\tau, V_{\tau})$ is a finite dimensional unitary representation of $K$.

\begin{Cor}\label{Cor H_(Q,tau) extends to Schwartz functions}
Let $Q\in\cPH(A)$. Then $\Ht_{Q,\tau}$ extends to a continuous linear map
$$
\Ht_{Q,\tau}:\cC(G/H:\tau)\to C^{\infty}(A_{\fq})\otimes\oC(\tau).
$$
Moreover, (\ref{eq def H_(Q,tau)}) and (\ref{eq relation H_(Q,tau) and H_Q}) are valid for every
$\phi \in \cC(G/H:\tau)$ with the extensions of $\Ht_{Q,\tau}$ and $\Ht_{Q^v}$
to the associated Schwartz spaces; the appearing integrals are absolutely convergent.
\end{Cor}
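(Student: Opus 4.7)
The plan is to deduce the corollary directly from Theorem \ref{Thm convergence} via the formula of Lemma \ref{Lemma relation between H_(Q,tau) and H_Q}, using that $\fh$-compatibility is preserved under $\cW$-conjugation.

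First, by Lemma \ref{l: H compatibility stable under cW}, since $Q \in \cPH(A)$ and $\cW \subseteq \Nor_K(\faq) \cap \Nor_K(\fa_\fh)$, each conjugate $Q^v = v^{-1}Qv$ for $v \in \cW$ also belongs to $\cPH(A)$. Applying Theorem \ref{Thm convergence} to each $Q^v,$ we obtain continuous linear extensions
\[
\Ht_{Q^v} : \cC(G/H) \longrightarrow C^\infty(L/H_L),
\]
and for every $\psi \in \cC(G/H)$ the integral defining $\Ht_{Q^v}\psi(l)$ converges absolutely for all $l \in L$.

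Next, for $\phi \in \cC(G/H\colon\tau)$ and $\psi \in \oC(\tau)$ with components $\psi_v$, the scalar-valued function $x \mapsto \langle \phi(x), \tau(v^{-1})\psi_v(e)\rangle_\tau$ lies in $\cC(G/H)$, as it is obtained from $\phi$ by applying a continuous linear functional in the $V_\tau$-factor. Thus the right-hand side of (\ref{eq relation H_(Q,tau) and H_Q}) makes sense as a continuous linear expression in $\phi$, and I define the extension $\Ht_{Q,\tau}\phi(a) \in \oC(\tau)$ via the identity
\[
\langle \Ht_{Q,\tau}\phi(a), \psi\rangle
= \sum_{v\in\cW} \Ht_{Q^v}\bigl(\langle \phi(\dotvar), \tau(v^{-1})\psi_v(e)\rangle_\tau\bigr)(v^{-1}av).
\]
Since $\psi \mapsto \tau(v^{-1})\psi_v(e)$ is continuous linear $\oC(\tau) \to V_\tau$ and the map $\Ht_{Q^v}$ is continuous into $C^\infty(L/H_L)$, this defines a continuous linear map $\cC(G/H\colon\tau) \to C^\infty(\Aq) \otimes \oC(\tau)$; on $C_c^\infty(G/H\colon\tau)$ it agrees with the original $\Ht_{Q,\tau}$ by Lemma \ref{Lemma relation between H_(Q,tau) and H_Q}, so uniqueness of the extension follows from density.

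It remains to verify the absolute convergence of the integral defining $\Ht_{Q,\tau}\phi(a)$ in (\ref{eq def H_(Q,tau)}) for an arbitrary Schwartz function $\phi$, and the validity of (\ref{eq relation H_(Q,tau) and H_Q}) with the defined extension. For each $v \in \cW$, $m \in M$ and $a \in \Aq$, the integrand in (\ref{eq def H_(Q,tau)}) has norm at most $|\phi|(mavn)$ (as $V_\tau$-norms are invariant under $\tau$), and applying Theorem \ref{Thm convergence} to the $K$-invariant Schwartz function $\widehat{|\phi|} \in \cC(G/H)^K$ dominating $|\phi|$ via Proposition \ref{Prop domination by K-inv Schwartz functions}, we get absolute convergence of $\int_{N_{Q^v}/H_{N_{Q^v}}} |\phi(mavn)|\,dn$. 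Thus the $V_\tau$-valued integral in (\ref{eq def H_(Q,tau)}) converges absolutely, and the identity (\ref{eq relation H_(Q,tau) and H_Q}) now extends from $C_c^\infty(G/H\colon\tau)$ to $\cC(G/H\colon\tau)$ by the continuity of both sides, with the integral formula (\ref{eq def H_(Q,tau)}) agreeing with the defining formula of the extension.

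The only genuinely substantive point is the convergence step, and it is handled entirely by Theorem \ref{Thm convergence} together with the $K$-invariant domination of Proposition \ref{Prop domination by K-inv Schwartz functions}; no further work on residues is required, since the hard analysis has been absorbed into the case of the trivial $K$-type already settled.
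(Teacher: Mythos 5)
Your proposal is correct and takes essentially the same route as the paper, whose proof is exactly the combination of Theorem \ref{Thm convergence} (applied to each $Q^v$), Lemma \ref{l: H compatibility stable under cW}, and density of $C_c^\infty(G/H\colon\tau)$ in $\cC(G/H\colon\tau)$ via the relation (\ref{eq relation H_(Q,tau) and H_Q}); you merely spell out the details. One small imprecision: Proposition \ref{Prop domination by K-inv Schwartz functions} is stated for functions in $\cC(G/H)$, so applying the map $\phi\mapsto\widehat\phi$ to the (possibly non-smooth) pointwise norm $\|\phi(\cdot)\|_{V_\tau}$ is not literally licensed — but this is harmless, since you can instead apply the absolute-convergence statement of Theorem \ref{Thm convergence} to the scalar Schwartz components $x\mapsto\langle\phi(x),\xi\rangle_\tau$, $\xi\in V_\tau$, and sum over a basis.
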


\begin{proof}
Since $C^\infty_c(G/H:\tau)$ is dense in $\cC(G/H:\tau),$
this follows immediately from combining Theorem \ref{Thm convergence} with Lemma
\ref{l: H compatibility stable under cW}.
\end{proof}

By Proposition \ref{Prop JQ: C(G/H:tau) to C^infty(Aq)} the map $\It_{Q,\tau}$ extends to a continuous linear map from  $\cC(G/H:\tau)$ to $C^{\infty}(A_{\fq})\otimes\oC(\tau)$ as well.
Hence, it follows from Equation (\ref{e: cor Ht minus Jphi as residues})
that for every $a\in A_{\fq}$ and $\psi\in\oC(\tau),$
$$
\sum_{\mu \in S_{Q,\tau}} a^\mu \Restau(Q: \mu : a : \dotvar)\psi
$$
is a smooth function defining a tempered distribution. In particular, writing
$C^\infty_\temp(G/H): = C^\infty(G/H) \cap \cC'(G/H),$ we obtain

\begin{Cor}
\label{c: R tempered}
Let $Q\in\cP_{\fh}(A)$ and $\mu \in S_{Q,\tau}.$ Then
$$
\Restau(Q:\mu) \in P(\Aq) \otimes C^\infty_\temp(G/H) \otimes \Hom(\oC(\tau), \Vtau).
$$
\end{Cor}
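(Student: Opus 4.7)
The factorisation into polynomial times $\tau$-spherical times $\Hom(\oC(\tau),V_\tau)$ is already provided by Lemma \ref{l: residues two}: for the given $\mu$ there is a finite dimensional subspace $\findim \subseteqq C^\infty(G/H:\tau)$ with
\[
\Restau(Q:\mu) \in P_{d-1}(\Aq) \otimes \findim \otimes \oC(\tau)^*.
\]
The only missing ingredient is that the $G/H$-component may be taken in the Schwartz-tempered class, and the plan is to read this off from the Schwartz extension of $\Ht_{Q,\tau}$ combined with a Vandermonde-style separation of exponential modes that is available because $\dim\faq = 1$.

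First, fix $\psi \in \oC(\tau)$ and define
\[
f_\psi(a,x) := \sum_{\nu \in S_{Q,\tau}} a^\nu\, \Restau(Q:\nu:a:x)(\psi).
\]
By Corollary \ref{Cor H_(Q,tau) extends to Schwartz functions} and Proposition \ref{Prop JQ: C(G/H:tau) to C^infty(Aq)}, both $\Ht_{Q,\tau}$ and $\It_{Q,\tau}$ extend continuously from $C_c^\infty(G/H:\tau)$ to $\cC(G/H:\tau)$. Passing to the limit in (\ref{e: cor Ht minus Jphi as residues}) along this extension will then yield, for every fixed $a \in \Aq,$ the Schwartz continuity of
\[
\phi \mapsto \int_{G/H} \langle \phi(x), f_\psi(a,x)\rangle_\tau\, dx.
\]
Smoothness of $f_\psi(a,\cdot)$ therefore gives $f_\psi(a,\cdot) \in C^\infty_\temp(G/H) \otimes V_\tau$ for every $a \in \Aq$.

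Next, parametrise $\Aq$ by $a_t := \exp(tH_0)$ for a unit vector $H_0 \in \faq.$ Lemma \ref{Lemma S_(Q,tau) is real and finite} together with $\dim \faq = 1$ guarantees that the real numbers $s_\nu := \nu(H_0)$, $\nu \in S_{Q,\tau}$, are finitely many and pairwise distinct. Writing the polynomial provided by Lemma \ref{l: residues two} as
\[
\Restau(Q:\nu:a_t:x)(\psi) = \sum_{k=0}^{d_\nu - 1} t^k\, F_{\nu,k}^\psi(x),
\]
one obtains $f_\psi(a_t,x) = \sum_{\nu,k} e^{t s_\nu} t^k\, F_{\nu,k}^\psi(x)$. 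Linear independence of the exponential-polynomial system $\{t^k e^{t s_\nu}\}_{(\nu,k)}$ over $\C$ then produces, for each pair $(\nu,k)$, finitely many sample points $t_1, \ldots, t_N \in \R$ and constants $\gg_j \in \C$ (depending only on $\nu$ and $k$) such that
\[
F_{\nu,k}^\psi(x) = \sum_{j=1}^N \gg_j\, f_\psi(a_{t_j},x) \qquad (x \in G/H).
\]
By the previous paragraph, each summand lies in $C^\infty_\temp(G/H) \otimes V_\tau$, hence so does $F_{\nu,k}^\psi$. Since the construction is linear in $\psi \in \oC(\tau)$, assembling the $F_{\nu,k}^\psi$ into an element of $\oC(\tau)^* \otimes V_\tau = \Hom(\oC(\tau),V_\tau)$ delivers the claimed membership.

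The genuine obstacle sits entirely in the first step: it is exactly the Schwartz-continuous extension of $\Ht_{Q,\tau}$ furnished by Theorem \ref{Thm convergence} and Corollary \ref{Cor H_(Q,tau) extends to Schwartz functions} that uses the $\fh$-compatibility hypothesis on $Q$, via the discrete-series identification of residues at the trivial $K$-type in Proposition \ref{p: R Q  in span of finite sum of discrete series}. Once that input is granted, the separation of exponential modes in the second step is a routine linear-algebra argument with no foreseen difficulty.
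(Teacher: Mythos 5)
Your argument is correct and follows essentially the same route as the paper: the paper also deduces, from the continuous extensions of $\Ht_{Q,\tau}$ (Corollary \ref{Cor H_(Q,tau) extends to Schwartz functions}) and $\It_{Q,\tau}$ (Proposition \ref{Prop JQ: C(G/H:tau) to C^infty(Aq)}) together with (\ref{e: cor Ht minus Jphi as residues}), that $\sum_{\mu}a^{\mu}\Restau(Q:\mu:a:\dotvar)\psi$ is a smooth tempered function for each $a$, and then passes to the individual terms. Your explicit Vandermonde-type separation of the exponential-polynomial modes, using Lemma \ref{l: residues two} and $\dim\faq=1$, merely spells out the step the paper subsumes under ``in particular.''
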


We now observe that Proposition \ref{Prop JQ: C(G/H:tau) to C^infty(Aq)}, Corollary \ref{Cor H_(Q,tau) extends to Schwartz functions} and Corollary \ref{c: R tempered}
imply the following result.

\begin{Cor}
\label{c: HQtau as tempered term plus residues for Schwartz}
Let $Q\in\cPH(A)$.
Then equation
{\rm (\ref{e: cor Ht minus Jphi as residues})}
holds for every $\phi\in\cC(G/H:\tau)$, $\psi\in\oC(\tau)$ and $a\in A_{\fq}$.
\end{Cor}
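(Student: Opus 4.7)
The plan is to argue by density and continuity. Equation (\ref{e: cor Ht minus Jphi as residues}) has already been established for $\phi \in C_c(G/H:\tau)$; what remains is to verify that all three sides of the identity depend continuously on $\phi \in \cC(G/H:\tau)$ when viewed as functions of a fixed $a\in\Aq$ and $\psi \in \oC(\tau)$, and then to invoke density of $C_c^\infty(G/H:\tau)$ in $\cC(G/H:\tau)$.

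First I will address the left-hand side. By Corollary \ref{Cor H_(Q,tau) extends to Schwartz functions}, which depends crucially on the assumption $Q \in \cPH(A)$, the map $\Ht_{Q,\tau}$ extends continuously to $\cC(G/H:\tau) \to C^\infty(\Aq)\otimes \oC(\tau)$. Evaluation at a fixed $a \in \Aq$ and pairing with a fixed $\psi\in \oC(\tau)$ are continuous linear operations on $C^\infty(\Aq)\otimes\oC(\tau)$, so $\phi \mapsto \inp{\Ht_{Q,\tau}\phi(a)}{\psi}$ is a continuous linear functional on $\cC(G/H:\tau)$. For the first term on the right-hand side, Proposition \ref{Prop JQ: C(G/H:tau) to C^infty(Aq)} (which does not require $\fh$-compatibility) gives the analogous continuity of $\phi \mapsto \inp{\It_{Q,\tau}\phi(a)}{\psi}$.

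Next, I will handle the residual sum. The set $S_{Q,\tau}$ is finite by Lemma \ref{Lemma S_(Q,tau) is real and finite}, so it suffices to show that each summand $\phi \mapsto a^\mu \inp{\phi}{\Restau(Q:\mu:a:\dotvar)\psi}$ extends continuously to $\cC(G/H:\tau)$. This is precisely the content of Corollary \ref{c: R tempered}: the residue kernel $\Restau(Q:\mu:a:\dotvar)\psi$ lies in $P(\Aq)\otimes C^\infty_\temp(G/H)\otimes \Vtau$, hence for fixed $a$ it defines a tempered distribution on $G/H$, and pairing with it is continuous on the Schwartz space.

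Finally, since $C_c^\infty(G/H:\tau)$ is dense in $\cC(G/H:\tau)$ and equation (\ref{e: cor Ht minus Jphi as residues}) holds on this dense subspace, the equality of three continuous functionals extends to all of $\cC(G/H:\tau)$. I do not anticipate a real obstacle here: the work has already been done in Corollaries \ref{Cor H_(Q,tau) extends to Schwartz functions} and \ref{c: R tempered} together with Proposition \ref{Prop JQ: C(G/H:tau) to C^infty(Aq)}, and the present corollary is essentially a bookkeeping consequence.
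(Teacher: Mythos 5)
Your proposal is correct and follows essentially the same route as the paper: the identity is known on the dense subspace of compactly supported functions, and the paper likewise deduces the extension to $\cC(G/H:\tau)$ from the continuity statements in Proposition \ref{Prop JQ: C(G/H:tau) to C^infty(Aq)}, Corollary \ref{Cor H_(Q,tau) extends to Schwartz functions} and Corollary \ref{c: R tempered}. No gap to report.
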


\section{Cusp forms and discrete series representations}\label{section Cusp forms}
A well known
 result of Harish-Chandra asserts for the case of the group that
 the closed span in the Schwartz space of the bi-$K$-finite matrix coefficients of
 the representations from the discrete series equals the space
of so-called cusp forms. (See \cite{HC_ds_2}, \cite[Thm.\ 10]{HC_harman_70}, \cite[Sect.\ 18,27]{Harish-Chandra_HarmonicAnalyisOnRealReductiveGroupsI} and \cite[Thm.\ 16.4.17]{Varadarajan_HarmonicAnalysisOnRealReductiveGroups}.)
 In the present section we generalize this result for the case of split rank one.

\subsection{The kernel of $\It_{Q,\tau}$}
\label{ss: kernel of I Q tau}
In the present subsection
we do not make any assumption on the dimension of $\faq$.
Let $(\tau,V_{\tau})$ be a finite dimensional unitary representation of $K$ as before.

\begin{Thm}\label{Thm ker(F^0_(P0,tau))=ker(J_(Q,tau))}
Let $Q\in\cP(A)$ and $P_{0} \in \cP_\gs(A).$
Then
the following are equal as subspaces of
$\cC(G/H:\tau),$
\begin{equation}
\label{e: kernels equal}
\ker(\nFt_{\bar P_{0}})
=\ker(\It_{Q,\tau}).
\end{equation}
\end{Thm}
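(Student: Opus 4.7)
The plan is to compare the two Fourier transforms by exploiting the identity of Proposition \ref{Prop F_Q phi=c F^0_cP0 phi} together with the structural information extracted in the proof of Proposition \ref{Prop JQ: C(G/H:tau) to C^infty(Aq)}. The central ingredients will be: (i) the intertwining identity
\begin{equation*}
p_h(\lambda)\,\Ft_{Q,\tau}\phi(\lambda)
=p_h(\lambda)\,\ctau_{\bar P_0|Q}(1:-\bar\lambda)^{*}\nFt_{\bar P_0}\phi(\lambda)
\qquad(\lambda\in i\faqd)
\end{equation*}
from (\ref{e: FQtau and FbarPzero}), where $p_h\in\Pi_{\gS,\R}(\faqd)$ is the homogeneous polynomial chosen in the proof of Proposition \ref{Prop JQ: C(G/H:tau) to C^infty(Aq)}; (ii) the factorization
$\It_{Q,\tau}\phi=u*\Kt_{Q,\tau}\phi$ established there, valid by density for every $\phi\in\cC(G/H:\tau)$, with $\Kt_{Q,\tau}\phi=\eFt_{\!\Aq}^{-1}(p_h\Ft_{Q,\tau}\phi|_{i\faqd})\in\cS(\Aq)\otimes\oC(\tau)$ and $\hat u=v$ the regularization of $1/p_h$ given by (\ref{e: defi distribution v}); and (iii) the fact that $\ctau_{\bar P_0|Q}(1:\dotvar)$ has a meromorphic inverse, so that $\ctau_{\bar P_0|Q}(1:-\bar\lambda)^{*}$ is an invertible endomorphism of $\oC(\tau)$ for $\lambda$ in an open, conull subset of $i\faqd$.

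The inclusion $\ker(\nFt_{\bar P_0})\subseteq\ker(\It_{Q,\tau})$ is then immediate: if $\nFt_{\bar P_0}\phi=0$, the displayed identity yields $p_h\Ft_{Q,\tau}\phi|_{i\faqd}=0$, hence $\Kt_{Q,\tau}\phi=0$ and $\It_{Q,\tau}\phi=u*0=0$.

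For the reverse inclusion, assume $\It_{Q,\tau}\phi=0$. Applying the Euclidean Fourier transform to $u*\Kt_{Q,\tau}\phi=0$ gives, as tempered distributions on $i\faqd$,
\begin{equation*}
v\cdot\bigl(p_h\Ft_{Q,\tau}\phi|_{i\faqd}\bigr)=0.
\end{equation*}
The key observation is that, because of the explicit definition of $v$, one has $v=1/p_h$ as a distribution on the open set $\{p_h\neq 0\}\subseteq i\faqd$: for any test function $\psi\in C_c^\infty(i\faqd)$ supported in this open set, dominated convergence applied to (\ref{e: defi distribution v}) gives $v(\psi)=\int\psi/p_h\,d\lambda$. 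Testing $v\cdot(p_h\Ft_{Q,\tau}\phi)=0$ against such $\psi$ yields $\int\Ft_{Q,\tau}\phi\cdot\psi\,d\lambda=0$, whence $\Ft_{Q,\tau}\phi=0$ on the open dense set $\{p_h\neq 0\}\cap i\faqd$. Since $p_h\Ft_{Q,\tau}\phi$ extends continuously to all of $i\faqd$ (in fact is Schwartz), it vanishes identically on $i\faqd$.

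Combining this with the intertwining identity and the generic invertibility of $\ctau_{\bar P_0|Q}(1:-\bar\lambda)^{*}$, we obtain $\nFt_{\bar P_0}\phi(\lambda)=0$ on a dense subset of $i\faqd$; continuity of $\nFt_{\bar P_0}\phi$ (it takes values in $\cS(i\faqd)\otimes\oC(\tau)$ by \cite[Lemma~6.2]{vdBanSchlichtkrull_MostContinuousPart}) then forces $\nFt_{\bar P_0}\phi=0$. The main technical point to handle carefully is step (ii) in the second paragraph: extracting pointwise vanishing of $p_h\Ft_{Q,\tau}\phi$ on $i\faqd$ from the distributional identity $v\cdot(p_h\Ft_{Q,\tau}\phi)=0$, which rests on the precise fact that $v$ is a pointwise inverse of $p_h$ off the zero set of $p_h$.
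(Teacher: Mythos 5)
Your proof is correct, and its skeleton is the paper's: both inclusions run through the identity (\ref{e: FQtau and FbarPzero}) relating $p_h\Ft_{Q,\tau}\phi$ to $\nFt_{\bar P_0}\phi$, the factorization $\It_{Q,\tau}\phi=u*\Kt_{Q,\tau}\phi$ of (\ref{eq J_Q= u*K_Q}) extended to $\cC(G/H:\tau)$ by density, and the final passage from $p_h\Ft_{Q,\tau}\phi\equiv 0$ on $i\faqd$ to $\nFt_{\bar P_0}\phi=0$ via generic invertibility of $\ctau_{\bar P_0|Q}(1:\dotvar)$ and smoothness of $\nFt_{\bar P_0}\phi$; the forward inclusion is verbatim the same. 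The one place you genuinely diverge is the middle step of the reverse inclusion. The paper proves the global identity $p_h\,v=1$ (using $|p_h(\gl)|\leq|p_h(\gl+\epsilon\nu)|$ for $\gl\in i\faqd$ and dominated convergence in (\ref{e: defi distribution v})), which upgrades (\ref{eq J_Q= u*K_Q}) to the operator identity $D(\It_{Q,\tau}\phi)=\Kt_{Q,\tau}\phi$, where $D$ is the constant-coefficient operator on $\Aq$ with Fourier symbol $p_h$; then $\It_{Q,\tau}\phi=0$ gives $\Kt_{Q,\tau}\phi=0$ immediately. You instead localize: on the open dense set $\{p_h\neq0\}\cap i\faqd$ the distribution $v$ is multiplication by $1/p_h$ (your dominated convergence is justified, since $|p_h(\cdot+\epsilon\nu)|$ is uniformly bounded below on compacta of that set for small $\epsilon$), so $v\cdot(p_h\Ft_{Q,\tau}\phi)=0$ forces the Schwartz function $p_h\Ft_{Q,\tau}\phi$ to vanish there, hence everywhere by continuity. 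Both devices work; the paper's buys the stronger relation $D\It_{Q,\tau}=\Kt_{Q,\tau}$ on all of $\cC(G/H:\tau)$, which it reuses later (e.g.\ in the proof of Theorem \ref{t: characterisation ds by Radon}), while yours is slightly more elementary, needing only the local description of $v$ off the zero set of $p_h$. One cosmetic point: for $\phi$ merely Schwartz, ``$\Ft_{Q,\tau}\phi$'' on $\{p_h\neq0\}\cap i\faqd$ should be read as the quotient by $p_h$ of the $\cS(i\faqd)\otimes\oC(\tau)$-valued extension of $p_h\Ft_{Q,\tau}\phi$ furnished by (\ref{e: FQtau and FbarPzero}); since you conclude via the vanishing of $p_h\Ft_{Q,\tau}\phi$ itself, this is harmless.
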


\begin{proof}
Let $\phi\in\cC(G/H:\tau).$ Then the definition
of $\Kt_{Q,\tau}\phi$ in (\ref{eq def K_Q}) is meaningful and the equality (\ref{eq J_Q= u*K_Q}) is valid. If $\nFt_{\bar P_0} \phi = 0$ then it follows from (\ref{e: FQtau and FbarPzero}) that
$\Kt_{Q,\tau}\phi = 0,$ which in turn implies that $\It_{Q,\tau}\phi = 0.$
This shows that the space on the left-hand side of (\ref{e: kernels equal}) is contained in the space
on the right-hand side.

For the converse inclusion, let $p_{h}$, $u$ and $v$ be as in the proof of
Proposition \ref{Prop JQ: C(G/H:tau) to C^infty(Aq)}. Let $D$
be the bi-invariant differential operator on $\Aq$ which satisfies
$$
\eFt_{A_{\fq}}(D\delta_e)
=p_{h},
$$
with $\gd_e$ the Dirac measure at the point $e$ on $\Aq.$ Then
$$
\eFt_{A_{\fq}}\big(D(\It_{Q,\tau}\phi)\big)
=p_{h}\eFt_{A_{\fq}}\big(\It_{Q,\tau}\phi\big)
=p_{h}\eFt_{A_{\fq}}\big(\Kt_{Q,\tau}\phi\big)v.
$$
If $\xi \in \faqd$ is a real linear functional, then by looking at
real and imaginary parts, we see that $| \inp{\gl}{\xi} | \leq |\inp{\gl + \epsilon \nu}{\xi}|$
for all $\gl \in i\faqd$, $\epsilon>0$ and $\nu\in\fa_{\fq}^{*}$.
This implies that
$$
\left| \frac{p_h(\gl)}{p_h(\gl + \epsilon \nu)} \right| \leq 1,\qquad (\gl \in \faqd).
$$
By a straightforward application of the Lebesgue dominated convergence theorem it
now follows that $p_h v = 1,$ and we infer that
$$
D(\It_{Q,\tau}\phi)
=\Kt_{Q,\tau}\phi.
$$

Let now $\phi \in \cC(G/H:\tau)$ and assume that $\It_{Q,\tau}\phi = 0.$ Then $\Kt_{Q,\tau}\phi = 0$
and it follows from (\ref{eq def K_Q}) that $[p_h\Ft_{Q,\tau}]\phi = 0.$
In view of (\ref{e: FQtau and FbarPzero}) this implies that
$$
p_{h}(\lambda) C_{\bar P_0 : Q}(1 : - \bar\lambda)^*\,\nFt_{\bar P_0}(\phi)(\gl) = 0
$$
for generic $\gl \in i\faqd.$ Since $\nFt_{\bar P_0}\phi$ is smooth by
\cite[Cor.\ 4, p.\ 573]{vdBanSchlichtkrull_FourierTransformOnASemisimpleSymmetricSpace}
and $C_{\bar P_0 : Q}(1 : - \bar\lambda)$ is invertible for generic $\gl \in i\faqd,$
by \cite[Thm.\ 8.13]{vdBanKuit_EisensteinIntegrals}, it follows that $\nFt_{\bar P_0}\phi = 0.$
\end{proof}

Let $\cC_{\mc}(G/H:\tau)$ be the closed subspace of $\cC(G/H:\tau)$ corresponding to the most continuous part of the spectrum.
By \cite[Cor.\ 17.2 and Prop.\ 17.3]{vdBanSchlichtkrull_MostContinuousPart} the kernel of $\nFt_{\bar P_0}$ is equal to the orthocomplement (with respect to the $L^{2}$-inner product) in $\cC(G/H:\tau)$ of $\cC_{\mc}(G/H:\tau)$. Theorem \ref{Thm ker(F^0_(P0,tau))=ker(J_(Q,tau))} therefore implies that $$
\cC_{\mc}(G/H:\tau)\cap\ker(\It_{Q,\tau})=\{0\}.
$$

Let $\cC_{\ds}(G/H\colon \tau)$ be the closed span in $\cC(G/H\colon \tau)$ of the $K$-finite generalized matrix coefficients of the representations from the discrete series for $G/H.$ Then the space $\cC_{\ds}(G/H\colon\tau)$ is contained in the orthocomplement of $\cC_{\mc}(G/H\colon\tau)$ and therefore in the kernel of $\It_{Q,\tau}$ for every $Q\in\cP(A)$.

\begin{Cor}
\label{c: ker It Q tau is ds}
If $\;\dim(\faq)=1$, then $\;\ker(\It_{Q,\tau})
=\cC_{\ds}(G/H:\tau). $
\end{Cor}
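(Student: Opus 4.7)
The plan is to combine Theorem \ref{Thm ker(F^0_(P0,tau))=ker(J_(Q,tau))} with the Plancherel decomposition for $\cC(G/H:\tau)$, which in split rank one is particularly simple.

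First, fix any $P_0 \in \cP_\gs(A)$ (for instance the one associated with $Q$ via Lemma \ref{l: Q  P zero and P}). By Theorem \ref{Thm ker(F^0_(P0,tau))=ker(J_(Q,tau))} we have the identification
\[
\ker(\It_{Q,\tau}) = \ker(\nFt_{\bar P_0})
\]
as subspaces of $\cC(G/H:\tau)$. Next, by \cite[Cor.\ 17.2 and Prop.\ 17.3]{vdBanSchlichtkrull_MostContinuousPart}, already invoked in the discussion preceding the corollary, $\ker(\nFt_{\bar P_0})$ equals the $L^2$-orthocomplement of $\cC_{\mc}(G/H:\tau)$ in $\cC(G/H:\tau)$. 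Thus the corollary reduces to proving
\[
\cC_{\mc}(G/H:\tau)^{\perp} = \cC_{\ds}(G/H:\tau).
\]

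The inclusion $\supseteq$ is standard and was already noted in the text: matrix coefficients of discrete series representations are orthogonal to every component of the most continuous part of the Plancherel decomposition. For the reverse inclusion one invokes the full Plancherel decomposition of $\cC(G/H:\tau)$ as an orthogonal direct sum of subspaces indexed by association classes of $\gs$-parabolic subgroups, as established in \cite{vdBan_Schlichtkrull_Plancherel1} (see also \cite{Delorme_Planch}). Under the standing assumption $\dim\faq=1$, there are only two such association classes, namely $\{G\}$ and $\{P_0, \bar P_0\}$, which contribute respectively $\cC_{\ds}(G/H:\tau)$ and $\cC_{\mc}(G/H:\tau)$. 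Hence
\[
\cC(G/H:\tau) = \cC_{\ds}(G/H:\tau) \oplus \cC_{\mc}(G/H:\tau)
\]
as an $L^2$-orthogonal direct sum, which gives the missing inclusion.

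The main (and essentially only) ingredient beyond what has been done in the paper is thus the absence of intermediate series for symmetric spaces of split rank one. This is not really an obstacle but a citation: it follows directly from the Plancherel formula of \cite{vdBan_Schlichtkrull_Plancherel1}, combined with the fact that a minimal $\gs\Cartan$-stable parabolic is automatically maximal when $\dim\faq=1$, so that there are no proper intermediate $\theta\gs$-stable parabolic subgroups between $P_0$ and $G$ to produce further Schwartz components. Once this dichotomy is in hand, the corollary is immediate from the two identifications above.
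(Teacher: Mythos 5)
Your argument is correct and essentially the same as the paper's: the paper likewise combines Theorem \ref{Thm ker(F^0_(P0,tau))=ker(J_(Q,tau))} with the identification of $\ker(\nFt_{\bar P_0})$ as the $L^2$-orthocomplement of $\cC_{\mc}(G/H:\tau)$, and then concludes via the split-rank-one orthogonal decomposition $\cC(G/H:\tau)=\cC_{\mc}(G/H:\tau)\oplus\cC_{\ds}(G/H:\tau)$, which it quotes directly as \cite[Prop.\ 17.7]{vdBanSchlichtkrull_MostContinuousPart}. The only (harmless) difference is that you re-derive this last decomposition from the full Plancherel theorem of \cite{vdBan_Schlichtkrull_Plancherel1} together with the absence of intermediate association classes when $\dim\faq=1$, which is a heavier tool than the specific proposition the paper cites.
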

\begin{proof}
This follows from the above discussion combined with \cite[Prop.\ 17.7]{vdBanSchlichtkrull_MostContinuousPart}
\end{proof}

\subsection{Residues for arbitrary $K$-types}
\label{ss: residues arbitrary K types}
In this subsection
we will work under the
\medno
{\bf Assumption:\ } {\em $G/H$ is of split rank one.}
\medno
Let $(\tau,\Vtau)$ be a finite dimensional unitary representation of $K$
and let $Q\in\cP_{\fh}(A)$. Let $P_0 \in \cP_\gs(\Aq)$ be such
that $\gS(Q, \gs\Cartan) \subseteq \gS(P_0).$ We recall that the singular set $S_{Q,\tau}$ is a finite subset of $\faqdp(P_0),$ see Lemma \ref{Lemma S_(Q,tau) is real and finite}. The first
main result of this subsection is that the residues appearing in (\ref{e: cor Ht minus Jphi as residues}) are $L^2$-perpendicular
to the part of the Schwartz space corresponding to the most continuous
part of the Plancherel formula.

\begin{Thm}\label{Thm most cont part orthogonal to residues}
Let $\mu\in S_{Q,\tau}$. For every $\phi\in\cC_{\mc}(G/H:\tau)$, $\psi\in\oC(\tau)$ and $a\in A_{\fq}$
\begin{equation}\label{eq <phi, Res(Q:mu:a)>=0}
\langle\phi,\Restau(Q:\mu: a:\dotvar)\psi\rangle
=0.
\end{equation}
\end{Thm}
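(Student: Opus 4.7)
The plan is to deduce the orthogonality from the residue identity (\ref{e: cor Ht minus Jphi as residues}) of Corollary \ref{c: HQtau as tempered term plus residues for Schwartz}, which says
\[
\sum_{\mu'\in S_{Q,\tau}} a^{\mu'}\,\langle \phi, \Restau(Q:\mu':a:\dotvar)\psi\rangle \;=\; \langle\Ht_{Q,\tau}\phi(a) - \It_{Q,\tau}\phi(a),\,\psi\rangle
\]
for every $\phi\in\cC(G/H:\tau)$, $\psi\in\oC(\tau)$ and $a\in\Aq$. By Lemma \ref{l: residues two}, each function $a\mapsto\langle\phi,\Restau(Q:\mu':a:\dotvar)\psi\rangle$ is polynomial in $\log a$ of degree bounded by the pole order of $E(Q:\dotvar)$ at $-\mu'$, and the exponentials $a\mapsto a^{\mu'}p(\log a)$ for distinct $\mu'\in S_{Q,\tau}$ are linearly independent on $\Aq$. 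It therefore suffices to prove the identity $\Ht_{Q,\tau}\phi = \It_{Q,\tau}\phi$ for every $\phi\in\cC_\mc(G/H:\tau)$.

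To establish this identity on the most continuous part, I would use the wave packet description supplied by the Plancherel theorem of \cite{vdBanSchlichtkrull_MostContinuousPart,vdBan_Schlichtkrull_Plancherel1}. The wave packet operator
\[
\cW_{\bar P_0}\col\cS(i\faqd)\otimes\oC(\tau)\longrightarrow\cC_\mc(G/H:\tau),
\]
adjoint to $\nFt_{\bar P_0}$, is continuous with dense image, and the subspace of wave packets $\phi=\cW_{\bar P_0}f$ obtained from $f\in C_c^\infty(i\faqd)\otimes\oC(\tau)$ whose support avoids the (locally finite) real singular locus of $\ctau_{\bar P_0|Q}(1:-\bar\dotvar)^*$ on $i\faqd$ is still dense in $\cC_\mc(G/H:\tau)$. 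Since both $\Ht_{Q,\tau}$ and $\It_{Q,\tau}$ are continuous on $\cC(G/H:\tau)$, by Corollary \ref{Cor H_(Q,tau) extends to Schwartz functions} and Proposition \ref{Prop JQ: C(G/H:tau) to C^infty(Aq)}, it is enough to verify the identity on these wave packets. For such $\phi$ the spherical Fourier transform $\Ft_{Q,\tau}\phi = \ctau_{\bar P_0|Q}(1:-\bar\dotvar)^*\,\nFt_{\bar P_0}\phi$ is a smooth, compactly supported function on $i\faqd$; in particular no meromorphic extension to $\faqdc$ is needed, and the auxiliary transform $\Kt_{Q,\tau}\phi$ of (\ref{eq def K_Q}) is a Schwartz function on $\Aq$ whose Fourier support lies in the compact support of $f$, away from every $\mu'\in S_{Q,\tau}$.

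The plan is then to show that $\Ht_{Q,\tau}\phi$ coincides with the Euclidean Fourier integral $\int_{i\faqd}\Ft_{Q,\tau}\phi(\gl)\,a^\gl\,d\gl$, which by definition is $\It_{Q,\tau}\phi(a)$ (no contour shift being needed since the integrand is compactly supported on $i\faqd$, hence the residues at $S_{Q,\tau}$ contribute nothing). This is done by interchanging the integration over $i\faqd$ with the Radon integration over $N_Q/H_{N_Q}$ in the formula $\Rt_Q\phi(g)=\int_{N_Q/H_{N_Q}}\phi(gn)\,dn$ and applying the asymptotic $c$-function expansion (\ref{e: asymptotics E Q}) of the normalized Eisenstein integral to identify the resulting $\gl$-integrand with $\Ht_{Q,\tau}E^\circ_\tau(\bar P_0:f(\gl):\gl)$, and finally to the Fourier kernel on $i\faqd$. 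The hard part will be the rigorous justification of this interchange: the normalized Eisenstein integral $E^\circ_\tau(\bar P_0:\eta:\gl)$ is only of tempered growth on $G/H$ (not Schwartz), so Fubini has to be applied using uniform tempered bounds in $\gl$ varying in the compact support of $f$, together with the $\fh$-compatibility of $Q$ that guarantees, via Theorem \ref{Thm convergence}, the required absolute convergence of the inner Radon integral.
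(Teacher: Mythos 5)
Your reduction step is sound and in fact mirrors the end of the paper's argument: by (\ref{e: cor Ht minus Jphi as residues}) and Lemma \ref{l: residues two}, the difference $\langle\Ht_{Q,\tau}\phi(\dotvar)-\It_{Q,\tau}\phi(\dotvar),\psi\rangle$ is an exponential polynomial with the distinct non-zero real exponents $\mu\in S_{Q,\tau}$, so proving $\Ht_{Q,\tau}\phi=\It_{Q,\tau}\phi$ on $\cC_{\mc}(G/H\colon\tau)$ does yield (\ref{eq <phi, Res(Q:mu:a)>=0}). But that identity is essentially equivalent to the theorem, and the route you propose for it contains a genuine gap at its central analytic step. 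You want to write a dense set of $\phi\in\cC_{\mc}(G/H\colon\tau)$ as wave packets $\int_{i\faqd}E^\circ(\bar P_0\colon f(\gl)\colon\gl)\,d\gl$ with $f$ compactly supported, and then interchange the $\gl$-integral with the Radon integral over $N_Q/H_{N_Q}$. The inner integral after the interchange is the Radon transform of a single normalized Eisenstein integral at a unitary parameter, and this integral is in general \emph{not} absolutely convergent: $E^\circ(\bar P_0\colon\eta\colon\gl)$ for $\gl\in i\faqd$ is only tempered, whereas Theorem \ref{Thm convergence} guarantees convergence of $\Rt_Q$ precisely on the Schwartz space, using the full rapid decay of $\phi$ (and $\fh$-compatibility and split rank one). ``Uniform tempered bounds in $\gl$'' cannot repair this, because the required decay along $N_Q/H_{N_Q}$ is produced only \emph{after} integration over $\gl$ (by partial integration in the spectral variable); Fubini in the form you invoke is therefore unavailable, and justifying the identity for wave packets would amount to redoing a constant-term/asymptotic analysis that is not in the paper and is not supplied by your sketch. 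A secondary, fixable, point: for Schwartz $\phi$ the transform $\It_{Q,\tau}\phi$ is defined by the extension $u*\Kt_{Q,\tau}\phi$ of Proposition \ref{Prop JQ: C(G/H:tau) to C^infty(Aq)}, not by formula (\ref{e: defi It Q tau}), so even the identification of $\It_{Q,\tau}\phi$ with the unshifted Fourier integral for your wave packets needs the argument with the distribution $v$ (as in the proof of Theorem \ref{Thm ker(F^0_(P0,tau))=ker(J_(Q,tau))}), though this part can be made to work.

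For comparison, the paper avoids the divergence problem entirely by a spectral argument on the image side: it decomposes $\cC_{\mc}(G/H\colon\tau)$ into the finitely many pieces $\cC_{\mc}(G/H\colon\tau)_{W\cdot\xi}$, shows via Proposition \ref{Prop H(D phi)=mu(D) H phi} and Lemma \ref{Lemma H(Delta phi)=(Delta-rho^2)phi} that the invariant operator $D_\xi$ intertwines $\Ht_{Q,\tau}$ with $\prod_{\mu}(\Delta_{\Aq}-\langle\mu,\mu\rangle)^{m_\mu}$, solves $D_\xi\chi=\phi$ inside $\cC_{\mc}(G/H\colon\tau)_{W\cdot\xi}$ by dividing the Fourier transform by the nowhere-vanishing polynomial $\prod_\mu(\langle\gl,\gl\rangle-\langle\mu,\mu\rangle)^{m_\mu}$ on $i\faqd$ (Lemma \ref{Lemma Existence of solution of D chi=phi}), and observes that this operator annihilates the residual terms of $\chi$, so that $\Ht_{Q,\tau}\phi$ is tempered; temperedness of $\It_{Q,\tau}\phi$ then forces the exponential-polynomial difference to vanish. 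If you want to salvage your approach, you would either have to adopt this differential-operator device or prove a convergence theorem for the Harish-Chandra transform of Eisenstein integrals, which is a substantially harder statement than anything your proposal establishes.
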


This result will be proved through a series of partial results.
Let $P_{0}$ be a parabolic subgroup as above and $P_{0}=M_{0}A_{0}N_{0}$ its Langlands decomposition.
Via symmetrization of the associated quadratic form, the symmetric
$G$-equivariant bilinear form $B$ of (\ref{e: defi B})
gives rise to the  Casimir
operator $\Omega$ in the center of $U(\fg).$ The image $\Delta_{G/H} \in \DGH$
of $\Omega$ under the map (\ref{e: r and DGH}) will be called the Laplacian of $G/H.$
Likewise, the restriction of $B$ to $\faq \times \faq$ gives rise to the Laplacian
$\Delta_{\Aq}$
of $\Aq.$ The Casimir $\Omega_{\fm_0}$of $\fm_0\cap \fk$ is defined by
applying symmetrization to the restriction of $B$ to $\fm_0\cap \fk.$

\begin{Lemma}\label{Lemma H(Delta phi)=(Delta-rho^2)phi}
Let $\Delta_{G/H},$ $\Delta_{A_{\fq}}$ and $\Omega_{\fm_{0}}$ be as above.
Then for $\phi\in\cC(G/H:\tau),$
$$
\Ht_{Q,\tau}(\Delta_{G/H}\phi)
=\big(\tau(\Omega_{\fm_{0}})+\Delta_{A_{\fq}}-\langle\rho_{P_{0}},\rho_{P_{0}}\rangle\big)\Ht_{Q,\tau}\phi.
$$
\end{Lemma}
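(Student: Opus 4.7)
The plan is to deduce the formula from the general transport identity in Proposition \ref{Prop H(D phi)=mu(D) H phi}. It suffices to verify, first for $\phi\in C_{c}^{\infty}(G/H:\tau)$ and then by density for $\phi\in\cC(G/H:\tau)$, the identity
\begin{equation}
\label{e: plan mu Delta}
\mu(\Delta_{G/H}:\tau)=\tau(\Omega_{\fm_{0}})+\Delta_{\Aq}-\langle\rho_{P_{0}},\rho_{P_{0}}\rangle
\end{equation}
as an $\End(\oC(\tau))$-valued differential operator on $\Aq$. The extension to the Schwartz space uses the continuity of $\Ht_{Q,\tau}$ from Corollary \ref{Cor H_(Q,tau) extends to Schwartz functions} together with the continuity of the action of $\Delta_{G/H}$ on $\cC(G/H:\tau)$.

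The identity (\ref{e: plan mu Delta}) is derived from Lemma \ref{l: char of mu} applied to the representative $\Omega\in U(\fg)^{G}\subseteq U(\fg)^{H}$ of $\Delta_{G/H}$. Using the decomposition $\fg=\fn_{P_{0}}\oplus\fm_{0}\oplus\fa_{0}\oplus\bar\fn_{P_{0}}$, I would pick for each $\alpha\in\Sigma(P_{0})$ a basis $\{X_{\alpha,i}\}$ of $\fg_{\alpha}$ orthonormal with respect to $-B(\cdot,\Cartan\cdot)$, so that the $B$-dual basis of $\bar\fn_{P_{0}}$ is $X_{\alpha,i}^{*}:=-\Cartan X_{\alpha,i}$. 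Combined with $B$-dual bases of $\fm_{0}$ and an orthonormal basis of $\fa_{0}$, the Casimir becomes
$$
\Omega=\Omega_{\fm_{0}}+\Omega_{\fa_{0}}+\sum_{\alpha,i}\bigl(X_{\alpha,i}X_{\alpha,i}^{*}+X_{\alpha,i}^{*}X_{\alpha,i}\bigr).
$$
The computational core of the argument is the congruence
\begin{equation}
\label{e: plan Omega mod ideal}
\Omega\equiv\Omega_{\fm_{0}}+\Omega_{\fa_{0}}-2H_{\rho_{P_{0}}}\pmod{\fn_{P_{0}}U(\fg)+U(\fg)\fh},
\end{equation}
where $H_{\rho_{P_{0}}}\in\faq$ is characterized by $B(H_{\rho_{P_{0}}},\cdot)=\rho_{P_{0}}$. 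Since $P_{0}$ is $\gs\Cartan$-stable, $\gs(\bar\fn_{P_{0}})=\fn_{P_{0}}$, so any $Y\in\bar\fn_{P_{0}}$ can be written as $Y=(Y+\gs Y)-\gs Y$ with $Y+\gs Y\in\fh$ and $-\gs Y\in\fn_{P_{0}}$; hence $XY\in\fn_{P_{0}}U(\fg)+U(\fg)\fh$ for any $X\in\fn_{P_{0}}$. Coupled with $X_{\alpha,i}^{*}X_{\alpha,i}=X_{\alpha,i}X_{\alpha,i}^{*}-[X_{\alpha,i},X_{\alpha,i}^{*}]$, this reduces the last sum modulo $\fn_{P_{0}}U(\fg)+U(\fg)\fh$ to $-\sum_{\alpha,i}[X_{\alpha,i},X_{\alpha,i}^{*}]$. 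A standard root-sum computation shows the $\fa_{0}$-component of this element equals $-2H_{\rho_{P_{0}}}$, and the $\Cartan$-dual choice of basis forces each bracket to lie in $\fp$, so the $\fm_{0}$-component lies in $\fm_{0}\cap\fp$. The crucial geometric input is $\fm_{0}\cap\fp\subseteq\fh$: indeed any element of $\fm_{0}\cap\fp\cap\fq$ centralizes $\faq$ (since $\fm_{0}\subseteq Z_{\fg}(\faq)$) and lies in $\fp\cap\fq$, hence belongs to $\faq$ by maximality, but then lies in $\fm_{0}\cap\fa_{0}=\{0\}$ by the Langlands decomposition.

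Conjugation by $e^{\rho_{P_{0}}}$ in the sense of Lemma \ref{l: char of mu} is then a product-rule calculation. Since $\rho_{P_{0}}$ is $\gs\Cartan$-invariant, it vanishes on $\fah$, so $H_{\rho_{P_{0}}}\in\faq$ and the potential extra summand $\fa_{0}\cap\fh\subseteq\fah$ of $\fa_{0}$ contributes nothing to $\Omega_{\fa_{0}}$ modulo $U(\fg)\fh$, which therefore reduces to $\Omega_{\faq}=\Delta_{\Aq}$. A direct computation then gives
$$
e^{-\rho_{P_{0}}}\circ\bigl(\Omega_{\fa_{0}}-2H_{\rho_{P_{0}}}\bigr)\circ e^{\rho_{P_{0}}}=\Delta_{\Aq}-\langle\rho_{P_{0}},\rho_{P_{0}}\rangle,
$$
whereas $\Omega_{\fm_{0}}$ commutes with the conjugation. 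Lemma \ref{l: char of mu} thus produces (\ref{e: plan mu Delta}), the passage to the $\End(\oC(\tau))$-valued operator being routine since $\Omega_{\fm_{0}}$, $\Omega_{\fa_{0}}$ and $\rho_{P_{0}}$ are all $\Ad(w)$-invariant for $w\in\cW$ (as $w\in N_{K}(\faq)\cap N_{K}(\fah)$ preserves $\fm_{0}$ and acts isometrically on $\fa$), and since $R_{\Omega_{\fm_{0}}}$ acts on each $\tau_{M}^{0}$-spherical component by $\tau(\Omega_{\fm_{0}})$. The main obstacle is the verification of (\ref{e: plan Omega mod ideal}), and in particular the identification $\fm_{0}\cap\fp\subseteq\fh$, which ties together the maximality of $\faq$ in $\fp\cap\fq$ with the Langlands decomposition of $P_{0}$.
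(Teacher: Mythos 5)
Your proposal is correct and follows the same overall skeleton as the paper's proof: both first observe that the transport identity of Proposition \ref{Prop H(D phi)=mu(D) H phi} extends from $C_c^\infty(G/H:\tau)$ to $\cC(G/H:\tau)$ by density together with the continuity supplied by Corollary \ref{Cor H_(Q,tau) extends to Schwartz functions}, and then reduce everything to the identity $\mu(\Delta_{G/H}:\tau)=\tau(\Omega_{\fm_{0}})+\Delta_{\Aq}-\langle\rho_{P_{0}},\rho_{P_{0}}\rangle$. The difference is that the paper simply quotes this identity from \cite[Lemma 5.3]{vdBanSchlichtkrull_ExpansionsForEisensteinIntegralsOnSemisimpleSymmetricSpaces}, whereas you rederive it from Lemma \ref{l: char of mu} (whose converse use is licensed by the Remark following it) via an explicit decomposition of the Casimir; this buys a self-contained argument at the cost of reproving a known radial-component formula. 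Two small imprecisions, neither fatal: (i) in your displayed decomposition of $\Omega$ the symbol $\Omega_{\fm_{0}}$ can only mean the Casimir of the full Levi factor $\fm_{0}$, not the paper's $\Omega_{\fm_{0}}$, which is the Casimir of $\fm_{0}\cap\fk$; the discrepancy consists of terms built from $\fm_{0}\cap\fp$ and is absorbed into $U(\fg)\fh$ exactly by the inclusion $\fm_{0}\cap\fp\subseteq\fh$ that you prove, so this fact is needed for the Casimir reduction itself, while for the bracket terms it is actually superfluous, since $[X_{\alpha,i},X_{\alpha,i}^{*}]\in\Cen_{\fg}(\fa)\cap\fp=\fa$ (likewise the $\gs$-trick for products $XY$ with $X\in\fn_{P_{0}}$ is redundant, as such products already lie in $\fn_{P_{0}}U(\fg)$); (ii) $\rho_{P_{0}}$ is not $\Ad(w)$-invariant for general $w\in\cW$ (a representative of the nontrivial Weyl element carries it to $\rho_{\bar P_{0}}$), but this does not affect your conclusion, because after the conjugation by $e^{\rho_{P_{0}}}$ only $\langle\rho_{P_{0}},\rho_{P_{0}}\rangle$, $\Delta_{\Aq}$ and $\Omega_{\fm_{0}}$ survive in $\mu(\Delta_{G/H})$, and these are indeed $\Ad(w)$-invariant, so the passage to $\mu_{w}$ and then to the $\End(\oC(\tau))$-valued operator goes through as you claim.
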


\begin{proof}
First of all, we observe that (\ref{eq H(D phi)=mu(D)H phi}) is valid for Schwartz functions
$\phi \in \cC(G/H),$
in view of Corollary \ref{Cor H_(Q,tau) extends to Schwartz functions}
 and density of $C_c^\infty(G/H:\tau)$ in $\cC(G/H:\tau).$

Next, from \cite[Lemma 5.3]{vdBanSchlichtkrull_ExpansionsForEisensteinIntegralsOnSemisimpleSymmetricSpaces}
it readily follows that
$$
\mu(\Delta_{G/H}:\tau)
=\tau(\Omega_{\fm_{0}})+\Delta_{A_{\fq}}-\langle\rho_{P_{0}},\rho_{P_{0}}\rangle.
$$
Now apply (\ref{eq H(D phi)=mu(D)H phi}).
\end{proof}

We write $\widehat{M_{0}}_{H}$ for the set of $\xi\in\widehat{M_{0}}$ such that $V(\xi)\neq0$. For $\xi\in\widehat{M_{0}}_{H}$ let $\oC(\tau)_{\xi}$ be the image of $C(K:\xi:\tau)\otimes\overline{V(\xi)}$ under the map $T\mapsto \psi_{T}$ defined by
(\ref{e: defi map psi T}).
Then $\oC(\tau)$ decomposes into a finite direct sum of orthogonal subspaces
$$
\oC(\tau)
=\bigoplus_{  \xi\in  \widehat{M}_{0H} } \oC(\tau)_\xi.
$$
The action of $N_K(\faq)$ on $M_0$ by conjugation
naturally induces a (left) action
of the Weyl group $W(\fa_{\fq}) =N_K(\faq)/Z_K(\faq)$
on $\widehat{M_{0}}_{H}$. We agree to use $W$ as abbreviation for $W(\faq)$ in the rest
of this subsection.  Accordingly, for $\xi\in\widehat{M_{0}}_{H}$ we define
$$
\oC(\tau)_{W\cdot \xi}
:=\bigoplus_{w\in W}\oC(\tau)_{w\xi}.
$$
Accordingly, we obtain the orthogonal decomposition
\begin{equation}\label{eq oC(tau)=oplus oC(tau)_(W xi)}
\oC(\tau)
=\bigoplus_{W\cdot\xi\in W\bs\widehat{M_{0}}_{H}}\oC(\tau)_{W\cdot\xi}.
\end{equation}

We define the normalized $C$-function $C^\circ_{\bar P_0|\bar P_0}(s:\dotvar),$ for $s \in W,$
to be the $\End(\oC(\tau))$-valued meromorphic function on $\faqdc$ given by
\begin{equation}
\label{e: defi nC}
C^\circ_{\bar P_0|\bar P_0}(s:\gl) = C_{\bar P_0|\bar P_0}(s:\gl) C_{\bar P_0|\bar P_0}(1:\gl)^{-1}
\end{equation}
for generic $\gl \in \faqdc;$ see \cite[Eqn.\ (55)]{vdBanSchlichtkrull_FourierTransformOnASemisimpleSymmetricSpace}.
Let $\gamma$ be the (unitary) representation of $W$ in $L^{2}(i\fa_{\fq}^{*})\otimes\oC(\tau)$
defined as in \cite[Eqn.~(16.1)]{vdBanSchlichtkrull_MostContinuousPart}, with $\bar P_0$ in place of $P.$  It is given by
$$
[\gamma(s) f](\lambda)
=C_{\bar P_0|\bar P_0}^{\circ}(s^{-1}:\lambda)^{-1}f(s^{-1}\lambda)
$$
for $s\in W$, $f\in L^{2}(i\fa_{\fq}^{*})\otimes\oC(\tau)$ and $\lambda\in i\fa_{\fq}^{*}$.

\begin{Lemma}
Let $\xi \in \widehat{M}_{0H}.$ Then the subspace
$$
\cS(i\fa^{*})\otimes \oC(\tau)_{W\cdot\xi}\subseteq L^2(i\faqd) \otimes \oC(\tau)
$$
 is invariant for $\gamma$.
\end{Lemma}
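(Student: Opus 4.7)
The plan is to reduce the invariance statement to two properties of the normalized $C$-function: a block structure with respect to the $\widehat{M_0}_H$-decomposition, and smoothness together with moderate growth on $i\faqd.$

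First, I would establish the following block structure claim: for every $s \in W$ and generic $\gl \in \faqdc,$ the operator $C^\circ_{\bar P_0|\bar P_0}(s:\gl)$ maps $\oC(\tau)_\xi$ into $\oC(\tau)_{s\xi},$ for each $\xi \in \widehat{M_0}_H.$ This is to be expected because the unnormalized $C$-functions arise from standard intertwining operators of the induced representations $\Ind_{\bar P_0}^G(\xi \otimes \gl \otimes 1),$ and under the isomorphism (\ref{e: direct sum over xi}) these intertwiners relate parameters $\xi$ and $s\xi$ component-wise; the normalization factor $C_{\bar P_0|\bar P_0}(1:\gl)^{-1}$ in (\ref{e: defi nC}) preserves each summand $\oC(\tau)_\xi$ by the same reasoning (with $s=1$). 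In particular, $C^\circ_{\bar P_0|\bar P_0}(s^{-1}:\gl)^{-1}$ maps $\oC(\tau)_\eta$ into $\oC(\tau)_{s\eta},$ so the direct sum $\oC(\tau)_{W\cdot\xi}$ is stable under this operator for every $s \in W.$

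Granted the claim, for $s \in W$ and $f \in \cS(i\faqd) \otimes \oC(\tau)_{W\cdot\xi}$ the pointwise value $f(s^{-1}\gl)$ lies in $\oC(\tau)_{W\cdot\xi}$ for every $\gl \in i\faqd,$ so that $[\gamma(s)f](\gl) = C^\circ_{\bar P_0|\bar P_0}(s^{-1}:\gl)^{-1} f(s^{-1}\gl) \in \oC(\tau)_{W\cdot\xi}.$ It then remains to verify that $\gamma(s)f$ is Schwartz. Substitution by $s^{-1}$ clearly preserves $\cS(i\faqd).$ For the multiplication by $C^\circ_{\bar P_0|\bar P_0}(s^{-1}:\dotvar)^{-1},$ one appeals to the well-known fact that this endomorphism-valued function is smooth on $i\faqd$ and, together with all its derivatives, is of at most polynomial growth; this is a standard consequence of the Maass--Selberg relations (which show unitarity on $i\faqd$) combined with the estimates of \cite[Thm.~1.5]{vdBanSchlichtkrull_estimates_cfunction}.

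The main (though modest) obstacle is the block structure claim, since one must trace the $\xi$-indexing through the $C$-function formalism of \cite{vdBanKuit_EisensteinIntegrals} and \cite{vdBanSchlichtkrull_FourierTransformOnASemisimpleSymmetricSpace}. Once that is in hand, the combination of the two properties yields $\gamma(s)f \in \cS(i\faqd) \otimes \oC(\tau)_{W\cdot\xi},$ which is the desired invariance.
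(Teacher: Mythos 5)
Your proposal is correct and follows essentially the same route as the paper: the paper likewise splits the argument into (i) $\gamma$-invariance of $\cS(i\faqd)\otimes\oC(\tau)$ (citing the argument of \cite[Rem.\ 16.3]{vdBanSchlichtkrull_FourierTransformOnASemisimpleSymmetricSpace}, which is the smoothness/polynomial-growth argument you invoke) and (ii) the block claim that $C^\circ_{\bar P_0|\bar P_0}(s^{-1}:\gl)$ maps $\oC(\tau)_{s\xi}$ into $\oC(\tau)_\xi$. For (ii) the paper makes your ``trace the $\xi$-indexing'' step precise by factoring $C^\circ_{\bar P_0|\bar P_0}(s^{-1}:\gl)=\cL(s^{-1})\after C_{s\bar P_0 s^{-1}|\bar P_0}(1:\gl)\after C_{\bar P_0|\bar P_0}(1:\gl)^{-1}$ and checking that $\cL(s^{-1})$ sends the $s\xi$-block to the $\xi$-block while the remaining $s=1$ factors preserve it, exactly the mechanism you sketch.
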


\begin{proof}
First, the subspace $\cS(i\faqd) \otimes \oC(\tau)$ is $\gg$-invariant by the argument
suggested in
\cite[Rem.\ 16.3]{vdBanSchlichtkrull_FourierTransformOnASemisimpleSymmetricSpace}.

Next, let $f \in \cS(i\faqd) \otimes \oC(\tau)_\xi,$ where $\xi \in \widehat{M}_{0H}.$
Let $s \in W$ and let $w \in N_K(\faq)$ be a representative for $s.$ Then it suffices
to show that
$$
[\gamma(s) f](\gl) \in  \oC(\tau)_{s\xi}\qquad (\gl \in i\faqd).
$$
In turn, for this it suffices to prove the claim that
\begin{equation}
\label{e: C on oC tau}
C_{\bar P_0|\bar P_0}^{\circ}(s^{-1}:\lambda) \oC(\tau)_{s\xi} \subseteq  \oC(\tau)_\xi,
\end{equation}
for any $\xi \in \widehat{M}_{0H}$ and generic $\gl \in i\faqd.$

By (\ref{e: defi nC}) and \cite[Lemma 7]{vdBanSchlichtkrull_FourierTransformOnASemisimpleSymmetricSpace},
\begin{align*}
C_{\bar P_0|\bar P_0}^{\circ}(s^{-1}:\lambda)
&=  C_{\bar P_0|\bar P_0}(s^{-1}:\lambda)\after C_{\bar P_0|\bar P_0}(1:\lambda)^{-1} \\
&= \cL(s^{-1})\after C_{s\bar P_0s^{-1}|\bar P_0}(1:\lambda)\after C_{\bar P_0|\bar P_0}(1:\lambda)^{-1},
\end{align*}
for generic $\lambda\in\fa_{\fq\C}^{*}$. Here $\cL(s^{-1})$ is given by
\cite[Eqn.~(65)]{vdBanSchlichtkrull_FourierTransformOnASemisimpleSymmetricSpace},
$$
\cL(s^{-1})\psi_{T}
=\psi_{[L(w^{-1})\otimes L(s \xi, w^{-1})]T}
\qquad\big(T\in C(K: s \xi:\tau)\otimes \bar V(s \xi)\big)
$$
where $L(w^{-1}):C^\infty(K: s\xi:\tau) \to C^{\infty}(K: \xi: \tau)$ and $L(s\xi,w^{-1}):\bar V(s\xi)\to \bar V(\xi)$ are linear maps.
Accordingly, we see that $\cL(s^{-1})$ maps $\oC(\tau)_{s\xi}$ to $\oC(\tau)_\xi.$

From \cite[Prop.~1]{vdBanSchlichtkrull_FourierTransformOnASemisimpleSymmetricSpace} and the definition of the $B$-matrix in \cite[Prop.~6.1]{vdBan_PrincipalSeriesI} it follows that $C_{s\bar P_0 s^{-1}|\bar P_0}(1\colon\lambda)\circ C_{\bar P_0|\bar P_0}(1\colon\lambda)^{-1}$ preserves $\oC(\tau)_{s\xi}$.
Thus, (\ref{e: C on oC tau}) follows.
\end{proof}

By \cite[Cor.~17.4]{vdBanSchlichtkrull_MostContinuousPart}, the Fourier transform
$\nFt_{\bar P_0}$ defines a topological linear isomorphism from $\cC_\mc(G/H: \tau)$
onto $[\cS(i\faqd) \otimes \oC(\tau)]^W.$
For $\xi\in\widehat{M_0}_{H}$ we define
$$
\cC_{\mc}(G/H:\tau)_{W\cdot \xi}
:=\nFt_{\bar P_0}^{-1} \big([ \cS(i\fa^{*})\otimes \oC(\tau)_{W\cdot\xi}]^{W}\big).
$$
Then in view of (\ref{eq oC(tau)=oplus oC(tau)_(W xi)}) it follows that
\begin{equation}\label{eq C_mc(G/H:tau)=oplus C_mc(G/H:tau)_(W xi)}
\cC_{\mc}(G/H:\tau)
=\bigoplus_{W\cdot \xi\in W\bs \widehat{M}_{0H}} \cC_{\mc}(G/H:\tau)_{W\cdot\xi}
\end{equation}
with only finitely many non-zero summands.

\begin{Lemma}\label{Lemma phi in C_mc(G/H:tau)_(W xi) => H phi in C(Aq)otimes oC(tau)_(W xi)}
Let $\xi\in\widehat{M_{0}}_{H}$. If $\phi\in\cC_{\mc}(G/H:\tau)_{W\cdot \xi}$ then $\Ht_{Q,\tau}\phi\in C^{\infty}(A_{\fq})\otimes\oC(\tau)_{W\cdot\xi}$.
\end{Lemma}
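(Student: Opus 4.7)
The plan is to decompose $\Ht_{Q,\tau}\phi$ via Corollary~\ref{c: HQtau as tempered term plus residues for Schwartz} into the tempered term $\It_{Q,\tau}\phi$ and a residual sum, and to show each part takes values in $\oC(\tau)_{W\cdot\xi}$. The key tool is the factorization $\Ft_{Q,\tau}\phi(\lambda) = \ctau_{\bar P_0|Q}(1:-\bar\lambda)^*\,\nFt_{\bar P_0}\phi(\lambda)$ on $i\faqd$ from Proposition~\ref{Prop F_Q phi=c F^0_cP0 phi}, combined with the fact that the $C$-function respects the orthogonal decomposition $\oC(\tau) = \bigoplus_{\xi'} \oC(\tau)_{\xi'}$.

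For the tempered part, I apply the identity $\It_{Q,\tau}\phi = u * \Kt_{Q,\tau}\phi$ with $\Kt_{Q,\tau}\phi = \eFt_{\Aq}^{-1}[p_h\,\Ft_{Q,\tau}\phi]$ from the proof of Proposition~\ref{Prop JQ: C(G/H:tau) to C^infty(Aq)}. Since $\phi \in \cC_\mc(G/H:\tau)_{W\cdot\xi}$ means by definition that $\nFt_{\bar P_0}\phi$ takes values in $\oC(\tau)_{W\cdot\xi}$, it suffices to verify that $\ctau_{\bar P_0|Q}(1:\mu)^*$ preserves each summand $\oC(\tau)_{\xi'}$ individually. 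For $T \in C(K:\xi':\tau) \otimes \bar V(\xi')$, Proposition~\ref{Prop relation E(Q) and E^circ(cP0)}(a) gives
\begin{equation*}
\ctau_{\bar P_0|Q}(1:\mu)\psi_T = \psi_{[\proj \,\after\, A(\sigma P:Q:\xi'_M:-\mu+\rho_{P,\fh}) \,\after\, \inj \otimes I]T};
\end{equation*}
the standard intertwining operator is an endomorphism of $C(K:\xi'_M)$, and $\proj \after \inj$ preserves $C(K:\xi')$, so the right-hand side lies in $\oC(\tau)_{\xi'}$. Orthogonality of (\ref{eq oC(tau)=oplus oC(tau)_(W xi)}) then yields the analogous property for the adjoint, giving $\It_{Q,\tau}\phi \in C^\infty(\Aq) \otimes \oC(\tau)_{W\cdot\xi}$.

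For the residual contribution, I would approximate $\phi$ by $\phi_n \in C^\infty_c(G/H:\tau)$ whose Fourier transforms $\nFt_{\bar P_0}\phi_n$ take values in $\oC(\tau)_{W\cdot\xi}$; existence of such $\phi_n$ follows from the $W$-stability of $\oC(\tau)_{W\cdot\xi}$ combined with density of the Paley--Wiener image in $(\cS(i\faqd) \otimes \oC(\tau)_{W\cdot\xi})^W$. For compactly supported $\phi_n$, $\nFt_{\bar P_0}\phi_n$ extends holomorphically to $\faqdc$, still taking values in $\oC(\tau)_{W\cdot\xi}$ by analytic continuation; Lemma~\ref{l: HC transform as contour integral} then expresses $\Ht_{Q,\tau}\phi_n(a)$ as a contour integral of $\Ft_{Q,\tau}\phi_n(\lambda)a^\lambda\,d\lambda$, whose integrand takes values in $\oC(\tau)_{W\cdot\xi}$ by the $C$-function preservation established above. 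Continuity of $\Ht_{Q,\tau}:\cC(G/H:\tau) \to C^\infty(\Aq) \otimes \oC(\tau)$ and closedness of $C^\infty(\Aq) \otimes \oC(\tau)_{W\cdot\xi}$ in the target then propagate the conclusion to $\phi$. The main obstacle is this density step for the residual terms, which forces the detour through the compactly supported case; it is nevertheless handled by the $W$-equivariance of $\nFt_{\bar P_0}$ and the standard Paley--Wiener density in the $W$-invariant Schwartz space.
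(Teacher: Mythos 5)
Your treatment of the tempered term is correct and coincides with the mechanism the paper itself uses: by the identity (\ref{e: FQtau and FbarPzero}), valid on $i\faqd$ for Schwartz functions, $p_h\Ft_{Q,\tau}\phi = p_h(\dotvar)\,C_{\bar P_0|Q}(1:-\bar{\dotvar})^*\nFt_{\bar P_0}\phi$, and Proposition~\ref{Prop relation E(Q) and E^circ(cP0)}(a) shows that $C_{\bar P_0|Q}(1:\gl)$ preserves each summand $\oC(\tau)_{\xi'}$ (the operator $\proj\after A(\gs P:Q:\xi'_M:\dotvar)\after\inj$ is an endomorphism of $C(K:\xi')$ commuting with the right $K$-action), so by orthogonality and finite dimensionality its adjoint preserves each summand as well; hence $\Kt_{Q,\tau}\phi$ and $\It_{Q,\tau}\phi=u*\Kt_{Q,\tau}\phi$ take values in $\oC(\tau)_{W\cdot\xi}$. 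The paper's own (very short) proof does not split off residues at all: it deduces the lemma from the factorization of $\Ft_{Q,\tau}$ through $\nFt_{\bar P_0}$ (Propositions~\ref{Prop relation E(Q) and E^circ(cP0)} and~\ref{Prop F_Q phi=c F^0_cP0 phi}) together with the relation $\Ft_{Q,\tau}=\eFt_{\Aq}\after\Ht_{Q,\tau}$ of Proposition~\ref{Prop Ft_(Q,tau)=F_A circ H_Q}.

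The residual step of your argument, however, contains a genuine gap. You need compactly supported $\phi_n$ with $\nFt_{\bar P_0}\phi_n$ valued in $\oC(\tau)_{W\cdot\xi}$ and $\phi_n\to\phi$ in $\cC(G/H:\tau)$, and neither ingredient is available. First, the asserted density of the relevant Paley--Wiener functions in $[\cS(i\faqd)\otimes\oC(\tau)_{W\cdot\xi}]^W$ is established nowhere and is not routine: the orthogonal projection onto an isotypic component of a function in the image of $C_c^\infty(G/H:\tau)$ need not lie in that image again (one would have to check compatibility with the relations characterizing the Paley--Wiener space), so even the existence of an ample supply of such $\phi_n$ is unclear. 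Second, and more decisively, convergence $\nFt_{\bar P_0}\phi_n\to\nFt_{\bar P_0}\phi$ only controls the most-continuous parts of the $\phi_n$; their discrete-series components are completely unconstrained, and $\Ht_{Q,\tau}$ does \emph{not} vanish on $\cC_\ds(G/H:\tau)$ --- by Corollary~\ref{c: ker It Q tau is ds} and (\ref{e: cor Ht minus Jphi as residues}) its restriction there is precisely the residual exponential part. So you cannot conclude $\phi_n\to\phi$ in $\cC(G/H:\tau)$, nor $\Ht_{Q,\tau}\phi_n\to\Ht_{Q,\tau}\phi$, and the continuity-plus-closedness argument does not close. Finally, note that proving directly that the residue terms pair trivially with $\phi\in\cC_\mc(G/H:\tau)_{W\cdot\xi}$ would essentially be Theorem~\ref{Thm most cont part orthogonal to residues}, which is proved \emph{after} and by means of the present lemma, so any shortcut in that direction risks circularity.
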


\begin{proof}
Let $\phi\in\cC_{\mc}(G/H:\tau)_{W\cdot \xi}$. Then $\nFt_{\bar P_{0}}\phi\in\cS(i\fa_{\fq}^{*})\otimes \oC(\tau)_{W\cdot\xi}$. The lemma now follows by application of  Propositions \ref{Prop relation E(Q) and E^circ(cP0)}, \ref{Prop F_Q phi=c F^0_cP0 phi} and \ref{Prop Ft_(Q,tau)=F_A circ H_Q}.
\end{proof}

\begin{Lemma}
\label{l: the scalar c xi}
Let $\xi\in\widehat{M_{0}}_{H}$. Then there exists a scalar $c_\xi \in \R$ such that
$\tau(\Omega_{\fm_{0}})$ acts on $\oC(\tau)_{W\cdot\xi}$ by the scalar $c_{\xi}$.
\end{Lemma}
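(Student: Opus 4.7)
The plan is to use the isomorphism $T \mapsto \psi_T$ from $C(K\col\xi\col\tau) \otimes \bar V(\xi)$ onto $\oC(\tau)_\xi$, given by (\ref{e: defi map psi T})--(\ref{e: direct sum over xi}), to reduce the scalar action of $\tau(\Omega_{\fm_0})$ to a Casimir computation on finite-dimensional $M$-modules.

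First I would show that $\xi|_M$ is $M$-irreducible. Since $V(\xi) \neq 0$, some $\cH_\xi^{M_0 \cap vHv^{-1}}$ is non-zero for $v \in \cW$. The inclusion $\fm_{0n} \subseteq \fh$, together with the fact that $v$ normalizes $\fm_{0n}$, gives $M_{0n} \subseteq M_0 \cap vHv^{-1}$, so $\cH_\xi^{M_{0n}} \neq 0$. Normality of $M_{0n}$ in $M_0$ and irreducibility of $\xi$ then force $\xi$ to be trivial on $M_{0n}$; combined with $M_0 = M \cdot M_{0n}$ this makes every $M$-invariant subspace of $\cH_\xi$ automatically $M_0$-invariant. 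Hence $\xi|_M$ is irreducible, and since $\Omega_{\fm_0}$ is the Casimir of $\fm = \fm_0 \cap \fk$ while $M$ is compact, Schur's lemma produces a scalar $c_\xi \in \R$ with $\xi(\Omega_{\fm_0}) = c_\xi I$ on $\cH_\xi$.

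Next I would translate the action of $\tau(\Omega_{\fm_0})$ along $T \mapsto \psi_T$. For $T = f \otimes \eta$, applying $\tau(\Omega_{\fm_0})$ pointwise to $(\psi_T)_v(m) = \langle f(e), \xi(m) \eta_v \rangle_\xi$ yields $(\tau(\Omega_{\fm_0}) \psi_T)_v(m) = \langle (I \otimes \tau(\Omega_{\fm_0})) f(e), \xi(m) \eta_v \rangle_\xi$. The well-definedness of $f \in C(K\col\xi\col\tau)$ forces $f(e)$ to be $(M_0 \cap K)$-invariant in $\cH_\xi \otimes V_\tau$ under the diagonal action $\xi \otimes \tau$, and in particular $M$-invariant. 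Setting $\delta := \xi|_M$, so that $\cH_\xi \cong V_\delta$ as an $M$-module, the orthogonal decomposition of $V_\tau$ into $M$-isotypes identifies $(\cH_\xi \otimes V_\tau)^M$ with the pairing of $V_\delta$ against the $V_{\delta^*}$-isotype of $V_\tau$; on this subspace $I \otimes \tau(\Omega_{\fm_0})$ acts as the Casimir scalar $c_{\delta^*} = c_\delta = c_\xi$, using the standard equality of Casimir scalars on a compact-group irrep and its dual. Consequently $(I \otimes \tau(\Omega_{\fm_0})) f(e) = c_\xi f(e)$, whence $\tau(\Omega_{\fm_0}) \psi_T = c_\xi \psi_T$.

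Finally, for every $w \in N_K(\faq)$, $\Ad(w)$ preserves $\fm = Z_\fk(\faq)$ and the $\Ad(G)$-invariant form $B$, so $\Ad(w) \Omega_{\fm_0} = \Omega_{\fm_0}$, and the Casimir scalar is conjugation invariant: $c_{w\xi} = c_\xi$ for every $w \in W$. Thus the same scalar $c_\xi$ acts on each summand $\oC(\tau)_{w\xi}$ of $\oC(\tau)_{W\cdot\xi}$, proving the claim. The main obstacle is the coincidence of the two Casimir operators $\xi(\Omega_{\fm_0})\otimes I$ and $I \otimes \tau(\Omega_{\fm_0})$ on the $M$-invariants in the middle step, which relies crucially on the compactness of $M$, the irreducibility of $\xi|_M$, and the identity $c_\delta = c_{\delta^*}$ of Casimir eigenvalues on dual compact-group irreps.
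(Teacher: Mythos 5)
Your steps 1 and 3 are fine: the argument that $\xi$ is trivial on $M_{0n}$ and hence $\xi|_M$ is irreducible is correct (it recovers the fact the paper cites from \cite[Cor.\ 4.4]{vdBanKuit_EisensteinIntegrals}), and the $W$-invariance $c_{w\xi}=c_\xi$ via $\Ad(w)\Omega_{\fm_0}=\Omega_{\fm_0}$ is exactly as in the paper. The problem is the middle step. You identify $\fm$ with $\fm_0\cap\fk$, but these are different in general: $\fm=\fz_\fk(\fa)$ while $\fm_0\cap\fk=\fz_\fk(\faq)$, and the inclusion $\fm\subseteq\fm_0\cap\fk$ is strict whenever $\gS\cap\fa_\fh^*\neq\emptyset$ (which happens for split rank one spaces; only in the group case is it automatic that these coincide). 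Consequently $\Omega_{\fm_0}$, which by definition is the Casimir of $\fm_0\cap\fk$, does not lie in $U(\fm)$, so the expression ``the Casimir scalar $c_{\delta^*}$'' for the $M$-irrep $\delta^*=(\xi|_M)^*$ is not defined, and $\tau(\Omega_{\fm_0})$ need not act by any scalar on the $\delta^*$-isotypic component of $V_\tau|_M$: it commutes with $\tau(M)$, hence has the form $1\otimes A$ on $V_{\delta^*}\otimes\Hom_M(V_{\delta^*},V_\tau)$, but $A$ is in general not scalar and its eigenvalues are not controlled by the $M$-structure alone. So $M$-invariance of $f(e)$, which is all you actually use, is insufficient to conclude $(1\otimes\tau(\Omega_{\fm_0}))f(e)=c_\xi f(e)$.

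The fix is to use the stronger invariance you mention and then drop: $f\in C(K\colsep\xi\colsep\tau)$ gives $f(e)\in(\cH_\xi\otimes V_\tau)^{M_0\cap K}$ for the diagonal action, so for every $X\in\fm_0\cap\fk$ one has $(d\xi(X)\otimes 1+1\otimes d\tau(X))f(e)=0$; applying this twice and summing over an orthonormal basis yields $(1\otimes\tau(\Omega_{\fm_0}))f(e)=(\xi(\Omega_{\fm_0})\otimes 1)f(e)$, and the right-hand side is $c_\xi f(e)$ because $\xi|_{M_0\cap K}$ is irreducible (a fortiori from your step 1) and $\Omega_{\fm_0}$ is $\Ad(M_0\cap K)$-invariant, so Schur applies for the correct group. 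In other words, replace $M$ by $M_0\cap K$ throughout your second step; with that replacement your isotypic/dual-Casimir argument also becomes valid, but it then amounts to the paper's direct computation transferring $1\otimes\tau(\Omega_{\fm_0})$ into $\xi(\Omega_{\fm_0})\otimes 1$ via the equivariance of $f$.
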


\begin{proof}
From \cite[Cor.~4.4]{vdBanKuit_EisensteinIntegrals}
it follows that the restriction of $\xi$ to $M_{0}\cap K$ is irreducible. Hence, $\xi(\Omega_{\fm_{0}})$ acts by a real scalar $c_\xi$ on $\cH_\xi$.

Let $s \in W.$
Then $s$ has a representative $w \in N_K(\faq).$ As $\Ad(w)$ preserves the restriction of the bilinear form $B$ to $\fk \cap \fm_0,$  it follows that
$\Ad(w)\Omega_{\fm_0} = \Omega_{\fm_0}.$ This implies that $s\xi(\Omega_{\fm_0})$
acts by the scalar $c_\xi$ on $\cH_{\xi}.$ We infer that
$$
c_{s\xi} = c_\xi,\qquad (s \in W).
$$

Let $s \in W,$ $f\in C(K:s\xi:\tau)$ and $\eta\in\bar V(s\xi)$. Then, with notation
as in (\ref{e: defi map psi T}),
\begin{eqnarray*}
\big(\tau(\Omega_{\fm_{0}})\psi_{f\otimes\eta}\big)(m)
&=& \langle\big(1 \otimes \tau(\Omega_{\fm_{0}})\big)f(m),\eta\rangle\\
&=& \langle\big(s\xi(\Omega_{\fm_{0}})\otimes 1\big)f(m),\eta\rangle\\
& = & c_{\xi}\psi_{f\otimes\eta}(m).
\end{eqnarray*}
The assertion now follows.
\end{proof}

For $\xi\in \widehat{M}_{0H}$ we define the differential operator $D_\xi \in \DGH$ by
$$
D_{\xi}
:=\prod_{\mu\in S_{Q,\tau}}
    \Big(\Delta_{G/H}-c_{\xi}+\langle\rho_{P_{0}},\rho_{P_{0}}\rangle-\langle\mu,\mu\rangle\Big)^{m_{\mu}},
$$
where $m_{\mu} - 1 \geq 0$ is the degree of the $ C^\infty(G/H) \otimes \Hom(\oC(\tau), \Vtau)$-valued
polynomial function $a\mapsto\Res_{\tau}(Q:\mu:a:\dotvar)$.

\begin{Lemma}\label{Lemma H(D phi)=prod (Delta-<mu,mu>)^j H phi}
Let $\xi\in\widehat{M_{0}}_{H}$. Then for every $\phi\in\cC_{\mc}(G/H:\tau)_{W\cdot\xi}$
$$
\Ht_{Q,\tau}\big(D_{\xi}\phi)
=\Big(\prod_{\mu\in S_{Q,\tau}}\Big(\Delta_{A_{\fq}}-\langle\mu,\mu\rangle\Big)^{m_{\mu}}\Big)\Ht_{Q,\tau}\phi.
$$
\end{Lemma}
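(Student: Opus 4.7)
The plan is to iterate Lemma \ref{Lemma H(Delta phi)=(Delta-rho^2)phi} after replacing the occurrence of $\tau(\Omega_{\fm_0})$ by the scalar $c_\xi$, and then take the product over $\mu \in S_{Q,\tau}$. The two ingredients that make this substitution legitimate are Lemma \ref{Lemma phi in C_mc(G/H:tau)_(W xi) => H phi in C(Aq)otimes oC(tau)_(W xi)}, which says that $\Ht_{Q,\tau}\phi$ takes values in $\oC(\tau)_{W\cdot\xi}$, and Lemma \ref{l: the scalar c xi}, which says that $\tau(\Omega_{\fm_0})$ acts on this subspace by $c_\xi$.

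First, I would record the following consequence of Lemma \ref{Lemma H(Delta phi)=(Delta-rho^2)phi}: for every $\mu \in S_{Q,\tau}$ and every $\phi \in \cC_{\mc}(G/H:\tau)_{W\cdot\xi}$,
$$
\Ht_{Q,\tau}\Big(\big(\Delta_{G/H}-c_\xi+\langle\rho_{P_0},\rho_{P_0}\rangle-\langle\mu,\mu\rangle\big)\phi\Big)
=\big(\Delta_{A_\fq}-\langle\mu,\mu\rangle\big)\Ht_{Q,\tau}\phi.
$$
Indeed, by Lemma \ref{Lemma phi in C_mc(G/H:tau)_(W xi) => H phi in C(Aq)otimes oC(tau)_(W xi)} the function $\Ht_{Q,\tau}\phi$ takes values in $\oC(\tau)_{W\cdot\xi}$, so Lemma \ref{l: the scalar c xi} allows us to replace $\tau(\Omega_{\fm_0})$ by the scalar $c_\xi$ in the identity from Lemma \ref{Lemma H(Delta phi)=(Delta-rho^2)phi}; the displayed formula is then immediate after moving the scalar terms across.

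Second, to iterate this identity one needs that the operator $A_\mu := \Delta_{G/H} - c_\xi + \langle\rho_{P_0},\rho_{P_0}\rangle - \langle\mu,\mu\rangle$ stabilizes the subspace $\cC_{\mc}(G/H:\tau)_{W\cdot\xi}$. Since $A_\mu$ differs from the invariant differential operator $\Delta_{G/H}$ by a constant, it is enough to verify this for $\Delta_{G/H}$. This follows by transport through the Fourier isomorphism $\nFt_{\bar P_0}\col \cC_\mc(G/H:\tau) \simeq [\cS(i\faqd)\otimes \oC(\tau)]^W$ of \cite[Cor.~17.4]{vdBanSchlichtkrull_MostContinuousPart}: under this isomorphism $\Delta_{G/H}$ corresponds to multiplication by the $W$-invariant $\End(\oC(\tau))$-valued polynomial $\tau(\Omega_{\fm_0}) - \langle\lambda,\lambda\rangle - \langle\rho_{P_0},\rho_{P_0}\rangle$, and by Lemma \ref{l: the scalar c xi} this polynomial preserves each subspace $\cS(i\faqd)\otimes \oC(\tau)_{W\cdot\xi}$ (acting on it as the scalar-valued polynomial $c_\xi - \langle\lambda,\lambda\rangle - \langle\rho_{P_0},\rho_{P_0}\rangle$). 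Consequently each $A_\mu$ preserves $\cC_\mc(G/H:\tau)_{W\cdot\xi}$.

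With these two facts in hand, the lemma is proved by induction on the total number of factors $\sum_\mu m_\mu$: apply the displayed identity once to the innermost factor and use that the output $A_\mu \phi$ still lies in $\cC_\mc(G/H:\tau)_{W\cdot\xi}$ to apply the identity again, and so on until all factors have been transferred. The only mildly non-routine point is the verification that $\Delta_{G/H}$ preserves the $W\cdot\xi$-component, but once one invokes the explicit expression of $\Delta_{G/H}$ under $\nFt_{\bar P_0}$ and Lemma \ref{l: the scalar c xi}, this is immediate. No further obstacle is expected.
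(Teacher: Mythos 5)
Your argument uses exactly the three ingredients that the paper's proof invokes (Lemmas \ref{Lemma H(Delta phi)=(Delta-rho^2)phi}, \ref{Lemma phi in C_mc(G/H:tau)_(W xi) => H phi in C(Aq)otimes oC(tau)_(W xi)} and \ref{l: the scalar c xi}), but you organize the iteration differently, and that reorganization creates the one weak point. Because you replace $\tau(\Omega_{\fm_{0}})$ by the scalar $c_\xi$ at every step, you need to know that $A_\mu=\Delta_{G/H}-c_{\xi}+\langle\rho_{P_{0}},\rho_{P_{0}}\rangle-\langle\mu,\mu\rangle$ maps $\cC_{\mc}(G/H:\tau)_{W\cdot\xi}$ into itself. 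Your justification by ``transport through $\nFt_{\bar P_0}$'' is incomplete as stated: the intertwining relation $\nFt_{\bar P_0}(D\phi)=\mu(D:\tau:\dotvar)\nFt_{\bar P_0}\phi$ only identifies the Fourier transform of $A_\mu\phi$, and since $\nFt_{\bar P_0}$ is not injective on $\cC(G/H:\tau)$ (its kernel is $\cC_{\mc}(G/H:\tau)^{\perp}$, which in split rank one equals $\cC_{\ds}(G/H:\tau)$), this does not yet rule out a discrete component of $A_\mu\phi$; recall that $\cC_{\mc}(G/H:\tau)_{W\cdot\xi}$ is defined as a preimage under the restriction of $\nFt_{\bar P_0}$ to $\cC_{\mc}(G/H:\tau)$. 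The gap is easy to close (e.g.\ $\langle A_\mu\phi,\chi\rangle=\langle\phi,A_\mu^{*}\chi\rangle=0$ for all $\chi\in\cC_{\ds}(G/H:\tau)$, because $\DGH$ preserves $\cC_{\ds}(G/H:\tau)$ and $\cC_{\mc}\perp\cC_{\ds}$), but you did not address it. There is also a harmless sign slip: the multiplier corresponding to $\Delta_{G/H}$ is $\tau(\Omega_{\fm_{0}})+\langle\lambda,\lambda\rangle-\langle\rho_{P_{0}},\rho_{P_{0}}\rangle$.

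The detour is in fact avoidable, and this is what the paper's one-line proof intends: Lemma \ref{Lemma H(Delta phi)=(Delta-rho^2)phi} holds for \emph{every} $\phi\in\cC(G/H:\tau)$, so you may iterate it without any reduction and obtain, for arbitrary Schwartz $\phi$,
$$
\Ht_{Q,\tau}(D_{\xi}\phi)
=\prod_{\mu\in S_{Q,\tau}}\big(\tau(\Omega_{\fm_{0}})-c_{\xi}+\Delta_{A_{\fq}}-\langle\mu,\mu\rangle\big)^{m_{\mu}}\,\Ht_{Q,\tau}\phi,
$$
the factors commuting because $\tau(\Omega_{\fm_{0}})$ acts on the $\oC(\tau)$-values while $\Delta_{A_{\fq}}$ acts in the $A_{\fq}$-variable. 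Only then restrict to $\phi\in\cC_{\mc}(G/H:\tau)_{W\cdot\xi}$: by Lemma \ref{Lemma phi in C_mc(G/H:tau)_(W xi) => H phi in C(Aq)otimes oC(tau)_(W xi)} the function $\Ht_{Q,\tau}\phi$ takes values in $\oC(\tau)_{W\cdot\xi}$, this subspace is preserved by each factor (as $\Delta_{A_{\fq}}$ does not affect the $\oC(\tau)$-component), and by Lemma \ref{l: the scalar c xi} one may replace $\tau(\Omega_{\fm_{0}})$ by $c_{\xi}$ throughout. This yields the asserted identity with no stability statement about $\Delta_{G/H}$ on $\cC_{\mc}(G/H:\tau)_{W\cdot\xi}$ needed at all.
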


\begin{proof}
The lemma follows directly from Lemma \ref{Lemma H(Delta phi)=(Delta-rho^2)phi},
 Lemma \ref{Lemma phi in C_mc(G/H:tau)_(W xi) => H phi in C(Aq)otimes oC(tau)_(W xi)}
 and Lemma \ref{l: the scalar c xi}.
\end{proof}

\begin{Lemma}\label{Lemma Existence of solution of D chi=phi}
Let $\xi\in\widehat{M_{0}}_{H}$. Then for every $\phi\in\cC_{\mc}(G/H:\tau)_{W\cdot\xi}$ there exists a $\chi\in\cC_{\mc}(G/H:\tau)_{W\cdot\xi}$ such that
$$
D_{\xi}\chi
=\phi.
$$
\end{Lemma}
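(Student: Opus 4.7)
The plan is to transfer the problem to the Euclidean side via the Fourier transform $\nFt_{\bar P_0}$, where the operator $D_\xi$ becomes multiplication by a polynomial that is bounded away from zero on $i\faqd$. Concretely, by \cite[Cor.~17.4]{vdBanSchlichtkrull_MostContinuousPart} combined with the definition of $\cC_{\mc}(G/H:\tau)_{W\cdot \xi}$, the map $\nFt_{\bar P_0}$ restricts to a topological linear isomorphism
\[
\nFt_{\bar P_0}:\;\cC_{\mc}(G/H:\tau)_{W\cdot\xi}\;\buildrel\simeq\over\longrightarrow\;\bigl[\cS(i\faqd)\otimes \oC(\tau)_{W\cdot\xi}\bigr]^{W}.
\]
I would first identify the Fourier symbol of $\Delta_{G/H}$ on this component. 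Since the normalized Eisenstein integral $\Etau^{\circ}(\bar P_0:\psi:\lambda)$ is a simultaneous eigenfunction for $\DGH$, an argument parallel to Lemma \ref{Lemma H(Delta phi)=(Delta-rho^2)phi} (using $\mu(\Delta_{G/H}:\tau)=\tau(\Omega_{\fm_0})+\Delta_{\Aq}-\langle\rho_{P_0},\rho_{P_0}\rangle$ together with Lemma \ref{l: the scalar c xi}) shows that for $\psi \in \oC(\tau)_{W\cdot \xi}$,
\[
\Delta_{G/H}\, \Etau^\circ(\bar P_0:\psi:\lambda)=\bigl(\langle\lambda,\lambda\rangle+c_\xi-\langle\rho_{P_0},\rho_{P_0}\rangle\bigr)\Etau^\circ(\bar P_0:\psi:\lambda).
\]
By symmetry of $\Delta_{G/H}$ and the definition of $\nFt_{\bar P_0}$ (note that $\langle\lambda,\lambda\rangle$ is real on $i\faqd$), this yields, for all $\phi \in \cC_{\mc}(G/H:\tau)_{W\cdot\xi}$ and $\lambda \in i\faqd$,
\[
\nFt_{\bar P_0}(\Delta_{G/H}\phi)(\lambda)=\bigl(\langle\lambda,\lambda\rangle+c_\xi-\langle\rho_{P_0},\rho_{P_0}\rangle\bigr)\nFt_{\bar P_0}(\phi)(\lambda).
\]

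Consequently, the operator $D_\xi$ acts on $\nFt_{\bar P_0}(\cC_{\mc}(G/H:\tau)_{W\cdot\xi})$ as multiplication by the polynomial
\[
P(\lambda):=\prod_{\mu\in S_{Q,\tau}}\bigl(\langle\lambda,\lambda\rangle-\langle\mu,\mu\rangle\bigr)^{m_\mu}.
\]
Since $S_{Q,\tau}\subseteq \faqdp(P_0)$ by Lemma \ref{Lemma S_(Q,tau) is real and finite}, each $\mu$ is a non-zero real element of $\faqd$, so $\langle\mu,\mu\rangle>0$. On the other hand, for $\lambda\in i\faqd$ one has $\langle\lambda,\lambda\rangle\leq 0$. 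Hence $P$ has no zeros on $i\faqd$, and in fact $|P(\lambda)|\geq \prod_\mu \langle\mu,\mu\rangle^{m_\mu}>0$ uniformly on $i\faqd$. As $P$ is a polynomial, a straightforward Leibniz-rule argument shows that multiplication by $P^{-1}$ defines a continuous linear endomorphism of $\cS(i\faqd)\otimes \oC(\tau)_{W\cdot \xi}$.

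Finally, since $P$ depends only on $\langle\lambda,\lambda\rangle$, it is $W$-invariant and therefore multiplication by $P^{-1}$ commutes with the $W$-action $\gamma$ (the scalar $P^{-1}(\lambda)$ passes through the intertwining factor $C^\circ_{\bar P_0|\bar P_0}(s^{-1}:\lambda)^{-1}$ in the definition of $\gamma(s)$). Thus multiplication by $P^{-1}$ preserves the subspace $[\cS(i\faqd)\otimes \oC(\tau)_{W\cdot \xi}]^W$. Setting
\[
\chi:=\nFt_{\bar P_0}^{-1}\bigl(P^{-1}\,\nFt_{\bar P_0}\phi\bigr)
\]
produces an element $\chi \in \cC_{\mc}(G/H:\tau)_{W\cdot\xi}$ satisfying $D_\xi \chi = \phi$. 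The main (and essentially only) technical point to be careful about is the verification of the Fourier symbol of $\Delta_{G/H}$; once established, the remaining steps are formal manipulations with Schwartz functions.
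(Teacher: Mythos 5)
Your proposal is correct and takes essentially the same route as the paper: both pass to the Fourier side via $\nFt_{\bar P_0}$, where on the $W\cdot\xi$-component the operator $D_{\xi}$ becomes multiplication by $\prod_{\mu\in S_{Q,\tau}}\big(\langle\lambda,\lambda\rangle-\langle\mu,\mu\rangle\big)^{m_{\mu}}$, which is nonvanishing (indeed bounded away from zero) on $i\faqd$ because $S_{Q,\tau}\subseteq\faqd\setminus\{0\}$, and then invert and pull back. The only difference is presentational: the paper quotes the intertwining relation $\nFt_{\bar P_0}(D\phi)(\lambda)=\mu(D:\tau:\lambda)\nFt_{\bar P_0}\phi(\lambda)$ from \cite[Lemma 6.2]{vdBanSchlichtkrull_MostContinuousPart} rather than re-deriving the symbol from the Eisenstein eigenvalue equation, and leaves the $W$-equivariance and Schwartz-continuity of division implicit, which you spell out.
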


\begin{proof}
For $\gl \in i\faqd$ we define  $e_\gl: A_{\fq}\to\C,\,$ $a \mapsto a^\gl.$
Moreover, for  $D\in\D(G/H)$
we define $\mu(D:\tau:\lambda)\in\End(\oC(\tau))$ by
$$
\mu(D:\tau: \lambda)\psi
:=\big(\mu (D:\tau)(e^\gl \otimes \psi)\big)(e)
\qquad\big(\psi\in\oC(\tau)\big);
$$
here $\mu(D:\tau)$ is the $\End(\oC(\tau))$-valued differential operator
on $\Aq$ defined in (\ref{e: intro mu tau}).

Then for all $D\in\D(G/H)$, $\phi\in\cC(G/H:\tau)$ and $\lambda\in i\fa_{\fq}^{*}$,
we have
$$
\nFt_{\bar P_0}\big(D\phi)(\lambda)
=\mu(D: \tau: \lambda)\nFt_{\bar P_0}\phi(\lambda);
$$
see \cite[Lemma 6.2]{vdBanSchlichtkrull_MostContinuousPart}.
In particular, it follows that
$$
\nFt_{\bar P_0 }\big(D_{\xi}\phi)(\lambda)
=\prod_{\mu\in S_{Q,\tau}}\Big(\langle\lambda,\lambda\rangle-\langle\mu,\mu\rangle\Big)^{m_{\mu}}
    \nFt_{\bar P_0}\phi(\lambda).
$$
Note that $\langle\lambda,\lambda\rangle-\langle\mu,\mu\rangle\neq0$ for $\lambda\in i\fa_{\fq}^{*}$ and $\mu\in S_{Q,\tau}\subseteq \fa_{\fq}^{*}\setminus \{0\}$.

Now let $\phi\in\cC_{\mc}(G/H:\tau)_{W\cdot \xi}$. Then the function
$
f:i\fa_{\fq}^{*}\to\oC(\tau)$ defined by
$$
f(\gl)=\prod_{\mu\in S_{Q,\tau}}\Big(\langle\lambda,\lambda\rangle-\langle\mu,\mu\rangle\Big)^{- m_{\mu}} \;  \nFt_{\bar P_0}\phi(\lambda)
$$
belongs to the space $\big(S(i\fa_{\fq}^{*})\otimes\oC(\tau)_{W\cdot\xi}\big)^{W}$.
The assertion of  the lemma now follows with $\chi=\nFt_{\bar P_0}^{-1}f$.
\end{proof}

\begin{proof}[Proof of Theorem \ref{Thm most cont part orthogonal to residues}]
Since $\cC_{\mc}(G/H:\tau)$ decomposes as a finite direct sum (\ref{eq C_mc(G/H:tau)=oplus C_mc(G/H:tau)_(W xi)}), it suffices to prove the assertion for $\phi\in\cC_{\mc}(G/H:\tau)_{W\cdot\xi}$.
Let $\phi\in\cC_{\mc}(G/H:\tau)_{W\cdot\xi}$ and let $\chi$ be as in Lemma \ref{Lemma Existence of solution of D chi=phi}. Then by Lemma \ref{Lemma H(D phi)=prod (Delta-<mu,mu>)^j H phi}
$$
\Ht_{Q,\tau}\phi
=\Big(\prod_{\mu\in S_{Q,\tau}}\Big(\Delta_{A_{\fq}}-\langle\mu,\mu\rangle\Big)^{m_{\mu}}\Big)\Ht_{Q,\tau}\chi.
$$
Since $a\mapsto\inp{\chi}{\Restau(Q:\mu: a:\dotvar)\psi}$ is a polynomial function of degree $m_{\mu}-1$, it follows that
$$
\Big(\Delta_{A_{\fq}}-\langle\mu,\mu\rangle\Big)^{m_{\mu}}a^\mu \inp{\chi}{\Restau(Q:\mu: a:\dotvar)\psi}
=0,
$$
hence
$$
\Ht_{Q,\tau}\phi
=\Big(\prod_{\mu\in S_{Q,\tau}}\Big(\Delta_{A_{\fq}}-\langle\mu,\mu\rangle\Big)^{m_{\mu}}\Big)\It_{Q,\tau}\chi.
$$
In particular, $\Ht_{Q,\tau}\phi$ belongs to $C^{\infty}_{\temp}(A_{\fq})\otimes\oC(\tau).$
Since also $\It_{Q,\tau}\phi$ belongs to this space, by Proposition \ref{Prop JQ: C(G/H:tau) to C^infty(Aq)}, we infer that
$$
a \mapsto \sum_{\mu \in S_{Q,\tau}} a^\mu \inp{\Restau(Q:\mu: a: \dotvar)}{\phi}
$$
belongs to this space. Since the latter sum  is also an exponential polynomial function
with non-zero exponents on $\Aq,$ with values in $\oC(\tau),$
it must be zero and we finally conclude
(\ref{eq <phi, Res(Q:mu:a)>=0}).
\end{proof}

We now come to the second theorem of this subsection, which asserts
that in fact the residual functions from Corollary \ref{c: R tempered} are constant
as functions of the variable from $\Aq.$
From Corollary \ref{c: R tempered} we recall that for $\mu \in S_{Q,\tau},$
the function $\Restau(Q,\mu)$ belongs to $P(\Aq) \otimes C^\infty_\temp(G/H: \tau) \otimes
\oC(\tau)^*.$

\begin{Thm}
\label{t: Res constant in a}
Let $\mu \in S_{Q,\tau}.$
\begin{enumerate}
\itema The function $\Restau(Q:\mu)$ is constant with respect to the
variable from $\Aq$ and belongs to $\cC_\ds(G/H:\tau) \otimes \oC(\tau)^*.$
\itemb The meromorphic $C^\infty(G/H) \otimes \Hom(\oC(\tau), V_\tau)$-valued
function $E(Q: - \dotvar)$ on $\faqdc$ has a pole of order $1$ at $\mu.$
\end{enumerate}
\end{Thm}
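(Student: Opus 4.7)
The plan is to prove (a) and (b) jointly, noting that by Lemma \ref{l: residues two} the polynomial-in-$a$ degree of $\Restau(Q\colon\mu)$ is at most $d-1$, where $d$ is the order of the pole of $E_\tau(Q\colon\dotvar)$ at $-\mu$; hence constancy in the $\Aq$-variable is equivalent to $d=1$, which is (b). The argument proceeds in three stages: Schwartz-ness, membership in $\cC_\ds$, and an eigenvalue analysis that forces the polynomial degree to be zero.

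First, I would extend Proposition \ref{p: residue in Schwartz space} from the trivial $K$-type to arbitrary $\tau$, showing that $\Restau(Q\colon\mu\colon a\colon\dotvar)\psi\in\cC(G/H\colon\tau)$ for every $a\in\Aq$ and $\psi\in\oC(\tau)$. The strategy parallels the given proof: expand $E_\tau(Q\colon\psi\colon\gl)$ along $\Aq^{+}(\bar P_0)$ in terms of the $\tau$-spherical series functions $\Phi_{\bar P_0,w}$ of \cite{vdBanSchlichtkrull_ExpansionsForEisensteinIntegralsOnSemisimpleSymmetricSpaces}, discard the part that is regular at $\gl=-\mu$ (which contributes nothing to the residue), and observe that because $\mu\in\fa_\fq^{*+}(P_0)$ the exponents of the surviving terms decay along $\Aq^{+}(\bar P_0)$ strictly faster than $b^{-\mu-\rho_{\bar P_0}+\epsilon\ga}$ for any small $\epsilon>0$. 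Together with the $\DGH$-finiteness provided by Lemma \ref{l: residue preparation}, this asymptotic bound gives Schwartz-ness via \cite[Thm.~6.4]{vdBan_AsymptoticBehaviourOfMatrixCoefficientsRelatedToReductiveSymmetricSpaces}. This technical step will be the main obstacle, since for non-trivial $\tau$ the expansion involves additional $\gg$-contributions that must be tracked through the residue. Once Schwartz-ness is established, Theorem \ref{Thm most cont part orthogonal to residues} gives $L^2$-orthogonality of $\Restau(Q\colon\mu\colon a\colon\dotvar)\psi$ to $\cC_\mc(G/H\colon\tau)$, and the Plancherel-type decomposition $\cC(G/H\colon\tau)=\cC_\mc\oplus\cC_\ds$ for split rank one (used in the proof of Corollary \ref{c: ker It Q tau is ds}) then yields $\Restau(Q\colon\mu\colon a\colon\dotvar)\psi\in\cC_\ds(G/H\colon\tau)$.

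For the constancy in $a$ I would exploit the differential equation satisfied by discrete series matrix coefficients. For a $K$-finite $\phi\in\cC_\ds(G/H\colon\tau)$ in the $\xi$-isotypic component with $Z(\fg)$-Casimir eigenvalue $\chi(\Omega)$, Lemmas \ref{Lemma H(Delta phi)=(Delta-rho^2)phi} and \ref{l: the scalar c xi} show that the $W\cdot\xi$-component of $\Ht_{Q,\tau}\phi$ is an eigenfunction of $\Delta_{\Aq}$ with eigenvalue $E=\chi(\Omega)-c_\xi+\inp{\rho_{P_0}}{\rho_{P_0}}$. Since $\It_{Q,\tau}\phi=0$ by Corollary \ref{c: ker It Q tau is ds}, Corollary \ref{c: HQtau as tempered term plus residues for Schwartz} gives
\[
\Ht_{Q,\tau}\phi(a)=\sum_{\nu\in S_{Q,\tau}}a^\nu\,R_\nu(a,\phi),
\]
with each $R_\nu(\dotvar,\phi)$ an $\oC(\tau)$-valued polynomial in $\log a$ of degree at most $d-1$. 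Linear independence of the characters $a\mapsto a^\nu$ over distinct $\nu\in S_{Q,\tau}$ separates the eigenvalue equation into a second-order ODE for each $R_\nu$ on the $W\cdot\xi$-component; since $\nu$ is a nonzero real functional on $\faq$, an elementary characteristic-root analysis shows that the only polynomial solutions are constants, and only in the resonant case $\inp{\nu}{\nu}=E$. Hence $R_\nu(a,\phi)$ is independent of $a$ for every $K$-finite $\phi$ in a single $Z(\fg)$-eigenspace of $\cC_\ds$, and by density and continuity for every $\phi\in\cC_\ds(G/H\colon\tau)$. Combined with the membership established above and the non-degeneracy of the $L^2$-pairing on $\cC_\ds(G/H\colon\tau)$, this forces $\Restau(Q\colon\mu\colon a\colon\dotvar)\psi$ itself to be independent of $a$. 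By Lemma \ref{l: residues two} it follows that $d\leq 1$, and since $\mu\in S_{Q,\tau}$ is a genuine singularity we conclude $d=1$, which is (b).
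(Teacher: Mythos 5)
Your middle stage -- the Casimir eigenvalue computation via Lemmas \ref{Lemma H(Delta phi)=(Delta-rho^2)phi} and \ref{l: the scalar c xi}, the separation of the exponential-polynomial terms attached to distinct $\nu\in S_{Q,\tau}$, the characteristic-root argument forcing the polynomial factors to be constant, and the passage from constancy of all pairings with $\cC_\ds(G/H:\tau)$ to constancy of $\Restau(Q:\mu)$ itself -- is essentially the paper's proof of (a). The first stage, however, is a genuine gap: you need the residues to lie in $\cC_\ds(G/H:\tau)$ before the non-degeneracy argument applies, and you propose to get this by extending Proposition \ref{p: residue in Schwartz space} to arbitrary $\tau$, a step you flag as the main obstacle but do not carry out. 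That extension is not routine: the proof for the trivial $K$-type uses the normalization conditions (1)--(2) following (\ref{e: expansion Phi R w}), which hold precisely because $\tau(L_\ga^\pm)=0$ and $\gg=0$ for trivial $\tau$; for general $\tau$ the coefficients $\Gamma_{\bar P_0,w,k}$ obey a more complicated recursion and the clean splitting into a part holomorphic at $-\mu$ and a rapidly decaying part needs to be re-justified. The paper avoids this altogether: by Corollary \ref{c: R tempered} the residues are tempered, by Theorem \ref{Thm most cont part orthogonal to residues} they are $L^2$-orthogonal to $\cC_\mc(G/H:\tau)$, and since $\cC_\ds(G/H:\tau)$ is finite dimensional the functional $\phi\mapsto\langle\phi,\chi\rangle$ on $\cC_\ds(G/H:\tau)$ is represented by some $\vartheta\in\cC_\ds(G/H:\tau)$; the orthogonal decomposition $\cC(G/H:\tau)=\cC_\mc(G/H:\tau)\oplus\cC_\ds(G/H:\tau)$ then forces $\chi=\vartheta$, so the residues lie in $\cC_\ds(G/H:\tau)$ with no Schwartz estimate for general $\tau$ required. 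Substituting this duality argument for your first stage would close that gap.

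The deduction of (b) is also flawed as written. Lemma \ref{l: residues two} only gives the upper bound: pole order $d$ implies $\Restau(Q:\mu)\in P_{d-1}(\Aq)\otimes\findim\otimes\oC(\tau)^*$. It says nothing about the bound being attained, so your inference ``constant in $a$, hence by Lemma \ref{l: residues two} $d\le 1$'' is the unproved converse, and your opening claim that constancy in the $\Aq$-variable is \emph{equivalent} to $d=1$ is exactly what must be shown. The missing step is the explicit Laurent computation carried out in the paper: if the pole order at $\mu$ were $\ge 2$, choose $\psi\in\oC(\tau)$ and $k\ge 1$ with $p^{k+1}E(Q:\psi:-\dotvar)$ regular and nonvanishing at $\mu$ (where $p(\gl)=\inp{\gl+\mu}{\omega}$); expanding $a^{-z\omega}$ in the formula $\Restau(Q:\mu:a:x)\psi=-\Res_{z=0}\,a^{-z\omega}E(Q:\psi:-\mu+z\omega)(x)$ and extracting the coefficient of $z^{-1}$ shows that the coefficient of $\omega(\log a)^k$ is a nonzero multiple of the leading Laurent coefficient, so the residue would genuinely depend on $a$, contradicting (a). This is short, but it cannot be replaced by a citation of Lemma \ref{l: residues two}.
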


\begin{proof}
By \cite[Prop.~17.7]{vdBanSchlichtkrull_MostContinuousPart}
$$
\cC(G/H:\tau)
=\cC_{\mc}(G/H:\tau)\oplus\cC_{\ds}(G/H:\tau)
$$
as an orthogonal direct sum.
By Corollary \ref{c: R tempered} the finite dimensional space
$$
\findim =\spn\{\Restau(Q:\mu:a)\psi:\mu\in S_{Q,\tau},a\in A_{\fq},\psi\in\oC(\tau)\}
$$
is contained in $C^{\infty}_{\temp}(G/H:\tau)\subseteq \cC'(G/H:\tau)$. Since $\cC_{\ds}(G/H:\tau)$ is finite dimensional, see \cite{Oshima_Matsuki_ds} and  \cite[Lemma 12.6 \& Rem.\ 12.7]{vdBan_Schlichtkrull_Plancherel1}, there exists for every $\chi\in F$ a function $\vartheta\in\cC_{\ds}(G/H:\tau)$ such that
\begin{equation}\label{eq <phi,chi>=<phi,theta>}
\langle\phi,\chi\rangle
=\langle\phi,\vartheta\rangle
\end{equation}
for every $\phi\in\cC_{\ds}(G/H:\tau)$.
By Theorem \ref{Thm most cont part orthogonal to residues} the space $\cC_{\mc}(G/H:\tau)$ is perpendicular to $F$. Hence,  (\ref{eq <phi,chi>=<phi,theta>}) is valid
for every $\phi\in\cC(G/H:\tau)$, and we conclude that $\chi=\vartheta$. This proves that $F\subseteq\cC_{\ds}(G/H:\tau)$.

The space $\cC_{\ds}(G/H:\tau)$ decomposes as a finite direct sum
$$
\cC_{\ds}(G/H:\tau)
=\bigoplus_{\pi}\cC(G/H:\tau)_{\pi},
$$
where $\pi$ runs over the representations of the discrete series
and $\cC(G/H:\tau)_{\pi}$ is spanned by left $\tau$-spherical and right $H$-fixed generalized matrix coefficients of $\pi$.

Let $\pi$ be a discrete series representation, $\phi\in\cC(G/H:\tau)_{\pi}$
and $\psi \in \oC(\tau).$
We will establish (a) by showing that  $\inp{\phi}{\Restau(Q:\mu: a:\dotvar)\psi}$ is independent of $a\in A_{\fq}.$  Since (\ref{eq oC(tau)=oplus oC(tau)_(W xi)}) is a finite direct sum, we may assume that $\psi\in\oC(\tau)_{W\cdot\xi}$ for a representation
$\xi\in\widehat{M}_{0H}$.

By Corollary \ref{c: HQtau as tempered term plus residues for Schwartz} and Corollary \ref{c: ker It Q tau is ds},
\begin{equation}\label{eq Ht phi_ds =Res phi_ds}
\langle\Ht_{Q,\tau}\phi(a),\psi\rangle
=\sum_{\mu \in S_{Q,\tau}}a^\mu  \inp{\phi}{\Restau(Q:\mu: a:\dotvar)\psi}
\qquad(a\in A_{\fq}).
\end{equation}
Since $\pi$ is irreducible, $\phi$ is an eigenfunction for $\gD_{G/H} = R_\Omega.$
 Let $c$ be its eigenvalue. Then by Lemma \ref{Lemma H(Delta phi)=(Delta-rho^2)phi}
$$
\big(\tau(\Omega_{\fm_{0}})+\Delta_{A_{\fq}}-\langle\rho_{P_{0}},\rho_{P_{0}}\rangle-c\big)\Ht_{Q,\tau}\phi
=0.
$$
Since $\tau(\Omega_{\fm_{0}})$ is symmetric, we have for $\chi\in\oC(\tau)$
$$
\langle\tau(\Omega_{\fm_{0}})\chi,\psi\rangle
=\langle\chi,\tau(\Omega_{\fm_{0}})\psi\rangle
=c_{\xi}\langle\chi,\psi\rangle.
$$
We thus see that
$$
\big(\Delta_{A_{\fq}}+c_{\xi}-\langle\rho_{P_{0}},\rho_{P_{0}}\rangle-c\big)\langle\Ht_{Q,\tau}\phi,\psi\rangle
=0.
$$
The solutions to this differential equation are either polynomial functions (in case $c_{\xi}-\langle\rho_{P_{0}},\rho_{P_{0}}\rangle-c=0$) or a sum of exponential functions (in case $c_{\xi}-\langle\rho_{P_{0}},\rho_{P_{0}}\rangle-c\neq 0$). By comparing
with (\ref{eq Ht phi_ds =Res phi_ds}) we see that $\langle\Ht_{Q,\tau}\phi,\psi\rangle$
cannot be purely polynomial, hence must be a finite sum of exponentials functions. This establishes (a).

We turn to (b). We define the linear function  $p: \faqdc \to \C$
by $p(\gl) = \inp{\gl + \mu}{\omega},$ where $\omega$ is the unique
unit vector in $\faqd(P_0).$ We note that
$$
p(-\mu + z\omega) = z,\qquad (z \in \C).
$$
The function $E(Q: - \dotvar)$ has a singularity at $\mu,$ by definition
of the set $S_{Q,\tau}.$ Reasoning by contradiction, assume that (b) is not valid. Then
there exists an element $\psi \in \oC(\tau)$ and a $k \geq 1$ such that $p^{k+1} E(Q: \psi: - \dotvar)$ is regular at $\mu$ and has a non-zero value at that point. Then with the notation of (\ref{e: definition of residue}) and Lemma \ref{l: residues one}, it follows
that
\begin{eqnarray*}
\Restau(Q: \mu: a:x)\psi
&=& - \Res_{\gl = - \mu} a^{-\gl - \mu} E(Q: \psi: \gl)(x)\\
&=& - \Res_{z = 0} a^{-z\omega} E(Q: \psi: -\mu + z \omega)(x)\\
&=& - \left. \frac{d^k}{dz^k}\right|_{z = 0} z^{k+1} a^{-z\omega} E(Q: \psi: - \mu + z\omega)(x)\\
&=& \sum_{j=0}^k  \omega(\log a)^j \chi_j(x)
\end{eqnarray*}
with uniquely determined functions $\chi_j \in C^\infty(G/H:\tau).$
By the Leibniz rule it follows in particular that
$$
\chi_k(x) = (-1)^{k+1} \omega(\log a)^k [z^{k+1} E(Q:\psi: -\mu + z\omega)]_{z = 0}.
$$
Since
$$
[z^{k+1} E(Q:\psi: -\mu + z\omega)]_{z = 0} = [p(\gl)E(Q:\psi:\gl]_{\gl = -\mu} \neq 0
$$
this implies that $\Restau(Q: \mu: a:x)(\psi)$ is not constant in $a,$ contradicting (a).
Hence (b) is valid.
\end{proof}

In view of Theorem \ref{t: Res constant in a} we now obtain the following
version of Corollary \ref{c: HQtau as tempered term plus residues for Schwartz}.

\begin{Cor}
Let $\phi\in\cC(G/H:\tau)$. Then for all $\psi \in \oC(\tau)$ and $a \in \Aq,$
\begin{equation}
\label{e: H minus I as pol free residues}
\langle\Ht_{Q,\tau}\phi(a),\psi\rangle
=\langle\It_{Q,\tau}\phi(a),\psi\rangle+\sum_{\mu\in S_{Q,\tau}}a^{\mu}\langle\phi,\Restau(Q,\mu)\psi\rangle.
\end{equation}
\end{Cor}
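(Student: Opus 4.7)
The plan is to deduce this corollary directly from the two preceding main results: Corollary \ref{c: HQtau as tempered term plus residues for Schwartz}, which already gives the analogous identity but with the $a$-dependent residual functions $\Restau(Q:\mu:a:\dotvar)$, and Theorem \ref{t: Res constant in a}(a), which asserts that each such residual function is in fact independent of $a \in \Aq$.

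First I would invoke Corollary \ref{c: HQtau as tempered term plus residues for Schwartz} to write, for arbitrary $\phi \in \cC(G/H:\tau)$, $\psi \in \oC(\tau)$ and $a \in \Aq$,
\begin{equation*}
\langle\Ht_{Q,\tau}\phi(a),\psi\rangle
=\langle\It_{Q,\tau}\phi(a),\psi\rangle+\sum_{\mu\in S_{Q,\tau}}a^{\mu}\langle\phi,\Restau(Q:\mu:a:\dotvar)\psi\rangle.
\end{equation*}
Next I would apply Theorem \ref{t: Res constant in a}(a): for each $\mu \in S_{Q,\tau}$, the polynomial-in-$a$ function $a \mapsto \Restau(Q:\mu:a:\dotvar)\psi$ is in fact a constant element of $\cC_\ds(G/H:\tau)$. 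Consequently this common value may legitimately be denoted $\Restau(Q,\mu)\psi$, matching the notation introduced in the statement. Substituting this into the displayed identity yields exactly (\ref{e: H minus I as pol free residues}).

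Since all the analytic content (absolute convergence of the defining integrals, extension of $\Ht_{Q,\tau}$ and $\It_{Q,\tau}$ to $\cC(G/H:\tau)$, and the constancy of the residues) has already been established, there is no genuine obstacle; the corollary is a purely notational reformulation of the two cited results. The only point worth highlighting in the write-up is that the pairing $\langle\phi,\Restau(Q,\mu)\psi\rangle$ makes sense as an $L^2$-pairing because $\Restau(Q,\mu)\psi \in \cC_\ds(G/H:\tau) \subset L^2(G/H:\tau)$, so that the duality bracket used in the $a$-dependent version (between $\cC(G/H:\tau)$ and $C^\infty_\temp(G/H:\tau)$) in fact reduces to the ordinary Schwartz inner product.
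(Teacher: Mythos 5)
Your argument is correct and is exactly the paper's: the corollary is obtained by combining Corollary \ref{c: HQtau as tempered term plus residues for Schwartz} with the constancy in $a$ of the residual functions from Theorem \ref{t: Res constant in a}\,(a). The added observation that the pairing reduces to the $L^2$-pairing since $\Restau(Q,\mu)\psi\in\cC_\ds(G/H:\tau)$ is a harmless and accurate refinement.
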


\subsection{Cusp Forms}
\label{ss: cusp forms}
In this final subsection we keep working under  the
\medno
{\bf Assumption:\ } {\em $G/H$ is of split rank one.}
\medno
The following definition makes use
of the Radon transform introduced in  Definition \ref{Defi Radon and HC-transform}.

\begin{Defi}\label{Defi cusp forms}
A function $\phi\in\cC(G/H)$ is called a cusp form if $\Rt_{Q}\phi=0$ for every $Q\in\cPH(A)$. We write $\cC_{\cusp}(G/H)$ for the subspace of such cusp forms in $\cC(G/H).$
\end{Defi}

\begin{Lemma}
\label{l: space of cusp forms closed}
$\cC_\cusp(G/H)$ is a $G$-invariant closed subspace of $\cC(G/H).$
\end{Lemma}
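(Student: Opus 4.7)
The plan is to realize $\cC_\cusp(G/H)$ as a common kernel of a family of continuous $G$-equivariant maps, from which both closedness and $G$-invariance will follow at once.

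First I would establish closedness. By Theorem \ref{Thm convergence}, for each $Q \in \cPH(A)$ the Radon transform extends to a continuous linear map
$$
\Rt_Q \colsep \cC(G/H) \longrightarrow C^\infty(G/N_Q),
$$
so its kernel is a closed subspace of $\cC(G/H).$ Since
$$
\cC_\cusp(G/H) = \bigcap_{Q \in \cPH(A)} \ker \Rt_Q,
$$
it is closed as an intersection of closed subspaces.

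Next I would establish $G$-invariance by invoking equivariance of $\Rt_Q.$ The map $\bp\Rt_Q\colon L^1(G/H) \to L^1_\loc(G/N_Q)$ is $G$-equivariant by Corollary \ref{c: intro bp R Q}, and the integral formula for $\Rt_Q$ in Theorem \ref{Thm convergence} shows that its continuous extension to $\cC(G/H)$ agrees with $\bp\Rt_Q$ on $\cC(G/H) \subseteq L^1(G/H);$ in particular the extended $\Rt_Q$ intertwines the left regular $G$-action on $\cC(G/H)$ with that on $C^\infty(G/N_Q).$ Therefore, for every $\phi \in \cC_\cusp(G/H),$ every $Q \in \cPH(A)$ and every $g \in G,$
$$
\Rt_Q(L_g \phi) = L_g \Rt_Q(\phi) = 0,
$$
so $L_g \phi \in \cC_\cusp(G/H).$

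There is no real obstacle here: once the continuity and the equivariance of $\Rt_Q$ on the Schwartz space are in hand (which is exactly the content of Theorem \ref{Thm convergence} combined with Corollary \ref{c: intro bp R Q}), both assertions are formal. The only minor point to verify is that the continuous extension coincides with the $L^1$-version $\bp\Rt_Q$ on the common domain $\cC(G/H),$ so that the equivariance transfers; this is immediate from the fact that both are given by the same absolutely convergent integrals on the dense subspace $C_c^\infty(G/H).$
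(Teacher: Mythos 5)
Your route is the same as the paper's: the paper's proof is exactly ``closedness from the continuity in Theorem~\ref{Thm convergence}, invariance from the $G$-equivariance of $\Rt_Q$ for $Q\in\cPH(A)$,'' and your realization of $\cC_\cusp(G/H)$ as $\bigcap_{Q\in\cPH(A)}\ker\Rt_Q$ with $\Rt_Q$ continuous on $\cC(G/H)$ is precisely that argument.

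One detail in your justification of equivariance is off. You transfer equivariance from $\bp\Rt_Q$ on $L^1(G/H)$ by asserting that the extension agrees with $\bp\Rt_Q$ ``on the common domain $\cC(G/H)\subseteq L^1(G/H)$.'' But $\cC(G/H)$ here is the $L^2$-Schwartz space $\cC^2(G/H)$, which is in general \emph{not} contained in $L^1(G/H)$ (the $L^1$-Schwartz space $\cC^1(G/H)$ is strictly smaller; the paper only places special functions, e.g.\ $K$-finite discrete series coefficients, in $\cC^1$). So there is no common domain to invoke. The fix is immediate and does not change the structure of your proof: Theorem~\ref{Thm convergence} states that for every $\phi\in\cC(G/H)$ the extended transform is given by the absolutely convergent integral $\Rt_Q\phi(g)=\int_{N_Q/H_{N_Q}}\phi(gn)\,dn$, and this formula is manifestly equivariant ($\Rt_Q(L_g\phi)=L_g\Rt_Q\phi$ by replacing $x$ with $g^{-1}x$); alternatively, equivariance on $C_c^\infty(G/H)$ (Corollary~\ref{c: intro bp R Q} together with Proposition~\ref{p: smoothness N integral}) passes to the closure by density of $C_c^\infty(G/H)$ in $\cC(G/H)$ and continuity of both $\Rt_Q$ and the regular representations. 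With that substitution your argument is complete and coincides with the paper's.
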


\begin{proof}
This follows immediately from Theorem \ref{Thm convergence}
and the $G$-equivariance of $\Rt_Q,$ for every $Q \in \cP_\fh(A).$
\end{proof}

Recall that a parabolic subgroup $Q\in\cP(A)$ is said to be
$\fh$-extreme if $\Sigma(Q,\sigma\theta)=\Sigma(Q)\cap\faq^{*}$.

\begin{Lemma}
\label{l: minimal defi cusp form}
Let $\phi \in \cC(G/H).$ Then the following conditions are equivalent.
\begin{enumerate}
\itema $\phi$ is a cusp form.
\itemb $\Rt_Q\phi = 0$ for every $\fh$-extreme parabolic subgroup
$Q \in \cPH(A).$
\end{enumerate}
\end{Lemma}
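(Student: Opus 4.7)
The implication (a) $\Rightarrow$ (b) is immediate, since every $\fh$-extreme parabolic subgroup in $\cPH(A)$ is in particular an element of $\cPH(A)$.

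For the converse, my plan is to reduce a general $Q \in \cPH(A)$ to an $\fh$-extreme one via the partial order $\preceq$, and then to express $\Rt_Q\phi$ as an iterated integral of $\Rt_{Q'}\phi$ using the Fubini-type formula of Corollary \ref{c: dominant radon integral}. Given $Q \in \cPH(A)$, finiteness of $\cP(A)$ guarantees the existence of a $\preceq$-minimal element $Q' \preceq Q$ in $\cP(A)$. By Lemma \ref{l: extremity and ordering}(b), such a $Q'$ is $\fh$-extreme. Since $Q \succeq Q'$ and $Q \in \cPH(A)$, Lemma \ref{l: H compatible parabolics}(c) yields $Q' \in \cPH(A)$ as well, so hypothesis (b) gives $\Rt_{Q'}\phi = 0$.

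To finish, I will apply Corollary \ref{c: dominant radon integral} with $P = Q$ and with the role of $Q$ in the corollary played by our $Q'$, to the left translate $L_{g^{-1}}\phi$ for arbitrary $g \in G$. Theorem \ref{Thm convergence} ensures that this translate is absolutely integrable over $N_Q/H_{N_Q}$, so the hypothesis of the corollary is fulfilled, and the identity it provides reads
$$
\Rt_Q\phi(g) \;=\; \int_{N_Q/H_{N_Q}} \phi(gm)\,dm \;=\; \int_{N_Q \cap \bar N_{Q'}} \Rt_{Q'}\phi(gn)\,dn \;=\; 0
$$
for every $g \in G$. As $Q \in \cPH(A)$ was arbitrary, this shows $\phi \in \cC_\cusp(G/H)$.

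There is no real obstacle here: the only nontrivial input is the descent of $\fh$-compatibility along $\succeq$, which is already contained in Lemma \ref{l: H compatible parabolics}(c), together with the convergence statement of Theorem \ref{Thm convergence} that legitimises the application of Corollary \ref{c: dominant radon integral} in the Schwartz setting.
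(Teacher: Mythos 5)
Your proof is correct and follows essentially the same route as the paper: produce an $\fh$-extreme $Q' \preceq Q$, transfer $\fh$-compatibility downwards by Lemma \ref{l: H compatible parabolics}(c), and combine the absolute convergence from Theorem \ref{Thm convergence} with the Fubini identity of Corollary \ref{c: dominant radon integral}. The only cosmetic difference is that you obtain the $\fh$-extreme minorant from finiteness of $\cP(A)$ together with Lemma \ref{l: extremity and ordering}(b), whereas the paper invokes the underlying reference directly; the content is the same.
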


\begin{proof}
Clearly, (b) follows from (a). For the converse, assume that (b) holds.
Let $P \in \cPH(A).$ There exists a $\fh$-extreme $Q \in \cP(A)$ such that
$P \succeq Q,$ see \cite[Lemma 2.6]{BalibanuVdBan_ConvexityTheorem}. By Lemma
\ref{l: H compatible parabolics} (c)
we see that $Q \in \cPH(A),$
so that $\Rt_{Q}\phi=0$.
Since the integral for $\Rt_P\phi(g)$ is absolutely convergent, for
every $g \in G,$ it now follows by application of Corollary \ref{c: dominant radon integral}
that
$$
\Rt_P\phi(g) = \int_{N_{P} \cap \bar N_Q} \Rt_Q\phi(gn) dn = 0.
$$
Thus, (a) follows.
\end{proof}

\begin{Rem}
\label{r: notion of cusp forms coincides with AFJS}
It follows from this result that for the class of real hyperbolic spaces
$\SO(p,q+1)_e/\SO(p, q)_e$
our notion of cusp form coincides with
the one introduced by
\cite[Eqn. (5)]
{AndersenFlenstedJensenSchlichtkrull_CuspidalDiscreteSerieseForSemisimpleSymmetricSpaces}.
Indeed, the minimal parabolic subgroup mentioned in the text following \cite[Eqn. (5)]{AndersenFlenstedJensenSchlichtkrull_CuspidalDiscreteSerieseForSemisimpleSymmetricSpaces}
is $\fh$-extreme in our sense, and it turns out the condition of $\fh$-compatibility is fulfilled.
In fact, it is easy to see that for this family of symmetric spaces the properties of $\fh$-compatibility and $\fh$-extremeness coincide.
\end{Rem}

\begin{Rem}
\label{r: K finite cusp forms}
Let $\vartheta$ be a finite subset of $\widehat{K}$.
For a representation of $K$ on a vector space $V,$ we denote by $V_\vartheta$  the subspace of $K$-finite vectors with isotypes contained in $\vartheta$.

Let $C(K)_\vartheta$ be the space of $K$-finite continuous functions
on $K,$ whose right $K$-types belong to $\vartheta$
and let $\tau$ denote the right
regular representation of $K$ on $V_{\tau}:= C(K)_{\vartheta}$. Then the canonical map
$$
\varsigma:\cC(G/H)_{\vartheta}\to\cC(G/H:\tau)
$$
given by
$$
\varsigma\phi(x)(k)=\phi(kx)
\qquad\big(\phi\in\cC(G/H)_{\vartheta}, k\in K, x\in G/H\big)
$$
is a linear isomorphism.
Let $\phi\in\cC(G/H)_{\vartheta}$. Then it follows from (\ref{eq relation H_(Q,tau) and H_Q}), see Corollary \ref{Cor H_(Q,tau) extends to Schwartz functions}, that $\Ht_{Q^{v}}\phi=0$ for every $v\in\cW$ if and only if $\Ht_{Q,\tau}(\varsigma\phi)=0$. Hence,
 $\phi\in\cC_{\cusp}(G/H)$ if and only if $\Ht_{Q,\tau}(\varsigma\phi)=0$ for every $Q\in\cPH(A)$.
\end{Rem}

\begin{Ex}[\bf Group case]
We use notation as in Example \ref{Ex Group case - HC-transform def compared to HC's def}.

Every minimal parabolic subgroup is $\fh$-compatible (see Example \ref{Ex Group case - H-compatible parabolic subgroups}); the $\fh$-extreme parabolic subgroups are all of the form $\bp P\times\bp P$ where $\bp P$ is a minimal parabolic subgroup of $\bp G$. As explained in Example \ref{Ex Group case - HC-transform def compared to HC's def}, the Radon transform $\Rt_{\bp P\times\bp P}$ is identified with $\Rt_{\bp P}$ under the identification $(\bp G\times\bp G)/\diag(\bp G)$.
From Lemma \ref{l: minimal defi cusp form}
 it now
 follows that $\phi\in\cC(G/H)$ is a cusp form if and only if $\Rt_{\bp P}\phi=0$ for every minimal parabolic subgroup $\bp P$ of $\bp G$. Using the fact that $\bp G$ acts transitively on the set of minimal parabolic subgroups of $\bp G$, it is easily seen that this in turn is equivalent to the condition that
$$
\int_{N_{\bp P}}\phi(gn)\,dn=0
$$
for every $g\in \bp G$ and every minimal parabolic subgroup $\bp P$.
Thus,
if $\bp G$ is a reductive Lie group of the Harish-Chandra class of split rank $1$, then our definition of cusp forms coincides with the definition of Harish-Chandra; see \cite[Part II, Sect.~12.6, p. 222]{Varadarajan_HarmonicAnalysisOnRealReductiveGroups}.
\end{Ex}

We now move on to investigate the relation between $\cC_{\cusp}(G/H)$ and $\cC_{\ds}(G/H)$.
Our main tool will be the identity
(\ref{e: H minus I as pol free residues}).
The following result is a straightforward consequence of Theorem \ref{t: vanishing Rt Q on Lone}.

\begin{Cor}
\label{c: L one ds are cuspidal}
$\cC_{\ds}(G/H)\cap L^1(G/H)\subseteq   \cC_{\cusp}(G/H)$.
\end{Cor}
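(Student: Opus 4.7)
The plan is to reduce this to Theorem \ref{t: vanishing Rt Q on Lone} by identifying the two a priori different Radon transforms available on the intersection $\cC(G/H) \cap L^1(G/H).$ Let $\phi \in \cC_\ds(G/H) \cap L^1(G/H)$ and fix $Q \in \cP_\fh(A).$ Since $\cC_\ds(G/H) \subseteqq L^2_\ds(G/H),$ Theorem \ref{t: vanishing Rt Q on Lone} applies, yielding $\bp \Rt_Q \phi = 0$ as an element of $L^1_\loc(G/N_Q).$

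On the other hand, because $Q$ is $\fh$-compatible, Theorem \ref{Thm convergence} provides the Schwartz extension $\Rt_Q \phi \in C^\infty(G/N_Q),$ defined pointwise on $G$ by the absolutely convergent integral
\[
\Rt_Q \phi(g) = \int_{N_Q/H_{N_Q}} \phi(gn)\, dn \qquad (g \in G).
\]
By Corollary \ref{c: intro bp R Q}, however, for $\phi \in L^1(G/H)$ the function $\bp \Rt_Q \phi \in L^1_\loc(G/N_Q)$ is represented for almost every $g \in G$ by exactly the same integral. Thus the smooth function $\Rt_Q \phi$ is a pointwise-defined representative of the $L^1_\loc$-class $\bp \Rt_Q \phi = 0.$ Since a continuous (indeed smooth) function which vanishes almost everywhere must vanish identically, we conclude $\Rt_Q \phi = 0$ on $G.$

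As $Q \in \cP_\fh(A)$ was arbitrary, the Definition \ref{Defi cusp forms} of cusp form is satisfied, so $\phi \in \cC_\cusp(G/H).$ There is no genuine obstacle here; the only point requiring a brief comment is the consistency between the $L^1$-theoretic transform $\bp \Rt_Q$ of Section~2 and the Schwartz extension $\Rt_Q$ of Theorem \ref{Thm convergence}, which is immediate because both arise from the same absolutely convergent integral under the hypotheses at hand.
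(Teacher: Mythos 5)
Your proposal is correct and follows essentially the route the paper intends: the corollary is deduced from Theorem \ref{t: vanishing Rt Q on Lone} via $\cC_\ds(G/H)\subseteq L^2_\ds(G/H)$, together with the observation that for $\phi\in\cC(G/H)\cap L^1(G/H)$ and $Q\in\cP_\fh(A)$ the Schwartz-extended transform $\Rt_Q\phi$ of Theorem \ref{Thm convergence} is the smooth representative of $\bp\Rt_Q\phi$, hence vanishes identically once the latter is zero. The explicit consistency check between $\bp\Rt_Q$ and $\Rt_Q$ is exactly the detail the paper leaves implicit in calling the result a ``straightforward consequence.''
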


\begin{Lemma}\label{Lemma H_(Q,tau)phi=0 implies phi in C_ds}
Let $Q\in\cP_{\fh}(A)$ and $\phi\in\cC(G/H:\tau)$. Then $\Ht_{Q,\tau}\phi=0$ if and only if both (a) and (b) hold,
\begin{enumerate}[(a)]
\item $\It_{Q,\tau}\phi=0$,
\item $ \phi \perp \Restau(Q,\mu)\psi,$ {\ } for every $\mu\in S_{Q,\tau}$ and $\psi\in\oC(\tau)$.
\end{enumerate}
In particular, if $\Ht_{Q,\tau}\phi=0$, then $\phi\in\cC_{\ds}(G/H:\tau)$.
\end{Lemma}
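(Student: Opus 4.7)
The plan is to use the key identity (\ref{e: H minus I as pol free residues}) together with a separation-of-terms argument based on temperedness. The ``if'' direction is immediate: assuming (a) and (b), formula (\ref{e: H minus I as pol free residues}) gives $\langle\Ht_{Q,\tau}\phi(a),\psi\rangle=0$ for every $a\in\Aq$ and $\psi\in\oC(\tau)$, whence $\Ht_{Q,\tau}\phi=0$.

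For the converse, suppose $\Ht_{Q,\tau}\phi=0$. Fix $\psi\in\oC(\tau)$ and set
$$c_{\mu}(\psi)\coleq\langle\phi,\Restau(Q,\mu)\psi\rangle,\qquad\mu\in S_{Q,\tau}.$$
By Theorem \ref{t: Res constant in a}(a) the functions $\Restau(Q,\mu)$ are independent of $a$, so $c_\mu(\psi)$ is just a constant. Identity (\ref{e: H minus I as pol free residues}) then reads
$$-\langle\It_{Q,\tau}\phi(a),\psi\rangle\;=\;\sum_{\mu\in S_{Q,\tau}}c_\mu(\psi)\,a^{\mu}\qquad(a\in\Aq).$$
By Proposition \ref{Prop JQ: C(G/H:tau) to C^infty(Aq)} the left-hand side is tempered on the Euclidean space $\Aq$. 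On the right we have a finite linear combination of the pairwise distinct characters $a\mapsto a^{\mu}$, $\mu\in S_{Q,\tau}$; since $\dim\faq=1$ and $S_{Q,\tau}\subseteq\faqdp(P_0)\setminus\{0\}$ (by Lemma \ref{Lemma S_(Q,tau) is real and finite}), each such character grows exponentially in one of the two directions of $\Aq\simeq\R$ and is therefore not tempered. Applying successively the constant-coefficient differential operators $\partial_\nu-\mu'(\nu)$ for $\mu'\in S_{Q,\tau}\setminus\{\mu\}$ (with $\nu$ a unit vector in $\faq$) to both sides — which preserves temperedness — we isolate each term $c_\mu(\psi)a^\mu$ as a tempered distribution. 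Non-temperedness of $a^\mu$ forces $c_\mu(\psi)=0$ for every $\mu\in S_{Q,\tau}$, which is (b); plugging this back into the displayed equation gives $\It_{Q,\tau}\phi=0$, which is (a).

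For the final assertion: the equivalence just established shows in particular that $\Ht_{Q,\tau}\phi=0$ implies $\It_{Q,\tau}\phi=0$. Under our split-rank-one assumption, Corollary \ref{c: ker It Q tau is ds} identifies $\ker(\It_{Q,\tau})$ with $\cC_{\ds}(G/H:\tau)$, whence $\phi\in\cC_{\ds}(G/H:\tau)$.

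The only delicate point is the separation step. It rests on nothing deeper than linear independence of exponentials combined with the observation that each $\mu\in S_{Q,\tau}$ is a non-zero real linear functional on a one-dimensional $\faq$; this is precisely where the split-rank-one hypothesis is used (beyond its implicit use inside Theorem \ref{t: Res constant in a} and Corollary \ref{c: ker It Q tau is ds}).
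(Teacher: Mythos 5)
Your proof is correct and follows essentially the same route as the paper: both rest on identity (\ref{e: H minus I as pol free residues}) together with Theorem \ref{t: Res constant in a}, the temperedness of $\It_{Q,\tau}\phi$ from Proposition \ref{Prop JQ: C(G/H:tau) to C^infty(Aq)}, and Corollary \ref{c: ker It Q tau is ds} for the final assertion. The only difference is that you spell out, via the operators $\partial_\nu-\mu'(\nu)$, the separation step that the paper compresses into the remark that a tempered function cannot equal a non-trivial exponential sum with non-zero real exponents.
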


\begin{proof}
Assume $\Ht_{Q,\tau}\phi=0$.
From (\ref{e: H minus I as pol free residues}) and Theorem \ref{t: Res constant in a} it follows
for every $\psi\in\oC(\tau)$ that the function $\langle\Ht_{Q,\tau}\phi(\dotvar),\psi\rangle$ equals a sum of the tempered term $\langle\It_{Q,\tau}\phi(\dotvar),\psi\rangle$ and finitely many exponential terms with non-zero real exponents.  Since $\Ht_{Q,\tau}\phi=0$, it follows that all the mentioned terms vanish. This proves (a) and (b) in the lemma. The converse implication follows directly from (\ref{e: H minus I as pol free residues}).

Finally, if (b) holds, then we infer from Corollary \ref{c: ker It Q tau is ds} that $\phi\in\cC_{\ds}(G/H:\tau)$. This concludes the proof of the lemma.
\end{proof}

\begin{Thm}
\label{t: inclusion cusp in ds}
$\;\;\cC_{\cusp}(G/H)\subseteq   \cC_{\ds}(G/H).$
\end{Thm}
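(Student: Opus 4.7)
The plan is to reduce to $K$-finite functions and then apply Lemma \ref{Lemma H_(Q,tau)phi=0 implies phi in C_ds} together with the identification of $K$-finite cuspidality recorded in Remark \ref{r: K finite cusp forms}.

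First, I would observe that both subspaces involved are closed and $K$-invariant in $\cC(G/H)$: closedness of $\cC_\cusp(G/H)$ is Lemma \ref{l: space of cusp forms closed}, and closedness of $\cC_\ds(G/H)$ is built into its definition. Since $\cC(G/H)$ is a smooth $K$-Fréchet representation, the $K$-finite vectors are dense. Hence it suffices to prove, for every finite $\vartheta\subseteq\widehat K$, the inclusion
\[
\cC_\cusp(G/H)_\vartheta\;\subseteq\;\cC_\ds(G/H)_\vartheta.
\]

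Next, fix such a $\vartheta$, let $\tau$ denote the right regular representation of $K$ on $V_\tau:=C(K)_\vartheta$, and let $\varsigma:\cC(G/H)_\vartheta\to\cC(G/H:\tau)$ be the canonical isomorphism from Remark \ref{r: K finite cusp forms}. Pick any $Q\in\cPH(A)$; such a $Q$ exists by Lemma \ref{l: H compatible parabolics}(a). Given $\phi\in\cC_\cusp(G/H)_\vartheta$, Remark \ref{r: K finite cusp forms} (which in turn relies on Corollary \ref{Cor H_(Q,tau) extends to Schwartz functions} and the identity (\ref{eq relation H_(Q,tau) and H_Q})) yields
\[
\Ht_{Q,\tau}(\varsigma\phi)=0.
\]
By Lemma \ref{Lemma H_(Q,tau)phi=0 implies phi in C_ds} this forces $\varsigma\phi\in\cC_\ds(G/H:\tau)$, whence $\phi\in\cC_\ds(G/H)_\vartheta\subseteq\cC_\ds(G/H)$.

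Finally, letting $\vartheta$ vary and taking the closure gives $\cC_\cusp(G/H)\subseteq\cC_\ds(G/H)$. The substance of the argument is entirely contained in Lemma \ref{Lemma H_(Q,tau)phi=0 implies phi in C_ds}; there is no additional obstacle, since the existence of an $\fh$-compatible parabolic subgroup (the only ingredient besides the $K$-finite reduction) has already been settled in Lemma \ref{l: H compatible parabolics}(a). The only subtlety worth double-checking is that a single $Q\in\cPH(A)$ suffices in the last step: this is precisely what Lemma \ref{Lemma H_(Q,tau)phi=0 implies phi in C_ds} affords, so no simultaneous use of all $Q\in\cPH(A)$ is needed to land in $\cC_\ds(G/H:\tau)$.
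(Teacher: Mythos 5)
Your proposal is correct and follows essentially the same route as the paper: reduce to $K$-finite cusp forms using the closedness of $\cC_\cusp(G/H)$ (Lemma \ref{l: space of cusp forms closed}) and of $\cC_\ds(G/H)$, then invoke Remark \ref{r: K finite cusp forms} to get $\Ht_{Q,\tau}(\varsigma\phi)=0$ for $Q\in\cPH(A)$ and conclude via Lemma \ref{Lemma H_(Q,tau)phi=0 implies phi in C_ds}. Your observation that a single $\fh$-compatible $Q$ suffices is exactly how the paper's argument works as well.
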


\begin{proof}
In view of Lemma \ref{l: space of cusp forms closed} it suffices
to show that every $K$-finite cusp form is an element of $\cC_{\ds}(G/H)$.
Let $\vartheta\subset\widehat{K}$ be finite and let $\tau$ and $\varsigma$ be as in Remark \ref{r: K finite cusp forms}. Let $\phi\in\cC_{\cusp}(G/H)_{\vartheta}$ and assume that $Q\in\cPH(A)$. Then $\Ht_{Q,\tau}(\varsigma\phi)=0$ by Remark \ref{r: K finite cusp forms} and thus it follows from Lemma \ref{Lemma H_(Q,tau)phi=0 implies phi in C_ds} that $\varsigma\phi\in\cC_{\ds}(G/H:\tau)$.
Hence, $\phi\in\cC_{\ds}(G/H)_{\vartheta}$.
\end{proof}

\begin{Rem}
\label{r: inclusion cusp in ds proper}
There exist
symmetric spaces for which the inclusion of Theorem \ref{t: inclusion cusp in ds} is proper.
Indeed, in \cite[Thm.\ 5.3]{AndersenFlenstedJensenSchlichtkrull_CuspidalDiscreteSerieseForSemisimpleSymmetricSpaces} it has been established that the mentioned inclusion is proper for $G=\SO(p,q+1)_{e}$ and $H=\SO(p,q)_{e}$, with $1\leq p<q-1$.
\end{Rem}

\begin{Thm}
\label{t: K fixed ds cusp forms implies all}
If {\ } $\cC_{\ds}(G/H)^{K}\subseteq   \cC_{\cusp}(G/H)$, then $\cC_{\ds}(G/H)=\cC_{\cusp}(G/H)$.
\end{Thm}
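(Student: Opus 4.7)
The strategy is to show that the hypothesis forces every residue $\Restau(Q, \mu)\chi$ to vanish, for every finite-dimensional unitary $K$-representation $\tau$, every $Q \in \cPH(A)$, every $\mu \in S_{Q,\tau}$ and every $\chi \in \oC(\tau)$. Once that is done, Lemma \ref{Lemma H_(Q,tau)phi=0 implies phi in C_ds} combined with Corollary \ref{c: ker It Q tau is ds} gives cuspidality of every $K$-finite element of $\cC_\ds(G/H)$, from which the theorem follows by density and Theorem \ref{t: inclusion cusp in ds}.

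First I would reduce the goal to $K$-finite functions. Since $\cC_\cusp(G/H)$ is a closed $G$-invariant subspace (Lemma \ref{l: space of cusp forms closed}) and $K$-finite vectors are dense in $\cC_\ds(G/H)$, it suffices to prove $\cC_\ds(G/H)_\vartheta \subseteq \cC_\cusp(G/H)$ for every finite $\vartheta \subseteq \widehat K$. For such $\vartheta$, let $\tau$ be the right-regular representation of $K$ on $C(K)_\vartheta$; by Remark \ref{r: K finite cusp forms}, a function $\phi \in \cC_\ds(G/H)_\vartheta$ is cuspidal iff $\Ht_{Q,\tau}(\varsigma\phi) = 0$ for every $Q \in \cPH(A)$. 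Since $\varsigma\phi \in \cC_\ds(G/H:\tau) \subseteq \ker(\It_{Q,\tau})$ by Corollary \ref{c: ker It Q tau is ds}, Lemma \ref{Lemma H_(Q,tau)phi=0 implies phi in C_ds} reduces this condition to $L^{2}$-orthogonality of $\varsigma\phi$ against every $\Restau(Q,\mu)\chi$.

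Next I would exploit the hypothesis to kill the trivial-$K$-type residues outright. Applying the preceding reduction to $\vartheta = \{1\}$ and $\tau = 1$, the hypothesis $\cC_\ds(G/H)^K \subseteq \cC_\cusp(G/H)$ reads: every $\phi \in \cC_\ds(G/H:1)$ is $L^{2}$-orthogonal to every $\Resone(Q,\mu)\chi$. By Theorem \ref{t: Res constant in a}(a) each $\Resone(Q,\mu)\chi$ itself lies in $\cC_\ds(G/H:1)$; substituting $\phi = \Resone(Q,\mu)\chi$ yields $\|\Resone(Q,\mu)\chi\|_{L^{2}}^{2} = 0$, so $\Resone(Q,\mu)\chi = 0$ for all $Q \in \cPH(A)$, $\mu \in S_{Q,1}$ and $\chi \in \oC(1)$.

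The main obstacle is propagating this vanishing to arbitrary $\tau$. The key input is Proposition \ref{Prop c-function holomorphic on a_q*(P0,0)}: for $\fh$-compatible $Q$ the $c$-function $C_{\bar{P}_{0}|Q}(1,\cdot)$ is holomorphic on the relevant half-space, so by Lemma \ref{l: relation EQ with Enorm} the singularities of $E_\tau(Q,\cdot,\lambda)$ at $\lambda = -\mu$ are inherited from the normalized Eisenstein integral $E^\circ_\tau(\bar P_0,\cdot,\lambda)$. Each such singularity corresponds to an irreducible discrete subquotient of the induced principal series at parameter $-\mu$, an intrinsic feature of the underlying $(\fg,K)$-module that does not depend on the $K$-type $\tau$ used to cut out matrix coefficients. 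Consequently the ``residual'' discrete series representations contributing to $\Restau(Q,\mu)\chi$ form a single family independent of $\tau$, and by Proposition \ref{p: R Q  in span of finite sum of discrete series} this family consists entirely of spherical discrete series. The vanishing from the previous paragraph then shows that no such representation actually contributes, so $\Restau(Q,\mu)\chi = 0$ for every $\tau$, $Q$, $\mu$ and $\chi$. Combined with the first reduction, this forces $\Ht_{Q,\tau}(\varsigma\phi) = 0$ for every $\phi \in \cC_\ds(G/H)_\vartheta$ and every $Q \in \cPH(A)$, giving $\cC_\ds(G/H) \subseteq \cC_\cusp(G/H)$; together with Theorem \ref{t: inclusion cusp in ds} the desired equality follows.
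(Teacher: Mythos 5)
Your reduction to $K$-finite functions and your treatment of the trivial $K$-type are sound and agree with the paper: the hypothesis, read through Remark \ref{r: K finite cusp forms} and Lemma \ref{Lemma H_(Q,tau)phi=0 implies phi in C_ds}, forces every $K$-fixed discrete series function to be orthogonal to the residues $\Resone(Q,\mu)\chi$; since these residues themselves lie in $\cC_\ds(G/H)^K$ (Proposition \ref{p: R Q  in span of finite sum of discrete series}, Theorem \ref{t: Res constant in a}), they vanish. This is exactly the paper's claim that the span $\findim$ of the trivial-type residues is zero.

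The gap is in the propagation to arbitrary $\tau$. Your key assertion --- that the singular set and the discrete series representations contributing to $\Restau(Q,\mu)\chi$ are ``intrinsic'' and independent of the $K$-type $\tau$, and in particular (via Proposition \ref{p: R Q  in span of finite sum of discrete series}) consist only of \emph{spherical} discrete series --- is not justified and is in fact false in general. Proposition \ref{p: R Q  in span of finite sum of discrete series} concerns only $\tau=1$; for other $K$-types the poles of $E_\tau(Q:-\dotvar)$ and their residues need not be governed by the same representations. Indeed, Theorem \ref{t: spherical cCres generated by Res}(b) identifies $\cC_\res(G/H:\tau)$ as the span of the residues $\Restau(Q:\mu)\psi$, and Remark \ref{r: non spherical residual part} records examples (real hyperbolic spaces with $1\leq p<q-3$) in which this residual part contains irreducible submodules that are \emph{not} spherical; so the vanishing of the spherical (trivial-type) residues cannot, by pure representation-theoretic ``$\tau$-independence'', force the $\tau$-residues to vanish. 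The paper bridges this gap analytically rather than representation-theoretically: once $\Ht_{Q,1}=\It_{Q,1}$, the domination result Proposition \ref{Prop domination by K-inv Schwartz functions} gives $|\Ht_Q\phi|\leq \Ht_Q\widehat\phi$ with $\widehat\phi\in\cC(G/H)^K$, and temperedness of $\It_{Q,1}\widehat\phi$ (Proposition \ref{Prop JQ: C(G/H:tau) to C^infty(Aq)}) shows that $\Ht_Q$, hence every $\Ht_{Q,\tau}$, maps Schwartz functions to tempered functions; then the identity (\ref{e: H minus I as pol free residues}) forces all exponential (residual) terms to vanish, so $\Ht_{Q,\tau}=\It_{Q,\tau}$ and $\cC_\ds(G/H:\tau)=\ker\Ht_{Q,\tau}$ for every $\tau$. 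Without this domination/temperedness step (or a genuine substitute), your argument does not close.
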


\begin{proof}
Assume $\cC_{\ds}(G/H)^{K}\subseteq  \cC_{\cusp}(G/H)$ and let $Q\in\cPH(A)$.
Let $\findim$ be the subspace of $C^\infty(G/H)^K$ spanned by the functions
$\Resone(Q:\mu)\psi$, for $\mu \in S_{Q,1}$ and $\psi \in \oC(1).$
Then $\findim \subseteqq\cC_\ds(G/H)^K$ by Proposition \ref{p: R Q  in span of finite sum of discrete series}.
We claim that $\findim = 0.$ To see this, let $\chi \in \findim.$ Then by the assumption and Remark \ref{r: K finite cusp forms}
it follows that
$$
\Ht_{Q,\oneK}(\chi) = 0.
$$
By Lemma \ref{Lemma H_(Q,tau)phi=0 implies phi in C_ds}  (b) it follows that
$\chi \perp \findim.$ Hence,  $\chi  = 0$ and the claim is established.

We conclude from the claim and (\ref{e: H minus I as pol free residues})
that $\Ht_{Q,\oneK}=\It_{Q,\oneK}$.
Let $\phi\in\cC(G/H)$. By Proposition \ref{Prop domination by K-inv Schwartz functions} there exists a $\widehat{\phi}\in\cC(G/H)^{K}$ such that $|\phi|\leq\widehat{\phi}$. Now
\begin{equation}
\label{e: estimate H Q phi}
|\Ht_{Q}\phi|
\leq\Ht_{Q}|\phi|
\leq\Ht_{Q}\widehat{\phi}.
\end{equation}
Let $\psi_{0}$ be the element of $\oC(\oneK)\simeq\C^{\cW}$ determined
by $(\psi_{0})_w = \gd_{1,w}.$
Then $$\Ht_{Q}\widehat{\phi}(a)=\langle\Ht_{Q,\oneK}\widehat{\phi}(a),\psi_{0}\rangle$$ by (\ref{eq relation H_(Q,tau) and H_Q}), see Corollary \ref{Cor H_(Q,tau) extends to Schwartz functions}.
It follows from Proposition \ref{Prop JQ: C(G/H:tau) to C^infty(Aq)} that $\langle\Ht_{Q,\oneK}\widehat{\phi}(\dotvar),\psi_{0}\rangle = \langle\It_{Q,\oneK}\widehat{\phi}(\dotvar),\psi_{0}\rangle$ is a tempered function on $A_{\fq}$. In view of the estimate (\ref{e: estimate H Q phi})
we conclude that $\Ht_{Q}$ maps the functions from $\cC(G/H)$
to tempered functions on $L/H_{L}$.
The same holds for $\Ht_{Q^{v}}$ with $v\in\cW$.

Let $(\tau,V_{\tau})$ be a finite dimensional representation of $K$. From (\ref{eq relation H_(Q,tau) and H_Q}) and Corollary \ref{Cor H_(Q,tau) extends to Schwartz functions} we conclude that $\Ht_{Q,\tau}$ maps the functions from $\cC(G/H\colon\tau)$ to tempered $\oC(\tau)$-valued functions on $A_{\fq}$. It follows that the exponential terms in (\ref{e: H minus I as pol free residues}) are all equal to zero. Therefore, $\Ht_{Q,\tau}=\It_{Q,\tau}$ and by Corollary \ref{c: ker It Q tau is ds} we have that
$$
\cC_{\ds}(G/H:\tau)=\ker\Ht_{Q,\tau}.
$$
Since this holds for every finite dimensional representation $\tau$ of $K$, we conclude that $\cC_{\ds}(G/H)\subseteq   \ker(\Rt_{Q})$. As $Q \in \cPH(\Aq)$
was arbitrary, we conclude that $\cC_\ds(G/H) \subseteq \cC_\cusp(G/H).$
The converse inclusion was established in Theorem \ref{t: inclusion cusp in ds}.
\end{proof}

\begin{Rem}
\label{r: non spherical residual part}
At present, we know of no example of a split rank one symmetric space where
the  equality
$
\cC(G/H)^K \cap \cC_\cusp(G/H) = \{0\}
$
is violated.
On the other hand, $\cC_\ds(G/H) \cap \cC_{\cusp}(G/H)^\perp$  may contain irreducible submodules that are not
spherical. An example of a symmetric pair for which this happens is provided by
 $G=\SO(p,q+1)_{e}$ and $H=\SO(p,q)_{e}$, with $1\leq p<q-3$; see
 \cite[Thm.~5.3]{AndersenFlenstedJensenSchlichtkrull_CuspidalDiscreteSerieseForSemisimpleSymmetricSpaces}.
 \end{Rem}

Let $(\tau,\Vtau)$ be a finite dimensional unitary representation of $K.$ We define
$\cC_\cusp(G/H: \tau)$ to be the intersection of $\cC(G/H:\tau)$ with $\cC_\cusp(G/H) \otimes \Vtau.$ Furthermore, we define $\cC_\res(G/H: \tau)$ to be the $L^2$-orthocomplement of $\cC_\cusp(G/H: \tau)$ in $\cC_\ds(G/H:\tau).$ Then by
finite dimensionality of the latter space, we have the following direct sum decomposition,
\begin{equation}
\label{e: ds is res plus cusp}
\cC_\ds(G/H:\tau) = \cC_\res(G/H: \tau) \oplus \cC_\cusp(G/H: \tau).
\end{equation}

\begin{Thm}
\label{t: spherical cCres generated by Res}
Let $\cP_{\fh\fh}(A)$ denote the set of $\fh$-extreme parabolic subgroups in $\cP_\fh(A).$
Then
\begin{enumerate}
\itema $\cC_\cusp(G/H:\tau) = \{ \phi \in \cC(G/H:\tau) \col  \Ht_{Q,\tau} \phi = 0\; (\forall Q \in \cP_{\fh\fh}(A))\};$
\itemb $\cC_\res(G/H:\tau) = \spn\{\Restau(Q:\mu)\psi \col Q \in \cP_{\fh\fh}(A), \,\mu \in S_{Q,\tau},\;\psi \in \oC(\tau)\}.$
\end{enumerate}
\end{Thm}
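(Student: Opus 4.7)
The plan is to prove (a) directly from the definitions using Lemma~\ref{Lemma relation between H_(Q,tau) and H_Q}, and then to deduce (b) by combining (a) with Corollary~\ref{c: ker It Q tau is ds} and the decomposition~(\ref{e: H minus I as pol free residues}) of $\Ht_{Q,\tau}$ into a tempered term plus residual exponentials.

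For part (a), first observe that $\cP_{\fh\fh}(A)$ is stable under the map $Q\mapsto Q^v$ for every $v\in\cW$: $\fh$-compatibility is preserved by Lemma~\ref{l: H compatibility stable under cW}, and $\fh$-extremeness is preserved because the chosen representatives of $\cW$ lie in $N_K(\faq)\cap N_K(\fa_\fh)$ and hence commute with $\gs\Cartan$ on $\fad$, so that $\gS(Q^v,\gs\Cartan)=v^{-1}\gS(Q,\gs\Cartan)=\gS(Q^v)\cap\faqd$. If $\phi\in\cC_\cusp(G/H:\tau)$, then $\Rt_{Q^v}\phi\equiv 0$ on $G$ for every $v$, so every summand of (\ref{eq relation H_(Q,tau) and H_Q}) vanishes and $\Ht_{Q,\tau}\phi=0$. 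Conversely, suppose $\Ht_{Q,\tau}\phi=0$ for every $Q\in\cP_{\fh\fh}(A)$. The $v=e$ component of (\ref{eq def H_(Q,tau)}) reads $(\Ht_{Q,\tau}\phi(a))_e(m)=\Deltach_{Q}(a)\Rt_Q\phi(ma)$, so the hypothesis forces $\Rt_Q\phi$ to vanish on $M\Aq$. Via the identification $L/H_L\simeq (M/M\cap H)\times\Aq$ this says that $\Rt_Q\phi\equiv 0$ on $L$; left $\tau$-sphericality together with right $N_Q$-invariance of $\Rt_Q\phi$ (ensured by Theorem~\ref{Thm convergence}) and the Iwasawa decomposition $G=KLN_Q$ then extend this to $\Rt_Q\phi\equiv 0$ on $G$. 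Since $\cP_{\fh\fh}(A)$ is precisely the set of $\fh$-extreme elements of $\cPH(A)$, Lemma~\ref{l: minimal defi cusp form} yields $\phi\in\cC_\cusp(G/H:\tau)$.

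For part (b), write $\cR$ for the span on the right-hand side. Theorem~\ref{t: Res constant in a} shows $\cR$ is a finite-dimensional subspace of $\cC_\ds(G/H:\tau)$. For any $\phi\in\cC_\ds(G/H:\tau)$, Corollary~\ref{c: ker It Q tau is ds} gives $\It_{Q,\tau}\phi=0$, and equation~(\ref{e: H minus I as pol free residues}) collapses to
\[
\langle\Ht_{Q,\tau}\phi(a),\psi\rangle
=\sum_{\mu\in S_{Q,\tau}}a^{\mu}\langle\phi,\Restau(Q,\mu)\psi\rangle
\qquad(a\in\Aq,\;\psi\in\oC(\tau)).
\]
Since the points $\mu\in S_{Q,\tau}$ are pairwise distinct, the exponentials $a\mapsto a^\mu$ are linearly independent on $\Aq$, so the right-hand side vanishes identically in $a$ if and only if $\phi\perp\Restau(Q,\mu)\psi$ for every $\mu\in S_{Q,\tau}$ and every $\psi\in\oC(\tau)$. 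Intersecting over $Q\in\cP_{\fh\fh}(A)$ and applying part (a) gives
\[
\cC_\cusp(G/H:\tau)\cap\cC_\ds(G/H:\tau)=\cR^{\perp}\cap\cC_\ds(G/H:\tau).
\]
By Theorem~\ref{t: inclusion cusp in ds} the left-hand side equals $\cC_\cusp(G/H:\tau)$, so taking orthogonal complements inside the finite-dimensional Hilbert space $\cC_\ds(G/H:\tau)$ identifies $\cR$ with $\cC_\res(G/H:\tau)$.

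The main technical point is the reverse direction of (a), namely showing that vanishing of the single $v=e$ component of $\Ht_{Q,\tau}\phi$ on $M\times\Aq$ already forces $\Rt_Q\phi$ to vanish on all of $G$; once this is in place, (b) is essentially bookkeeping, combining Corollary~\ref{c: ker It Q tau is ds} with the linear independence of the exponentials indexed by the finite set $S_{Q,\tau}$.
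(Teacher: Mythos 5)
Your proposal is correct and follows essentially the same route as the paper: part (a) from the defining formula (\ref{eq def H_(Q,tau)}) for $\Ht_{Q,\tau}$ together with sphericality, right $N_Q$-invariance and Lemma \ref{l: minimal defi cusp form}, and part (b) from Corollary \ref{c: ker It Q tau is ds}, the residue identity (\ref{e: H minus I as pol free residues}) and the orthogonal decomposition (\ref{e: ds is res plus cusp}) combined with Theorem \ref{t: inclusion cusp in ds}. The only cosmetic difference is that you re-derive the relevant equivalence inline (via linear independence of the exponentials $a\mapsto a^\mu$) where the paper simply cites Lemma \ref{Lemma H_(Q,tau)phi=0 implies phi in C_ds}, and you add the (true but unneeded) observation that $\cP_{\fh\fh}(A)$ is stable under $Q\mapsto Q^v$.
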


\begin{proof}
If $\phi \in \cC_\cusp(G/H:\tau)$ and $Q \in \cP_{\fh\fh}(A),$
then in view of (\ref{eq def H_(Q,tau)}), see Corollary \ref{Cor H_(Q,tau) extends to Schwartz functions},
it follows that $\Ht_{Q,\tau}\phi = 0.$ This establishes one
inclusion. For the converse inclusion, assume that $\phi\in \cC(G/H:\tau)$ belongs
to the set on the right-hand side. Let $Q \in \cP_{\fh\fh}(A).$
Then it follows from (\ref{eq def H_(Q,tau)}), see Corollary \ref{Cor H_(Q,tau) extends to Schwartz functions},
that $(\Rt_Q \otimes I_{V_\tau})(\phi)$ vanishes on $L N_Q.$
By sphericality of $\phi$ and $G$-equivariance of $\Rt_Q$ it follows
that $(\Rt_Q \otimes I_{V_\tau})(\phi) = 0.$ By Lemma \ref{l: minimal defi cusp form}
we infer that $\phi \in \cC_\cusp(G/H:\tau).$

We now turn to (b). Let $\phi\in\cC_{\ds}(G/H:\tau)$ and let $Q \in \cP_{\fh\fh}(A)$. As $\It_{Q,\tau}$ vanishes on
$\cC_\ds(G/H:\tau)$ by Corollary \ref{c: ker It Q tau is ds},
it follows from Lemma \ref{Lemma H_(Q,tau)phi=0 implies phi in C_ds} that $\Ht_{Q,\tau}\phi=0$ if and only if $\phi$ is perpendicular to $\Restau(Q,\mu)\psi$ for every $\mu\in S_{Q,\tau}$ and $\psi\in\oC(\tau)$. Therefore the space on the right-hand side of (b) equals the orthocomplement in $\cC_{\ds}(G/H:\tau)$ of the space on the right-hand side of (a). Now (b) follows from (a) by the orthogonality of the direct sum (\ref{e: ds is res plus cusp}).
\end{proof}

\begin{Rem}
\label{r: finitely many non-cuspidal ds in AFJS}
In \cite[Thm.~5.2 \& 5.3]{AndersenFlenstedJensenSchlichtkrull_CuspidalDiscreteSerieseForSemisimpleSymmetricSpaces}
it is shown that for the real hyperbolic spaces $\SO(p, q+1)_e/\SO(p,q)_e,$
the left regular representation on $L^2_\ds(G/H) \cap \cC_\cusp(G/H)^\perp$ is a finite direct sum of discrete series  for $G/H$ and these are explicitly identified.
\end{Rem}

We conclude this article by giving a characterization of $K$-finite functions in $\cC_{\ds}(G/H)$.

\begin{Thm}
\label{t: characterisation ds by Radon}
Let $\phi$ be a $K$-finite function in $\cC(G/H)$.
Then the following assertions are equivalent.
\begin{enumerate}
\itema
$\phi\in\cC_{\ds}(G/H)$
\itemb
For every $Q\in\cPH(A)$ and every $g \in G$ the function
\begin{equation}
\label{e: a mapsto H Q tau phi psi}
A_{\fq}\ni a\mapsto\Rt_{Q}\phi( g a)
\end{equation}
is a finite linear combination of real exponential functions.
\itemc
There exists an $\fh$-extreme  $Q \in\cPH(A)$ such that for
every $v \in \cW$ and every $k \in K$ the function
$$
A_{\fq}\ni a\mapsto \Rt_{Q^v}\phi(k a)
$$
is a finite linear combination of real exponential functions.
\end{enumerate}
\end{Thm}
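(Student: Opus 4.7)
The plan is to establish the cyclic chain $(a)\Rightarrow(b)\Rightarrow(c)\Rightarrow(a)$, using the $\tau$-spherical Harish-Chandra transform as the principal link between the scalar Radon transforms and the discrete series characterization. Throughout, I would fix a finite $\vartheta\subseteq\widehat K$ containing the left $K$-types of $\phi$ and form the associated $\tau$-spherical function $\varsigma\phi\in\cC(G/H\colon\tau)$ as in Remark \ref{r: K finite cusp forms}, where $\tau$ is the right regular representation of $K$ on $V_\tau=C(K)_\vartheta$. Unwinding the definition of $\varsigma$ in formula (\ref{eq def H_(Q,tau)}) produces the pointwise bridge
\begin{equation}
\label{e: key bridge}
\bigl(\Ht_{Q,\tau}\varsigma\phi(a)\bigr)_v(m)(k)=\Deltach_Q(a)\,\Rt_{Q^v}\phi(kmav),
\end{equation}
valid for $v\in\cW$, $m\in M$, $k\in K$ and $a\in\Aq$, by which information passes between the two sides.

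For $(a)\Rightarrow(b)$: if $\phi\in\cC_\ds(G/H)$ then $\varsigma\phi\in\cC_\ds(G/H\colon\tau)$, Corollary \ref{c: ker It Q tau is ds} gives $\It_{Q,\tau}\varsigma\phi=0$, and (\ref{e: H minus I as pol free residues}) writes $\Ht_{Q,\tau}\varsigma\phi(a)$ as a finite sum $\sum_{\mu\in S_{Q,\tau}}a^\mu R_\mu$; by Lemma \ref{Lemma S_(Q,tau) is real and finite} this is a finite $\oC(\tau)$-valued sum of real exponentials. Substituting into (\ref{e: key bridge}) and dividing by the real exponential $\Deltach_Q$ shows that $a\mapsto\Rt_{Q^v}\phi(kmav)$ is a finite sum of real exponentials for every $(v,m,k)$. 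To handle an arbitrary $g\in G$, I would use the Iwasawa decomposition $g=k_g a_g n_g$ relative to $Q$: right $N_Q$-invariance of $\Rt_Q\phi$, after $a$ is conjugated past $n_g$, reduces the assertion to $\Rt_Q\phi(k_ga_ga)$; writing $a_g=a_g^\fh a_g^\fq$ with $a_g^\fh\in A_\fh\subset H$ and $a_g^\fq\in\Aq$, commuting $a_g^\fh$ past $a_g^\fq a\in A$ and conjugating the inner variable by $a_g^\fh$ while exploiting right $H$-invariance of $\phi$ yields $\Rt_Q\phi(ga)=J(g)\Rt_Q\phi(k_g\,a_g^\fq a)$ with $J(g)$ a constant depending only on $g$; taking $v=m=e$ in the pointwise result and translating the $\Aq$-argument by $a_g^\fq$ then gives (b).

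The implication $(b)\Rightarrow(c)$ is essentially formal: Lemma \ref{l: H compatible parabolics}(a) guarantees an $\fh$-extreme $Q\in\cPH(A)$; for such a $Q$ and any $v\in\cW$, Lemma \ref{l: H compatibility stable under cW} together with the $\cW$-equivariance of the sets $\gS(Q,\gs\Cartan)$ and $\gS(Q)\cap\faqd$ show that $Q^v\in\cPH(A)$ is again $\fh$-extreme, so (b) applied to $Q^v$ with $g=k$ yields (c).

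For the main direction $(c)\Rightarrow(a)$: condition (c), combined with the substitution $\tilde a=v^{-1}av$ and $k'=kmv\in K$, shows via (\ref{e: key bridge}) that each scalar evaluation $a\mapsto(\Ht_{Q,\tau}\varsigma\phi(a))_v(m)(k)$ is a finite sum of real exponentials. Since $\oC(\tau)$ is finite dimensional, it follows that $\Ht_{Q,\tau}\varsigma\phi$ itself is an $\oC(\tau)$-valued finite sum of real exponentials, and (\ref{e: H minus I as pol free residues}) then forces $\It_{Q,\tau}\varsigma\phi$ to be such a sum as well. On the other hand, Proposition \ref{Prop JQ: C(G/H:tau) to C^infty(Aq)} gives $\It_{Q,\tau}\varsigma\phi\in C^\infty_\temp(\Aq)\otimes\oC(\tau)$; since on the one-dimensional space $\Aq$ the only tempered real exponentials are constants, we conclude that $\It_{Q,\tau}\varsigma\phi=c$ for some $c\in\oC(\tau)$. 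The hard part is to deduce $c=0$. For this I would return to the proof of Proposition \ref{Prop JQ: C(G/H:tau) to C^infty(Aq)}: there $\Kt_{Q,\tau}\varsigma\phi=D(\It_{Q,\tau}\varsigma\phi)$ where $D$ has Euclidean Fourier symbol $p_h$, and $\It_{Q,\tau}\varsigma\phi=u\ast\Kt_{Q,\tau}\varsigma\phi$. If $\deg p_h\geq 1$ the homogeneous polynomial $p_h$ has zero constant term, so $D$ annihilates constants and $\Kt_{Q,\tau}\varsigma\phi=D(c)=0$; if instead $p_h$ is a non-zero constant, then $D$ is a non-zero scalar multiplication and $D(c)\in\cS(\Aq)\otimes\oC(\tau)$ being Schwartz forces $c=0$ immediately. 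In either case $\Kt_{Q,\tau}\varsigma\phi=0$, whence $\It_{Q,\tau}\varsigma\phi=u\ast 0=0$, and Corollary \ref{c: ker It Q tau is ds} yields $\varsigma\phi\in\cC_\ds(G/H\colon\tau)$, hence $\phi\in\cC_\ds(G/H)$.
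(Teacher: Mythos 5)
Your proposal is correct and follows essentially the same route as the paper: (a)$\Rightarrow$(b) via $\It_{Q,\tau}(\varsigma\phi)=0$ on the discrete part together with the residue identity (\ref{e: H minus I as pol free residues}) and an Iwasawa reduction for general $g$; (b)$\Rightarrow$(c) formally; and (c)$\Rightarrow$(a) by showing $\It_{Q,\tau}(\varsigma\phi)$ is tempered and of real exponential type, hence constant, hence zero, and then invoking Corollary \ref{c: ker It Q tau is ds}. The only cosmetic differences are that you work with pointwise evaluation functionals on $\oC(\tau)$ instead of the pairing in (\ref{eq relation H_(Q,tau) and H_Q}), and in the endgame you kill the constant directly through the operator $D$ with symbol $p_h$ (treating the cases $\deg p_h\geq 1$ and $p_h$ constant), whereas the paper argues via the Fourier transform being supported at the origin and $\Kt_{Q,\tau}(\varsigma\phi)$ being Schwartz -- the two arguments are equivalent.
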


\begin{proof}
Let $\vartheta$ be a finite subset of $\widehat{K}$ such that
$\phi \in \cC(G/H)_\vartheta$ and let $\tau$ and $\varsigma$ be as in Remark
\ref{r: K finite cusp forms}.

Assume that (a) is valid and let $Q\in\cPH(A)$.
Then $\It_{Q,\tau}(\varsigma\phi)=0$ by Theorem \ref{Thm ker(F^0_(P0,tau))=ker(J_(Q,tau))}. Therefore, only the exponential
 terms on the right-hand side of
 (\ref{e: H minus I as pol free residues})
can be non-zero. From the relation between $\Ht_{Q,\tau}$ and $\Ht_{Q}$ as given in (\ref{eq relation H_(Q,tau) and H_Q}), see Corollary \ref{Cor H_(Q,tau) extends to Schwartz functions}, it follows that the function (\ref{e: a mapsto H Q tau phi psi}) is of
real exponential type, i.e., a finite linear combination of exponential functions with real exponents, if $g =e.$ For $g = k \in K$ the assertion now follows from the $K$-equivariance of $\Rt_Q.$
Let $g \in G$ be general, then $g = k a_0 n_Q$ according to the Iwasawa decomposition $G = K A N_Q.$ Furthermore,
$$
\Rt_Q\phi(ga) = \Rt_Q\phi(ka_0 a (a^{-1} n_Q a)) =
\Rt_Q\phi(k (a_0 a))$$
 and we see that (\ref{e: a mapsto H Q tau phi psi})
is of real exponential type. Hence, (b) follows.

Clearly, (b) implies (c). Now assume (c) and let $Q$ be an $\fh$-extreme
parabolic subgroup in $\cPH(A)$ with the asserted properties.
It follows from (\ref{eq relation H_(Q,tau) and H_Q}), see Corollary \ref{Cor H_(Q,tau) extends to Schwartz functions},
that for every $\psi\in\oC(\tau)$ the function $\langle\Ht_{Q,\tau}(\varsigma\phi)(\dotvar),\psi\rangle$ is of real exponential type.
From (\ref{e: H minus I as pol free residues})
we then read off that $\langle\It_{Q,\tau}(\varsigma\phi)(\dotvar),\psi\rangle$ is of such exponential type as well. It now suffices to prove the claim that $\It_{Q,\tau}(\varsigma\phi)$
is in fact equal to $0$.  Indeed, from the claim it follows that $\phi\in\cC_{\ds}(G/H)_{\vartheta}$ by Corollary \ref{c: ker It Q tau is ds}.
Hence (a).

It remains to prove the above claim.
We established for every $\psi\in\oC(\tau)$ that  the function $\langle\It_{Q,\tau}(\varsigma\phi)(\dotvar),\psi\rangle$ is of real exponential type. Since this function is tempered in view of Proposition \ref{Prop JQ: C(G/H:tau) to C^infty(Aq)}, it has to be constant.
As this is valid for every $\psi\in\oC(\tau)$, the support of  $\eFt_{A_{\fq}}\big(\It_{Q,\tau}(\varsigma\phi)\big)$ is contained in the origin. Now it follows from (\ref{eq J_Q= u*K_Q}) that $\eFt_{A_{\fq}}\big(\Kt_{Q,\tau}(\varsigma\phi)\big)$ is supported in the origin as well. As the latter is a smooth function, it must vanish identically.
It then follows from (\ref{eq J_Q= u*K_Q}) that also $\It_{Q,\tau}(\varsigma\phi)=0$. The validity of the claim follows.
\end{proof}


\def\adritem#1{\hbox{\small #1}}
\def\distance{\hbox{\hspace{3.5cm}}}
\def\apetail{@}
\def\addVdBan{\vbox{
\adritem{E.~P.~van den Ban}
\adritem{Mathematical Institute}
\adritem{Utrecht University}
\adritem{PO Box 80 010}
\adritem{3508 TA Utrecht}
\adritem{The Netherlands}
\adritem{E-mail: E.P.vandenBan{\apetail}uu.nl}
}
}
\def\addKuit{\vbox{
\adritem{J.~J.~Kuit}
\adritem{Institut f\"ur Mathematik}
\adritem{Universit\"at Paderborn}
\adritem{Warburger Stra{\ss}e 100}
\adritem{33089 Paderborn}
\adritem{Germany}
\adritem{E-mail: j.j.kuit{\apetail}gmail.com}
}
}
\mbox{}
\vfill
\hbox{\vbox{\addVdBan}\vbox{\distance}\vbox{\addKuit}}


\begin{thebibliography}{BvdB14{\ }}

\bibitem[AFJS12]{AndersenFlenstedJensenSchlichtkrull_CuspidalDiscreteSerieseForSemisimpleSymmetricSpaces}
N.~B. Andersen, M.~Flensted-Jensen, and H.~Schlichtkrull.
\newblock Cuspidal discrete series for semisimple symmetric spaces.
\newblock {\em J. Funct. Anal.}, 263(8):2384--2408, 2012.

\bibitem[BvdB14]{BalibanuVdBan_ConvexityTheorem}
D.~M. B{\u{a}}libanu and E.~P.~van~den Ban.
\newblock Convexity theorems for semisimple symmetric spaces.
\newblock arXiv:1401.1093, 2014; to appear: {\em Forum Mathematicum.}

\bibitem[vdB86]{vdBan_ConvexityThm}
E.~P.~van~den Ban.
\newblock A convexity theorem for semisimple symmetric spaces.
\newblock {\em Pacific J. Math.}, 124(1):21--55, 1986.

\bibitem[vdB87a]{vdBan_AsymptoticBehaviourOfMatrixCoefficientsRelatedToReductiveSymmetricSpaces}
E.~P.~van~den Ban.
\newblock Asymptotic behaviour of matrix coefficients related to reductive  symmetric spaces.
\newblock {\em Nederl. Akad. Wetensch. Indag. Math.}, 49(3):225--249, 1987.

\bibitem[vdB87b]{vdBan_finite_multiplicities_Plancherel}
E.~P.~van~den Ban.
\newblock Invariant differential operators on a semisimple symmetric space and finite multiplicities in a {P}lancherel formula.
\newblock {\em Ark. Mat.}, 25(2):175--187, 1987.

\bibitem[vdB88]{vdBan_PrincipalSeriesI}
E.~P.~van~den Ban.
\newblock The principal series for a reductive symmetric space. {I}. {$H$}-fixed distribution vectors.
\newblock {\em Ann. Sci. \'Ecole Norm. Sup. (4)}, 21(3):359--412, 1988.

\bibitem[vdB92]{vdBan_PrincipalSeriesII}
E.~P.~van~den Ban.
\newblock The principal series for a reductive symmetric space. {II}. {E}isenstein integrals.
\newblock {\em J. Funct. Anal.}, 109(2):331--441, 1992.

\bibitem[vdBK14]{vdBanKuit_EisensteinIntegrals}
E.~P.~van~den Ban and J.~J. Kuit.
\newblock Normalizations of {E}isenstein integrals for reductive symmetric spaces.
\newblock arXiv:1409.5411, 2014.

\bibitem[vdBS97a]{vdBanSchlichtkrull_ExpansionsForEisensteinIntegralsOnSemisimpleSymmetricSpaces}
E.~P.~van~den Ban and H.~Schlichtkrull.
\newblock Expansions for {E}isenstein integrals on semisimple symmetric spaces.
\newblock {\em Ark. Mat.}, 35(1):59--86, 1997.

\bibitem[vdBS97b]{vdBanSchlichtkrull_FourierTransformOnASemisimpleSymmetricSpace}
E.~P.~van~den Ban and H.~Schlichtkrull.
\newblock Fourier transform on a semisimple symmetric space.
\newblock {\em Invent. Math.}, 130(3):517--574, 1997.

\bibitem[vdBS97c]{vdBanSchlichtkrull_MostContinuousPart}
E.~P.~van~den Ban and H.~Schlichtkrull.
\newblock The most continuous part of the {P}lancherel decomposition for a reductive symmetric space.
\newblock {\em Ann. of Math. (2)}, 145(2):267--364, 1997.

\bibitem[vdBS99]{vdBanSchlichtkrull_FourierInversionOnAReductiveSymmetricSpace}
E.~P.~van~den Ban and H.~Schlichtkrull.
\newblock Fourier inversion on a reductive symmetric space.
\newblock {\em Acta Math.}, 182(1):25--85, 1999.

\bibitem[vdBS00]{vdBanSchlichtkrull_ResidueCalculus}
E.~P.~van~den Ban and H.~Schlichtkrull.
\newblock A residue calculus for root systems.
\newblock {\em Compositio Math.}, 123(1):27--72, 2000.

\bibitem[vdBS05]{vdBan_Schlichtkrull_Plancherel1}
E.~P.~van~den Ban and H.~Schlichtkrull.
\newblock The {P}lancherel decomposition for a reductive symmetric space. {I}. {S}pherical functions.
\newblock {\em Invent. Math.}, 161(3):453--566, 2005.

\bibitem[vdBS12]{vdBanSchlichtkrull_estimates_cfunction}
E.~P.~van~den Ban and H.~Schlichtkrull.
\newblock Polynomial estimates for {$c$}-functions on reductive symmetric spaces.
\newblock {\em Int. Math. Res. Not. IMRN}, (6):1201--1229, 2012.

\bibitem[Del98]{Delorme_Planch}
P.~Delorme.
\newblock Formule de {P}lancherel pour les espaces sym\'etriques r\'eductifs.
\newblock {\em Ann. of Math. (2)}, 147(2):417--452, 1998.

\bibitem[DM78]{DixmierMalliavin_FactorisationsDeFunctionsEtDeVecteursIndefinimentDifferentiables}
J.~Dixmier and P.~Malliavin.
\newblock Factorisations de fonctions et de vecteurs ind\'efiniment diff\'erentiables.
\newblock {\em Bull. Sci. Math. (2)}, 102(4):307--330, 1978.

\bibitem[HC66]{HC_ds_2}
Harish-Chandra.
\newblock Discrete series for semisimple {L}ie groups. {II}. {E}xplicit determination of the characters.
\newblock {\em Acta Math.}, 116:1--111, 1966.

\bibitem[HC70]{HC_harman_70}
Harish-Chandra.
\newblock Harmonic analysis on semisimple {L}ie groups.
\newblock {\em Bull. Amer. Math. Soc.}, 76:529--551, 1970.

\bibitem[HC75]{Harish-Chandra_HarmonicAnalyisOnRealReductiveGroupsI}
Harish-Chandra.
\newblock Harmonic analysis on real reductive groups. {I}. {T}he theory of the constant term.
\newblock {\em J. Functional Analysis}, 19:104--204, 1975.

\bibitem[H{\"o}r03]{Hormander_AnalysisOfPDOs-I}
L.~H{\"o}rmander.
\newblock {\em The analysis of linear partial differential operators. {I}. Distribution theory and Fourier analysis}.
\newblock Classics in Mathematics. Springer-Verlag, Berlin, 2003.
\newblock Reprint of the second (1990) edition.

\bibitem[KS80]{KnappStein_IntertwiningOperatorsForSemisimpleGroups_II}
A.~W.~Knapp and E.~M. Stein.
\newblock Intertwining operators for semisimple groups ii.
\newblock {\em Invent. Math.}, 60:9--84, 1980.

\bibitem[Kr09]{Krotz_horospherical_transform}
B.~Kr{\"o}tz.
\newblock The horospherical transform on real symmetric spaces: kernel and cokernel.
\newblock {\em Funktsional. Anal. i Prilozhen.}, 43(1):37--54, 2009.

\bibitem[KrS12]{KrotzSchlichtkrull_OnFunctionSpacesOnSymmetricSpaces}
B.~Kr{\"o}tz and H.~Schlichtkrull.
\newblock On function spaces on symmetric spaces.
\newblock In {\em Representation theory, complex analysis, and integral geometry}, pages 1--8. Birkh\"auser/Springer, New York, 2012.

\bibitem[Kui13]{Kuit_SupportTheorem}
J.~J. Kuit.
\newblock Radon transformation on reductive symmetric spaces: support theorems.
\newblock {\em Adv. Math.}, 240:427--483, 2013.

\bibitem[{\'O}la87]{Olafsson_FourierAndPoissonTransformationAssociatedToASemisimpleSymmetricSpace}
G.~{\'O}lafsson.
\newblock Fourier and {P}oisson transformation associated to a semisimple symmetric space.
\newblock {\em Invent. Math.}, 90(3):605--629, 1987.

\bibitem[{\=O}M84]{Oshima_Matsuki_ds}
T.~{\=O}shima and T.~Matsuki.
\newblock A description of discrete series for semisimple symmetric spaces.
\newblock In {\em Group representations and systems of differential equations ({T}okyo, 1982)}, volume~4 of {\em Adv. Stud. Pure Math.}, pages 331--390. North-Holland, Amsterdam, 1984.

\bibitem[Pou72]{Poulsen_OnSmoothVectorsAndIntertwiningBilinearForms}
N.~S. Poulsen.
\newblock On {$C^{\infty }$}-vectors and intertwining bilinear forms for representations of {L}ie groups.
\newblock {\em J. Functional Analysis}, 9:87--120, 1972.

\bibitem[Ros79]{Rossmann_structure_ss_spaces}
W.~Rossmann.
\newblock The structure of semisimple symmetric spaces.
\newblock {\em Canad. J. Math.}, 31(1):157--180, 1979.

\bibitem[Var77]{Varadarajan_HarmonicAnalysisOnRealReductiveGroups}
V.~S. Varadarajan.
\newblock {\em Harmonic analysis on real reductive groups}.
\newblock Lecture Notes in Mathematics, Vol. 576. Springer-Verlag, Berlin, 1977.

\bibitem[Wal88]{Wallach_RealReductiveGroupsI}
N.~R. Wallach.
\newblock {\em Real reductive groups. {I}}, volume 132 of {\em Pure and Applied Mathematics}.
\newblock Academic Press Inc., Boston, MA, 1988.

\bibitem[Wal92]{Wallach_RealReductiveGroupsII}
N.~R. Wallach.
\newblock {\em Real reductive groups. {II}}, volume 132 of {\em Pure and Applied Mathematics}.
\newblock Academic Press Inc., Boston, MA, 1992.

\end{thebibliography}
\end{document}